\newtheorem{remark}{Remark}
\newtheorem{lemma}{Lemma}
\newtheorem{corollary}{Corollary}
\newtheorem{proposition}{Proposition}
\newtheorem{theorem}{Theorem}
\newtheorem{assumption}{Assumption}
\newtheorem{algorithm}{Algorithm}
\newcommand{\ben}{\begin{equation}}
\newcommand{\een}{\end{equation}}
\newcommand{\benn}{\begin{equation*}}
\newcommand{\eenn}{\end{equation*}}
\newcommandx{\unsure}[2][1=]{\todo[linecolor=red,backgroundcolor=red!25,bordercolor=red,#1]{#2}}
\newcommandx{\change}[2][1=]{\todo[linecolor=blue,backgroundcolor=blue!25,bordercolor=blue,#1]{#2}}
\newcommandx{\qanda}[2][1=]{\todo[linecolor=yellow,backgroundcolor=yellow!25,bordercolor=blue,#1]{#2}}
\newcommandx{\memoToMyself}[2][1=]{\todo[linecolor=green,backgroundcolor=green!25,bordercolor=blue,#1]{#2}}
\newcommandx{\question}[2][1=]{\todo[linecolor=pink,backgroundcolor=pink!25,bordercolor=blue,#1]{#2}}
\newcommandx{\info}[2][1=]{\todo[linecolor=OliveGreen,backgroundcolor=OliveGreen!25,bordercolor=OliveGreen,#1]{#2}}
\newcommandx{\improvement}[2][1=]{\todo[linecolor=Plum,backgroundcolor=Plum!25,bordercolor=Plum,#1]{#2}}
\newcommandx{\thiswillnotshow}[2][1=]{\todo[disable,#1]{#2}}
\newcommandx{\pcomment}[2][1=]{\todo[linecolor=red,backgroundcolor=red!25,bordercolor=red,#1]{#2}}
\newcommandx{\scomment}[2][1=]{\todo[linecolor=blue,backgroundcolor=blue!25,bordercolor=blue,#1]{#2}}
\renewcommand\unsure[1]{}
\renewcommand\change[1]{}
\renewcommand\qanda[1]{}
\renewcommand\memoToMyself[1]{}
\renewcommand\question[1]{}
\renewcommand\info[1]{}
\renewcommand\improvement[1]{}
\renewcommand\thiswillnotshow[1]{}
\newcommand{\ue}{{u_{\varepsilon} } }
\newcommand{\Te}{{T_{\varepsilon} } }
\newcommand{\Tt}{{\tilde{T} } }
\newcommand{\uet}{{\tilde{u}_{\varepsilon} } }
\newcommand{\he}{{h_{\varepsilon}}}
\newcommand{\He}{{H_{\varepsilon}}}
\newcommand{\Ht}{{\tilde{H}}}
\newcommand{\Ke}{{K_{\varepsilon}}}
\newcommand{\JJ}{{\mathcal J}}
\newcommand{\nuh}{\hat{\nu} }
\newcommand{\ome}{\omega_{\varepsilon}}
\newcommand{\ut}{\tilde u}
\newcommand{\ccos}{\mbox{cos}\,}
\newcommand{\ssin}{\mbox{sin}\,}
\newcommand{\ccosh}{\mbox{cosh}\,}
\newcommand{\ssinh}{\mbox{sinh}\,}
\newcommand{\adj}{p}
\newcommand{\Adj}{P}
\newcommand{\adjz}{{\adj}_0}
\newcommand{\adje}{{{\adj}_{\varepsilon} } }
\newcommand{\adjet}{{\tilde{\adj}_{\varepsilon} } }
\newcommand{\Adjz}{\Adj_0}
\newcommand{\test}{\eta}
\newcommand{\myP}{{R_1}}
\newcommand{\myRsub}{{R_2}}
\newcommand{\cDuOld}{\underline{c}_4}
\newcommand{\cDoOld}{\overline{c}_4}
\newcommand{\cEOld}{c_5}
\newcommand{\cFOld}{c_6}
\newcommand{\cGOld}{c_7}
\newcommand{\cGtOld}{\tilde c_7}
\newcommand{\cHOld}{c_8}
\newcommand{\cHtOld}{\tilde c_8}
\newcommand{\cAu}{\underline{c}_1}
\newcommand{\cAo}{\overline{c}_1}
\newcommand{\cB}{c_2}
\newcommand{\cC}{c_3}
\newcommand{\cD}{c_4}
\newcommand{\cDt}{\tilde c_4}
\newcommand{\cE}{c_5}
\newcommand{\cEt}{\tilde c_5}
\renewcommand{\cDuOld}{\cAu}
\renewcommand{\cDoOld}{\cAo}
\renewcommand{\cEOld}{\cB}
\renewcommand{\cFOld}{\cC}
\renewcommand{\cGOld}{\cD}
\renewcommand{\cGtOld}{\cDt}
\renewcommand{\cHOld}{\cE}
\renewcommand{\cHtOld}{\cEt}
\newcommand{\ddiv}{\mbox{div}}
\newcommand{\RN}{\mathbb R^2}
\newcommand{\epsN}{\varepsilon^2}
\newcommand{\Nhalf}{1}
\newcommand{\myN}{2}
\newcommand{\Nhalfmone}{0}
\newcommand{\onemN}{-1}
\newcommand{\HH}{{\mathcal H}}
\newcommand{\lleft}{{\left(\vphantom{\frac{du}{mmy}}\right.}}
\newcommand{\rright}{{\left.\vphantom{\frac{du}{mmy}}\right)}}
\newcommand{\holdAll}{D}
\newcommand{\DT}{\mathrm{D}T}
\newcommand{\DtT}{\mathrm{D}^2T}
\newcommand{\Omfr}{\Omega_f^{ref}} 
\newcommand{\numin}{\underline{\nu}}		
\newcommand{\numinp}{\underline{\nu}}		
\newcommand{\polMat}{\mathcal P}		
\newcommand{\myTDMat}{\mathcal M}		
\newcommand{\eigvOne}{\lambda_1}
\newcommand{\eigvTwo}{\lambda_2}
\newcommand{\Hii}{H^{(2)}}
\newcommand{\uii}{u_{0}^{(2)}}
\newcommand{\Kii}{K^{(2)}}
\newcommand{\Uoii}{{U_0^{(2)}}}
\newcommand{\Voii}{{\Adj_0^{(2)}}}
\newcommand{\vii}{\adj_{0}^{(2)}}
\newcommand{\Adjzii}{{\Adj_0^{(2)}}}
\newcommand{\adjii}{\adj_{0}^{(2)}}
\title{Toplogical derivative for nonlinear magnetostatic problem}
\author{Samuel Amstutz\footnotemark[2] \quad Peter Gangl\footnotemark[3]}
\begin{document}
\maketitle

\renewcommand{\thefootnote}{\fnsymbol{footnote}}

\footnotetext[2]{Universit\'{e} d'Avignon, UFR-ip Sciences Technologie Sant\'{e}, 301 rue Baruch de Spinoza, BP 21239, 84916 Avignon Cedex 9, France ({\tt samuel.amstutz@univ-avignon.fr}).}
\footnotetext[3]{Institute of Applied Mathematics, Graz University of Technology, Steyrergasse 30/III, 8010 Graz, Austria  ({\tt gangl@math.tugraz.at}).}

\pagestyle{myheadings}
\thispagestyle{plain}
\markboth{P. Gangl and S. Amstutz}{Toplogical derivative for nonlinear magnetostatics}

	\begin{abstract}
		The topological derivative represents the sensitivity of a domain-dependent functional with respect to a local perturbation of the domain and is a valuable tool in topology optimization. Motivated by an application from electrical engineering, we derive the topological derivative for an optimization problem which is constrained by the quasilinear equation of two-dimensional magnetostatics. Here, the main ingredient is to establish a sufficiently fast decay of the variation of the direct state at scale 1 as $|x|\rightarrow \infty$. In order to apply the method in a bi-directional topology optimization algorithm, we derive both the sensitivity for introducing air inside ferromagnetic material and the sensitivity for introducing material inside an air region. We explicitly compute the arising polarization matrices and introduce a way to efficiently evaluate the obtained formulas. Finally, we employ the derived formulas in a level-set based topology optimization algorithm and apply it to the design optimization of an electric motor.
	\end{abstract}

%
	
	\def\showfigures{1}

\section{Introduction} \label{sec:Introduction}
The goal of this paper is the rigorous derivation of the topological derivative for a shape optimization problem constrained by the quasi-linear partial differential equation (PDE) of two-dimensional magnetostatics. 
This study is motivated by a concrete application from electrical engineering, namely the problem of determining an optimal design for an elctric motor. More precisely, we are interested in finding a distribution of  ferromagnetic material in a design region of an electric motor such that the motor performs as well as possible with respect to a given cost functional $\mathcal J$.
In \cite{GanglLangerLaurainMeftahiSturm2015}, the same problem was addressed by means of a shape optimization method based on shape sensitivity analysis. In order to allow for a change of the topology in the course of the optimization process, it is beneficial to include topological senstivity information of the objective function into the optimization procedure.

The \textit{topological derivative} of a domain-dependent shape functional $\mathcal J = \mathcal J(\Omega)$ indicates whether a perturbation of the domain (i.e., an introduction of a hole) around a spatial point $x_0$ would lead to an increase or decrease of the objective functional. The idea of the topological derivative was first introduced for the compliance functional in linear elasticity in \cite{EschenauerKobelevSchumacher1994, Schumacher1995} in the framework of the bubble method, where classical shape optimization methods are combined with the repeated introduction of holes (so-called ``bubbles'') at optimal positions. The mathematical concept of the topological derivative was rigorously introduced in \cite{SokolowskiZochowski1999}, see also \cite{GarreauGuillaumeMasmoudi2001} for the case of linear elasticity.  Given an open set $\Omega\subset \mathbb R^d$ with $d$ the space dimension, and a fixed bounded, smooth domain $\omega$ containing the origin, the topological derivative of a shape functional $\mathcal J = \mathcal J(\Omega)$ at a spatial point $x_0$ is defined as the quantity $G(x_0)$ satisfying a topological asymptotic expansion of the form
\begin{align}
	\mathcal J(\Omega_{\varepsilon}) - \mathcal J(\Omega) = f(\varepsilon)\, G(x_0) + o(f(\varepsilon)), \label{topAsympExp_def1}
\end{align}
where $\Omega_{\varepsilon} = \Omega \setminus \overline{\omega}_{\varepsilon}$ with $\omega_{\varepsilon} = x_0 + \varepsilon \, \omega $ denotes the perturbed domain, and $f$ is a positive first order correction function that vanishes with $\varepsilon \rightarrow 0$. We remark that, in the case where $\mathcal J$ depends on the domain $\Omega_{\varepsilon}$ via the solution of a boundary value problem on $\Omega_{\varepsilon}$, boundary conditions also have to be specified on the boundary of the hole, i.e., on $\partial \omega_{\varepsilon}$. Then, the choice of the boundary conditions on this boundary has a great influence on the resulting formula for the topological derivative $G$. In \cite{SokolowskiZochowski1999}, the authors introduced the topological derivative concept with $f(\varepsilon)$ being the volume of the ball of radius $\varepsilon$ in $\mathbb R^d$, whereas in \cite{Masmoudi1998} the form \eqref{topAsympExp_def1} was used which also allowed to deal with Dirichlet boundary conditions on the boundary of the hole, see also \cite{NovotnySokolowski2013, CeaGarreauGuillaumeMasmoudi2000}.

Topological asymptotic expansions of the form \eqref{topAsympExp_def1} have been derived for many different problems constrained by linear PDEs. We refer the interested reader to \cite{CanelasNovotnyRoche2014, ChaabaneMasmoudiMeftahi2013, FeijooNovotnyPadraTaroco2002, MasmoudiPommierSamet2005, Amstutz2006, Amstutz2005, AmstutzNovotny2010, AmstutzNovotny2011, AmstutzNovotnyVangoethem2014} as well as the monograph \cite{NovotnySokolowski2013}. Besides the field of shape and topology optimization, topological derivatives are also used in applications from mathematical imaging, such as image segmentation \cite{Hintermueller2007} or electric impedance tomography \cite{HintermuellerLaurainNovotny2012, LarrabideFeijooNovotnyTaroco2008}, or other geometric inverse problems such as the detection of obstacles, of cracks or of impurities of a material, see e.g., \cite{ChaabaneMasmoudiMeftahi2013, Hackl2006} and the references therein.

In the context of magnetostatics, introducing a hole into a domain does not 
correspond to excluding this hole from the computational domain, but rather corresponds to the presence of an inclusion of a different material, namely air. Thus, in this scenario, both the perturbed and the unperturbed configurations live on the same domain $\Omega$, and only the material coefficient of the underlying PDE constraint is perturbed. Let $u_\varepsilon$ and $u_0$ denote the solutions to the perturbed and unperturbed state equation and $\mathcal J_{\varepsilon}$ and $\mathcal J_0$ the objective functionals defined on the perturbed and unperturbed configurations, respectively. Then, the asymptotic expansion corresponding to \eqref{topAsympExp_def1} reads
\begin{align}
	\mathcal J_{\varepsilon}(u_{\varepsilon}) - \mathcal J_0(u_0) = f(\varepsilon)\, G(x_0) + o(f(\varepsilon)), \label{def:topDerivative}
\end{align}
where, again, the function $f$ is positive and tends to zero with $\varepsilon$. The quantity $G(x_0)$ is then sometimes 
referred to as the configurational derivative of the shape functional $\mathcal J$ at point $x_0$, see \cite{NovotnySokolowski2013}. This sensitivity is analyzed for a class of linear PDE constraints in \cite{Amstutz2006}. We remark that, in the limit case where the material coefficient inside the inclusion tends to zero, the classical topological derivative defined by \eqref{topAsympExp_def1} with homogeneous Neumann boundary conditions on the boundary of the hole is recovered, see \cite[Remark 5.3]{NovotnySokolowski2013}. In our case, the function $f$ in \eqref{def:topDerivative} will be given by $f(\varepsilon) = \varepsilon^d$ with $d=2$ the space dimension.
Under a slight abuse of notation, we will refer to the configurational derivative defined by \eqref{def:topDerivative} as the topological derivative.

In this paper, we derive the topological derivative for a design optimization problem that is constrained by the quasilinear equation of two-dimensional magnetostatics. As opposed to the linear case, only a few problems constrained by nonlinear PDEs have been studied in the literature. We mention the paper \cite{NovotnyFeijooTarocoMasmoudiPadra2005} where the topological derivative is estimated for the $p$-Poisson problem and the papers \cite{Amstutz2006b} and \cite{MR2541192} for the topological asymptotic expansion in the case of a semilinear elliptic PDE constraint.
In the recent work \cite{AmstutzBonnafe2015} which is based on \cite{Bonnafe2013}, the authors considered a class of quasilinear PDEs and rigorously derived the topological derivative according to \eqref{def:topDerivative}, which consists of two terms: a first term that resembles the topological derivative in the linear case, and a second term which accounts for the nonlinearity of the problem.

The quasi-linear PDE we consider in this paper does not exactly fit the framework of \cite{Bonnafe2013}. However, it is very similar and we will follow the steps taken there in order to derive the topological derivative for the electromagnetic shape optimization problem described in Section \ref{sec:ProblemDescription}.

Large parts of this paper are following the lines of \cite{AmstutzBonnafe2015, Bonnafe2013}. Here, we want to give a brief overview over the main differences to the results obtained there.
The main technical difference of the considered problems can be seen from the definition of the perturbed operator $T_{\varepsilon}$ given in Section \ref{sec_pertStateEqn}. In this paper, we consider the perturbation of a nonlinear subdomain by an inclusion of linear material or the other way around,
\begin{align*}
	T_{\varepsilon}(x,W)= \begin{cases}
								\nu_0 W & \mbox{in } \omega_{\varepsilon},\\
								T(W) & \mbox{in } \holdAll \setminus \omega_{\varepsilon},
							\end{cases}  \quad \quad
	T_{\varepsilon}^{(2)}(x,W)= \begin{cases}
									T(W) & \mbox{in } \omega_{\varepsilon},\\
									\nu_0 W  & \mbox{in } \holdAll \setminus \omega_{\varepsilon},
								\end{cases}
\end{align*}
with a nonlinear operator $T:\mathbb R^2 \rightarrow \mathbb R^2$ 
whereas in \cite{AmstutzBonnafe2015, Bonnafe2013}, the authors consider the same nonlinear function multiplied by a different constant factor inside and outside the inclusion. In our notation, this would correspond to 
\begin{align*}
	T_{\varepsilon}(x,W)&= \begin{cases}
								\gamma_1 T_a(W) & \mbox{in } \omega_{\varepsilon},\\
								\gamma_0 T_a(W) & \mbox{in } \holdAll \setminus \omega_{\varepsilon},
							\end{cases} 
\end{align*}
where a different operator $T_a$ is used (a regularized version of the $p$-Laplace operator). On the one hand, many of the steps taken for the derivation of the topological derivative can be used analogously in our context. The function space setting and the majority of the estimations even simplify here since all involved quantities are defined in the Hilbert spaces $H^1(\holdAll)$ and $\mathcal H(\RN)$ rather than in the Sobolev space $W^{1,p}(\holdAll)$ or the corresponding weighted Sobolev space over $\RN$. On the other hand, especially the proof of Theorem \ref{theo:asymp_H}, which is based on Propositions \ref{prop:supersol1} and \ref{prop:subsol}, required some additional effort.

Furthermore, the work presented in this paper extends the results of \cite{AmstutzBonnafe2015, Bonnafe2013} in several directions. Our work is motivated by a concrete application from electrical engineering. Therefore, our focus is not only on the rigorous theoretical derivation of the correct formula for the topological derivative, but also on the practical applicability of this formula. In order to be able to use the derived formula for computational shape and topology optimization, we have to consider the following additional aspects:
\begin{itemize}
	\item We compute both sensitivities $G^{f \rightarrow air}$ (see Section \ref{sec:TopAsympExp_CaseI}) and $G^{air\rightarrow f}$ (see Section \ref{sec:CaseII}) in order to be able to apply a bi-directional optimization algorithms which is capable of both introducing and removing ferromagnetic material. Note that, in \cite{AmstutzBonnafe2015, Bonnafe2013}, the derivation of the topological derivative in the reverse scenario cannot be achieved by simply exchanging the values for $\gamma_0$ and $\gamma_1$ since the result that corresponds to our Theorem \ref{theo:asymp_H} assumes that $\gamma_1 < \gamma_0$.
	\item We derive explicit formulas for the matrices $\myTDMat$, $\myTDMat^{(2)}$, see \eqref{TDMat_CaseI} and \eqref{TDMat_CaseII}.
	\item It is a priori not clear, how the new term $J_2$ defined in \eqref{def_J2} and the corresponding term of $G^{air \rightarrow f}$ in \eqref{G2_final}, which account for the nonlinearity of the problem, can be computed numerically in an efficient way. In Section \ref{Sec:Numerics}, we show a way to efficiently evaluate these terms by precomputing values in an off-line stage and using interpolation during the optimization process.
\end{itemize}

The rest of this paper is organized as follows: In Section \ref{sec:ProblemDescription}, we introduce the problem from electrical engineering that serves as a motivation for our study. We collect some mathematical preliminaries of our problem in Section \ref{sec:Preliminaries} before deriving the topological asymptotic expansion for two different cases in Sections \ref{sec:TopAsympExp_CaseI} and \ref{sec:CaseII}. In Section \ref{sec:TopAsympExp_CaseI} we consider the case where an inclusion of air is introduced into a domain of ferromagnetic material, whereas Section \ref{sec:CaseII} deals with the reverse scenario. In Section \ref{sec_Appendix}, we derive the explicit formulas of the matrices $\myTDMat$, $\myTDMat^{(2)}$ appearing in the final formulas, and in Section \ref{Sec:Numerics}, the numerical evaluation of the derived formulas is considered. Finally, we show the application of an optimization algorithm based on the topological derivative to our model problem in Section \ref{sec:applicTDNL}.

\section{Problem description} \label{sec:ProblemDescription}
We consider the design optimization of an electric motor which consists of various components as depicted in Figure \ref{fig:elMotor}. Let $\holdAll \subset \mathbb R^2$ denote the whole computational domain and let $\Omega_f^{ref}$ be the ferromagnetic reference domain, which is the brown area in the left picture of Figure \ref{fig:elMotor}. We denote its complement by $\Omega_{air}^{ref}$, i.e., $\Omega_{air}^{ref} = \holdAll \setminus \overline{\Omega_{f}^{ref}}$. This subdomain also contains the coil areas $\Omega_c$, the magnet areas $\Omega_{ma}$ as well as the thin air gap region $\Omega_g$ between the rotor and the stator of the motor. Let $\Omega^d \subset \Omega_f^{ref}$ denote the design subdomain, which is the union of the highlighted regions in the right picture of Figure \ref{fig:elMotor}. We are interested in the optimal distribution of ferromagnetic material and air regions in $\Omega^d$ and denote the subdomain of $\Omega^d$ that is currently occupied with ferromagnetic material by $\Omega$ (the unknown set). 
For any given configuration of ferromagnetic material inside $\Omega^d$, the set of all points that are occupied with ferromagnetic material is then given by 
\begin{align}
	\overline{\Omega}_f := \overline{\left(\Omega_f^{ref} \setminus \overline{\Omega^d} \right)} \cup \overline{\Omega}. \label{def_Omega_f}
\end{align}
Then, introducing $\Omega_{air} = \holdAll \setminus \overline{\Omega_f}$, we always have that $\overline{\holdAll} = \overline{\Omega}_f \cup \overline{\Omega}_{air}$.

Our goal is to find a set $\Omega \subset \Omega^d$ such that a given domain-dependent shape functional $\mathcal J$ is minimized. In the case of electric motors, this objective function $\mathcal J$ is generally supported only in the air gap $\Omega_g \subset \Omega_{air}^{ref}$. Therefore, a perturbation of the material coefficient inside the design domain $\Omega^d$ will not directly affect the functional and we assume that the functional for the perturbed and the unperturbed configuration coincide, i.e., $\mathcal J_{\varepsilon} = \mathcal J_0 = \mathcal J$ in the expansion \eqref{def:topDerivative}. The functional depends on the configuration of the design subdomain $\Omega^d$ via the solution $u$ of the state equation, $\mathcal J = \mathcal J(u)$.

The optimization problem we consider reads as follows:
	\begin{align}
		\underset{\Omega \in \mathcal A}{\mbox{min } } \mathcal J(u) \label{minJ} 
	\end{align}
	\begin{align}
		\begin{aligned} \label{PDEconstraintUnperturbed}
		\mbox{s.t.}\quad -\mbox{div}(\nu_{\Omega}(x,|\nabla u|) \nabla u ) &= F &&\mbox{in } \holdAll, \\
		u &= 0 &&\mbox{on } \partial \holdAll, \\
		[u] &= 0 &&\mbox{on } \Gamma_f, \\
		[\nu_{\Omega} \nabla u \cdot n] &= 0 &&\mbox{on } \Gamma_f, \\
		\end{aligned}
	\end{align}
	where $\mathcal A$ denotes a set of admissible shapes, $\Gamma_f:=\partial \Omega_f$ denotes the interface between ferromagnetic subdomains and air regions, and $[\cdot]$ denotes the jump across the interface.
Here, the magnetic reluctivity $\nu_{\Omega}$ depends on the ferromagnetic subdomain $\Omega_f \subset \holdAll$, which is related to the design variable $\Omega$ by \eqref{def_Omega_f}, and is defined as
\begin{align} \label{def_nuOmega}
	\begin{aligned}
	\nu_{\Omega}(x,|\nabla u|) &= \chi_{\Omega_{f}}(x) \, \hat{\nu}(|\nabla u|) &+& \chi_{\Omega_{air}}(x) \, \nu_0, \\
    &= (\chi_{ \Omega_f^{ref} \setminus \Omega^d}(x) + \chi_{\Omega}(x) ) \, \hat{\nu}(|\nabla u|) &+& (\chi_{D \setminus \Omega_f^{ref}}(x) + \chi_{\Omega^d \setminus \Omega}(x)) \, \nu_0,
    \end{aligned}
\end{align}
where $\chi_S$ denotes the characteristic function of a set $S$ and $\Omega_{air}$ and $\Omega_f$ are as defined above. Furthermore, $\nu_0 = 10^7/(4 \pi)=const>0$ denotes the reluctivity of air and $\nuh$ is a nonlinear function, which, due to physical properties, satisfies the following assumption (cf. \cite{Pechstein2004}):
\begin{assumption} \label{assump_BH}
The function $\nuh: \mathbb R_0^+ \rightarrow \mathbb R_0^+$ is continuously differentiable and there exists $\numin>0$ such that we have for all $s \in \mathbb R_0^+$,
  \begin{subequations}\label{physProperties}
      \begin{align} 
              \numin \leq \nuh(s) &\leq \nu_0, \label{propNuBounded}\\
              \numinp \leq \left( \nuh(s) \, s \right)' &\leq \nu_0. \label{propNuPrimeBounded}
      \end{align}
  \end{subequations}
\end{assumption}

The right hand side $F \in H^{-1}(\holdAll)$ comprises the sources given by the permanent magnets and by the electric current induced in the coil regions of the motor, see Figure \ref{fig:elMotor}. For $\eta \in H^1(\holdAll)$, it is defined as
\begin{align} \label{eq_magnetostatic_weak_rhs}
	\langle F, \eta \rangle = \int_{\holdAll} M^{\perp} \cdot \nabla \eta + J_z \, \eta \, \mbox dx,
\end{align}
where $J_z$ represents the third component of the electric current that is impressed in the coils and vanishes outside the coil areas, and
$M^{\perp} = (-M_2, M_1)^T$ is the perpendicular of the permanent magnetization $M = (M_1, M_2)^T$ in the magnets, which likewise vanishes outside the magnet areas. 
The PDE constraint \eqref{PDEconstraintUnperturbed} can be rewritten in operator form as
\begin{align}  \label{eq_magnetostatic_weak}
	A_{\Omega}(u) = F
\end{align}
with the operator $A_{\Omega}: H_0^1(\holdAll) \rightarrow H^{-1}(\holdAll)$ defined by
\begin{align} \label{eq_magnetostatic_weak_lhs}
	\langle A_{\Omega}(u), \test \rangle = \int_D \nu_{\Omega}(x,|\nabla u|)\nabla u \cdot \nabla  \test \, \mbox dx
\end{align}
for $u, \test \in H_0^1(\holdAll)$ and $F$ as in \eqref{eq_magnetostatic_weak_rhs}. Existence and uniqueness of a solution to the boundary value problem \eqref{eq_magnetostatic_weak} can be shown by the theorem of Zarantonello \cite[Thrm. 25.B.]{Zeidler1990} since properties \eqref{propNuBounded} and \eqref{propNuPrimeBounded} yield the strong monotonicity and Lipschitz continuity of the operator $A_\Omega$, see e.g., \cite{Gangl2017, Heise1994, Pechstein2004}.

\begin{figure} 
	\begin{minipage}{7cm}
		\begin{tabular}{c}    
			\includegraphics[scale=0.4, trim=300 0 300 0, clip=true]{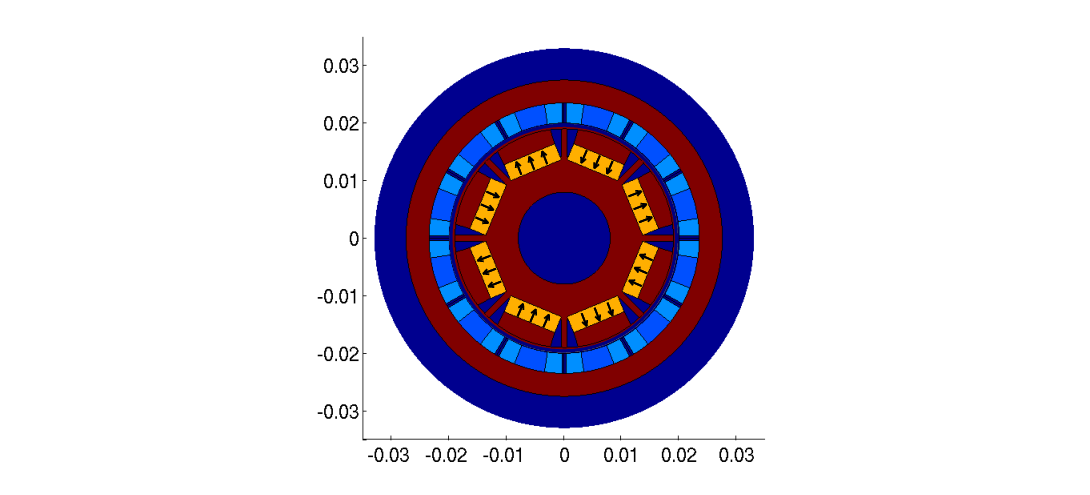}
		\end{tabular}
	\end{minipage}
	\hspace{1.2cm}
	\begin{minipage}{7cm}
		\begin{tabular}{c}    
			\hspace{-15mm}\includegraphics[scale=0.4, trim=300 0 300 0, clip=true]{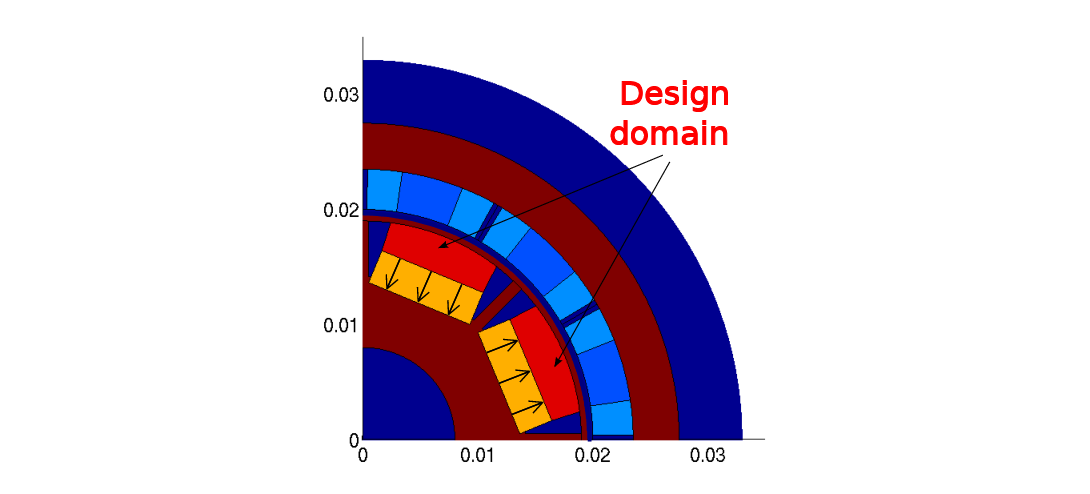}
		\end{tabular}
	\end{minipage}
	\caption{Left: Computational domain representing electric motor with different subdomains. Right: Zoom on upper left quarter with design regions $\Omega^d$ highlighted.}
	\label{fig:elMotor}	
\end{figure}


\section{Preliminaries} \label{sec:Preliminaries}

We aim at solving problem \eqref{minJ}--\eqref{PDEconstraintUnperturbed} by means of the topological derivative introduced in \eqref{def:topDerivative}. 
It is important to note that the topological derivative for introducing air in ferromagnetic material is different from that for introducing ferromagnetic material in a domain of air. Therefore, we distinguish between the following two cases:
\begin{enumerate}
	\item \textbf{Case I:} An \textit{inclusion of air} is introduced inside an area that is occupied with \textit{ferromagnetic material}, see Figure~\ref{fig:OmegaPertUnpert_caseI}.
	\item \textbf{Case II:} An \textit{inclusion of ferromagnetic material} is introduced inside an area that is occupied with \textit{air}, see Figure~\ref{fig:OmegaPertUnpert_caseII}.
\end{enumerate}
In order to distinguish these two sensitivities, we denote the topological derivative in Case I by $G^{f\rightarrow air}$ and in Case II by $G^{air\rightarrow f}$. It is important to have access to both these sensitivities for employing bidirectional optimization algorithms which are capable of both introducing and removing material at the most favorable positions. In \cite{Amstutz2006,GanglLanger2014} it is shown that, in the case of a linear state equation \eqref{PDEconstraintUnperturbed}, the two sensitivities $G^{f\rightarrow air}$ and $G^{air\rightarrow f}$ differ only by a constant factor. In the case introduced in Section \ref{sec:ProblemDescription} however, where the nonlinear material behavior of ferromagnetic material is accounted for, the two topological derivatives must be derived individually. We will rigorously derive $G^{f\rightarrow air}$ for Case I in Section~\ref{sec:TopAsympExp_CaseI} and comment on Case II in Section~\ref{sec:CaseII}.

Throughout this paper, for sake of more compact presentation, we will drop the differential $\mbox d x$ in the volume integrals whenever there is no danger of confusion.

\subsection{Notation} \label{subsec:notation}
For sake of better readability we introduce the operator $T: \RN \rightarrow \RN$ together with its Jacobian,
\begin{align}
	T(W) :=& \; \nuh(|W|)W \label{def_T} \\
	\DT(W) =& \begin{cases}
					\nuh(|W|)I + \frac{\nuh'(|W|)}{|W|}W \otimes W, &  W \neq (0,0)^\top \\
					\nuh(|W|) I&  W = (0,0)^\top ,
					\end{cases}\label{def_DT}
\end{align}
where $W\in \RN$, $I$ denotes the identity matrix in $\RN$ and $\otimes$ the outer product between two column vectors, $a \otimes b := a\, b^\top$, for $a,b\in \RN$. 
Note that $\DT$ is continuous also in $W=0$. Let further 
\begin{align} \label{def_T_Omega}
	T_{\Omega}(x,W) :&= \nu_{\Omega}(x,W)\,W = \chi_{\Omega_f}(x) \, T(W) + \chi_{D \setminus \Omega_f} (x) \,  \nu_0 W \\
	&= \left(\chi_{\Omfr \setminus \Omega^d}(x) +\chi_{\Omega}(x) \right) \,  T(W) + \left(\chi_{\holdAll \setminus \Omfr }(x) + \chi_{\Omega^d \setminus \Omega }(x)\right)\, \nu_0 W, \nonumber
\end{align}
such that $\langle A_{\Omega}(u), \test \rangle = \int_D T_{\Omega}(x,\nabla u) \cdot \nabla  \test $ for $u, \test \in H_0^1(\holdAll)$, and note that
\begin{align*}
	\DT_{\Omega}(x,W) = \chi_{\Omega_f}(x) \, \DT(W) + \chi_{D \setminus \Omega_f}(x) \,  \nu_0 I.
\end{align*}
The Fr\'echet derivative of the operator $A_{\Omega}: H_0^1(\holdAll) \rightarrow H^{-1}(\holdAll)$ is then given by
\begin{align}
	A_{\Omega}' : H_0^1(\holdAll) &\rightarrow \mathcal L( H_0^1(\holdAll), H^{-1}(\holdAll) ), \nonumber \\
	\langle A_{\Omega}'(u)  w, \test \rangle &= \int_{\holdAll} \DT_{\Omega}(x,\nabla u) \nabla w \cdot \nabla \test , \label{def_AOmegaPrime}
\end{align}
where $u,w, \test \in H_0^1(\holdAll)$. For $W = (w_1, w_2)^\top\in \RN$, $W\neq (0,0)^\top$, let $\theta_W$ be the angle between $W$ and the $x_1$-axis such that $W=|W|(\cos\theta_W,\sin\theta_W)$,
and denote $R_{\theta_W}$ the counter-clockwise rotation matrix around an angle $\theta_W$, i.e., $$R_{\theta_W} = \left[ \begin{array}{cc} \ccos \theta_W & -\ssin \theta_W \\ \ssin \theta_W & \ccos \theta_W \end{array} \right].$$ 
Denoting $\lambda_1(|W|) := \nuh(|W|)$ and $\lambda_2(|W|) := \nuh(|W|) + \nuh'(|W|)|W|$, it holds
	\begin{align} 	\label{DTW_eigenvalues}
	 \DT(W) = R_{\theta_W} \left[ \begin{array}{cc} \lambda_2(|W|) & 0 \\ 0 & \lambda_1(|W|) \end{array} \right] R_{\theta_W}^\top
	\end{align}
for all $W \in \RN$. 
Note that $\DT(W)$ is symmetric.

 For $V, W \in \RN$, we introduce the operator
\begin{align}
	S_W(V) := T(W+V) - T(W) - \DT(W)V. \label{def_S}
\end{align}
Note that, in the case of a linear state equation \eqref{PDEconstraintUnperturbed}, the operator $T$ is linear and thus $S_W(V)$ vanishes for all $V,W \in \RN$. The operator defined in \eqref{def_S} can be seen as a characterization of the nonlinearity of the problem.

\subsection{Simplified model problem} \label{sec_simplSetting}
In order to alleviate some calculations, we introduce a simplified model of the PDE constraint \eqref{PDEconstraintUnperturbed}. The model we introduce here, is meant for Case I. The analogous simplified model for Case II will be introduced in the beginning of Section \ref{sec:CaseII}.

The simplification consists in the fact that, in the unperturbed configuration, we assume the material coefficient $\nu$ to be homogeneous in the entire bounded domain $\holdAll$. In the notation of Section \ref{sec:ProblemDescription}, we assume that 
$\Omfr = \holdAll$
and, in the unperturbed case, 
$\Omega = \Omega^d$.
Then, the unperturbed state equation \eqref{PDEconstraintUnperturbed} simplifies to
\begin{align} \label{weakUnpert}
	&\mbox{Find } u_0 \in H^1_0(\holdAll) \mbox{ such that } \int_{\holdAll} T(\nabla u_0) \cdot \nabla \eta  = \langle F, \eta \rangle \quad \forall \eta \in H^1_0(\holdAll),
\end{align}
as can be seen from the definitions of the operator $A_{\Omega}$ \eqref{eq_magnetostatic_weak_lhs} and the reluctivity function $\nu_{\Omega}$ \eqref{def_nuOmega}, as well as the definition of the operator $T$ \eqref{def_T}. Here, $F\in H^{-1}(\holdAll)$ is as in \eqref{eq_magnetostatic_weak_rhs} and represents the sources due to the electric currents in the coil areas of the motor and the permanent magnetization in the magnets.

We will assume this simplified setting for the rest of this section and derive the formula for the topological derivative under these assumptions.
\begin{remark}
	The reason why we have to make this simplification will come clear in the proofs of Proposition \ref{proposition4412} and Lemma \ref{lemma455}. We remark that the topological derivative denotes the sensitivity of the objective function with respect to a perturbation inside an inclusion whose radius tends to zero. Therefore, the material coefficients ``far away'' from the point of perturbation, e.g., outside the design subdomain $\Omega^d$ when the point of perturbation is inside $\Omega^d$, should not influence the formula for the sensitivity and it is justified to use the same formula also for the realistic setting introduced in Section \ref{sec:ProblemDescription}. Note that, for all numerical computations, the realistic state equation \eqref{PDEconstraintUnperturbed} was solved.
\end{remark}
\subsection{Perturbed state equation} \label{sec_pertStateEqn}
We are interested in the sensitivity of the objective functional $\JJ$ with respect to a local perturbation of the material coefficient around a fixed point $x_0$ in the design subdomain $\Omega^d$. For that purpose, we introduce a perturbed version of the simplified state equation \eqref{weakUnpert}.

Let $\mbox{supp}(F)$ denote the support of the distribution $F \in H^{-1}(\holdAll)$. We assume that $\mbox{supp}(F)$ is compactly contained in $\holdAll$, $\mbox{supp}(F) \subset \subset \holdAll$, and that the design subdomain $\Omega^d$ is open and compactly contained in $\holdAll \setminus \mbox{supp}(F)$, $$\Omega^d \subset \subset D \setminus \mbox{supp}(F).$$
Let $x_0$ be a fixed point in $\Omega^d$. Let furthermore $\omega\subset \RN$ be a bounded open domain with $C^2$ boundary which contains the origin, and let $\omega_{\varepsilon} = x_0 + \varepsilon\, \omega$ represent the inclusion of different material in the physical domain. For simplicity and without loss of generality, we assume that $x_0 = (0,0)^\top$. Furthermore, let $0 < \rho < R$ and $\lambda \in (0,1]$ such that 
\begin{equation} \label{balls_rho_R}
	\lambda \, \omega \subset \subset B(0,\rho) \subset B(0,R) \subset \subset \holdAll \setminus \mbox{supp}(F).
\end{equation}
Note that such a choice of $\lambda$, $\rho$, $R$ is always possible if $x_0\in\Omega^d$.

Recall the notation introduced in Section \ref{sec:ProblemDescription}. In the perturbed configuration, the inclusion $\ome$ of radius $\varepsilon$ is occupied by air. Therefore, we have $\Omega = \Omega^d \setminus \ome$, and, according to \eqref{def_Omega_f}, $\Omega_f = \Omfr \setminus \Omega^d \cup \Omega = \holdAll \setminus \ome$ the set of points occupied by ferromagnetic material, see Figure~\ref{fig:OmegaPertUnpert_caseI}. Here we used that, in the simplified setting introduced in Section \ref{sec_simplSetting}, $\Omfr = \holdAll$.
We define the operator
\begin{align}
	T_{\varepsilon}(x,W) &:= \chi_{D\setminus \omega_{\varepsilon} }(x) T(W) + \chi_{\omega_{\varepsilon}}(x) \nu_0 W, \label{def_Teps_I} 
\end{align}
for $\varepsilon>0$, $x \in  \holdAll$ and $W \in \RN$ with its Jacobian given by
\begin{align*}
	\DT_{\varepsilon}(x, W) &= \chi_{D\setminus \omega_{\varepsilon} }(x) \DT(W) + \chi_{\omega_{\varepsilon}}(x) \nu_0 I.
\end{align*}
Note that, given the special setting introduced above, $ T_{\varepsilon}(x,W) = T_{\Omega^d \setminus \ome}(x,W)$ according to \eqref{def_T_Omega}.

Thus, in the simplified setting introduced in Section \ref{sec_simplSetting}, the perturbed state equation reads
\begin{align}\label{weakPert}
	&\mbox{Find } u_{\varepsilon} \in H^1_0(\holdAll) \mbox{ such that } \int_{\holdAll} T_{\varepsilon}(x,\nabla u_{\varepsilon})\cdot \nabla \eta  = \langle F,\eta \rangle \quad \forall  \eta \in H^1_0(\holdAll).
\end{align}
For $\varepsilon>0$ and $\ome = x_0 +\varepsilon \omega$ as in Section \ref{sec_pertStateEqn}, we define
\begin{align}
	S^{\varepsilon}_{W}(x, V) &:= \chi_{D \setminus \omega_{\varepsilon}}(x) S_W(V). \label{def_Seps_I}
\end{align}
Moreover, for $\varepsilon>0$, we define the scaled version of the domain $\holdAll$ as
\begin{align*}
	\holdAll / \varepsilon = \lbrace y = x / \varepsilon | x \in \holdAll \rbrace.
\end{align*}

\subsection{Expansion of cost functional} \label{sec:TopAsyExp_costFunc_caseI}
We assume that the functional to be minimized is of the following form: 
\begin{assumption} \label{assump_J}
	For $\varepsilon \geq 0$ small enough, let $\mathcal J:H^1_0(\holdAll) \rightarrow \mathbb R$ such that
	\begin{align}
		\JJ(\ue) = \JJ(u_0) + \langle \tilde G, \ue-u_0 \rangle + \delta_J \epsN + R(\varepsilon) \label{expansion_J}
	\end{align}
	where
	\begin{enumerate}
		\item $\tilde G$ denotes a bounded linear form on $H^1_0(\holdAll)$
		\item $\delta_J \in  \mathbb R$
		\item the remainder $R(\varepsilon)$ is
			of the form
				\begin{align}
					R(\varepsilon) = O \left( \int_{\holdAll \setminus B(0, \hat \alpha \varepsilon^{\tilde r} )} |\nabla(\ue-u_0)|^2 \right) \label{functional_remainder_b}
				\end{align}
				for a given $\hat \alpha >0$ and a given $\tilde r \in (0,1)$.
	\end{enumerate}
\end{assumption}

\subsection{Requirements} \label{sec:requirements}
In addition to Assumption \ref{assump_BH}, we have to make further assumptions on the nonlinear function $\nuh$ representing the magnetic reluctivity in the ferromagnetic subdomains.
\begin{assumption} \label{assump_c78}
	We assume that the nonlinear magnetic reluctivity function $\nuh$ satisfies the following:
	\begin{enumerate} 
		\item $\nuh \in C^3(\mathbb R_0^+)$. \label{nu_C3}
		\item There exists $\tilde c>0$ such that $\frac{|\nuh'(s)|}{s} \leq \tilde c$ for all $s\geq 0$. \label{nuhPrime0Zero}
		\item There exist non-negative constants $\cGtOld, \cHtOld, \tilde c', \tilde c''$ such that it holds
		\begin{align}
			\left\lvert \left( \nuh(s)s \right)'' \, \right\rvert &\leq \cGtOld, \quad \forall s \geq 0, \label{requirement7} \\
			\left\lvert \left(\nuh(s)s\right)''' \, \right\rvert &\leq \cHtOld, \quad \forall s \geq 0, \label{requirement8}\\
			| \nuh'(s)| &\leq \tilde c', \quad \, \forall s \geq 0,\label{bound_nup}\\
			| \nuh''(s)| &\leq \tilde c'', \quad \forall s \geq 0.\label{bound_nupp}
		\end{align}	
	\end{enumerate}
\end{assumption}
\begin{assumption} \label{assump_delta}
	Let $\delta_{\nuh} :=\underset{s>0}{\mbox{inf}} (\nuh'(s) s)/\nuh (s)$. We assume that
	\begin{align*}
	 	\delta_{\nuh} > \mbox{max} \, \left\lbrace \delta_{\nuh}^{\myP}, \delta_{\nuh}^{\myRsub} \right\rbrace 
	\end{align*}
	where $\delta_{\nuh}^{\myP} = -1/3$ and $\delta_{\nuh}^{\myRsub} =-\frac{(1+k_1)^2}{(1+k_1)^2+2}$ with $k_1=  (\numin-\nu_0)/\nu_0$.

\end{assumption}
Note that the first assumption implies that $\nuh$ is Lipschitz continuous on $[0, \infty)$ and we denote the Lipschitz constant by $L_{\nuh}$. Due to physical properties, the reluctivity function $\nuh$ is once continuously differentiable. However, in our asymptotic analysis, we will make use of derivatives of order up to three and thus assume $\nuh \in C^3(\mathbb R_0^+)$. This assumption is realistic in practice, since the function $\nuh$ is not known explicitly but only approximated from measured data by smooth functions, see \cite{PechsteinJuettler2006}.
The second point of Assumption \ref{assump_c78} does not automatically follow from physical properties, but it is satisfied for the (realistic) set of data we used for all of the numerical computations, see Figure~\ref{fig:nuHat}. Note that the second point of Assumption \ref{assump_c78} implies that $\nuh'(0) = 0$.

Assumption \ref{assump_delta} is needed to show Propositions {\ref{prop:supersol1} and \ref{prop:subsol} which will then yield the asymptotic behavior of the variation of the direct state at scale 1, see Theorem \ref{theo:asymp_H}. Due to the big contrast between maximum and minimum value of the magnetic reluctivity, see Figure~\ref{fig:nuHat}(a), the value of $\delta_{\nuh}^{\myRsub}<0$ is very close to zero. This means that, in order for $\nuh$ to fulfill Assumption \ref{assump_delta}, the function $\nuh$ would have to be almost monotone.
This would rule out a big class of reluctivity functions including the data used in the numerical experiments of this paper where $\delta_{\nuh } = -0.3091$.
However, we remark that Assumption \ref{assump_delta} is only a sufficient condition for the result of Theorem \ref{theo:asymp_H} and it may very well be possible to show the result with weaker assumptions on $\delta_{\nuh}$. The relaxation of Assumption \ref{assump_delta} is subject of future investigation.
\begin{figure} 
	\begin{tabular}{cccc}    
		\hspace{-6mm}\includegraphics[trim=10mm 68mm 10mm 68mm, clip,width=.25\textwidth]{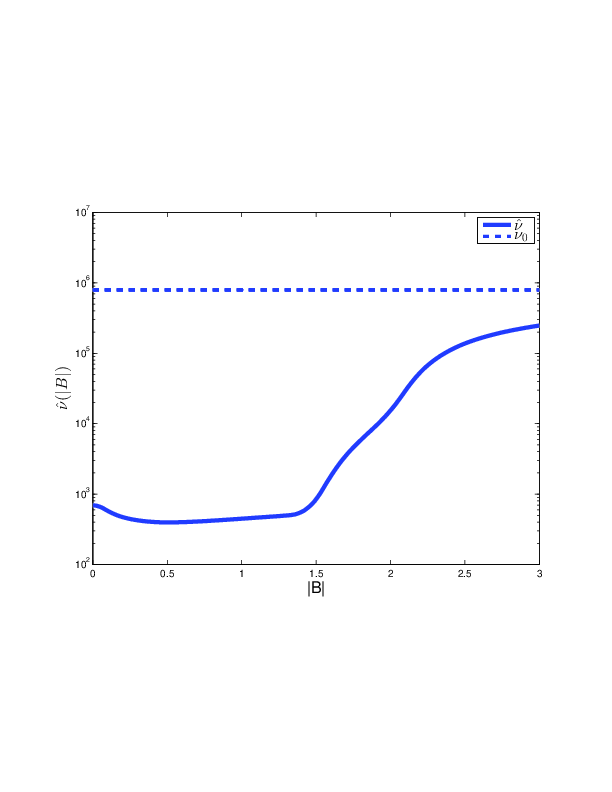} &\includegraphics[width=.25\textwidth]{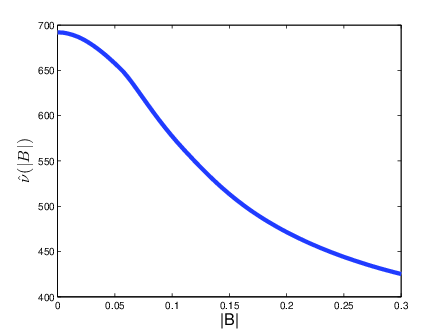} & \includegraphics[width=.25\textwidth]{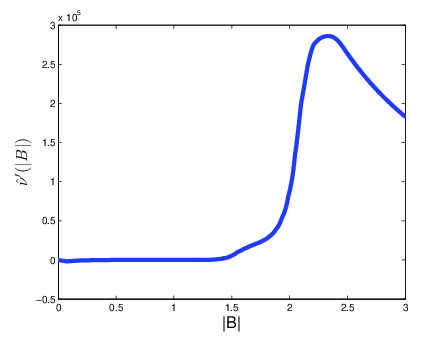} & \includegraphics[width=.25\textwidth]{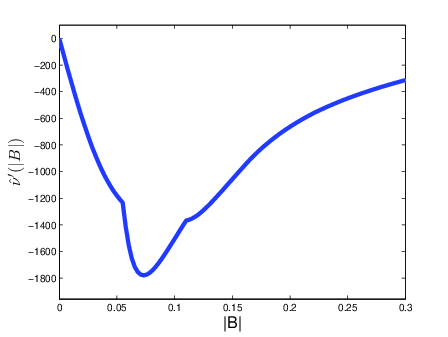}\\
			(a)&(b)&(c)&(d)
		\end{tabular}
	\caption[Magnetic reluctivity function in ferromagnetic subdomain]{(a) Magnetic reluctivity of ferromagnetic subdomain $\nuh$ in semilogarithmic scale. (b) Zoom of reluctivity $\nuh$. (c) First derivative of magnetic reluctivity, $\nuh'$. (d) Zoom of first derivative of magnetic reluctivity, $\nuh'$.}
	\label{fig:nuHat}	
\end{figure}

In Section \ref{sec_properties}, we will make use of the following estimates:
\begin{lemma}
	Let Assumption \ref{assump_c78} hold. Then there exist constants $\cGOld$, $\cHOld$ such that, for all $\varphi \in \RN$ the following estimates hold:
	\begin{align}
		4 | \nuh'(|\varphi|) | + |\nuh''(|\varphi|)| \, |\varphi| &\leq \cGOld, \label{est_c7} \\
		| \nuh'''(|\varphi|)| \, |\varphi| +9 |\nuh''(|\varphi|)| + 12 \frac{|\nuh'(|\varphi|)|}{|\varphi|} &\leq \cHOld. \label{est_c8}
	\end{align}
\end{lemma}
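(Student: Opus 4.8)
The plan is to reduce both inequalities to the pointwise bounds already granted by Assumption~\ref{assump_c78}, the key observation being that the quantities $\nuh''(s)\,s$ and $\nuh'''(s)\,s$ --- the only factors in \eqref{est_c7}--\eqref{est_c8} that are not controlled directly --- can be expressed through the derivatives of $s\mapsto \nuh(s)\,s$, which \emph{are} controlled. Writing $s=|\varphi|$ and differentiating twice and three times via the product rule gives
\begin{align}
	(\nuh(s)\,s)'' &= \nuh''(s)\,s + 2\,\nuh'(s), \\
	(\nuh(s)\,s)''' &= \nuh'''(s)\,s + 3\,\nuh''(s),
\end{align}
so that $\nuh''(s)\,s = (\nuh(s)\,s)'' - 2\,\nuh'(s)$ and $\nuh'''(s)\,s = (\nuh(s)\,s)''' - 3\,\nuh''(s)$.

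For \eqref{est_c7}, I would use the first identity together with \eqref{requirement7} and \eqref{bound_nup} to get $|\nuh''(s)|\,s \leq |(\nuh(s)s)''| + 2|\nuh'(s)| \leq \cGtOld + 2\tilde c'$, and then add $4|\nuh'(s)|\leq 4\tilde c'$, yielding $4|\nuh'(s)| + |\nuh''(s)|\,s \leq \cGtOld + 6\tilde c'$; this fixes $\cGOld := \cGtOld + 6\tilde c'$. For \eqref{est_c8} I would argue analogously: the second identity with \eqref{requirement8} and \eqref{bound_nupp} gives $|\nuh'''(s)|\,s \leq \cHtOld + 3\tilde c''$, the pointwise bound \eqref{bound_nupp} gives $9|\nuh''(s)|\leq 9\tilde c''$, and point~\ref{nuhPrime0Zero} of Assumption~\ref{assump_c78} gives $12\,|\nuh'(s)|/s \leq 12\tilde c$. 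Summing the three contributions produces $|\nuh'''(s)|\,s + 9|\nuh''(s)| + 12|\nuh'(s)|/s \leq \cHtOld + 12\tilde c'' + 12\tilde c =: \cHOld$.

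The only delicate point is the meaning of the quotient $|\nuh'(s)|/s$ at $s=0$ (that is, at $\varphi=0$), where it is formally of the type $0/0$. Since point~\ref{nuhPrime0Zero} of Assumption~\ref{assump_c78} forces $\nuh'(0)=0$ and since $\nuh\in C^3(\mathbb R_0^+)$, a Taylor expansion gives $\nuh'(s)/s \to \nuh''(0)$ as $s\to 0^+$, so the quotient extends continuously to $s=0$ and the bound $|\nuh'(s)|/s\leq \tilde c$ holds uniformly, limit included. With this understood, both estimates hold for all $\varphi\in\RN$. I do not expect a genuine obstacle here: the lemma is essentially a bookkeeping step that repackages the hypotheses of Assumption~\ref{assump_c78} into the precise linear combinations of $\nuh'$, $\nuh''$, $\nuh'''$ that the later estimates in Section~\ref{sec_properties} will invoke.
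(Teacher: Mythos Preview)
Your proposal is correct and follows essentially the same approach as the paper: both arguments use the product-rule identities $(\nuh(s)s)'' = \nuh''(s)s + 2\nuh'(s)$ and $(\nuh(s)s)''' = \nuh'''(s)s + 3\nuh''(s)$ together with the bounds \eqref{requirement7}--\eqref{bound_nupp} and Assumption~\ref{assump_c78}.\ref{nuhPrime0Zero}, arriving at the very same constants $\cGOld = \cGtOld + 6\tilde c'$ and $\cHOld = \cHtOld + 12\tilde c'' + 12\tilde c$. Your additional remark on the continuous extension of $|\nuh'(s)|/s$ at $s=0$ is a welcome clarification that the paper leaves implicit.
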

\begin{proof}
	Estimate \eqref{est_c7} can be easily seen using \eqref{requirement7} and \eqref{bound_nup}:
	\begin{align*}
		4 | \nuh'(|\varphi|) | + \left|\nuh''(|\varphi|)\right| \, |\varphi| &=  4 | \nuh'(|\varphi|) | + \left|\nuh''(|\varphi|) \, |\varphi| \, \right|\\
		& \leq  4 | \nuh'(|\varphi|) | + \left| \nuh''(|\varphi|) \, |\varphi| \, + 2 \nuh'(|\varphi|)\right|  + 2 |\nuh'(|\varphi|)|\\
		&\leq \cGtOld + 6 \tilde c' =: \cGOld.
	\end{align*}
	Similarly, from \eqref{requirement8} and \eqref{bound_nupp}, it follows that
	\begin{align*}
		| \nuh'''(|\varphi|)| \, |\varphi| &= \left| \nuh'''(|\varphi|) \, |\varphi| \, \right|\\
			& \leq \left| \nuh'''(|\varphi|) \, |\varphi| +3 \nuh''(|\varphi|)\right| + 3 \left|\nuh''(|\varphi|) \right|\\
			&\leq \cHtOld + 3 \tilde c'',
	\end{align*}
	which yields estimate \eqref{est_c8} with $\cHOld := \cHtOld + 12 \tilde c'' + 12 \tilde c$ by means of estimate \eqref{bound_nupp} and Assumption \ref{assump_c78}.\ref{nuhPrime0Zero}.
\end{proof}

\subsection{Properties} \label{sec_properties}
Given the physical properties of Assumption \ref{assump_BH} as well as the additional requirements of Section \ref{sec:requirements}, we can show the relations of Lemmas \ref{lemmaProperties} and \ref{lemmaProperties2}, which we will make use of throughout the next section.
\begin{lemma} \label{lemmaProperties}
	Let Assumption \ref{assump_BH} hold. Then, for the mapping $T: \RN \rightarrow \RN$ given in \eqref{def_T}, the following properties hold:
	\begin{enumerate}
		\item For all $\varphi \in \RN$, we have that
			\begin{equation*}
				\numin|\varphi| \leq |T(\varphi)| \leq \nu_0 |\varphi|.
			\end{equation*}
		\item There exist $0<\cDuOld \leq \cDoOld$ such that 
			\begin{equation}
				\cDuOld |\psi|^2 \leq \psi^\top \DT(\varphi) \psi \leq \cDoOld |\psi|^2 \quad \forall \varphi,\psi \in \RN. \label{prop4}
			\end{equation}
		\item There exists $\cEOld>0$ such that
			\begin{equation}
				(T(\varphi + \psi) - T(\varphi) ) \cdot \psi \geq \cEOld |\psi|^2 \quad \forall \varphi, \psi \in \RN. \label{prop5}
			\end{equation}
		\item There exists a Lipschitz constant $\cFOld>0$ such that
			\begin{equation}
				| T(\varphi+\psi) - T(\varphi)| \leq \cFOld |\psi|\quad \forall \varphi, \psi \in \RN.  \label{prop6}
			\end{equation}
	\end{enumerate}
\end{lemma}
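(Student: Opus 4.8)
The plan is to handle the four properties in a dependent order: the first two are direct pointwise estimates on $T$ and $\DT$, and the last two then follow by writing $T(\varphi+\psi)-T(\varphi)$ as an integral of $\DT$ along the segment $[\varphi,\varphi+\psi]$ and inserting the bounds from the second property. Property~1 is the easiest: from the definition \eqref{def_T} one has $|T(\varphi)| = \nuh(|\varphi|)\,|\varphi|$, and inserting the two-sided bound $\numin \le \nuh(|\varphi|) \le \nu_0$ from \eqref{propNuBounded} immediately gives $\numin|\varphi| \le |T(\varphi)| \le \nu_0|\varphi|$.

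For property~2 the key ingredient is the spectral representation \eqref{DTW_eigenvalues}, which exhibits $\DT(\varphi)$ as a symmetric matrix with eigenvalues $\lambda_1(|\varphi|)=\nuh(|\varphi|)$ and $\lambda_2(|\varphi|)=\nuh(|\varphi|)+\nuh'(|\varphi|)|\varphi| = (\nuh(s)s)'\big|_{s=|\varphi|}$. By \eqref{propNuBounded} and \eqref{propNuPrimeBounded} both eigenvalues lie in $[\numin,\nu_0]$, so the Rayleigh-quotient estimate for symmetric matrices yields $\numin|\psi|^2 \le \psi^\top\DT(\varphi)\psi \le \nu_0|\psi|^2$ for all $\varphi,\psi$; hence one may take $\cDuOld=\numin$ and $\cDoOld=\nu_0$. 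The degenerate case $\varphi=0$ is covered directly, since there $\DT(0)=\nuh(0)I$ with $\nuh(0)\in[\numin,\nu_0]$.

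For properties~3 and~4 I would first record that $T\in C^1(\RN)$, so that the fundamental theorem of calculus applies along $s\mapsto\varphi+s\psi$ and gives $T(\varphi+\psi)-T(\varphi)=\int_0^1 \DT(\varphi+s\psi)\psi\,\mathrm ds$. Pairing this identity with $\psi$ turns the left-hand side of \eqref{prop5} into $\int_0^1 \psi^\top\DT(\varphi+s\psi)\psi\,\mathrm ds$, and the lower bound of property~2 gives $(T(\varphi+\psi)-T(\varphi))\cdot\psi \ge \numin|\psi|^2$, i.e.\ property~3 with $\cEOld=\numin$. Taking norms in the same integral identity and using the operator-norm bound $\|\DT(\cdot)\|_2\le\nu_0$ (which follows from property~2, as $\DT$ is symmetric with eigenvalues in $[\numin,\nu_0]$) yields $|T(\varphi+\psi)-T(\varphi)|\le\nu_0|\psi|$, i.e.\ property~4 with $\cFOld=\nu_0$.

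The \emph{only} point that requires care is the behaviour at the origin, where the formula for $\DT$ in \eqref{def_DT} carries an apparent $1/|\varphi|$ singularity through the term $\tfrac{\nuh'(|\varphi|)}{|\varphi|}\varphi\otimes\varphi$. I expect this to be the sole technical subtlety, and it is resolved by observing that $\nuh\in C^1$ forces $\nuh'(s)\,s\to 0$ as $s\to 0$, so that $\DT$ extends continuously by $\DT(0)=\nuh(0)I$ and $T$ is genuinely $C^1$ on all of $\RN$. Once this continuity is in hand, the integral representation used for properties~3 and~4 is justified with no further difficulty, and all four estimates reduce to Assumption~\ref{assump_BH}.
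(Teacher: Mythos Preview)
Your proof is correct and follows essentially the same approach as the paper: property~1 directly from \eqref{propNuBounded}, property~2 from the eigenvalue bounds via \eqref{propNuBounded} and \eqref{propNuPrimeBounded} with $\cDuOld=\numin$, $\cDoOld=\nu_0$, and properties~3 and~4 from the integral representation $T(\varphi+\psi)-T(\varphi)=\int_0^1 \DT(\varphi+t\psi)\psi\,\mathrm dt$ combined with property~2, yielding $\cEOld=\numin$ and $\cFOld=\nu_0$. Your explicit discussion of the continuity of $\DT$ at the origin is a nice clarification that the paper handles more tersely (it simply notes after \eqref{def_DT} that $\DT$ is continuous at $0$), but the overall argument is the same.
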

\begin{proof}
	\begin{enumerate}
		\item By definition of the operator $T$ \eqref{def_T} and property \eqref{propNuBounded}, we have
			\begin{equation*}
				\numin |\varphi| \leq |T(\varphi)| = \nuh(|\varphi|)|\varphi| \leq \nu_0 |\varphi|.
			\end{equation*}
		\item Let $\varphi = (\varphi_1, \varphi_2)^\top \in \RN$. The eigenvalues of the $2 \times 2$ matrix $\DT(\varphi)$ \eqref{def_DT} are given by $\lambda_1 = \nuh(|\varphi|)$, $\lambda_2 = \nuh(|\varphi|) + \nuh'(|\varphi|)|\varphi|$. Properties \eqref{propNuBounded} and \eqref{propNuPrimeBounded} yield the claimed result with $\cDuOld = \numin$ and $\cDoOld = \nu_0$.
		\item Property \eqref{prop5} can be seen in the same way. It holds that
			\begin{equation*}
				(T(\varphi + \psi) - T(\varphi))\cdot \psi = \int_0^1 \psi^\top \DT(\varphi + t \psi) \, \psi \, \mbox d t \geq \cDuOld |\psi|^2,
			\end{equation*}
			which yields that \eqref{prop5} holds with $\cEOld = \cDuOld$.
		\item In a similar way, we obtain \eqref{prop6} with $\cFOld=\nu_0$,
			\begin{align*}
				| T(\varphi+\psi) - T(\varphi)| &= \left\lvert \int_0^1 \DT(\varphi + t \psi) \psi \mbox d t \right \rvert \leq \int_0^1 \left\lvert  \DT(\varphi + t \psi) \psi \right \rvert\mbox d t \\
				&\leq \int_0^1 \mbox{max}\, \lbrace \lambda_1(\varphi + t \psi), \lambda_2(\varphi + t \psi) \rbrace|\psi| \mbox dt \leq \nu_0 |\psi|.
			\end{align*}
			Here, we used the notation $\lambda_1(W)$, $\lambda_2(W)$ for the two eigenvalues of the matrix $DT(W)$ given in \eqref{def_DT} as well as \eqref{propNuBounded} and \eqref{propNuPrimeBounded}.
	\end{enumerate}
\end{proof} 

\begin{lemma} \label{lemmaProperties2}
	Let Assumption \ref{assump_c78} hold. Then, the mapping $T:\RN \rightarrow \RN, W \mapsto \nuh(|W|)W$ given in \eqref{def_T} has the following properties:
	\begin{enumerate}
	 	\item $T \in C^{3}(\RN)$. \label{lem:DTLipschitz}
		\item For the constant $\cGOld\geq0$ in \eqref{est_c7}, it holds
			\begin{equation}
				| S_{\varphi}(\psi_2)- S_{\varphi}(\psi_1)| \leq \cGOld |\psi_2 - \psi_1|(|\psi_1| + |\psi_2|) \quad \forall \varphi, \psi_1, \psi_2 \in \RN. \label{prop7}
			\end{equation}
		\item For the constant $\cHOld\geq0$ in \eqref{est_c8}, it holds
			\begin{equation}
				| S_{\varphi_2}(\psi)- S_{\varphi_1}(\psi)| \leq \cHOld |\varphi_2 - \varphi_1||\psi|^2  \quad \forall \varphi_1, \varphi_2 , \psi \in \RN. \label{prop8}
			\end{equation}
	\end{enumerate}
\end{lemma}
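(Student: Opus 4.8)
The plan is to reduce all three assertions to pointwise operator-norm bounds on the derivative tensors of $T$. Writing $T(W)=\nuh(|W|)W$ and differentiating, one obtains explicit expressions for $\DT(W)$, $\DtT(W)$ and $\mathrm{D}^3T(W)$ in terms of $\nuh,\nuh',\nuh'',\nuh'''$ evaluated at $|W|$ and of the unit vector $n=W/|W|$. The role of the estimates \eqref{est_c7} and \eqref{est_c8} is precisely to control the induced norms: collecting terms, every summand of $\DtT(W)[a,b]$ is a product of one of the scalars $\nuh''(|W|)|W|$ or $\nuh'(|W|)$ with factors drawn from $n\cdot a,\ n\cdot b,\ a\cdot b,\ a,\ b,\ n$, whence $|\DtT(W)[a,b]|\le(|\nuh''(|W|)|\,|W|+4|\nuh'(|W|)|)|a||b|\le\cGOld|a||b|$, and analogously $\|\mathrm{D}^3T(W)\|\le\cHOld$ with the coefficients $1,9,12$ of \eqref{est_c8}. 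Once $\|\DtT\|\le\cGOld$ and $\|\mathrm{D}^3T\|\le\cHOld$ are in hand, parts 2 and 3 follow from Taylor's theorem.

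For part 1, on $\RN\setminus\{(0,0)^\top\}$ the map $W\mapsto|W|$ is $C^\infty$ and $\nuh\in C^3$, so $T$ is $C^3$ there. The only issue is continuity of the derivatives up to order three at the origin. The derivative formulas contain singular prefactors $\nuh'(|W|)/|W|$, $(\nuh''(|W|)|W|-\nuh'(|W|))/|W|^2$, and higher analogues. The decisive input is that Assumption \ref{assump_c78} forces $\nuh'(0)=0$, so a third-order Taylor expansion of $\nuh'$ at $0$ renders each prefactor bounded with a limit at the origin, the apparent $1/|W|$ and $1/|W|^2$ singularities cancelling exactly because $\nuh$ has three continuous derivatives. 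This cancellation is the main technical obstacle; it is a routine but tedious Taylor computation, after which $T\in C^3(\RN)$ follows.

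For part 2, fix $\varphi$ and regard $\psi\mapsto S_\varphi(\psi)=T(\varphi+\psi)-T(\varphi)-\DT(\varphi)\psi$, whose differential in $\psi$ is $\DT(\varphi+\psi)-\DT(\varphi)$. Hence
\begin{align*}
S_\varphi(\psi_2)-S_\varphi(\psi_1)=\int_0^1\left[\DT(\varphi+\psi_1+t(\psi_2-\psi_1))-\DT(\varphi)\right](\psi_2-\psi_1)\,\mbox{d}t.
\end{align*}
Since $\|\DtT\|\le\cGOld$, the map $\DT$ is $\cGOld$-Lipschitz, so the bracketed matrix has norm at most $\cGOld\,|\psi_1+t(\psi_2-\psi_1)|\le\cGOld(|\psi_1|+|\psi_2|)$; integrating over $t$ gives \eqref{prop7}.

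For part 3, fix $\psi$ and regard $\varphi\mapsto S_\varphi(\psi)$, whose differential in $\varphi$ equals $\DT(\varphi+\psi)-\DT(\varphi)-\DtT(\varphi)[\,\cdot\,,\psi]$, that is, the second-order Taylor remainder of $\DT$ at $\varphi$ in direction $\psi$. The integral form of this remainder is $\int_0^1(1-s)\,\mathrm{D}^3T(\varphi+s\psi)[\psi,\psi]\,\mbox{d}s$, whose norm is at most $\tfrac12\cHOld|\psi|^2$ by the bound on $\mathrm{D}^3T$. Integrating this differential along the segment from $\varphi_1$ to $\varphi_2$ then yields
\begin{align*}
|S_{\varphi_2}(\psi)-S_{\varphi_1}(\psi)|\le\tfrac12\cHOld|\psi|^2|\varphi_2-\varphi_1|\le\cHOld|\psi|^2|\varphi_2-\varphi_1|,
\end{align*}
which is \eqref{prop8}. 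Throughout, the genuine difficulty is the origin regularity of part 1; parts 2 and 3 are immediate once the tensor bounds and the $C^3$ property are established.
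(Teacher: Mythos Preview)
Your proposal is correct and follows essentially the same approach as the paper: both proofs establish the pointwise operator-norm bounds $\|\DtT(W)\|\le\cGOld$ and $\|\mathrm{D}^3T(W)\|\le\cHOld$ from the explicit derivative formulas (yielding exactly the coefficients $4,1$ and $1,9,12$ in \eqref{est_c7}, \eqref{est_c8}), then deduce parts 2 and 3 by integral Taylor representations of $S_\varphi(\psi_2)-S_\varphi(\psi_1)$ and $S_{\varphi_2}(\psi)-S_{\varphi_1}(\psi)$. The only stylistic difference is that the paper writes out the iterated integrals of $\DtT$ and $\mathrm{D}^3T$ explicitly, whereas you phrase the same content as ``$\DT$ is $\cGOld$-Lipschitz'' and ``second-order Taylor remainder of $\DT$''; for part 1 the paper computes the Fr\'echet derivatives at the origin directly and checks continuity using $\lim_{t\to 0}\nuh'(t)/t=\nuh''(0)$, which is the concrete version of the cancellation you describe.
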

\begin{proof}
	\begin{enumerate}
		\item We consider the first, second and third derivative of $T$:
		\begin{itemize}
		 \item Recall the Jacobian of the mapping $T$ given in \eqref{def_DT},
			\begin{align*}
				\DT(\varphi) (\psi) = \begin{cases}
					\nuh(|\varphi|)\psi + \frac{\nuh'(|\varphi|)}{|\varphi|}(\varphi \otimes \varphi) \psi &  \varphi \neq (0,0)^\top, \\
					\nuh(|\varphi|) \psi&  \varphi = (0,0)^\top,
				\end{cases}
			\end{align*}
			for $\psi \in \RN$, and note that $\DT$ is continuous also in $\varphi=0$ due to
			\begin{align*}
				\underset{|\varphi|\rightarrow 0}{\mbox{lim} }\, | \DT(\varphi)(\psi) - \DT(0)(\psi) | &\leq \underset{|\varphi|\rightarrow 0}{\mbox{lim} }\,\left( | (\nuh(|\varphi|)-\nuh(0)) \psi |
				+ \left| \frac{\nuh'(|\varphi|)}{|\varphi|} (\varphi \otimes \varphi) \psi \right| 
				\right)\\
				&\leq \underset{|\varphi|\rightarrow 0}{\mbox{lim} }\,\left( | (\nuh(|\varphi|)-\nuh(0)) | \, |\psi|
				+ \left | \nuh'(|\varphi|)|\varphi| \, |\psi| \right| \right) = 0,
			\end{align*}
			because of \eqref{bound_nup} and the continuity of $\nuh$.
			\item
			For $\DtT$, we get for all $\varphi \neq 0$ and  all $\psi,\eta \in \RN$,
			\begin{align}
				\begin{aligned}
				\mathrm{D}^2 T(\varphi)(\psi,\eta) 
				=&\frac{\nuh'(|\varphi|)}{|\varphi|} \left( \varphi \otimes \eta  + (\varphi \cdot \eta) I + \eta \otimes \varphi \right) \psi\\
				&+ \left( \frac{\nuh''(|\varphi|)}{|\varphi|^2} - \frac{\nuh'(|\varphi|)}{|\varphi|^3} \right) (\varphi \cdot \eta)(\varphi \otimes \varphi) \psi. \label{D2T}
				\end{aligned}
			\end{align}
			In the point $\varphi =0$, the Fr\'echet derivative of $\DT$ is given by~${\mathrm{D}^2 T(0)(\psi,\eta) =0}$, which can be seen as follows:
			\begin{align*}
				\underset{| \eta| \rightarrow 0}{\mbox{lim}}\, \frac{| \DT(\eta)(\psi) - \DT(0)(\psi) - 0 | }{|\eta|} 
				=& \underset{| \eta| \rightarrow 0}{\mbox{lim}}\,\left| \frac{\nuh(|\eta|) - \nuh(0)}{|\eta|} \psi + \nuh'(|\eta|) \left(\frac{\eta}{|\eta|} \otimes \frac{\eta}{|\eta|}\right) \, \psi \right|\\
				\leq& \, 2\, |\nuh'(0)| |\psi| = 0,
			\end{align*}
			since $\nuh'(0) =0$ due to Assumption \ref{assump_c78}.\ref{nuhPrime0Zero}. Thus, we have, $\mathrm{D}^2 T(0)(\psi,\eta) = 0$.
			Also here, we can see the continuity in $\varphi = 0$ as, for any $\psi, \eta \in \RN$, it holds
			\begin{align*}
				\underset{|\varphi|\rightarrow 0}{\mbox{lim} }&\, | \mathrm{D}^2 T(\varphi)(\psi,\eta) - \DtT(0)(\psi, \eta) | \\
				=& \underset{|\varphi|\rightarrow 0}{\mbox{lim} }\, \left[ \nuh'(|\varphi|) \left( \frac{\varphi}{|\varphi|} \otimes \eta  + \left(\frac{\varphi}{|\varphi|} \cdot \eta\right) I + \eta \otimes \frac{\varphi}{|\varphi|} -  \left( \frac{\varphi}{|\varphi|} \cdot \eta\right)\left( \frac{\varphi}{|\varphi|} \otimes  \frac{\varphi}{|\varphi|} \right) \right) \right. \\
				& \qquad \quad + \left. \nuh''(|\varphi|) (\varphi \cdot \eta) \left(\frac{\varphi}{|\varphi|} \otimes \frac{\varphi}{|\varphi|} \right)\right] \psi = \, 0,
			\end{align*}
			where, we used that $\varphi /|\varphi|$ is a unit vector and, again, that $\nuh'(0)=0$ due to Assumption~\ref{assump_c78}.\ref{nuhPrime0Zero}, as well as \eqref{bound_nupp}.
			
			\item
			By differentiating \eqref{D2T}, we obtain for all $\varphi, \psi,\eta, \xi \in \RN$ with $\varphi\neq0$ that
			\begin{align} \label{def_D3T}
				\begin{aligned}
				\mathrm{D}^3T&(\varphi)(\psi, \eta, \xi)=\frac{1}{|\varphi|^2} \left( \nuh''(|\varphi|) - \frac{\nuh'(|\varphi|)}{|\varphi|} \right) \varphi \cdot \xi \left[ (\varphi\cdot \eta) \psi + (\varphi\cdot \psi) \eta + (\psi\cdot \eta) \varphi \right] \\
				&+ \frac{\nuh'(|\varphi|)}{|\varphi|} \left[ (\xi \cdot \eta) \psi + (\xi \cdot \psi) \eta +(\psi \cdot \eta) \xi \right]\\
				&+\frac{1}{|\varphi|^2} \left( \nuh''(|\varphi|) - \frac{\nuh'(|\varphi|)}{|\varphi|} \right) \left[ (\xi \cdot \eta)(\varphi \cdot \psi) \varphi + (\varphi \cdot \eta)(\xi \cdot \psi) \varphi + (\varphi \cdot \eta)(\varphi \cdot \psi) \xi \right] \\
				&+ \left( \frac{\nuh'''(|\varphi|)}{|\varphi|^3} - 3 \frac{\nuh''(|\varphi|)}{|\varphi|^4} + 3 \frac{\nuh'(|\varphi|)}{|\varphi|^5} \right) (\varphi \cdot \xi) (\varphi \cdot \eta)( \varphi \cdot \psi) \varphi.
				\end{aligned}
			\end{align}
			We show that, under Assumption \ref{assump_c78}.\ref{nuhPrime0Zero}, the Fr\'echet derivative of $\mathrm{D}^2 T$ at the point $\varphi = 0$ is given by
			\begin{align}
				\mathrm{D}^3 T(0)(\psi, \eta,\xi) = \nu''(0) \left( \xi \otimes \eta  + (\xi \cdot \eta) I + \eta \otimes \xi \right) \psi. \label{def_D3T0}
			\end{align}
			Exploiting that $\mathrm{D}^2 T(0)(\psi,\eta) = 0$, we get
			\begin{align*}	
				\underset{|\xi| \rightarrow 0}{\mbox{lim}}&\, \frac{|\mathrm{D}^2 T(\xi)(\psi,\eta) - \mathrm{D}^2 T(0)(\psi,\eta) - \mathrm{D}^3 T(0)(\psi, \eta,\xi)|}{|\xi|} \\
				=&\underset{|\xi| \rightarrow 0}{\mbox{lim}}\, \left| \frac{\nuh'(|\xi|)}{|\xi|} \left( \frac{\xi}{|\xi|} \otimes \eta  + (\frac{\xi}{|\xi|} \cdot \eta) I + \eta \otimes \frac{\xi}{|\xi|} \right) \psi \right. \\
				&\qquad \; +  \left( \nuh''(|\xi|) - \frac{\nuh'(|\xi|)}{|\xi|} \right) (\frac{\xi}{|\xi|} \cdot \eta)(\frac{\xi}{|\xi|} \otimes \frac{\xi}{|\xi|}) \psi\\
				& \qquad \; - \left. \nu''(0) \left( \frac{\xi}{|\xi|} \otimes \eta  + (\frac{\xi}{|\xi|} \cdot \eta) I + \eta \otimes \frac{\xi}{|\xi|} \right) \psi \right|.
			\end{align*}
			Noting that, under Assumption \ref{assump_c78}.\ref{nuhPrime0Zero}, we have $\nuh'(0) = 0$ and thus $$\underset{t \rightarrow 0}{\mbox{lim}} \, \nu'(t) / t  = \underset{t \rightarrow 0}{\mbox{lim}} \, (\nu'(t) - \nu'(0)) / (t- 0)  = \nu''(0),$$
			we see that the above expression vanishes, which proves the form \eqref{def_D3T0}.
			The continuity of $\mathrm{D}^3 T(\varphi)(\psi,\eta,\xi)$ is clear for $\varphi\neq 0$ and can be seen for the point $\varphi =0 $ noting that $\underset{t \rightarrow 0}{\mbox{lim}} \, \nu'(t) / t = \nu''(0)$ which finishes the proof of statement \ref{lem:DTLipschitz} of Lemma~\ref{lemmaProperties2}.
		\end{itemize}

		\item We follow the lines of the proof of Proposition 4.1.3(7) on page 73 of \cite{Bonnafe2013}:\\
			Since $T \in C^3(\RN)$, we can apply the fundamental theorem of calculus and get
			\begin{align*}
				S_{\varphi}&(\psi_2) - S_{\varphi}(\psi_1) = T(\varphi + \psi_2) - T(\varphi + \psi_1) - \DT(\varphi) (\psi_2 - \psi_1) \\
				&= \int_0^1 \left[ \DT(\varphi + \psi_1 + t(\psi_2 - \psi_1)) - \DT(\varphi) \right] (\psi_2 - \psi_1) \mbox d t \\
				&= \int_0^1 \int_0^1 \DtT(\varphi + s \left[ (1-t)\psi_1 + t \psi_2 \right] )  ((1-t)\psi_1 + t \psi_2, \psi_2 - \psi_1) \mbox d t\, \mbox d s.
			\end{align*}
			From \eqref{D2T}, it follows that
			\begin{align}
				|\mathrm{D}^2 T(\varphi)(\psi, \eta)| 
				&\leq 3 \frac{|\nuh'(|\varphi|)|}{|\varphi|}  |\varphi| \, |\psi| \, |\eta| + \left( \frac{|\nuh''(|\varphi|)|}{|\varphi|^2} + \frac{|\nuh'(|\varphi|)|}{|\varphi|^3} \right) |\varphi|^3 |\psi|\, |\eta| \nonumber \\
				&= \left( 4 |\nuh'(|\varphi|)|+ |\nuh''(|\varphi|)|\,  |\varphi| \right) |\psi|\, |\eta|.  \label{est_D2T}
			\end{align}
			Estimate \eqref{est_D2T}, together with requirement \eqref{est_c7}, yields that
			\begin{align*}
				|&S_{\varphi}(\psi_2) - S_{\varphi}(\psi_1)|  \\
				&\leq \int_0^1 \int_0^1 \left|\DtT(\varphi + s \left[ (1-t)\psi_1 + t \psi_2 \right] ) )\right| \, |(1-t)\psi_1 + t \psi_2| \,|\psi_2 - \psi_1| \mbox d t \mbox d s \\
				& \leq \cGOld |(1-t)\psi_1 + t \psi_2| |\psi_2 - \psi_1| \\
				&\leq \cGOld (|\psi_1| + |\psi_2|) |\psi_2 - \psi_1|.
			\end{align*}
		\item Since $T$ is three times continuously differentiable, we get by the fundamental theorem of calculus that
			\begin{align*}
				| S_{\varphi_2}&(\psi) - S_{\varphi_1}(\psi) | \\
				=& | T(\varphi_2 + \psi) - T(\varphi_2) - \DT(\varphi_2)\psi  - \left(T(\varphi_1 + \psi) - T(\varphi_1) - \DT(\varphi_1)\psi \right) | \\
				=& \left|  \int_0^1 \left( \DT(\varphi_2 + t\, \psi) - \DT(\varphi_2)   \right) \psi \, \mbox d t -  \int_0^1 \left( \DT(\varphi_1 + t\, \psi) - \DT(\varphi_1)   \right) \psi \, \mbox d t \right| \\
				=& \left|\int_0^1 \int_0^1  \mathrm{D}^2 T(\varphi_2 + s\, t\, \psi )(\psi, t \psi)     \, \mbox d s \, \mbox d t
				-\int_0^1 \int_0^1  \mathrm{D}^2 T(\varphi_1 + s\, t\, \psi )(\psi, t \psi)     \, \mbox d s \, \mbox d t
				\right| \\
				\leq& \int_0^1 \int_0^1 \left| \mathrm{D}^2 T(\varphi_2 + s\, t\, \psi )(\psi, t \psi)  -\mathrm{D}^2 T(\varphi_1 + s\, t\, \psi )(\psi, t \psi)   \right| \, \mbox d s \, \mbox d t \\
				\leq& \int_0^1 \int_0^1 \int_0^1 t | \mathrm{D}^3 T(\varphi_1 + s\, t\, \psi + r (\varphi_2 - \varphi_1) )(\psi, \psi, \varphi_2 - \varphi_1) | \, \mbox d r \, \mbox d s \, \mbox d t.
			\end{align*}
			From \eqref{def_D3T}, it is seen that
			\begin{align*} 
				\begin{aligned}
				|\mathrm{D}^3T(\varphi)(\psi,\psi, \xi)| \leq& 6 \left| \left( \nuh''(|\varphi|) - \frac{\nuh'(|\varphi|)}{|\varphi|} \right) \right| \, |\psi|^2 |\xi| + 3 \frac{|\nuh'(|\varphi|)|}{|\varphi|} |\psi|^2 |\xi|\\ 
				&+ \left|\left( \nuh'''(|\varphi|)|\varphi| - 3 \nuh''(|\varphi|) + 3 \frac{\nuh'(|\varphi|)}{|\varphi|} \right) \right| \, |\psi|^2 |\xi| \\
				\leq& \left( |\nuh'''(|\varphi|)|\,|\varphi| + 9 | \nuh''(|\varphi|)| + 12 \frac{|\nuh'(|\varphi|)|}{|\varphi|}\right)|\psi|^2 |\xi|.
				\end{aligned}
			\end{align*}
			Denoting $\varphi_{r,s}^t = \varphi_1 + s\, t\, \psi + r (\varphi_2 - \varphi_1)$ for $0\leq r,s,t \leq 1$, we get
			\begin{align*}
				&|S_{\varphi_2}(\psi) - S_{\varphi_1}(\psi)| 
				\leq \int_0^1 \int_0^1 \int_0^1 |\mathrm{D}^3T(\varphi_{r,s}^t)(\psi, \psi, \varphi_2 - \varphi_1)| \, \mbox d r \, \mbox d s\, \mbox d t \\
				&\leq \int_0^1 \int_0^1 \int_0^1  \left| |\nuh'''(|\varphi_{r,s}^t|)|\,|\varphi_{r,s}^t| + 9 |\nuh''(|\varphi_{r,s}^t|)| + 12 \frac{|\nuh'(|\varphi_{r,s}^t|)|}{|\varphi_{r,s}^t|} \right| \,|\psi|^2 |\varphi_2 - \varphi_1|  \, \mbox d r \, \mbox d s \, \mbox d t \\
				&\leq \cHOld|\psi|^2 |\varphi_2 - \varphi_1|,
			\end{align*}
			where we used \eqref{est_c8}, which holds under Assumption \ref{assump_c78}.
	\end{enumerate}
	This concludes the proof of Lemma \ref{lemmaProperties2}
\end{proof}

\begin{remark} \label{rem:propXDep}
	It is easy to see that the statements of Lemma \ref{lemmaProperties} and Lemma \ref{lemmaProperties2} also hold for the $x$-dependent operators $T_{\varepsilon}$, $\DT_{\varepsilon}$ introduced in \eqref{def_Teps_I} and the operator $S^{\varepsilon}$ defined in \eqref{def_Seps_I} for each point in their domain of definition with the same constants.
\end{remark}

\begin{remark}
	Note that relation \eqref{prop7} implies that
		\begin{align}
			|S_{\varphi}(\psi)| \leq \cGOld |\psi|^2. \label{prop7_followUp}
		\end{align}
\end{remark}


\subsection{Weighted Sobolev spaces} \label{sec_weightedSoboSpaces}
In order to analyze the asymptotic behavior of the variation of the direct and adjoint state at scale 1 in Sections \ref{sec:VarDirect} and \ref{sec:VarAdjoint}, we need to define an appropriate function space. We follow the presentation given in \cite{Bonnafe2013}. For more details on weighted Sobolev spaces, we refer the reader to \cite{AmroucheGiraultGiroire1994}.

Let the weight function $w:\RN \rightarrow \mathbb R$ be defined as
\begin{equation}
	w(x) = \frac{1}{(1+|x|^2)^{1/2}( \mbox{log}(2+|x|))}. \label{def_weight}
\end{equation}
Note that $w \in L^2(\RN)$ and $w(x)>0$ for all $x \in \RN$ with 
\begin{align*}
	\underset{x \in \RN}{\mbox{inf}} w(x) = 0 \quad \mbox{ and } \quad \underset{x \in \RN}{\mbox{sup}} w(x) < \infty.
\end{align*}
For all open $\mathcal O \subset \RN$, the space of distributions in $\mathcal O$ is denoted by $\mathcal D'(\mathcal O)$. We define the weighted Sobolev space
\begin{equation*}
	\HH^w(\mathcal O) := \left \lbrace u \in \mathcal D'(\mathcal O): w\, u \in L^2(\mathcal O), \nabla u \in L^2(\mathcal O)	\right\rbrace,
\end{equation*}
together with the inner product
\begin{align*}
		\langle u, v \rangle_{\HH^w(\mathcal O)} := \langle w \, u, w\, v \rangle_{L^2(\mathcal O)} + \langle \nabla u, \nabla v \rangle_{L^2(\mathcal O)}, \quad  u, v \in \HH^w(\mathcal O),
\end{align*}
and the norm
\begin{align*}
	\| u \|_{\HH^w(\mathcal O)} := \langle u,u \rangle_{\HH^w(\mathcal O)}^{\frac{1}{2}},  \quad u \in \HH^w(\mathcal O).
\end{align*}
The following result is shown in \cite{Bonnafe2013}.
\begin{lemma}
	The space $\HH^w(\mathcal O)$ endowed with the inner product $\langle \cdot, \cdot \rangle_{\HH^w(\mathcal O)}$ is a separable Hilbert space.
\end{lemma}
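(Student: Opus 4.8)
The plan is to realize $\HH^w(\mathcal O)$ as a closed subspace of a product of $L^2$-spaces and to deduce all three properties (inner product space, completeness, separability) from the corresponding well-known properties of $L^2(\mathcal O)$. That the given form is an inner product is immediate: it is bilinear and symmetric, and since $w(x)>0$ everywhere, $\langle u,u\rangle_{\HH^w(\mathcal O)}=\|wu\|_{L^2(\mathcal O)}^2+\|\nabla u\|_{L^2(\mathcal O)}^2=0$ forces $wu=0$ a.e., hence $u=0$ as a distribution. The natural tool is the map $J:\HH^w(\mathcal O)\to L^2(\mathcal O)\times L^2(\mathcal O)^2$, $Ju=(wu,\nabla u)$, which is by construction a linear isometry. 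It then suffices to show that the range of $J$ is closed, since a closed subspace of a Hilbert space is itself a Hilbert space, and every subset of the separable space $L^2(\mathcal O)\times L^2(\mathcal O)^2$ is separable.

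For completeness, let $(u_n)$ be Cauchy in $\HH^w(\mathcal O)$. By the isometry, $(wu_n)$ is Cauchy in $L^2(\mathcal O)$ and $(\nabla u_n)$ is Cauchy in $L^2(\mathcal O)^2$, so by completeness of $L^2$ there are limits $wu_n\to g$ and $\nabla u_n\to G$ in $L^2$. Define $u:=g/w$, which is a well-defined measurable function because $w>0$, and note $wu=g\in L^2(\mathcal O)$. The crucial step is to verify that $G$ is the distributional gradient of $u$. For any test function $\varphi\in\mathcal D(\mathcal O)$, the weight $w$ is continuous and strictly positive, hence bounded below on the compact support of $\varphi$, so $\varphi/w$ is a bounded compactly supported (in particular $L^2$) function; therefore $\langle u_n,\varphi\rangle=\langle wu_n,\varphi/w\rangle_{L^2(\mathcal O)}\to\langle g,\varphi/w\rangle_{L^2(\mathcal O)}=\langle u,\varphi\rangle$, i.e.\ $u_n\to u$ in $\mathcal D'(\mathcal O)$. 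Since distributional differentiation is continuous on $\mathcal D'(\mathcal O)$, we get $\nabla u_n\to\nabla u$ in $\mathcal D'$; but also $\nabla u_n\to G$ in $L^2\hookrightarrow\mathcal D'$, so by uniqueness of limits $\nabla u=G\in L^2(\mathcal O)$. Hence $u\in\HH^w(\mathcal O)$ with $Ju=(g,G)$, which shows the range of $J$ is closed and, simultaneously, that $u_n\to u$ in $\HH^w(\mathcal O)$.

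Separability then follows at once: $L^2(\mathcal O)$ is separable for $\mathcal O\subset\RN$ open, so $L^2(\mathcal O)\times L^2(\mathcal O)^2$ is separable, and its subspace $J(\HH^w(\mathcal O))$ inherits separability as a subset of a separable metric space; transporting a countable dense set back through the isometry $J^{-1}$ yields a countable dense subset of $\HH^w(\mathcal O)$. I expect the only genuinely delicate point to be the identification of the limit, namely passing from $L^2$-convergence of $wu_n$ to $\mathcal D'$-convergence of $u_n$ and thereby confirming $\nabla u=G$; once the closedness of the range of $J$ is in hand, the Hilbert-space and separability conclusions are automatic.
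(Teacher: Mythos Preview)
Your argument is correct and is the standard route: embed $\HH^w(\mathcal O)$ isometrically into $L^2(\mathcal O)\times L^2(\mathcal O)^2$ via $u\mapsto(wu,\nabla u)$, use continuity and strict positivity of $w$ on compacta to pass from $L^2$-convergence of $wu_n$ to distributional convergence of $u_n$, and conclude closedness of the image and hence completeness and separability. The paper does not actually prove this lemma but simply cites \cite{Bonnafe2013}; your proof is precisely the kind of argument one finds there, so there is no substantive difference to discuss.
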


We define the weighted quotient Sobolev space
\begin{equation*}
	\HH(\RN) := \HH^w(\RN)/_{\mathbb R}
\end{equation*}
where we factor out the constants, and equip it with the quotient norm
\begin{equation*}
	\| [u] \|_{\HH(\RN)} := \underset{m \in \mathbb R}{\mbox{inf}} \| \tilde u + m \|_{\HH^w(\RN)}, \quad [u] \in \HH(\RN),
\end{equation*}
where $\tilde u \in \HH^w(\RN)$ is any element of the class $[u]$. We note that $\HH(\RN)$ is a Hilbert space because $\HH^w(\RN)$ is a Hilbert space and $\mathbb R$ is a closed subspace. For the space $\HH(\RN)$, we can state the following Poincar\'{e} inequality, which is proved in \cite{Bonnafe2013}:
\begin{lemma} \label{lem:PoincareH}
	There exists $c_P>0$ such that
	\begin{equation*}
		\| [u] \|_{\HH(\RN)} \leq c_P \| \nabla \tilde u \|_{L^2(\RN)}, \quad \forall [u] \in \HH(\RN),
	\end{equation*}
	where $\tilde u \in \HH^w(\RN)$ is any element of the class $[u]$.
\end{lemma}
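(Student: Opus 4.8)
The plan is to reduce the stated quotient estimate to a weighted Poincaré–Wirtinger inequality and then prove the latter by a dyadic annular decomposition combined with the discrete Hardy inequality. Since $\nabla(\tilde u + m) = \nabla \tilde u$ for every $m\in\mathbb R$, the definition of the quotient norm gives $\|[u]\|_{\HH(\RN)}^2 = \|\nabla \tilde u\|_{L^2(\RN)}^2 + \inf_{m\in\mathbb R}\|w(\tilde u + m)\|_{L^2(\RN)}^2$, so it suffices to produce a constant $C$ with $\inf_{m}\|w(\tilde u + m)\|_{L^2(\RN)}^2 \le C\,\|\nabla \tilde u\|_{L^2(\RN)}^2$; then $c_P=\sqrt{1+C}$ works. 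Because $w\in L^2(\RN)$, the measure $d\mu = w^2\,dx$ is finite, the infimum over $m$ is attained at minus the $\mu$-average of $\tilde u$, and in particular it is bounded above by $\|w(\tilde u - c)\|_{L^2(\RN)}^2$ for any convenient constant $c$. I would take $c=a_0$, the Lebesgue average of $\tilde u$ over the first dyadic annulus.

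Next I would split $\RN$ into the ball $B(0,1)$ and the dyadic annuli $A_k=\{2^k\le|x|<2^{k+1}\}$, $k\ge0$, and let $a_k$ be the Lebesgue average of $\tilde u$ over $A_k$. On the ball $w$ is bounded and an ordinary Poincaré inequality on $B(0,2)$ controls that contribution by a multiple of $\|\nabla\tilde u\|_{L^2(B(0,2))}^2$. On each annulus a scaled (unweighted) Poincaré inequality gives $\int_{A_k}|\tilde u - a_k|^2 \lesssim 2^{2k}\int_{A_k}|\nabla \tilde u|^2$, while on $A_k$ the weight obeys $w^2\sim(2^{2k}k^2)^{-1}$; multiplying yields $\int_{A_k} w^2|\tilde u - a_k|^2 \lesssim k^{-2}\int_{A_k}|\nabla\tilde u|^2$, which sums to at most $\|\nabla\tilde u\|_{L^2(\RN)}^2$. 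It then remains to control $\sum_k\big(\int_{A_k}w^2\big)\,|a_k-a_0|^2$. Here $\int_{A_k}w^2\sim k^{-2}$, and comparing averages over adjacent annuli through a Poincaré inequality on $A_k\cup A_{k+1}$ gives $|a_{k+1}-a_k|^2 \lesssim \int_{A_k\cup A_{k+1}}|\nabla\tilde u|^2 =: b_k^2$ with $\sum_k b_k^2\lesssim\|\nabla\tilde u\|_{L^2(\RN)}^2$ by finite overlap. Writing $|a_k-a_0|\le\sum_{j<k}b_j$, the term becomes $\sum_k k^{-2}\big(\sum_{j<k}b_j\big)^2$, which the discrete Hardy inequality bounds by $4\sum_j b_j^2$.

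Combining the inner region, the local oscillations, and the chained averages gives $\int_{\RN}w^2|\tilde u - a_0|^2 \lesssim \|\nabla\tilde u\|_{L^2(\RN)}^2$, hence the required bound on $\inf_m\|w(\tilde u+m)\|_{L^2(\RN)}^2$ and the lemma. The main obstacle, and the only genuinely delicate point, is this telescoping estimate across annuli together with the invocation of discrete Hardy: it is exactly where the logarithmic factor in $w$ is indispensable. The $\log$ is precisely what produces $\int_{A_k}w^2\sim k^{-2}$, and $k^{-2}$ is the borderline Hardy weight that makes $\sum_k k^{-2}\big(\sum_{j<k}b_j\big)^2$ summable; without it one would get $\int_{A_k}w^2\sim 1$ and the chained sum would diverge, so the critical $2$-dimensional case genuinely requires the logarithmic correction. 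A minor preliminary is to justify that annular averages are meaningful and that the inequality passes from smooth functions to general $\tilde u\in\HH^w(\RN)$ by density, which is routine since $\nabla\tilde u\in L^2(\RN)$ already places $\tilde u$ in $H^1_{\mathrm{loc}}$.
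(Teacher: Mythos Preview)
Your argument is correct. The paper does not actually prove this lemma; it merely cites \cite{Bonnafe2013} for the proof, so there is no ``paper's own proof'' to compare against at the level of technique. Your dyadic-annulus decomposition combined with the discrete Hardy inequality is a standard and clean route to this type of weighted Poincar\'e inequality on $\mathbb{R}^2$, and your identification of the logarithmic factor in $w$ as the ingredient that makes $\int_{A_k} w^2 \sim k^{-2}$ (and hence makes the Hardy sum converge) is exactly the right diagnosis of why the weight is chosen this way in dimension two. Two small cosmetic points: for $k=0$ the estimate $w^2 \sim (2^{2k}k^2)^{-1}$ degenerates, so you should absorb $A_0$ (and perhaps $A_1$) into the ``inner region'' handled by the ordinary Poincar\'e inequality on a fixed ball; and the density step at the end is indeed routine, but since the inequality is already stated for arbitrary $\tilde u \in \mathcal{H}^w(\mathbb{R}^2)$ and all the ingredients (local $H^1$ membership, annular averages, scaled Poincar\'e) hold directly for such $\tilde u$, you do not even need to pass through smooth approximations.
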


For all $[u]\in \HH(\RN)$, let $\tilde u\in \HH(\RN)$ denote any element of the class $[u]$. We endow $\HH(\RN)$ with the semi-norm
\begin{equation*}
	|[u]|_{\HH(\RN)} := \| \nabla \tilde u \|_{L^2(\RN)}.
\end{equation*}
The following corollary follows directly from Lemma \ref{lem:PoincareH}.
\begin{corollary} \label{cor:CoercivityH}
	The semi-norm $|[\cdot]|_{\HH(\RN)}$ is a norm and is equivalent to the norm $\|[\cdot]\|_{\HH(\RN)}$ in~$\HH(\RN)$. 
\end{corollary}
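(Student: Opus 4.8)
The plan is to read the corollary off directly from the Poincaré inequality of Lemma \ref{lem:PoincareH}, after two quick preliminaries: checking that $|[\cdot]|_{\HH(\RN)}$ is genuinely well defined on the quotient space, and establishing the ``easy'' half of the equivalence by hand. First I would verify that $|[u]|_{\HH(\RN)} = \|\nabla \tilde u\|_{L^2(\RN)}$ is independent of the chosen representative $\tilde u$ of the class $[u]$: two representatives differ by a real constant, whose gradient vanishes, so the quantity depends only on $[u]$. Homogeneity and the triangle inequality are inherited from the $L^2$-norm of the gradient, so $|[\cdot]|_{\HH(\RN)}$ is at any rate a semi-norm; the only thing left to upgrade it to a norm is positive definiteness, which I postpone to the end.

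For the easy direction of the equivalence I would bound $|[u]|_{\HH(\RN)}$ by $\|[u]\|_{\HH(\RN)}$. For any representative $\tilde u$ and any $m \in \mathbb R$, the definition of the $\HH^w$-inner product gives
\begin{equation*}
	\|\tilde u + m\|_{\HH^w(\RN)}^2 = \|w(\tilde u + m)\|_{L^2(\RN)}^2 + \|\nabla \tilde u\|_{L^2(\RN)}^2 \geq \|\nabla \tilde u\|_{L^2(\RN)}^2 = |[u]|_{\HH(\RN)}^2,
\end{equation*}
where I used $\nabla(\tilde u + m) = \nabla \tilde u$. Taking the infimum over $m \in \mathbb R$ in the definition of $\|[u]\|_{\HH(\RN)}$ yields $|[u]|_{\HH(\RN)} \leq \|[u]\|_{\HH(\RN)}$.

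The reverse inequality is precisely Lemma \ref{lem:PoincareH}, which gives $\|[u]\|_{\HH(\RN)} \leq c_P\, |[u]|_{\HH(\RN)}$ with the same semi-norm on the right-hand side. The two bounds together establish the equivalence $|[u]|_{\HH(\RN)} \leq \|[u]\|_{\HH(\RN)} \leq c_P\,|[u]|_{\HH(\RN)}$. Finally, positive definiteness follows for free: if $|[u]|_{\HH(\RN)} = 0$, then $\|[u]\|_{\HH(\RN)} \leq c_P \cdot 0 = 0$, so $[u]$ is the zero class because $\|\cdot\|_{\HH(\RN)}$ is already a norm on $\HH(\RN)$; hence $|[\cdot]|_{\HH(\RN)}$ is a norm. (Alternatively, $\|\nabla \tilde u\|_{L^2(\RN)} = 0$ forces $\tilde u$ to be constant on the connected set $\RN$, i.e. $[u] = [0]$ after factoring out $\mathbb R$.) There is essentially no obstacle here: the corollary is designed to be a direct consequence of the Poincaré inequality, and the only point demanding a little care is that every estimate must be phrased uniformly over representatives, which the infimum in the definition of $\|[\cdot]\|_{\HH(\RN)}$ takes care of automatically.
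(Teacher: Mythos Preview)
Your proof is correct and is precisely the argument the paper has in mind: the corollary is stated as following directly from Lemma~\ref{lem:PoincareH}, and you have simply spelled out the routine details (well-definedness on the quotient, the trivial inequality $|[u]|_{\HH(\RN)} \le \|[u]\|_{\HH(\RN)}$, and positive definiteness) that make this immediate.
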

\section{Topological asymptotic expansion: case I} \label{sec:TopAsympExp_CaseI}
In this section, we derive the topological asymptotic expansion \eqref{def:topDerivative} for the introduction of an inclusion of air, which has linear material behavior, inside ferromagnetic material, which behaves nonlinearly. 
For the reader's convenience, we moved all longer, technical proofs of this section to Section \ref{sec_proofsTDNL}.

By Assumption \ref{assump_J}, the expansion \eqref{def:topDerivative} reduces to showing that
\begin{align*}
	\langle \tilde G, \ue - u_0 \rangle = \varepsilon^2 \, G(x_0) + o(\varepsilon^2) \quad \mbox{ and } \quad R(\varepsilon) = o(\varepsilon^2)
\end{align*}
with the remainder $R(\varepsilon)$ of the form \eqref{functional_remainder_b}, where we chose $f(\varepsilon) = \varepsilon^2$. In order to show these relations, we investigate in detail the difference $\ue - u_0$, called the \textit{variation of the direct state}. After rescaling, we introduce an approximation of this variation which is independent of the small parameter~$\varepsilon$. This approximation, which we will denote by $H$, is the solution to a transmission problem on the entire plane $\RN$ and is an element of a weighted Sobolev space as introduced in Section~\ref{sec_weightedSoboSpaces}. We establish relations between this approximation $H$ and the variation $\ue - u_0$ on the domain $\holdAll$. An important ingredient for this is to show that $H$ satisfies a sufficiently fast decay towards infinity, meaning that this approximation to the difference between the perturbed and unperturbed state is small ``far away'' from the inclusion. This result, which is rather technical, is obtained in Theorem \ref{theo:asymp_H}. 
All of these steps are shown in detail in Section~\ref{sec:VarDirect}.

Similar results are needed for the \textit{variation of the adjoint state} $\adje - \adjz$ which is approximated by the $\varepsilon$-independent function $K$. Again, a sufficiently fast decay of $K$ towards infinity is important. We remark that, also in the case of a nonlinear state equation, the boundary value problem defining the adjoint state is always linear. Therefore, the treatment of the variation of the adjoint state is less technical. These steps are carried out in Section \ref{sec:VarAdjoint}.

Given the relations of Sections \ref{sec:VarDirect} and \ref{sec:VarAdjoint}, a topological asymptotic expansion of the form \eqref{def:topDerivative} is shown in Section \ref{sec_topAsympExp_I}. 

\subsection{Variation of direct state} \label{sec:VarDirect}
\begin{figure} 
	\begin{minipage}{.5\textwidth}
		\begin{tabular}{c}    
			\hspace{15mm} \includegraphics[scale=0.55]{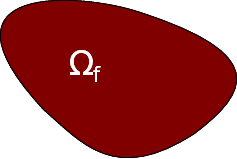}
		\end{tabular}
	\end{minipage}
	\hspace{1.2cm}
	\begin{minipage}{.5\textwidth}
		\begin{tabular}{c}    
			\hspace{-7mm}\includegraphics[scale=0.55]{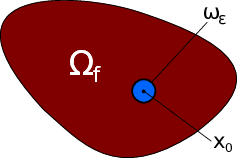}
		\end{tabular}
	\end{minipage}
	\caption[Illustration of topological derivative in Case I]{Left: Unperturbed configuration for Case I, $\holdAll = \Omega_f$. Right: Perturbed configuration for Case I, $\overline \holdAll = \overline{\Omega_f} \cup \overline{\ome}$. }
	\label{fig:OmegaPertUnpert_caseI}	
\end{figure}

\subsubsection{Regularity assumptions} \label{sec:regularityAssumptions}
In order to perform the asymptotic analysis for the derivation of the topological derivative, we need some regularity of the solution to the unperturbed state problem \eqref{weakUnpert} in a neighborhood of the point of the perturbation $x_0 \in \Omega^d$. Henceforth, we make the following assumption:

\begin{assumption} \label{assump_reg_u0}
	There exists $\beta >0$ such that
	\begin{align*}
		u_0|_{\Omega^d} \in C^{1,\beta}(\Omega^d).
	\end{align*}
\end{assumption}
\begin{remark}
    In the case of the model problem introduced in Section \ref{sec:ProblemDescription}, the right hand side is a distribution $F \in H^{-1}(\holdAll)$, which is, however, only supported outside the design area. Therefore, the assumption that the solution $u_0$ is smooth in the design area is reasonable. \unsure{talk to somebody..}
\end{remark}

\subsubsection{Step 1: variation $\ue - u_0$} \label{sec:Step1Direct_I}
Subtracting the perturbed problem \eqref{weakUnpert} from the unperturbed problem \eqref{weakPert} and defining ${\uet:= \ue - u_0 }$,
we get the boundary value problem defining the variation of the direct state at scale $\varepsilon$:
\begin{align} \label{variationDirect_scaleEps}
	\begin{aligned}
		\mbox{Find } \uet \in H^1_0(\holdAll):
		&\int_{\holdAll} \left( \Te(x,\nabla u_0 + \nabla \uet) - \Te(x,\nabla u_0) \right) \cdot \nabla \test \\
		&=- \int_{\ome}(\nu_0 - \hat{\nu}(|\nabla u_0|)) \nabla u_0 \cdot \nabla \test  \quad \forall \test \in H^1_0(\holdAll).
	\end{aligned}
\end{align}

\subsubsection{Step 2: approximation of variation $\ue - u_0$} \label{sec:approxStep1}
We approximate problem \eqref{variationDirect_scaleEps} by the same boundary value problem where we replace the function $\nabla u_0$
by its value at the point of interest $x_0$, i.e., we replace $\nabla u_0$ by the constant $U_0 := \nabla u_0(x_0)$. Note that this point evaluation makes sense due to Assumptions \ref{assump_reg_u0}. Denoting the solution to the arising boundary value problem by $\he$, we get
\begin{align} \label{variationApprox1}
	\begin{aligned}
		\mbox{Find } \he \in H^1_0(\holdAll):
		&\int_{\holdAll} \left( \Te(x,U_0 + \nabla \he)  - \Te(x,U_0) \right) \cdot \nabla \test \\
		& = - \int_{\ome}(\nu_0 - \hat{\nu}(|U_0|)) U_0 \cdot \nabla \test \quad \forall \test \in H^1_0(\holdAll).
	\end{aligned}
\end{align}
The relation between the solutions to boundary value problems \eqref{variationDirect_scaleEps} and \eqref{variationApprox1} will be investigated in Proposition \ref{proposition4413}.

\subsubsection{Step 3: change of scale} \label{sec:changeOfScale}
Next, we make another approximation to boundary value problem \eqref{variationApprox1}. First, we perform a change of scale, i.e., we go over from the domain $\holdAll$ with the inclusion $\ome$ of size $\varepsilon$ to the much larger domain $\holdAll / \varepsilon$ with the inclusion $\omega$ of unit size, e.g., $\omega = B(0,1)$. In a second step we approximate this scaled version of \eqref{variationApprox1} by sending the boundary of the ``very large'' domain $D / \varepsilon$ to infinity. This yields a transmission problem on the plane $\RN$ which is independent of~$\varepsilon$.

We introduce the $\varepsilon$-independent operators corresponding to \eqref{def_Teps_I} and \eqref{def_Seps_I} at scale 1,
\begin{align}
	\tilde T (x,W) &:= \chi_{\RN \setminus \omega }(x) T(W) + \chi_{\omega}(x) \nu_0 W, \nonumber\\
	\tilde S_{W}(x, V) &:= \chi_{\RN \setminus \omega}(x) S_W(V), \label{def_Stil_I}
\end{align}
for $x \in  \RN$ and $V,W \in \RN$ with $T$ and $S$ given in \eqref{def_T} and \eqref{def_S}, respectively, and note that
\begin{align*}
	\mbox{D}\tilde T(x,W)= \chi_{\RN \setminus \omega }(x) \DT(W) + \chi_{\omega}(x) \nu_0 I.
\end{align*}

\begin{remark} \label{rem:propXDep_tilde}
	Again, the statements of Lemma \ref{lemmaProperties} and Lemma \ref{lemmaProperties2} also hold for the $x$-dependent operators $\tilde T$, $ \mbox{D}\tilde T$, $\tilde S$ for each point in their domain of definition with the same constants.
\end{remark}

With this notation, we arrive at the nonlinear transmission problem on $\RN$ defining $H$, the variation of the direct state at scale 1:
\begin{align} \label{variationApprox2}
	\begin{aligned}
		\mbox{Find } H \in \HH(\RN):
	& \int_{\RN} \left( \Tt(x,U_0 + \nabla H) - \Tt(x,U_0) \right)\cdot \nabla \test \\
	&	=-\int_{\omega} \left(\nu_0 - \nuh (|U_0|) \right) U_0 \cdot \nabla \test \quad \forall \test \in \HH(\RN).	
	\end{aligned}
\end{align}

Next, we show existence and uniqueness of a solution to \eqref{variationApprox2} using \cite[Thrm. 25.B.]{Zeidler1990} which was shown by Zarantonello in 1960.
\begin{proposition} \label{propExUnH}
	Let Assumption \ref{assump_BH} hold. Then there exists a unique solution $H \in \HH(\RN)$ to problem \eqref{variationApprox2}.
\end{proposition}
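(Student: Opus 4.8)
The plan is to apply the theorem of Zarantonello (cited as \cite[Thrm.~25.B.]{Zeidler1990}), exactly as was done for the unperturbed state equation \eqref{eq_magnetostatic_weak}. To this end I would define the nonlinear operator $\mathcal{B}: \HH(\RN) \to \HH(\RN)^*$ by
\begin{align*}
	\langle \mathcal{B}(H), \test \rangle := \int_{\RN} \left( \Tt(x,U_0 + \nabla H) - \Tt(x,U_0) \right)\cdot \nabla \test, \qquad H, \test \in \HH(\RN),
\end{align*}
and let $\ell \in \HH(\RN)^*$ be the right-hand side $\langle \ell, \test \rangle := -\int_{\omega} (\nu_0 - \nuh(|U_0|)) U_0 \cdot \nabla \test$. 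Note that $\ell$ is well defined and bounded because the integrand is supported on the bounded set $\omega$, so by Cauchy--Schwarz and Corollary \ref{cor:CoercivityH} it is controlled by $|[\test]|_{\HH(\RN)}$. The whole problem \eqref{variationApprox2} then reads $\mathcal{B}(H) = \ell$, and Zarantonello's theorem guarantees a unique solution once I verify that $\mathcal{B}$ is strongly monotone and Lipschitz continuous with respect to the norm $|[\cdot]|_{\HH(\RN)}$ (which, by Corollary \ref{cor:CoercivityH}, is equivalent to the full norm $\|[\cdot]\|_{\HH(\RN)}$, so the Hilbert space structure is intact).

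First I would check well-definedness and that $\mathcal{B}$ maps into the dual: by Remark \ref{rem:propXDep_tilde} the Lipschitz property \eqref{prop6} holds for $\Tt$ with constant $\cFOld = \nu_0$, so $|\Tt(x,U_0+\nabla H)-\Tt(x,U_0)| \le \nu_0\,|\nabla H|$ pointwise, giving $|\langle \mathcal{B}(H),\test\rangle| \le \nu_0 \|\nabla H\|_{L^2}\|\nabla \test\|_{L^2}$, i.e. boundedness in the semi-norm. Strong monotonicity follows from the $x$-dependent analogue of \eqref{prop5}: for $H_1, H_2 \in \HH(\RN)$, writing $\varphi = U_0 + \nabla H_1$ and $\psi = \nabla H_2 - \nabla H_1$,
\begin{align*}
	\langle \mathcal{B}(H_2) - \mathcal{B}(H_1), H_2 - H_1 \rangle = \int_{\RN} \big( \Tt(x,U_0+\nabla H_2) - \Tt(x,U_0+\nabla H_1) \big)\cdot (\nabla H_2 - \nabla H_1) \ge \cEOld \, |[H_2-H_1]|_{\HH(\RN)}^2,
\end{align*}
using that $\cEOld = \cDuOld = \numin > 0$ is the lower bound valid uniformly in $x$ (for both the nonlinear region $\RN\setminus\omega$ and the linear region $\omega$, where the constant is $\nu_0 \ge \numin$). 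Lipschitz continuity of $\mathcal{B}$ is the analogous computation using \eqref{prop6}: $|\langle \mathcal{B}(H_2)-\mathcal{B}(H_1),\test\rangle| \le \nu_0 \, |[H_2-H_1]|_{\HH(\RN)}\,|[\test]|_{\HH(\RN)}$.

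The one point requiring genuine care — and the main obstacle — is that these estimates must be carried out in the quotient space $\HH(\RN)$ and phrased through the semi-norm $|[\cdot]|_{\HH(\RN)} = \|\nabla \tilde u\|_{L^2(\RN)}$ rather than a classical $H^1$-norm. I would emphasize that both $\mathcal{B}$ and $\ell$ depend on $H$ only through $\nabla H$, so they descend to well-defined maps on the quotient by constants, making the problem genuinely posed on $\HH(\RN)$. Coercivity and boundedness are then expressed directly in terms of the semi-norm, and Corollary \ref{cor:CoercivityH} certifies that this semi-norm is an equivalent norm on the Hilbert space $\HH(\RN)$, so that Zarantonello's theorem applies verbatim and yields the unique $H$. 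I expect the monotonicity and Lipschitz bounds themselves to be routine given Lemma \ref{lemmaProperties} and Remark \ref{rem:propXDep_tilde}; the only substantive verification is the boundedness of $\ell$ together with the reduction to the semi-norm setting.
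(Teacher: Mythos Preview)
Your proposal is correct and follows essentially the same approach as the paper: define the nonlinear operator and linear functional, then verify strong monotonicity via \eqref{prop5} and Lipschitz continuity via \eqref{prop6} (both extended to $\Tt$ through Remark~\ref{rem:propXDep_tilde}), and invoke Zarantonello's theorem together with the norm equivalence of Corollary~\ref{cor:CoercivityH}. Your additional remarks on well-definedness on the quotient space and boundedness of $\ell$ are slightly more explicit than the paper's presentation but add nothing new in substance.
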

\begin{proof}
	We apply theorem of Zarantonello \cite[Thrm. 25.B.]{Zeidler1990} to problem \eqref{variationApprox2} rewritten in the form
	\begin{align*}
		 \mbox{Find }H \in \HH(\RN) \mbox{ such that } A \, H = L,
	\end{align*}
	where the operator $A:\HH(\RN) \rightarrow \HH(\RN)^*$ and the right hand side $L \in \HH(\RN)^{*}$ are defined by
	\begin{align*}
		\langle A \test_1, \test_2 \rangle 
			&= \int_{\RN} \left(\tilde T(x,U_0+\nabla \test_1) - \tilde T(x,U_0) \right) \cdot \nabla \test_2, \\
		\langle L, \test \rangle &= \int_{\omega} \left( \nuh (|U_0|) - \nu_0 \right) U_0 \cdot \nabla \test,
	\end{align*}
	for $\test_1, \test_2, \test \in \HH(\RN)$.
	We verify the strong monotonicity and Lipschitz continuity of the operator A.

	Property \eqref{prop5} together with Remark \ref{rem:propXDep_tilde} gives
	\begin{align*}
		\langle A \test_1 - A \test_2, \test_1 - \test_2 \rangle 
		&= \int_{\RN} \left( \Tt(x, U_0 + \nabla \test_1) - \Tt(x, U_0 + \nabla \test_2) \right) \left( \nabla \test_1 - \nabla \test_2 \right) \\
		&\geq \cEOld \int_{\RN} |\nabla \test_1-\nabla \test_2|^2 = \cEOld \| \nabla \test_1 - \nabla \test_2 \|_{L^2(\RN)}^2.
	\end{align*}
	The Poincar\'{e} inequality of Lemma \ref{lem:PoincareH} yields the strong monotonicity property in~$\HH(\RN)$.

	For the Lipschitz condition, we get by property \eqref{prop6} together with Remark \ref{rem:propXDep_tilde}, Cauchy's inequality and the norm equivalence of Corollary \ref{cor:CoercivityH} that
	\begin{align*}
		\| A \test_1 - A \test_2 \|_{\HH(\RN)^*} &= \underset{\test \neq 0}{\mbox{sup}} \frac{1}{\| \test \|_{\HH(\RN)}} \left|\langle A \test_1 - A \test_2, \test \rangle \right|\\
		&= \underset{\test \neq 0}{\mbox{sup}} \frac{1}{\| \test \|_{\HH(\RN)}} \left| \int_{\RN} \left(\tilde T(x,U_0+\nabla \test_1) - \tilde T(x,U_0+\test_2) \right) \cdot \nabla \test \right| \\
		&\leq \cFOld \, \underset{\test \neq 0}{\mbox{sup}} \frac{1}{\| \test \|_{\HH(\RN)}}  \int_{\RN} |\nabla \test_1 - \nabla \test_2| \, |\nabla \test| \\
		&\leq \cFOld \, \underset{\test \neq 0}{\mbox{sup}} \frac{1}{\| \test \|_{\HH(\RN)}}  \| \test_1 -  \test_2\|_{\HH(\RN)} \, \| \test\|_{\HH(\RN)} \\
		&= \cFOld \, \| \test_1 -  \test_2\|_{\HH(\RN)}.
	\end{align*}
	
	Therefore, the theorem of Zarantonello \cite[Thrm. 25.B.]{Zeidler1990} yields the existence of a unique solution $H \in \HH(\RN)$ to the variational problem \eqref{variationApprox2} since $L \in \HH(\RN)^* $.
\end{proof}

\begin{remark} \label{rem_U0_neq_0}
We note that, for $U_0=(0,0)^\top$, problems \eqref{variationApprox1} and \eqref{variationApprox2} only admit the trivial solution which yields that $\nabla H$, $\nabla h_{\varepsilon}$ are identical zero and also $S_{U_0}(\nabla H)$ and $S_{U_0}(\nabla h_{\varepsilon})$ vanish. In this case, many computations simplify significantly. For the rest of this section, we exclude the trivial case and assume that $U_0 \neq (0,0)^\top$.
\end{remark}
\subsubsection{Step 4: asymptotic behavior of variations of direct state} \label{sec_asymptBehH}
In this section, we investigate the asymptotic behavior of the solution $H$ to problem \eqref{variationApprox2} as $|x|$ goes to infinity. 
 
For $H \in \HH(\RN)$ the solution to $\eqref{variationApprox2}$, $\hat H \in \HH^w(\RN)$ a given element of the class $H$ and $\varepsilon >0$, we define the function $\He: \holdAll \rightarrow \mathbb R$ by
\begin{align}
	\He(x):= \varepsilon \hat H(\varepsilon^{-1}x), \quad x \in \holdAll. \label{def_Heps}
\end{align}
Note that, when one is only interested in the gradient of $\He$, the specific choice of $\hat H$ in the class $H$ does not matter.
Noting that
\begin{equation*}
	\underline w:= \underset{x \in \holdAll}{\mbox{inf}} \,w\left(\frac{x}{\varepsilon}\right) >0,
\end{equation*}
it is easy to see that $\hat H \in \HH^w(\RN)$ implies $H_{\varepsilon}\in H^1(\holdAll)$. We can show some first estimates which we will make use of in later estimations:

\begin{lemma} \info{(corresponds to Lemma 4.4.5 in \cite{Bonnafe2013})}\label{lem445}
Let Assumption \ref{assump_BH} and Assumption \ref{assump_reg_u0} hold. Then
	\begin{align} 
		\| \nabla \ut_{\varepsilon} \|_{L^2(\holdAll)}^2 = \mathcal O(\epsN),  \label{asympt_uEpsTilde}\\
		\| \nabla h_{\varepsilon} \|_{L^2(\holdAll)}^2 = \mathcal O(\epsN),  \label{asympt_hEps}\\
		\| \nabla H_{\varepsilon} \|_{L^2(\holdAll)}^2 = \mathcal O(\epsN).  \label{asympt_HEps}
	\end{align}
\end{lemma}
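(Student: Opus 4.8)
The plan is to treat the three estimates separately, handling $\nabla\uet$ and $\nabla\he$ by the same energy (testing) argument, and obtaining the bound on $\nabla\He$ directly from its definition \eqref{def_Heps} via a change of variables.

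For \eqref{asympt_uEpsTilde}, I would test the variational problem \eqref{variationDirect_scaleEps} with $\test = \uet$ itself. By the strong monotonicity property \eqref{prop5}, applied pointwise to $\Te$ (valid with the same constant $\cEOld$ thanks to Remark \ref{rem:propXDep}), the left-hand side is bounded below by $\cEOld\,\|\nabla\uet\|_{L^2(\holdAll)}^2$. For the right-hand side, which is an integral over the inclusion $\ome$ only, I would use that $|\nu_0 - \nuh(|\nabla u_0|)| \le \nu_0 - \numin$ by \eqref{propNuBounded}, and that $|\nabla u_0|$ is bounded on $\ome$ (for $\varepsilon$ small enough that $\ome \subset \Omega^d$) by the regularity Assumption \ref{assump_reg_u0}. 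Cauchy--Schwarz then produces a bound of the form $C\,|\ome|^{1/2}\,\|\nabla\uet\|_{L^2(\holdAll)}$, and since $|\ome| = \varepsilon^2|\omega|$ this is $C'\,\varepsilon\,\|\nabla\uet\|_{L^2(\holdAll)}$. Dividing through by $\|\nabla\uet\|_{L^2(\holdAll)}$ gives $\|\nabla\uet\|_{L^2(\holdAll)} = \mathcal O(\varepsilon)$, which is the claimed $\mathcal O(\epsN)$ for the squared norm. Estimate \eqref{asympt_hEps} follows by the identical argument applied to \eqref{variationApprox1} tested with $\he$: the only difference is that $\nabla u_0$ is replaced by the constant $U_0 = \nabla u_0(x_0)$, so the factor $|\nu_0 - \nuh(|U_0|)|\,|U_0|$ is a fixed constant, and the same $|\ome|^{1/2} = \mathcal O(\varepsilon)$ scaling of the right-hand side yields $\|\nabla\he\|_{L^2(\holdAll)} = \mathcal O(\varepsilon)$.

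For \eqref{asympt_HEps}, I would argue directly from $\He(x) = \varepsilon\,\hat H(\varepsilon^{-1}x)$. Differentiating gives $\nabla\He(x) = (\nabla\hat H)(\varepsilon^{-1}x)$, so the substitution $y = \varepsilon^{-1}x$, $dx = \varepsilon^2\,dy$, yields $\|\nabla\He\|_{L^2(\holdAll)}^2 = \varepsilon^2\int_{\holdAll/\varepsilon}|\nabla\hat H|^2 \le \varepsilon^2\,\|\nabla\hat H\|_{L^2(\RN)}^2$. The last quantity is finite because $H\in\HH(\RN)$ and, by Corollary \ref{cor:CoercivityH}, $\|\nabla\hat H\|_{L^2(\RN)}$ coincides with the $\HH(\RN)$-norm of $H$ and is independent of the chosen representative $\hat H$. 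This delivers $\|\nabla\He\|_{L^2(\holdAll)}^2 = \mathcal O(\epsN)$ immediately.

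I do not expect a genuine obstacle here; the statement is essentially a collection of routine a priori bounds. The only points requiring care are that the lower bound on the testing identities is a \emph{uniform} strong monotonicity of $\Te$ (supplied by \eqref{prop5} together with Remark \ref{rem:propXDep}, so the constant $\cEOld$ does not degenerate as the material coefficient jumps across $\partial\ome$), and that the correct power of $\varepsilon$ originates entirely from the measure of the inclusion, $|\ome| = \varepsilon^2|\omega|$, combined with the $L^\infty$-control of $\nabla u_0$ in a neighborhood of $x_0$ afforded by Assumption \ref{assump_reg_u0}. Keeping track of exactly where each factor of $\varepsilon$ enters is the one place to be attentive.
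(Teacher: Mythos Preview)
Your proposal is correct and matches the standard energy argument that the paper defers to \cite{Bonnafe2013, AmstutzBonnafe2015, Gangl2017}: test the variational identities with the solution itself, use the uniform strong monotonicity \eqref{prop5} (via Remark~\ref{rem:propXDep}) on the left, bound the right-hand side by $|\ome|^{1/2}=\mathcal O(\varepsilon)$ times an $L^\infty$ bound on $\nabla u_0$ (resp.\ $U_0$), and handle $\He$ by the change of variables $y=\varepsilon^{-1}x$. There is nothing to add.
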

This proof is following the lines of \cite{Bonnafe2013, AmstutzBonnafe2015} and can be found in \cite{Gangl2017}.
 
	In order to show estimate \eqref{hEps_minus_Heps_L2} in Section \ref{subsubsec:justification}, we need that there exists a representative $\tilde H$ of the solution $H$ to \eqref{variationApprox2} which satisfies a sufficiently fast decay for $|x| \rightarrow \infty$.
	For that purpose, let the nonlinear operator $Q: \HH(\RN) \rightarrow \HH(\RN)^*$ be defined by
	\begin{align} \label{defQ}
		\hspace{-10mm}\langle Q \test_1, \test_2 \rangle := \int_{\RN} \left[ \Tt(x, U_0 + \nabla \test_1) - \Tt(x,U_0) \right] \cdot \nabla \test_2 + \int_{\omega} (\nu_0 - \nuh(|U_0|)) U_0 \cdot \nabla \test_2.
	\end{align}
	Note that for $H$ the solution of \eqref{variationApprox2}, we have that 
	\begin{align} \label{eq_QHzero}
		\langle Q\, H, \test \rangle = 0 \quad \mbox{ for all } \test \in \HH(\RN).
	\end{align}
	
	In the following, we show that there exist a supersolution $\myP$ satisfying ${\langle Q\, \myP , \test \rangle \geq 0}$ and a subsolution $\myRsub$ such that ${\langle Q\, \myRsub , \test \rangle \leq 0}$ for all test functions $\test$ in a subset of $\HH(\RN)$, both of which satisfy a sufficient decay at infinity. Then we make use of a comparison principle to show that there exists a representative $\tilde H$ of the solution $H$ of \eqref{eq_QHzero} which satisfies $\myRsub(x) \leq \tilde H(x) \leq \myP(x)$ almost everywhere and conclude that $\tilde H$ must have the same decay at infinity as $\myP$ and $\myRsub$.

	For this purpose, we first introduce a coordinate system that is aligned with the fixed vector $U_0$.
	Since we excluded the trivial case where $U_0 = (0,0)^\top$ (see Remark \ref{rem_U0_neq_0}), we can introduce the unit vector $e_1 = U_0 /|U_0|$ and the orthonormal basis $(e_1, e_2)$ of $\RN$. We denote $(x_1, x_2)$ the system of coordinates in this basis and introduce the half space  $\RN_+ := \lbrace x \in \RN: U_0 \cdot x \geq 0 \rbrace$. We first show that there exists a representative $\tilde H$ of the solution $H$ to \eqref{variationApprox2} that is odd with respect to the first coordinate.
	
	\begin{lemma} \label{lem:Hodd}
		Let $H\in \HH(\RN)$ be the unique solution to the operator equation $QH=0$ with $Q$ defined in \eqref{defQ} and assume that $\omega$ is symmetric with respect to the line $\lbrace x \in \RN: U_0 \cdot x = 0 \rbrace$. Then there exists an element $\tilde H$ of the class $H$ such that, for all $(x_1, x_2) \in \RN$,
		\begin{align*}
			\tilde H(-x_1,x_2) = -\tilde H(x_1, x_2).
		\end{align*}
		In particular, $\tilde H(0,x_2) = 0$ for all $x_2\in \mathbb R$.
	\end{lemma}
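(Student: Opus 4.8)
The plan is to use a reflection symmetry argument together with the uniqueness result of Proposition~\ref{propExUnH}. Working in the orthonormal basis $(e_1,e_2)$ with $e_1 = U_0/|U_0|$, let $\sigma$ denote the reflection $\sigma(x_1,x_2) = (-x_1,x_2)$ across the line $\{U_0\cdot x = 0\}$; as a matrix it satisfies $\sigma^\top = \sigma$, $\sigma^2 = I$ and $\sigma U_0 = -U_0$. First I would record three structural invariances. (i) Since $T(W) = \nuh(|W|)W$ is isotropic and odd, it is both equivariant under $\sigma$ and odd, i.e.\ $T(\sigma W) = \sigma\, T(W)$ and $T(-W) = -T(W)$ for all $W\in\RN$; the same holds trivially for the linear part $\nu_0 W$. (ii) Because $\omega$ is symmetric with respect to $\{U_0\cdot x = 0\}$, the characteristic functions in \eqref{def_Stil_I} are $\sigma$-invariant, so that $\Tt(\sigma x,\sigma W) = \sigma\,\Tt(x,W)$ and $\Tt(x,-W) = -\Tt(x,W)$. (iii) Since the weight $w$ in \eqref{def_weight} is radial, the map $\test\mapsto -\test\circ\sigma$ is an isometry of $\HH^w(\RN)$ (it preserves both the weighted $L^2$ part and the Dirichlet seminorm under the orthogonal change of variables) and hence descends to $\HH(\RN)$.

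Next I would fix a representative $\hat H$ of the class $H$ and define $H^\ast := -\hat H\circ\sigma$, so that $\nabla H^\ast(x) = -\sigma\,(\nabla\hat H)(\sigma x)$. To test \eqref{variationApprox2} for $H^\ast$ against an arbitrary $\test\in\HH(\RN)$, I would change variables $x = \sigma y$ and introduce the transformed test function $\test^\ast := -\test\circ\sigma$. Using $U_0 + \nabla H^\ast(\sigma y) = U_0 - \sigma\,(\nabla\hat H)(y)$ together with the equivariance and oddness in (i)--(ii) and the identity $\sigma U_0 = -U_0$, one obtains
\begin{align*}
	\Tt(\sigma y, U_0 + \nabla H^\ast(\sigma y)) - \Tt(\sigma y, U_0) = -\sigma\big[\Tt(y, U_0 + \nabla\hat H(y)) - \Tt(y, U_0)\big],
\end{align*}
while $\nabla\test(\sigma y) = -\sigma\,\nabla\test^\ast(y)$. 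Since $\sigma$ is orthogonal, the two reflections cancel in the inner product, so the left-hand side of \eqref{variationApprox2} for $H^\ast$ tested against $\test$ equals the left-hand side for $\hat H$ tested against $\test^\ast$. An analogous change of variables in the source integral over $\omega$, again using $\sigma U_0 = -U_0$ and the $\sigma$-invariance of $\omega$, shows that the right-hand side transforms in exactly the same manner. As $\hat H$ solves \eqref{variationApprox2} for every admissible test function, $H^\ast$ is a solution as well.

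By the uniqueness part of Proposition~\ref{propExUnH}, $H^\ast$ and $\hat H$ represent the same class in $\HH(\RN)$, hence differ by a constant: $-\hat H(\sigma x) = \hat H(x) + c$ almost everywhere for some $c\in\mathbb R$. Setting $\tilde H := \hat H + c/2$ then yields a representative of the class $H$ with $\tilde H(\sigma x) = -\tilde H(x)$, that is $\tilde H(-x_1,x_2) = -\tilde H(x_1,x_2)$; evaluating on the line $x_1 = 0$ forces the trace of $\tilde H$ there to vanish, giving $\tilde H(0,x_2) = 0$, which is the claimed statement once the local regularity of $H$ is used to pass from the almost-everywhere identity to the pointwise one.

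The main point to get right is the bookkeeping of signs in the nonlinear term: it is precisely the interplay of the minus sign built into $H^\ast = -\hat H\circ\sigma$, the relation $\sigma U_0 = -U_0$, and the oddness $T(-W) = -T(W)$ that renders both the operator difference and the dipole-type source invariant under the transformation, none of these ingredients being sufficient on its own. A secondary technical check is that $\test\mapsto -\test\circ\sigma$ genuinely preserves $\HH^w(\RN)$, so that $\test^\ast$ is an admissible test function; this follows from the radial symmetry of $w$ and the invariance of the Dirichlet seminorm under orthogonal maps. Beyond these verifications the argument is a routine change of variables combined with the uniqueness already established.
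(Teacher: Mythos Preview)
Your proof is correct and follows essentially the same approach as the paper: define the reflected function $-\hat H\circ\sigma$, verify via a change of variables (using $\sigma U_0=-U_0$, the oddness/equivariance of $T$, and the $\sigma$-invariance of $\omega$) that it again solves \eqref{variationApprox2}, invoke the uniqueness of Proposition~\ref{propExUnH} to conclude it differs from $\hat H$ by a constant, and then shift the representative to obtain the odd one. The only cosmetic difference is that the paper determines the constant by evaluating on the line $x_1=0$ and normalizing $\tilde H(0,0)=0$, whereas you directly set $\tilde H=\hat H+c/2$; both are equivalent.
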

    
	A proof of Lemma \ref{lem:Hodd} can be found in Section \ref{sec_proofsTDNL} on page \pageref{lem:Hodd_proof}.

	Lemma \ref{lem:Hodd} allows us to investigate the asymptotic behavior of $\tilde H$ only in the half plane $\RN_+$. However, Lemma \ref{lem:Hodd} is based on the assumption that $\omega$ is symmetric with respect to the line $\lbrace x \in \RN: U_0 \cdot x = 0 \rbrace$. In order to fulfill this assumption for any given $U_0 \in \RN$, we will from now on restrict ourselves to the case where $\omega = B(0,1)$.

	\begin{proposition} \label{prop:supersol1}
		\info{(corresponds to Proposition 4.4.6 in \cite{Bonnafe2013})} 
		Let $\omega = B(0,1)$ and let Assumption \ref{assump_delta} hold. Then there exists $\sigma > \Nhalf$ and $k\in(0,1]$ such that the function $\myP \in \HH(\RN)\cap L^{\infty}(\RN)$ defined by 
		\begin{align} \label{defP}
			\myP(x) := \begin{cases}
						 k(U_0 \cdot x) |x|^{-\sigma} & x \in \RN \setminus \omega, \\
						 k(U_0 \cdot x) &  x \in \omega,
			        \end{cases}
		\end{align}
		satisfies
		\begin{align}
			\langle Q\, \myP, \test \rangle \geq 0  \quad \forall \test \in \HH(\RN):\mbox{ supp}(\test) \subset \RN_+, \, \test \geq 0 \mbox{  a.e.} \label{QPgreaterEqualZero}
		\end{align}
		for the operator $Q$ defined in \eqref{defQ}.
	\end{proposition}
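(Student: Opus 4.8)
The plan is to verify that the explicitly prescribed $\myP$ is a weak supersolution on the half-space by reducing $\langle Q\myP,\test\rangle\ge 0$ to two pointwise sign conditions. First I would integrate by parts, equivalently compute the distributional divergence of the flux $\Phi:=\Tt(x,U_0+\nabla\myP)-\Tt(x,U_0)+\chi_\omega(\nu_0-\nuh(|U_0|))U_0$. Since $\myP(x)=k(U_0\cdot x)$ is affine, hence harmonic, on $\omega=B(0,1)$, the inclusion contributes no volume term, and since $\myP$ is continuous across $\partial\omega$ the surviving terms are a bulk term on $\RN\setminus\omega$ and an interface term on $\partial\omega$:
\begin{align*}
\langle Q\myP,\test\rangle = -\int_{\RN\setminus\omega}\big[\DT(U_0+\nabla\myP):\nabla^2\myP\big]\test + \int_{\partial\omega}\big(\Phi_{\mathrm{int}}-\Phi_{\mathrm{ext}}\big)\cdot n\,\test,
\end{align*}
where I use the identity $\divv[T(U_0+\nabla\myP)]=\DT(U_0+\nabla\myP):\nabla^2\myP$, where $n=x$ is the outward unit normal of $\omega$, and where $\Phi_{\mathrm{int}},\Phi_{\mathrm{ext}}$ denote the interior and exterior flux traces. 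As $\test\ge 0$ with $\mathrm{supp}(\test)\subset\RN_+$, it suffices to show (a) $\DT(U_0+\nabla\myP):\nabla^2\myP\le 0$ on $(\RN\setminus\omega)\cap\RN_+$, and (b) $(\Phi_{\mathrm{int}}-\Phi_{\mathrm{ext}})\cdot n\ge 0$ on $\partial\omega\cap\RN_+$.

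For (a) I would work in the orthonormal frame $(e_1,e_2)$ with $e_1=U_0/|U_0|$, so that $\myP=k|U_0|\,x_1|x|^{-\sigma}$ and $\DT(U_0)=\mathrm{diag}(\lambda_2,\lambda_1)$ with $\lambda_1=\nuh(|U_0|)$, $\lambda_2=\nuh(|U_0|)+\nuh'(|U_0|)|U_0|$. Splitting $\DT(U_0+\nabla\myP):\nabla^2\myP=L+N$ with the linear part $L:=\DT(U_0):\nabla^2\myP$ and the remainder $N:=[\DT(U_0+\nabla\myP)-\DT(U_0)]:\nabla^2\myP$, a direct computation in polar coordinates gives
\begin{align*}
L = k|U_0|\,\sigma\,x_1\,|x|^{-\sigma-2}\,B(\mu), \qquad B(\mu)=(\sigma+1)\lambda_1-3\lambda_2+(\sigma+2)(\lambda_2-\lambda_1)\mu,
\end{align*}
with $\mu=x_1^2/|x|^2\in[0,1]$. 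Since $B$ is affine, $B\le-\beta_0<0$ on $[0,1]$ as soon as $B(0)=(\sigma+1)\lambda_1-3\lambda_2<0$ and $B(1)=(\sigma-1)\lambda_2-\lambda_1<0$, i.e. $1<\sigma<\min\{3\lambda_2/\lambda_1-1,\,1+\lambda_1/\lambda_2\}$. This interval is nonempty precisely because $\lambda_2/\lambda_1=1+\nuh'(|U_0|)|U_0|/\nuh(|U_0|)\ge 1+\delta_{\nuh}>2/3$, which is exactly the condition $\delta_{\nuh}>\delta_{\nuh}^{\myP}=-1/3$ of Assumption \ref{assump_delta}. Fixing such a $\sigma>1$ also guarantees $\nabla\myP\in L^2(\RN)$ and $\myP\in L^\infty(\RN)$, so $\myP\in\HH(\RN)\cap L^\infty(\RN)$, and with $x_1\ge 0$ on $\RN_+$ it yields $L\le-\beta_0 k|U_0|\sigma\,x_1|x|^{-\sigma-2}\le 0$.

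It remains to absorb $N$, which is the heart of the argument. Away from the axis $\{x_1=0\}$ the crude bound $|N|\le\cGOld|\nabla\myP|\,|\nabla^2\myP|=O(k^2|x|^{-2\sigma-1})$ combined with $|L|\gtrsim k\,x_1|x|^{-\sigma-2}$ gives $|N|/|L|=O(k/\cos\theta)$, which is $\le 1$ for $k$ small when $\cos\theta$ is bounded below. The main obstacle is the degeneracy near the axis, where $L$ vanishes like $x_1=|x|\cos\theta$ but the crude bound on $N$ does not. Here I would exploit the structural fact that on $\{x_1=0\}$ the vector $U_0+\nabla\myP$ stays parallel to $U_0$, so $\DT(U_0+\nabla\myP)$ remains diagonal in $(e_1,e_2)$ while $\nabla^2\myP$ is purely off-diagonal; a first-order Taylor expansion of $\DT$ about $U_0$ (licit since $T\in C^3$ by Lemma \ref{lemmaProperties2}, with the bounds of Assumption \ref{assump_c78}) shows that the off-diagonal entries of $\DT(U_0+\nabla\myP)-\DT(U_0)$ are themselves $O(\cos\theta)$, whence $N=O(k^2\cos\theta\,|x|^{-2\sigma-1})$ and $|N|/|L|=O(k)$ uniformly. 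Choosing $k\in(0,1]$ small enough then gives $L+N\le 0$ on all of $(\RN\setminus\omega)\cap\RN_+$.

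Finally, for (b) I would expand the interface term for small $k$: writing $a=x\cdot e_1\ge 0$, $b=x\cdot e_2$, $a^2+b^2=1$ on $\partial\omega$, one obtains
\begin{align*}
(\Phi_{\mathrm{int}}-\Phi_{\mathrm{ext}})\cdot n = |U_0|\,a\Big\{[\nu_0-\nuh(|U_0|)]+k[\nu_0-\lambda_2]+k\sigma(\lambda_2 a^2+\lambda_1 b^2)\Big\}+O(k^2).
\end{align*}
By \eqref{propNuBounded} and \eqref{propNuPrimeBounded} the first two bracketed terms are nonnegative, while $k\sigma(\lambda_2 a^2+\lambda_1 b^2)\ge k\sigma\,\numin>0$ dominates the $O(k^2)$ remainder for $k$ small; since $a\ge 0$ on $\partial\omega\cap\RN_+$ the whole expression is $\ge 0$. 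Taking $k$ small enough to satisfy (a) and (b) simultaneously concludes the proof. The essential difficulty, and the precise place where Assumption \ref{assump_delta} enters, is the coupled control of the anisotropy-driven decay exponent $\sigma$ and of the nonlinear remainder $N$ near the symmetry axis.
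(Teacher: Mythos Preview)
Your argument is correct and takes a genuinely different route from the paper's proof. Both proofs begin with the same integration-by-parts decomposition into interior, transmission, and exterior contributions, and both dispose of the interior and transmission terms easily. The divergence occurs in the treatment of the exterior condition $-\divv(T(U_0+\nabla \myP))\ge 0$ on $(\RN\setminus\omega)\cap\RN_+$.

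The paper works directly with the full nonlinear expression $\nuh(|\tilde\varphi|)(\sigma-2)+\nuh'(|\tilde\varphi|)|U_0|^2 f(r,\theta)/|\tilde\varphi|$ and performs a three-way case split on the sign of $\nuh'(|\tilde\varphi|)$: the cases $\nuh'=0$ and $\nuh'>0$ are handled by elementary inequalities (the latter yielding the bound $\sigma<\sigma_1$), while the delicate case $\nuh'<0$ with $f<0$ requires a careful lower bound on $g(r,\theta)=|U_0|^2 f/|\tilde\varphi|^2$, and it is precisely here that the threshold $\delta_{\nuh}>-1/3$ enters, through the monotone auxiliary function $\tilde f$ of \eqref{proof_P_ftilde}. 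The paper then produces explicit formulas for $k$ and the admissible range of $\sigma$, with $k=1$ whenever $\delta_{\nuh}>-1/5$.

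You instead linearize the flux around the constant state $U_0$, writing $\DT(U_0+\nabla\myP):\nabla^2\myP=L+N$ with $L=\DT(U_0):\nabla^2\myP$. The assumption $\delta_{\nuh}>-1/3$ enters already at the linear level, as exactly the condition that the affine function $B(\mu)$ can be made strictly negative on $[0,1]$ for some $\sigma>1$. The nonlinear remainder $N$ is then absorbed by taking $k$ small, the key structural observation being that on the symmetry axis $\{x_1=0\}$ the vector $U_0+\nabla\myP$ stays parallel to $U_0$, forcing the off-diagonal entries of $\DT(U_0+\nabla\myP)-\DT(U_0)$ to carry a factor $\cos\theta$; this matches the $\cos\theta$ degeneracy of $L$ and yields the uniform ratio $|N|/|L|=O(k)$.

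Your approach is more perturbative and conceptually cleaner: it isolates where the hypothesis $\delta_{\nuh}>\delta_{\nuh}^{\myP}$ is genuinely needed (only for the constant-coefficient operator $\DT(U_0)$) and avoids the case analysis on $\mathrm{sgn}\,\nuh'$ entirely. The paper's approach, in exchange, yields an explicit and sometimes large $k$ (indeed $k=1$ over most of the admissible range of $\delta_{\nuh}$), which could matter if one wanted quantitative control on the supersolution, and it does not rely on the Lipschitz bound for $\DT$ coming from Assumption~\ref{assump_c78} in the same way. Both proofs implicitly use the boundedness of $\nuh'$ from Assumption~\ref{assump_c78}, so neither is strictly weaker in its hypotheses.
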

    
	The proof of Proposition \ref{prop:supersol1} is very technical and can be found on page \pageref{prop:supersol1_proof}.

	Next, we provide a subsolution $\myRsub$ satisfying $\langle Q \, \myRsub , \test \rangle \leq 0$ for all $\test$ from the same set of test functions.
	\begin{proposition} \label{prop:subsol}
	Let $\omega = B(0,1)$ and let Assumption \ref{assump_delta} hold. Then there exists $\sigma > \Nhalf$ such that the function $\myRsub \in \HH(\RN)\cap L^{\infty}(\RN)$ defined by 
		\begin{align} \label{defR}
			\myRsub (x):= \begin{cases}
					k (U_0 \cdot x) |x|^{-\sigma} & x \in \RN \setminus \omega,\\
					k (U_0 \cdot x) & x \in \omega,
			     \end{cases}
		\end{align}
		with 
		\begin{align*}
			k = \frac{\numin-\nu_0}{\nu_0 + \numin(\sigma-1)} \in (-1,0)
		\end{align*}
		satisfies
		\begin{align}
			\langle Q \, \myRsub, \test \rangle \leq 0 \quad \forall \test \in \HH(\RN): \mbox{supp}(\test) \subset \RN_+, \test \geq 0 \mbox{  a.e.} \label{QRgreaterEqualZero}
		\end{align}
		for the operator $Q$ defined in \eqref{defQ}.
	\end{proposition}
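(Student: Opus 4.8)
The plan is to follow the proof of Proposition \ref{prop:supersol1} almost verbatim, reversing every inequality and exploiting that here the prefactor $k$ is negative. Working in the orthonormal frame $(e_1,e_2)$ with $e_1=U_0/|U_0|$, so that $U_0\cdot x=|U_0|x_1$ and $\myRsub=k|U_0|\,x_1|x|^{-\sigma}$ outside $\omega$, I would first record that $\myRsub\in\HH(\RN)\cap L^\infty(\RN)$: boundedness is clear since $x_1|x|^{-\sigma}=|x|^{1-\sigma}\cos\theta$, and $|\nabla\myRsub|=O(|x|^{-\sigma})$ is square-integrable on $\RN\setminus\omega$ precisely because $\sigma>1$. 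Since $\myRsub$ is continuous across $\partial\omega=\partial B(0,1)$ and smooth on either side, I would split $\langle Q\,\myRsub,\test\rangle$ into the contributions from $\omega$ and from $\RN\setminus\omega$ and integrate by parts in each. Inside $\omega$ one has $\Tt(x,U_0+\nabla\myRsub)-\Tt(x,U_0)=\nu_0\nabla\myRsub=\nu_0 kU_0$, a constant, so together with the forcing $(\nu_0-\nuh(|U_0|))U_0$ the inner flux $\Phi_{\mathrm{in}}:=[\nu_0 k+\nu_0-\nuh(|U_0|)]U_0$ is divergence free and contributes only a boundary term on $\partial\omega$. Outside, the flux $\Phi_{\mathrm{out}}:=T(U_0+\nabla\myRsub)-T(U_0)$ contributes a volume term $-\int_{\RN\setminus\omega}(\di\,\Phi_{\mathrm{out}})\,\test$ plus a boundary term on $\partial\omega$, the contribution at infinity vanishing by the decay $|\Phi_{\mathrm{out}}|=O(|x|^{-\sigma})$ (justified first for $\test\in C_c^\infty$ and then extended by a density argument). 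This reduces the claim to two pointwise inequalities on the support of $\test$, i.e.\ on $\RN_+$: namely $\di\,\Phi_{\mathrm{out}}\ge 0$ in $\RN_+\setminus\omega$ and $(\Phi_{\mathrm{in}}-\Phi_{\mathrm{out}})\cdot n\le 0$ on $\partial\omega\cap\RN_+$, where $n=x$ is the outward unit normal.

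For the boundary inequality I would compute $\nabla\myRsub$ on $|x|=1^+$, obtaining $W:=U_0+\nabla\myRsub=(1+k)U_0-k\sigma(U_0\cdot x)x$ and hence $W\cdot x=(U_0\cdot x)[1+k(1-\sigma)]$. Since $k<0$ and $\sigma>1$, the factor $1+k(1-\sigma)=1-k(\sigma-1)$ is strictly positive, and on $\RN_+$ we have $U_0\cdot x\ge 0$; after dividing by $U_0\cdot x>0$ the required inequality $(\Phi_{\mathrm{in}}-\Phi_{\mathrm{out}})\cdot n\le0$ becomes $\nu_0(1+k)\le\nuh(|W|)\,[1-k(\sigma-1)]$. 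The monotonicity bound $\nuh(|W|)\ge\numin$ of \eqref{propNuBounded} together with the positivity of $1-k(\sigma-1)$ reduces this to $\nu_0(1+k)\le\numin[1-k(\sigma-1)]$, and the specific value $k=(\numin-\nu_0)/(\nu_0+\numin(\sigma-1))$ is exactly the root of $\nu_0(1+k)=\numin[1-k(\sigma-1)]$, so the boundary inequality holds (with equality in the worst case) for every $\sigma>1$. This is the step where the precise formula for $k$ is used; it is the analogue of the flux-matching that would hold exactly for a linear inclusion of reluctivity $\numin$.

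It then remains to choose $\sigma>1$ so that the volume inequality $\di\,T(U_0+\nabla\myRsub)\ge0$ holds throughout $\RN_+\setminus\omega$; this is the main obstacle and the only place where Assumption \ref{assump_delta} enters. Writing $W=U_0+\nabla\myRsub$ and expanding $\di\,T(W)=\DT(W):\nabla W$ with $\nabla W=\nabla^2\myRsub$ the Hessian of $k|U_0|x_1|x|^{-\sigma}$, I would express the sign of $\di\,T(W)$ through the eigenvalue structure \eqref{DTW_eigenvalues}, i.e.\ through the ratio $\nuh'(|W|)|W|/\nuh(|W|)$, which is bounded below by $\delta_{\nuh}$. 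A careful estimate of the resulting trigonometric expression, uniform over all directions $\theta$ and radii $r\ge1$, should show that the worst case is controlled precisely by the threshold $\delta_{\nuh}^{\myRsub}=-(1+k_1)^2/((1+k_1)^2+2)$ with $k_1=(\numin-\nu_0)/\nu_0$ (which equals the $\sigma\to1^+$ limit of $k$), so that under Assumption \ref{assump_delta} there is a nonempty range of admissible exponents $\sigma>1$ for which $\di\,T(W)\ge0$. Fixing such a $\sigma$ and the corresponding $k$, both reduced inequalities hold on $\RN_+$, whence $\langle Q\,\myRsub,\test\rangle\le0$ for every admissible $\test$, proving the proposition. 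The hard part will be exactly this uniform-in-$(\theta,r)$ sign analysis of $\di\,T(W)$, since $W$ ranges over a region where both $|W|$ and its direction vary and where $\nuh'$ may be negative; it mirrors, with reversed inequalities, the technical core of Proposition \ref{prop:supersol1}.
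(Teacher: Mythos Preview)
Your strategy is correct and coincides with the paper's: the same integration-by-parts split into interior, transmission, and exterior conditions, and your treatment of the first two (including the observation that the specific value of $k$ turns the transmission inequality into an equality in the worst case $\nuh(|W|)=\numin$) is exactly what the paper does. For the exterior sign analysis you defer, the paper indeed mirrors the supersolution argument, but be aware of one extra subtlety absent from Proposition~\ref{prop:supersol1}: because here $k<0$, the auxiliary quantity $1+kr^{-\sigma}(1+\sigma)$ (the analogue of $d_0+q$) can change sign, which forces a further split of the critical Case~2b into two subcases with separate lower bounds on the ratio $g$; the threshold $\delta_{\nuh}^{\myRsub}$ you correctly identify is the value at $\sigma=1$ of the less restrictive of the two resulting constraints.
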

    
	The technical proof of Proposition \ref{prop:subsol} can be found in Section \ref{sec_proofsTDNL} on page \pageref{prop:subsol_proof}.

	Now, we can show that there exists an element $\tilde H$ of the class $H$, where $H \in \HH(\RN)$ is the solution to \eqref{variationApprox2}, which has the same asymptotic behavior as $\myP$ and $\myRsub$ defined in \eqref{defP} and \eqref{defR}, respectively, by means of a comparison principle.
	\begin{proposition} \label{prop:compPrinciple}
		Let $\omega = B(0,1)$ and let Assumption \ref{assump_delta} hold. Let $\myP$ the supersolution defined in Proposition \ref{prop:supersol1}, $\myRsub$ the subsolution defined in Proposition \ref{prop:subsol} and $H\in \HH(\RN)$ the unique solution to the operator equation $QH=0$ with $Q$ defined in \eqref{defQ}. Then there exists an element $\tilde H$ of the class $H$ such that 
		\begin{align}
			\myRsub(x) \leq \tilde H(x) \leq \myP(x) \quad \forall x \in \RN_+ \mbox{ a.e.} \label{est_R_H_P}
		\end{align}
	\end{proposition}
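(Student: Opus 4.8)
The plan is to establish the two one-sided bounds in \eqref{est_R_H_P} separately, each via a truncation argument combined with the strong monotonicity of $\Tt$. Throughout I take $\tilde H$ to be the odd representative of the class $H$ provided by Lemma \ref{lem:Hodd}, so that $\tilde H$ vanishes on the line $\lbrace x \in \RN : U_0 \cdot x = 0 \rbrace = \partial \RN_+$; by their explicit definitions \eqref{defP} and \eqref{defR}, both $\myP$ and $\myRsub$ are proportional to $U_0 \cdot x$ and hence vanish on the same line. This common vanishing on $\partial \RN_+$ is precisely what makes the truncated functions admissible test functions, and it is the reason Lemma \ref{lem:Hodd} is invoked first.

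For the upper bound $\tilde H \le \myP$ I would test with
\[
z := \begin{cases} (\tilde H - \myP)^+ & \text{on } \RN_+, \\ 0 & \text{on } \RN \setminus \RN_+. \end{cases}
\]
Since $\tilde H - \myP$ vanishes on $\lbrace U_0 \cdot x = 0 \rbrace$, its positive part has zero trace there, so continuation by zero keeps $z \in H^1_{\mathrm{loc}}(\RN)$; moreover $\nabla z \in L^2(\RN)$ because $\nabla \tilde H, \nabla \myP \in L^2(\RN)$, and $w\,z \in L^2(\RN)$ because $|z| \le |\tilde H| + |\myP|$ with $w\,\tilde H \in L^2(\RN)$, $\myP \in L^\infty(\RN)$ and $w \in L^2(\RN)$. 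Hence $z \in \HH^w(\RN)$, $z \ge 0$, and $\mathrm{supp}(z) \subset \RN_+$, so $z$ is admissible both in \eqref{eq_QHzero} and in \eqref{QPgreaterEqualZero}. Plugging $z$ into \eqref{eq_QHzero} gives $\langle Q\tilde H, z \rangle = 0$ (the form $Q$ sees only the gradient, so $\langle Q\tilde H, \cdot \rangle = \langle QH, \cdot \rangle$), while \eqref{QPgreaterEqualZero} gives $\langle Q\myP, z \rangle \ge 0$. Subtracting, the terms $\Tt(x,U_0)$ and the $\omega$-integral in \eqref{defQ} cancel, leaving
\[
\int_{\RN} \bigl( \Tt(x, U_0 + \nabla \tilde H) - \Tt(x, U_0 + \nabla \myP) \bigr) \cdot \nabla z = \langle Q\tilde H, z \rangle - \langle Q\myP, z \rangle \le 0.
\]
On $\lbrace \tilde H > \myP \rbrace \cap \RN_+$ one has $\nabla z = \nabla(\tilde H - \myP)$ and elsewhere $\nabla z = 0$, so by the strong monotonicity \eqref{prop5} (valid for $\Tt$ by Remark \ref{rem:propXDep_tilde}, with $\varphi = U_0 + \nabla \myP$, $\psi = \nabla \tilde H - \nabla \myP$) the integrand is bounded below by $\cEOld |\nabla z|^2 \ge 0$. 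The integral being $\le 0$ therefore forces $\nabla z = 0$ a.e.; by the Poincar\'e inequality (Lemma \ref{lem:PoincareH} and Corollary \ref{cor:CoercivityH}) the class $[z]$ vanishes, and since $z = 0$ on the set $\RN \setminus \RN_+$ of positive measure we conclude $z \equiv 0$, i.e. $\tilde H \le \myP$ a.e. on $\RN_+$.

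The lower bound $\myRsub \le \tilde H$ follows by the symmetric argument, testing with $z := (\myRsub - \tilde H)^+$ on $\RN_+$ (extended by zero), whose admissibility is checked exactly as above using that $\myRsub$ and $\tilde H$ vanish on $\partial \RN_+$. Here one uses $\langle Q\myRsub, z \rangle \le 0$ from \eqref{QRgreaterEqualZero} against $\langle Q\tilde H, z \rangle = 0$, and the same monotonicity estimate applied to $\Tt(x,U_0 + \nabla \myRsub) - \Tt(x, U_0 + \nabla \tilde H)$ again forces $z \equiv 0$. Combining the two bounds yields \eqref{est_R_H_P}. I expect the main obstacle to be the verification that the truncations $z$ are genuine elements of $\HH^w(\RN)$ supported in the closed half-plane — in particular the continuation-by-zero across $\lbrace U_0 \cdot x = 0 \rbrace$, which hinges on the oddness of $\tilde H$ from Lemma \ref{lem:Hodd} and on the vanishing of $\myP, \myRsub$ on that line, together with the weighted-$L^2$ integrability of $z$ at infinity; once admissibility is secured, the monotonicity estimate makes the comparison essentially immediate.
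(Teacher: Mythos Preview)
Your proposal is correct and follows essentially the same route as the paper: the paper does not spell out the proof but refers to an adaptation of \cite{Bonnafe2013, AmstutzBonnafe2015}, and your truncation-plus-monotonicity argument (testing with $(\tilde H - \myP)^+$ and $(\myRsub - \tilde H)^+$ restricted to $\RN_+$, using the oddness from Lemma~\ref{lem:Hodd} to get zero trace on $\partial\RN_+$, then invoking \eqref{prop5}) is precisely that standard comparison-principle argument. Your identification of the only delicate point---verifying that the truncated functions lie in $\HH^w(\RN)$ with support in $\RN_+$---is accurate, and your justification via $\myP,\myRsub\in L^\infty(\RN)$, $w\in L^2(\RN)$ is the right one.
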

Proposition \ref{prop:compPrinciple} can be proved by an adaptation of the proof in \cite{Bonnafe2013, AmstutzBonnafe2015}, which can be found in \cite{Gangl2017}.

	Finally, collecting the results of Propositions \ref{prop:supersol1}, \ref{prop:subsol} and \ref{prop:compPrinciple}, we can state the following result:
	\begin{theorem} \label{theo:asymp_H}
		Let $\omega = B(0,1)$, let Assumption \ref{assump_delta} hold and let $H\in \HH(\RN)$ be the unique solution to the operator equation $QH=0$ with $Q$ defined in \eqref{defQ}. Then there exists an element $\tilde H$ of the class $H\in \HH(\RN)$ and $\tau > \Nhalfmone$ such that
		\begin{align}
			 \tilde H(y) = \mathcal O \left( |y|^{-\tau} \right) \; \mbox{as} \; |y| \rightarrow \infty. \label{asymp_H}
		\end{align}
	\end{theorem}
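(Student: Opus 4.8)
The plan is to read off the decay directly from the pointwise sandwich $\myRsub(x)\le\tilde H(x)\le\myP(x)$ established in Proposition~\ref{prop:compPrinciple}, after reducing to the half plane $\RN_+$ via the oddness of Lemma~\ref{lem:Hodd}. I would take $\tilde H$ to be the representative of the class $H$ furnished by Proposition~\ref{prop:compPrinciple}, which is precisely the odd representative of Lemma~\ref{lem:Hodd}; thus $\tilde H(-x_1,x_2)=-\tilde H(x_1,x_2)$ and the sandwich estimate holds a.e.\ on $\RN_+$.

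To extract the exponent, let $\sigma_1>\Nhalf$ denote the decay exponent of the supersolution $\myP$ from Proposition~\ref{prop:supersol1}, let $\sigma_2>\Nhalf$ denote that of the subsolution $\myRsub$ from Proposition~\ref{prop:subsol}, and set $\sigma:=\min\{\sigma_1,\sigma_2\}$. For $x\in\RN_+$ with $|x|>1$ (so that $x\in\RN\setminus\omega$) one has $0\le U_0\cdot x\le|U_0|\,|x|$, and hence, from the explicit forms \eqref{defP} and \eqref{defR},
\begin{align*}
	0\le\myP(x)\le C_1\,|x|^{1-\sigma_1},\qquad -C_2\,|x|^{1-\sigma_2}\le\myRsub(x)\le 0,
\end{align*}
with constants $C_1,C_2>0$ depending only on $|U_0|$ and on the constants of Propositions~\ref{prop:supersol1} and \ref{prop:subsol}. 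Since $1-\sigma_i<0$ and $|x|>1$, both extreme bounds are controlled by $|x|^{1-\sigma}$, so the sandwich yields a constant $C>0$ with $|\tilde H(x)|\le C\,|x|^{-\tau}$ for every $x\in\RN_+$ with $|x|>1$, where $\tau:=\sigma-1>\Nhalfmone$.

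It then remains to propagate this bound to the complementary half plane, for which I would invoke oddness: if $U_0\cdot x<0$, then the reflection $x'=(-x_1,x_2)$ lies in $\RN_+$ with $|x'|=|x|$, and $|\tilde H(x)|=|\tilde H(x')|\le C\,|x'|^{-\tau}=C\,|x|^{-\tau}$. Combining the two half planes gives $\tilde H(y)=\mathcal O(|y|^{-\tau})$ as $|y|\to\infty$ with $\tau>\Nhalfmone$, which is the assertion.

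I do not expect a genuine obstacle in this final step, since the substantive work is already carried by Propositions~\ref{prop:supersol1}--\ref{prop:compPrinciple}. The only point demanding care is the bookkeeping that identifies the representative satisfying the comparison bounds with the odd one of Lemma~\ref{lem:Hodd}: both side functions $\myP$ and $\myRsub$ carry the factor $U_0\cdot x$ and therefore vanish on $\{U_0\cdot x=0\}$, which is what pins down the additive constant and makes the reflection argument legitimate.
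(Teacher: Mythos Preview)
Your proposal is correct and follows essentially the same approach as the paper, which simply states that the theorem is obtained by ``collecting the results of Propositions~\ref{prop:supersol1}, \ref{prop:subsol} and \ref{prop:compPrinciple}.'' You have merely made explicit the bookkeeping that the paper leaves to the reader: bounding $\myP$ and $\myRsub$ by $C|x|^{1-\sigma}$ on $\RN_+\setminus\omega$, taking $\tau=\min\{\sigma_1,\sigma_2\}-1>0$, and extending to the full plane via the oddness of Lemma~\ref{lem:Hodd}.
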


	\begin{remark}
		As $\myP$ and $\myRsub$ are both in $L^{\infty}(\RN)$, we also have that $H \in L^{\infty}(\RN)$.
	\end{remark}

	From now on, the function $H_{\varepsilon}$ is defined choosing $\hat H = \tilde H$ in \eqref{def_Heps} where $\tilde H$ is as in Theorem \ref{theo:asymp_H}, i.e., $\He(x) := \varepsilon \Ht (\varepsilon^{-1}x)$ for $x \in \holdAll.$

	\subsubsection{Estimates for the variations of the direct state} \label{subsubsec:justification}
    Exploiting the asymptotic behavior of Theorem \ref{theo:asymp_H}, the following estimates can be obtained.
	\begin{proposition}\label{proposition4412}
		\info{(corresponds to Proposition 4.4.12 in \cite{Bonnafe2013})}
		Let $\omega = B(0,1)$ and let Assumptions \ref{assump_BH}, \ref{assump_delta} and \ref{assump_reg_u0} hold. Then
		\begin{align}
			\| \nabla h_{\varepsilon} - \nabla H_{\varepsilon} \|^2_{L^2(\holdAll)} &= o(\epsN),		\label{hEps_minus_Heps_L2} \\
			\forall \alpha>0 \, \forall r\in(0,1): \int_{\holdAll \setminus B(0, \alpha \varepsilon^r)} |\nabla h_{\varepsilon}|^2 &= o(\epsN), \label{estimate_4422} \\
			\int_{\holdAll} |\nabla u_0 - U_0| |\nabla h_{\varepsilon}|^2 &= o(\epsN),  \label{estimate_4423} \\
			\int_{\holdAll} |\nabla h_{\varepsilon} - \nabla H_{\varepsilon}|(|\nabla h_{\varepsilon}| + |\nabla H_{\varepsilon}|)&= o(\epsN). \label{estimate_4426} 
		\end{align}
	\end{proposition}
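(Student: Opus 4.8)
The plan is to reduce all four estimates to the single boundary-layer estimate \eqref{hEps_minus_Heps_L2}, which in turn rests on the decay of $\Ht$ provided by Theorem \ref{theo:asymp_H}. Throughout I would work on the scaled domain $\holdAll/\varepsilon$. Introducing $\hat{h}_\varepsilon(y) := \varepsilon^{-1}\he(\varepsilon y)$, the change of variables $x = \varepsilon y$ in \eqref{variationApprox1} shows that $\hat{h}_\varepsilon \in H^1_0(\holdAll/\varepsilon)$ solves the same transmission problem as $H$ in \eqref{variationApprox2}, but posed on $\holdAll/\varepsilon$ with homogeneous Dirichlet data; moreover $\nabla\He(\varepsilon y) = \nabla\Ht(y)$ and $\nabla\he(\varepsilon y) = \nabla\hat{h}_\varepsilon(y)$. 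Consequently $\|\nabla\he - \nabla\He\|_{L^2(\holdAll)}^2 = \epsN\int_{\holdAll/\varepsilon}|\nabla\hat{h}_\varepsilon - \nabla\Ht|^2$, so that \eqref{hEps_minus_Heps_L2} is equivalent to $\int_{\holdAll/\varepsilon}|\nabla\hat{h}_\varepsilon - \nabla\Ht|^2 = o(1)$.

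To prove this, I would first subtract the two variational problems: testing the equation \eqref{variationApprox2} defining $H$ against any $v\in H^1_0(\holdAll/\varepsilon)$ (extended by zero, which lies in $\HH(\RN)$) and subtracting the equation for $\hat{h}_\varepsilon$ makes the common source on $\omega$ cancel, leaving
\begin{align*}
	\int_{\holdAll/\varepsilon}\left(\Tt(y,U_0 + \nabla\hat{h}_\varepsilon) - \Tt(y,U_0 + \nabla\Ht)\right)\cdot\nabla v = 0 \qquad \forall v\in H^1_0(\holdAll/\varepsilon).
\end{align*}
Since $z_\varepsilon := \hat{h}_\varepsilon - \Ht$ does not vanish on $\partial(\holdAll/\varepsilon)$, I would correct its trace with a cut-off: fixing $R_0$ with $\overline{B(0,R_0)}\subset\holdAll$ and choosing $\phi_\varepsilon$ with $\phi_\varepsilon\equiv 1$ on $B(0,R_0/(2\varepsilon))$, $\phi_\varepsilon\equiv 0$ outside $B(0,R_0/\varepsilon)$ and $|\nabla\phi_\varepsilon|\le C\varepsilon$, so that $v := \hat{h}_\varepsilon - \phi_\varepsilon\Ht\in H^1_0(\holdAll/\varepsilon)$. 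Writing $g_\varepsilon := (1-\phi_\varepsilon)\Ht$ one has $\nabla v = \nabla z_\varepsilon + \nabla g_\varepsilon$, and testing the identity above with $v$, the strong monotonicity \eqref{prop5} and Lipschitz continuity \eqref{prop6} of $\Tt$ (Remark \ref{rem:propXDep_tilde}) give
\begin{align*}
	\cEOld\|\nabla z_\varepsilon\|_{L^2(\holdAll/\varepsilon)}^2 \le \cFOld\,\|\nabla z_\varepsilon\|_{L^2(\holdAll/\varepsilon)}\,\|\nabla g_\varepsilon\|_{L^2(\holdAll/\varepsilon)},
\end{align*}
hence $\|\nabla z_\varepsilon\|_{L^2(\holdAll/\varepsilon)}\le(\cFOld/\cEOld)\|\nabla g_\varepsilon\|_{L^2(\holdAll/\varepsilon)}$. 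It then remains to show $\|\nabla g_\varepsilon\|_{L^2}=o(1)$. Splitting $\nabla g_\varepsilon = (1-\phi_\varepsilon)\nabla\Ht - \Ht\nabla\phi_\varepsilon$, the first term is bounded by the tail $\|\nabla\Ht\|_{L^2(\RN\setminus B(0,R_0/(2\varepsilon)))}\to 0$, since $\nabla\Ht\in L^2(\RN)$; for the second, Theorem \ref{theo:asymp_H} gives $|\Ht(y)|\le C|y|^{-\tau}\le C\varepsilon^\tau$ on $\mbox{supp}(\nabla\phi_\varepsilon)\subset\{|y|\ge R_0/(2\varepsilon)\}$, and since $|\nabla\phi_\varepsilon|\le C\varepsilon$ on an annulus of area $\mathcal O(\varepsilon^{-2})$, one gets $\|\Ht\nabla\phi_\varepsilon\|_{L^2}^2 \le C\,\varepsilon^{2\tau}\,\varepsilon^{2}\,\varepsilon^{-2} = C\varepsilon^{2\tau}\to 0$. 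This last balance is exactly where the decay exponent $\tau>0$ of Theorem \ref{theo:asymp_H} is indispensable, as it must beat the $\mathcal O(\varepsilon^{-2})$ area of the boundary annulus; this is the main obstacle of the whole proposition and the reason the super/subsolution machinery was developed.

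The remaining three estimates then follow cheaply. For \eqref{estimate_4426}, Cauchy--Schwarz gives $\int_\holdAll|\nabla\he - \nabla\He|(|\nabla\he| + |\nabla\He|)\le\|\nabla\he-\nabla\He\|_{L^2}(\|\nabla\he\|_{L^2}+\|\nabla\He\|_{L^2})$, which is $o(\varepsilon)\cdot\mathcal O(\varepsilon)=o(\epsN)$ by \eqref{hEps_minus_Heps_L2} and Lemma \ref{lem445}. For \eqref{estimate_4422}, the triangle inequality together with \eqref{hEps_minus_Heps_L2} reduces matters to $\int_{\holdAll\setminus B(0,\alpha\varepsilon^r)}|\nabla\He|^2$; rescaling turns this into $\epsN\int_{\RN\setminus B(0,\alpha\varepsilon^{r-1})}|\nabla\Ht|^2$, and since $r<1$ the inner radius $\alpha\varepsilon^{r-1}\to\infty$, so the $L^2(\RN)$-tail of $\nabla\Ht$ vanishes, yielding $o(\epsN)$. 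Finally, for \eqref{estimate_4423} I would split $\holdAll$ into $B(0,\alpha\varepsilon^r)$ and its complement: on the ball, $B(0,\alpha\varepsilon^r)\subset\Omega^d$ for small $\varepsilon$ and Assumption \ref{assump_reg_u0} yields $|\nabla u_0 - U_0|\le C|x|^\beta\le C(\alpha\varepsilon^r)^\beta$, so this contribution is $\le C\varepsilon^{r\beta}\|\nabla\he\|_{L^2}^2 = \mathcal O(\varepsilon^{2+r\beta}) = o(\epsN)$ by Lemma \ref{lem445}; on the complement $|\nabla u_0-U_0|$ is bounded (by boundedness of $\nabla u_0$ on $\holdAll$) and $\int_{\holdAll\setminus B(0,\alpha\varepsilon^r)}|\nabla\he|^2 = o(\epsN)$ by \eqref{estimate_4422}.
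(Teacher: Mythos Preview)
Your approach is essentially the one the paper refers to (the Bonnaf\'e/Amstutz--Bonnaf\'e scheme): rescale, subtract the two transmission problems, correct the boundary trace of $\hat h_\varepsilon-\Ht$ by a cut-off supported in a dilated annulus, use monotonicity/Lipschitz of $\Tt$, and then invoke the decay $\Ht(y)=\mathcal O(|y|^{-\tau})$ to kill the annulus term. The derivations of \eqref{estimate_4422} and \eqref{estimate_4426} from \eqref{hEps_minus_Heps_L2} via rescaling and Cauchy--Schwarz are also the intended ones.

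There is one point in your treatment of \eqref{estimate_4423} that needs care. On the complement $\holdAll\setminus B(0,\alpha\varepsilon^r)$ you invoke ``boundedness of $\nabla u_0$ on $\holdAll$'', but Assumption~\ref{assump_reg_u0} only gives $u_0\in C^{1,\beta}(\Omega^d)$, not an $L^\infty$ bound for $\nabla u_0$ on all of $\holdAll$. The clean fix is to split the outer region once more using the fixed ball $B(0,\rho)\subset\subset\Omega^d$ from \eqref{balls_rho_R}: on $B(0,\rho)\setminus B(0,\alpha\varepsilon^r)$ the $C^{1,\beta}$ bound on $\Omega^d$ makes $|\nabla u_0-U_0|$ bounded and \eqref{estimate_4422} applies; on $\holdAll\setminus B(0,\rho)$ you cannot simply bound $|\nabla u_0-U_0|$ in $L^\infty$, so you must instead exploit that $\nabla\he$ is extremely small there---either via the explicit decay of $\nabla\Ht$ outside $B(0,\rho/\varepsilon)$ together with \eqref{hEps_minus_Heps_L2}, or by interior elliptic estimates for $\he$ on this fixed region (where $\he$ solves a homogeneous quasilinear equation). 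As written, your sentence glosses over this step.
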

	Proposition \ref{proposition4412} can be shown in a similar way as it was done in \cite{Bonnafe2013, AmstutzBonnafe2015}, see \cite{Gangl2017} for an adaptation to our case.

	Note that, in the proof of \eqref{hEps_minus_Heps_L2}, the structure of the simplified setting introduced in Section~\ref{sec_simplSetting} is exploited.
	
	\begin{proposition} \label{proposition4413}
		\info{(corresponds to Proposition 4.4.13 in \cite{Bonnafe2013})} 
		Let $\omega = B(0,1)$ and let Assumptions \ref{assump_BH}, \ref{assump_delta} and \ref{assump_reg_u0} hold. Then
		\begin{align}
			\| \nabla \ut_{\varepsilon} - \nabla h_{\varepsilon} \|_{L^2(\holdAll)}^2 &= o(\epsN), \label{uEpsTilde_minus_hEps} \\
			\int_{\holdAll} |\nabla \ut_{\varepsilon} - \nabla h_{\varepsilon}| (|\nabla \ut_{\varepsilon}| + |\nabla h_{\varepsilon}|) &= o(\epsN),\label{estimate_4429} \\
			\forall \alpha >0 \, \forall r \in(0,1): \int_{\holdAll \setminus B(0,\alpha \varepsilon^r)} |\nabla \ut_{\varepsilon}|^2 &= o(\epsN) \label{estimate_4431}.
		\end{align}
	\end{proposition}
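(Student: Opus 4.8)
The plan is to prove the first estimate \eqref{uEpsTilde_minus_hEps} by an energy argument comparing the two defining problems \eqref{variationDirect_scaleEps} and \eqref{variationApprox1}, and then to obtain \eqref{estimate_4429} and \eqref{estimate_4431} as essentially free consequences. Writing $A:=\nabla u_0$, $B:=U_0$, $P:=\nabla\uet$, $Q:=\nabla h_{\varepsilon}$ and $g(W):=(\nu_0-\nuh(|W|))W=\nu_0 W-T(W)$, I would subtract \eqref{variationApprox1} from \eqref{variationDirect_scaleEps} and test with $\test=\uet-h_{\varepsilon}$, so that $\nabla\test=P-Q$. Adding and subtracting $\Te(x,A+Q)$ isolates a monotone part:
\begin{align*}
	\int_{\holdAll}\big[\Te(x,A+P)-\Te(x,A+Q)\big]\cdot(P-Q)
	=-\int_{\holdAll}E'\cdot(P-Q)-\int_{\ome}\big[g(A)-g(B)\big]\cdot(P-Q),
\end{align*}
where $E':=\big(\Te(x,A+Q)-\Te(x,A)\big)-\big(\Te(x,B+Q)-\Te(x,B)\big)$. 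By the strong monotonicity \eqref{prop5}, valid also for $\Te$ by Remark~\ref{rem:propXDep}, the left-hand side is $\geq\cEOld\|P-Q\|_{L^2(\holdAll)}^2$, so it remains to show that both right-hand terms are $o(\epsN)$.

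For $E'$, the key observation is that inside $\ome$ the operator $\Te(x,\cdot)=\nu_0(\cdot)$ is linear, hence $E'\equiv0$ there, while on $\holdAll\setminus\ome$ we have $\Te(x,\cdot)=T$. Rather than using the splitting $T(W+Q)-T(W)=S_W(Q)+\DT(W)Q$, which would create a term quadratic in $Q$, I would use the fundamental-theorem-of-calculus representation to keep the estimate linear in $Q$:
\begin{align*}
	E'=\int_0^1\big[\DT(A+tQ)-\DT(B+tQ)\big]\,Q\,\mbox dt,\qquad |E'|\leq\cGOld\,|A-B|\,|Q|,
\end{align*}
using that $\DT$ is globally Lipschitz with constant $\cGOld$, a consequence of \eqref{est_D2T} together with \eqref{est_c7}. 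Consequently $\|E'\|_{L^2(\holdAll)}^2\leq\cGOld^2\int_{\holdAll}|U_0-\nabla u_0|^2|\nabla h_{\varepsilon}|^2$; bounding one factor $|U_0-\nabla u_0|$ by its supremum in a neighborhood of $x_0$ (with the far field controlled by the decay estimate \eqref{estimate_4422}) yields $\|E'\|_{L^2(\holdAll)}^2\leq\cGOld^2 M\int_{\holdAll}|U_0-\nabla u_0||\nabla h_{\varepsilon}|^2=o(\epsN)$ by \eqref{estimate_4423}. With the a priori bound $\|P-Q\|_{L^2(\holdAll)}\leq\|\nabla\uet\|_{L^2}+\|\nabla h_{\varepsilon}\|_{L^2}=O(\varepsilon)$ from Lemma~\ref{lem445}, Cauchy--Schwarz gives $|\int E'\cdot(P-Q)|\leq\|E'\|_{L^2}\,O(\varepsilon)=o(\epsN)$. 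For the source term, $g=\nu_0\,I-T$ is Lipschitz by \eqref{prop6}, and Assumption~\ref{assump_reg_u0} gives $|\nabla u_0(x)-U_0|\leq C|x|^{\beta}\leq C'\varepsilon^{\beta}$ on $\ome=\varepsilon\omega$; hence $|g(A)-g(B)|\leq C''\varepsilon^{\beta}$ there and $|\int_{\ome}[g(A)-g(B)]\cdot(P-Q)|\leq C''\varepsilon^{\beta}|\ome|^{1/2}\|P-Q\|_{L^2}=O(\varepsilon^{2+\beta})=o(\epsN)$. Combining the two bounds yields $\cEOld\|P-Q\|_{L^2(\holdAll)}^2=o(\epsN)$, which is \eqref{uEpsTilde_minus_hEps}.

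The remaining two estimates then follow quickly. Estimate \eqref{estimate_4429} is a direct application of Cauchy--Schwarz, $\int_{\holdAll}|\nabla\uet-\nabla h_{\varepsilon}|(|\nabla\uet|+|\nabla h_{\varepsilon}|)\leq\|\nabla\uet-\nabla h_{\varepsilon}\|_{L^2}(\|\nabla\uet\|_{L^2}+\|\nabla h_{\varepsilon}\|_{L^2})=o(\varepsilon)\,O(\varepsilon)=o(\epsN)$, using \eqref{uEpsTilde_minus_hEps} and Lemma~\ref{lem445}. For \eqref{estimate_4431}, the elementary inequality $|\nabla\uet|^2\leq2|\nabla\uet-\nabla h_{\varepsilon}|^2+2|\nabla h_{\varepsilon}|^2$, integrated over $\holdAll\setminus B(0,\alpha\varepsilon^r)$, reduces the claim to \eqref{uEpsTilde_minus_hEps} (whose left-hand side dominates the first contribution over the whole of $\holdAll$) and to the already available decay estimate \eqref{estimate_4422} for the second.

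I expect the \emph{main obstacle} to be the control of $E'$, i.e., of the discrepancy between the genuine variable coefficient $\nabla u_0(x)$ and the frozen value $U_0$: the naive expansion of $T$ into its linearization plus the nonlinear remainder $S_W$ produces a term that is quadratic in $Q=\nabla h_{\varepsilon}$ and would demand $L^4$-type control, whereas the integral representation keeps everything linear in $Q$ and lets estimate \eqref{estimate_4423} of Proposition~\ref{proposition4412} do the work; the $C^{1,\beta}$-regularity of $u_0$ near $x_0$ (Assumption~\ref{assump_reg_u0}) is precisely what makes the source term of order $\varepsilon^{2+\beta}$, hence negligible.
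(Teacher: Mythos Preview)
Your proposal is correct and follows essentially the same energy-method approach as the references the paper cites (Bonnaf\'e, Amstutz--Bonnaf\'e): subtract the two variational problems, test with $\uet-h_{\varepsilon}$, use strong monotonicity \eqref{prop5} for coercivity, and bound the discrepancy $E'$ via the Lipschitz property of $\DT$ together with \eqref{estimate_4423}, the source term via the $C^{1,\beta}$-regularity on $\ome$, and then deduce \eqref{estimate_4429}, \eqref{estimate_4431} from Cauchy--Schwarz, Lemma~\ref{lem445} and \eqref{estimate_4422}. One small point to tighten when you write it out: in the far field (outside a fixed ball contained in $\Omega^d$) you do not have a pointwise bound on $|\nabla u_0-U_0|$, so rather than pulling out $M$ there, use the cruder Lipschitz bound $|E'|\leq 2\cFOld|\nabla h_{\varepsilon}|$ from \eqref{prop6} and invoke \eqref{estimate_4422} directly.
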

	The proof of this proposition is following the lines of \cite{Bonnafe2013, AmstutzBonnafe2015}, too, and is adapted to our case in \cite{Gangl2017}.

	\subsection{Variation of adjoint state} \label{sec:VarAdjoint}
	For $\varepsilon > 0$, we introduce the perturbed adjoint equation to the PDE-constrained optimization problem \eqref{minJ}--\eqref{PDEconstraintUnperturbed} in the simplified setting of Section \ref{sec_simplSetting},
	\begin{align}
		\mbox{Find } \adje \in H^1_0(\holdAll): 
		&\int_{\holdAll}  \mbox{D}\Te(x, \nabla u_0)  \nabla \adje \cdot \nabla \test = - \langle \tilde G, \test \rangle \quad  \forall \test \in H^1_0(\holdAll), \label{adjointEquation_scaleEps}
	\end{align}
	where $\Te$ is given in \eqref{def_Teps_I} and $\tilde G$ fulfills Assumption \ref{assump_J} together with the objective function $\mathcal J$. Note that $ \mbox{D}\Te$ is a symmetric matrix. For $\varepsilon = 0$ we get the unperturbed adjoint equation,
	\begin{align}
		\mbox{Find } \adjz \in H^1_0(\holdAll):
		&\int_{\holdAll} \DT(\nabla u_0) \nabla \adjz \cdot \nabla \test = -\langle \tilde G, \test \rangle \quad \forall \test \in H^1_0(\holdAll), \label{adjoint_eps0}
	\end{align}
	where we used that $\DT_0(x,\nabla u_0) = \DT(\nabla u_0)$ according to the definition of $T_{\varepsilon}$ \eqref{def_Teps_I}.
	For $\varepsilon >0$, we call $\adje$ the perturbed adjoint state, and $\adjz$ the unperturbed adjoint state. Note that we use the same right hand side $\tilde G$, independently of the parameter $\varepsilon \geq0$.
	
	\subsubsection{Regularity assumptions} \label{sec:regularityAssumptions_adjoint}
	Similarly to Assumption \ref{assump_reg_u0}, we also need the unperturbed adjoint state $\adjz$ to be sufficiently regular in a neighborhood of the point of the perturbation $x_0 \in \Omega^d$. We assume the following:
	
	\begin{assumption} \label{assump_reg_v0}
	There exists $\tilde \beta >0$ such that 
		\begin{align*}
			\adjz|_{\Omega^d} \in C^{1,\tilde \beta}(\Omega^d).
		\end{align*}
	\end{assumption}
	\begin{remark}
        In our case, the right hand side of the adjoint equation \eqref{adjoint_eps0} is a distribution $\tilde G \in H^{-1}(\holdAll)$ satisfying an expansion of the form \eqref{expansion_J}. However, since we assume that $\mathcal J$ is not supported in $\Omega^d$, $\tilde G$ is not supported in $\Omega^d$ and thus the assumption that $\adjz$ is smooth in $\Omega^d$ is reasonable. \unsure{talk to somebody..}
	\end{remark}
		
	\subsubsection{Step 1: variation $\adje - \adjz$}
	We proceed in an analogous way to Section \ref{sec:Step1Direct_I}.
	Subtracting \eqref{adjoint_eps0} from \eqref{adjointEquation_scaleEps}, we get
	 the boundary value problem defining the variation of the adjoint state at scale $\varepsilon$, $\adjet := \adje - \adjz$:
	\begin{align} 
		&\mbox{Find } \adjet \in H^1_0(\holdAll) :\nonumber\\
		&\int_{\holdAll} \DT_{\varepsilon}(x,\nabla u_0) \nabla \adjet \cdot \nabla \test = -\int_{\ome} (\nu_0 \, I - \DT(\nabla u_0) ) \nabla \adjz \cdot \nabla \test \;  \forall \test \in H^1_0(\holdAll).\label{variationAdjoint_scaleEps}
	\end{align}
	
	\subsubsection{Step 2: approximation of Variation $\adje - \adjz$} \label{sec:approxStep1adj}
	Analogously to Section \ref{sec:approxStep1}, we approximate boundary value problem \eqref{variationAdjoint_scaleEps} by the same boundary value problem where the functions $\nabla u_0, \nabla \adjz$ are replaced by their values at the point $x_0$, $U_0 = \nabla u_0(x_0)$ and $\Adjz := \nabla \adjz(x_0)$, respectively. Again, note that this point evaluation makes sense due to Assumptions \ref{assump_reg_u0} and \ref{assump_reg_v0}. We denote the solution to the arising boundary value problem by $k_{\varepsilon}$:
	\begin{align}
		&\mbox{Find } k_{\varepsilon} \in H^1_0(\holdAll) :\nonumber\\
		&\int_{\holdAll} \DT_{\varepsilon}(x,U_0) \nabla k_{\varepsilon} \cdot \nabla \test = -\int_{\ome} (\nu_0 \, I - \DT(U_0) ) \Adjz \cdot \nabla \test \quad  \forall \test \in H^1_0(\holdAll). \label{variationAdjoint_scaleEpsApprox}
	\end{align}

	\subsubsection{Step 3: change of scale} \label{sec:approxStep2adj}
	Also here, we proceed analogously to the case of the variation of the direct state presented in \ref{sec:changeOfScale}. We perform a change of scale and then approximate boundary value problem \eqref{variationAdjoint_scaleEpsApprox} by sending the outer boundary to infinity, which yields the linear transmission problem
	\begin{align} \label{variationAdjoint_scale1}
		&\mbox{Find } K \in \HH(\RN) :\nonumber\\
		&\int_{\RN} \mbox{D}\Tt(x, U_0) \nabla K \cdot \nabla \test = -\int_{\omega} (\nu_0 \, I - \DT(U_0) ) \Adjz \cdot \nabla \test \quad  \forall \test \in \HH(\RN).
	\end{align}
	Note that \eqref{variationAdjoint_scale1} is independent of $\varepsilon$.
	
	It is straightforward to establish the well-posedness of problem \eqref{variationAdjoint_scale1}:
	\begin{lemma} \label{lem_exUniqK}
		Let Assumption \ref{assump_BH} hold. Then, there exists a unique solution $K \in \HH(\RN)$ to problem \eqref{variationAdjoint_scale1}.
	\end{lemma}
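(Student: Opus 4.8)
The plan is to recognize that, unlike problem \eqref{variationApprox2}, the transmission problem \eqref{variationAdjoint_scale1} is \emph{linear}, so that the Zarantonello machinery of Proposition \ref{propExUnH} collapses to the Lax--Milgram theorem. First I would recast \eqref{variationAdjoint_scale1} in the abstract variational form $a(K,\test)=\ell(\test)$ for all $\test\in\HH(\RN)$, where
\begin{align*}
	a(K,\test) := \int_{\RN} \mbox{D}\Tt(x,U_0)\,\nabla K\cdot\nabla\test, \qquad
	\ell(\test) := -\int_{\omega}\left(\nu_0\,I-\DT(U_0)\right)\Adjz\cdot\nabla\test.
\end{align*}
Because $U_0=\nabla u_0(x_0)$ and $\Adjz=\nabla\adjz(x_0)$ are fixed vectors, $\mbox{D}\Tt(x,U_0)$ is a fixed (piecewise constant in $x$) symmetric matrix and $(\nu_0 I-\DT(U_0))\Adjz$ is a constant vector. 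Both $a$ and $\ell$ depend on their arguments only through gradients, hence are invariant under addition of constants and are therefore well defined on the quotient space $\HH(\RN)$.

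To verify coercivity I would use the lower ellipticity bound: by property \eqref{prop4} together with Remark \ref{rem:propXDep_tilde}, $\psi^\top\mbox{D}\Tt(x,U_0)\psi\geq\cDuOld|\psi|^2$ for all $\psi\in\RN$ and a.e.\ $x\in\RN$, so that
\begin{align*}
	a(\test,\test)\geq \cDuOld\int_{\RN}|\nabla\test|^2 = \cDuOld\,|[\test]|_{\HH(\RN)}^2,
\end{align*}
and Corollary \ref{cor:CoercivityH} upgrades the seminorm on the right to an equivalent full norm, yielding coercivity on $\HH(\RN)$. Boundedness of $a$ follows analogously from the upper bound $\psi^\top\mbox{D}\Tt(x,U_0)\psi\leq\cDoOld|\psi|^2$ and Cauchy--Schwarz, giving $|a(K,\test)|\leq\cDoOld\,\|\nabla K\|_{L^2(\RN)}\|\nabla\test\|_{L^2(\RN)}$, again controlled via the norm equivalence.

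For $\ell$, the key observation is that the integration extends only over the bounded set $\omega$, so by Cauchy--Schwarz
\begin{align*}
	|\ell(\test)|\leq\left|\left(\nu_0\,I-\DT(U_0)\right)\Adjz\right|\,|\omega|^{1/2}\,\|\nabla\test\|_{L^2(\omega)}\leq C\,|[\test]|_{\HH(\RN)},
\end{align*}
which is bounded by $\|[\test]\|_{\HH(\RN)}$ through Corollary \ref{cor:CoercivityH}; hence $\ell\in\HH(\RN)^*$. With coercivity, continuity, and $\ell$ bounded all in hand, the Lax--Milgram theorem delivers the unique $K\in\HH(\RN)$. I do not expect a genuine obstacle here: the nonlinearity that forced the delicate monotonicity estimates in Proposition \ref{propExUnH} is absent, and everything reduces to the uniform ellipticity bounds \eqref{prop4} plus the compact support of the right-hand side. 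The only point requiring care is to keep all quantities at the level of gradients so that the forms descend correctly to the quotient $\HH(\RN)$, which is precisely what the definitions above ensure.
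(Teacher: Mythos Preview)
Your proposal is correct and matches the paper's own proof essentially line for line: the paper also applies Lax--Milgram, invoking \eqref{prop4} together with Remark \ref{rem:propXDep_tilde} and Corollary \ref{cor:CoercivityH} for coercivity and boundedness, and notes that the right-hand side is obviously a bounded linear functional on $\HH(\RN)$. Your write-up is in fact more detailed than the paper's sketch, and the extra care you take in noting that the forms depend only on gradients (and hence descend to the quotient) is a welcome clarification.
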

	\begin{proof}
		We show existence and uniqueness of a solution $H\in\HH(\RN)$ to \eqref{variationAdjoint_scale1} by the lemma of Lax-Milgram. The coercivity and boundedness of the left hand side of \eqref{variationAdjoint_scale1} can be shown by exploiting \eqref{prop4} together with Remark \ref{rem:propXDep_tilde}, the physical properties \eqref{physProperties} and the norm equivalence of Corollary \ref{cor:CoercivityH}. The right hand side of \eqref{variationAdjoint_scale1} is obviously a bounded linear functional on $\HH(\RN)$.
	\end{proof}

	\begin{remark}
	We remark that, for $\Adjz=(0,0)^\top$, problems \eqref{variationAdjoint_scaleEpsApprox} and \eqref{variationAdjoint_scale1} only admit the trivial solution such that $\nabla K$, $\nabla k_{\varepsilon}$ are identical zero. In this case, many computations simplify significantly. For the rest of this work, we exclude the trivial case and assume that $\Adjz \neq (0,0)^\top$.
	\end{remark}

	\subsubsection{Step 4: Asymptotic behavior of variations of the adjoint state} \label{sec_step4adjoint}
	Let $K \in \HH(\RN)$ be the unique solution in $\HH(\RN)$ to \eqref{variationAdjoint_scale1} and let $\hat K \in \HH^w(\RN)$ denote a given element of the class $K$. For $\varepsilon>0$, let $\Ke: \holdAll \rightarrow \mathbb R$ be defined by
	\begin{align} \label{def_Keps}
		\Ke(x):= \varepsilon \hat K(\varepsilon^{-1} x).
	\end{align}
	As in the case of the variation of the direct state, making the change of scale backwards, it follows from $\hat K \in \HH^w(\RN)$ that $\Ke \in H^1(\holdAll)$, since
	\begin{align*}
		\underset{x \in  \holdAll}{\mbox{inf}} \, w\left(\frac{x}{\varepsilon}  \right) > 0.
	\end{align*}
	
	\begin{lemma}\info{corresponds to Lemma 4.5.3 \cite{Bonnafe2013} } \label{lemma453}
		Let Assumptions \ref{assump_BH}, \ref{assump_c78}, \ref{assump_reg_u0} and \ref{assump_reg_v0}. Then it holds
		\begin{align}
			\| \nabla \adjet \| _{L^2(\holdAll)}^2 &= O(\epsN), \label{norm_vepstilde}\\
			\| \nabla k_{\varepsilon} \| _{L^2(\holdAll)}^2 &= O(\epsN), \label{norm_keps}\\
			\| \nabla K_{\varepsilon} \| _{L^2(\holdAll)}^2 &= O(\epsN).\label{norm_Keps}
		\end{align}
	\end{lemma}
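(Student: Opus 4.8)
The plan is to treat the three estimates by the standard energy argument, exploiting that all three problems \eqref{variationAdjoint_scaleEps}, \eqref{variationAdjoint_scaleEpsApprox} and \eqref{variationAdjoint_scale1} are \emph{linear}, so that testing each variational formulation with its own solution immediately produces a coercive left-hand side and a right-hand side supported on the small inclusion. The only ingredients needed are the uniform coercivity of the bilinear forms, which follows from \eqref{prop4} together with Remark \ref{rem:propXDep}, and the fact that $|\ome| = \varepsilon^2 |\omega| = O(\epsN)$.

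For \eqref{norm_vepstilde} I would test \eqref{variationAdjoint_scaleEps} with $\test = \adjet$. The left-hand side is bounded below by $\cAu \| \nabla \adjet \|_{L^2(\holdAll)}^2$, since the eigenvalues of $\DT_{\varepsilon}(x,\nabla u_0)$ are uniformly bounded below by $\numin$ (inside $\ome$ by $\nu_0$, outside by \eqref{propNuBounded}--\eqref{propNuPrimeBounded}), independently of $\varepsilon$ and of the position of the inclusion. For the right-hand side, the symmetric matrix $\nu_0 I - \DT(\nabla u_0)$ has operator norm bounded by $\nu_0 - \numin$, and $\nabla \adjz$ is bounded on $\ome \subset \Omega^d$ by Assumption \ref{assump_reg_v0}; hence a Cauchy--Schwarz estimate gives
\begin{align*}
    \left| \int_{\ome} (\nu_0 I - \DT(\nabla u_0)) \nabla \adjz \cdot \nabla \adjet \right| \leq C\, |\ome|^{1/2} \| \nabla \adjet \|_{L^2(\holdAll)} = O(\varepsilon)\, \| \nabla \adjet \|_{L^2(\holdAll)}.
\end{align*}
Combining the two bounds and dividing by $\| \nabla \adjet \|_{L^2(\holdAll)}$ yields $\| \nabla \adjet \|_{L^2(\holdAll)} = O(\varepsilon)$, which is \eqref{norm_vepstilde}. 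The estimate \eqref{norm_keps} follows by the identical argument applied to \eqref{variationAdjoint_scaleEpsApprox}: now the constant vector $\Adjz$ and the constant matrix $\nu_0 I - \DT(U_0)$ replace their $x$-dependent counterparts, so the right-hand side is again $O(\varepsilon)\, \| \nabla k_{\varepsilon} \|_{L^2(\holdAll)}$ and coercivity of $\DT_{\varepsilon}(x,U_0)$ closes the estimate.

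Finally, \eqref{norm_Keps} does not invoke the transmission problem at all and is the easiest of the three: since $\nabla \Ke(x) = (\nabla \hat K)(\varepsilon^{-1}x)$, the substitution $y = \varepsilon^{-1} x$ (with $dx = \varepsilon^2\, dy$ in $\RN$) gives
\begin{align*}
    \| \nabla \Ke \|_{L^2(\holdAll)}^2 = \varepsilon^2 \int_{\holdAll/\varepsilon} |\nabla \hat K(y)|^2\, dy \leq \varepsilon^2 \| \nabla \hat K \|_{L^2(\RN)}^2 = O(\epsN),
\end{align*}
the last norm being finite because $K \in \HH(\RN)$ implies $\nabla \hat K \in L^2(\RN)$ by definition of the weighted space. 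In contrast with the direct-state analysis, there is no genuine obstacle here: the adjoint problems are linear, so neither the monotonicity/Lipschitz machinery nor the decay result of Theorem \ref{theo:asymp_H} is required. The only points requiring care are to verify that the coercivity constant $\cAu$ in the first two estimates is uniform in $\varepsilon$, which is guaranteed by \eqref{physProperties} and Remark \ref{rem:propXDep}, and to use the scale of the inclusion $|\ome| = O(\epsN)$ correctly.
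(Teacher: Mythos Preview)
Your proposal is correct and follows essentially the same energy-method approach as the paper (which defers the proof to \cite{Bonnafe2013, AmstutzBonnafe2015, Gangl2017}): test each linear variational equation with its own solution, use the uniform coercivity of $\DT_{\varepsilon}$ from \eqref{prop4} and Remark~\ref{rem:propXDep}, and bound the right-hand side via Cauchy--Schwarz together with $|\ome|=O(\epsN)$; for $K_{\varepsilon}$ you use the same change-of-scale argument as in the analogous estimate \eqref{asympt_HEps} for $H_{\varepsilon}$.
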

This proof is following the lines of \cite{Bonnafe2013, AmstutzBonnafe2015} and can be found in \cite{Gangl2017}.

	Next, we show an asymptotic behavior of an element of the class $K \in \HH(\RN)$ similar to \eqref{asymp_H}.
	\begin{proposition}\info{corresponds to Prop 4.5.4 in \cite{Bonnafe2013} } \label{prop454}
		Let Assumption \ref{assump_BH} hold and let $K \in \mathcal H(\RN)$ the unique solution to \eqref{variationAdjoint_scale1} according to Lemma \ref{lem_exUniqK}. Then there exists an element $\tilde{K}$ of the class $K$ such that
		\begin{align}
			\tilde{K}(y)= \mathcal O(|y|^{\onemN}) \quad \mbox{as} \quad |y|\rightarrow \infty. \label{asympt_K}
		\end{align}
	\end{proposition}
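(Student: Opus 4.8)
The plan is to exploit the fact, already stressed in the text, that the transmission problem \eqref{variationAdjoint_scale1} defining $K$ is \emph{linear} with coefficients that are constant on each side of $\partial\omega$: the matrix $\mbox{D}\Tt(x,U_0)=\chi_{\RN\setminus\omega}(x)\,\DT(U_0)+\chi_{\omega}(x)\,\nu_0 I$ takes the constant symmetric positive-definite value $\DT(U_0)$ outside $\omega$, and the right-hand side (carrying $\Adjz$) is supported in $\omega=B(0,1)$. Hence, in the distributional sense, $K$ solves $\divv\!\big(\DT(U_0)\nabla K\big)=0$ in $\RN\setminus\overline\omega$, i.e.\ $K$ is a solution of a constant-coefficient, uniformly elliptic equation (by \eqref{prop4}) in the exterior of a bounded inclusion. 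Consequently none of the super-/sub-solution machinery of Propositions \ref{prop:supersol1}--\ref{prop:compPrinciple} is needed; the decay will follow from the classical asymptotics of exterior solutions of such equations.

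First I would reduce to the Laplacian by a linear change of variables. Since $\DT(U_0)$ is constant and symmetric positive definite, set $B:=\DT(U_0)^{1/2}$ and $\hat K(z):=K(Bz)$. Inserting this into the weak formulation and using $B^{-1}\DT(U_0)B^{-1}=I$ shows that $\hat K$ satisfies $\Delta\hat K=0$ in $\RN\setminus B^{-1}\overline\omega$, where $B^{-1}\omega$ is a bounded ellipse, and that $\nabla_z\hat K\in L^2$ precisely when $\nabla_x K\in L^2$ (the norms are equivalent because $B$ is invertible with bounded inverse); the source, being supported in $\omega$, maps to a source supported in $B^{-1}\omega$. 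Thus $\hat K$ is harmonic in the exterior of a bounded smooth domain and has finite Dirichlet energy at infinity.

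Next I would invoke the asymptotic expansion of exterior harmonic functions in $\RN$ in the weighted-Sobolev framework of \cite{AmroucheGiraultGiroire1994}: modulo an additive constant $a_0$, such a function expands as $a_1\log|z|+\mathcal O(|z|^{-1})$ at infinity, with gradient $\mathcal O(|z|^{-2})$ for the dipole part. The logarithmic coefficient $a_1$ must vanish: its presence would give $\nabla\hat K\sim|z|^{-1}$, which is not square-integrable on the exterior of a disk in two dimensions, contradicting $\nabla_z\hat K\in L^2$; equivalently, $a_1$ is proportional to the total flux $\int_{\partial(B^{-1}\omega)}\partial_n\hat K$, which equals the integral of the compactly supported source and hence is zero. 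Choosing the representative $\tilde K$ of the class $K$ by subtracting the constant $a_0$, one obtains $\hat K(z)-a_0=\mathcal O(|z|^{-1})$. Since $x=Bz$ with $B$ invertible satisfies $c_1|z|\le|x|\le c_2|z|$, the decay rate is preserved under the back-transformation, giving $\tilde K(y)=\mathcal O(|y|^{-1})$, i.e.\ the claim with $\tau=1>0$.

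The main obstacle is the rigorous justification of the exterior asymptotic expansion together with the transmission (flux-jump) conditions across $\partial\omega$; however, because the source is a bounded field supported in the compact set $\overline\omega$, the transformed exterior solution is genuinely harmonic and the classical two-dimensional exterior-domain theory applies directly. In particular, no analogue of the delicate Propositions \ref{prop:supersol1} and \ref{prop:subsol} is required here, which is precisely the simplification afforded by the linearity of the adjoint problem.
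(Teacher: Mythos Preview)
Your proposal is correct and follows essentially the same route the paper points to: the paper does not spell out a proof but calls it ``standard'' (citing \cite{Ammari2008,Bonnafe2013,AmstutzBonnafe2015,Gangl2017}), and the coordinate change $x=\DT(U_0)^{1/2}z$ reducing the constant-coefficient exterior problem to an exterior Laplace problem, followed by the classical two-dimensional exterior expansion with the logarithmic term killed by the finite-energy (equivalently, zero-flux) condition, is precisely this standard argument. The paper additionally notes (Remark~\ref{rem_MatExplicit_CaseI}) that one can even write $K$ in closed form via an ansatz in elliptic coordinates after the same transformation, from which the $\mathcal O(|y|^{-1})$ decay is read off directly; your more abstract route avoids this explicit computation but yields the same conclusion.
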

This proposition can be shown in a standard way, see e.g., \cite{Ammari2008, Bonnafe2013, AmstutzBonnafe2015} and the proof for our setting can be found in \cite{Gangl2017}.
		
	Let, from now on, the function $K_{\varepsilon}$ \eqref{def_Keps} be defined by choosing $\hat K = \tilde K$ where $\tilde K$ is the element of the class $K\in\HH(\RN)$, which satisfies the asymptotic behavior \eqref{asympt_K}. Here, $K \in \HH(\RN)$ is the unique solution to \eqref{variationAdjoint_scale1} according to Lemma \ref{lem_exUniqK}.
    	
	\begin{lemma}\info{corresponds to Lemma 4.5.5 in \cite{Bonnafe2013} } \label{lemma455}
		Let Assumption \ref{assump_BH} hold. Then, it holds
		\begin{align}
			\|\nabla k_{\varepsilon} - \nabla K_{\varepsilon}\|_{L^2(\holdAll)}^2 =  o(\epsN) \mbox{ and} \label{estimate_keps_Keps}
		\end{align}
		\begin{align}
			\forall \alpha > 0 \, \forall r\in(0,1): \int_{\holdAll \setminus B(0, \alpha \varepsilon^r)} |\nabla k_{\varepsilon}|^2 =  o(\epsN). \label{estimate_kepsOutsideball}
		\end{align}	 
	\end{lemma}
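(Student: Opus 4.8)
The plan is to exploit the simplified homogeneous setting of Section~\ref{sec_simplSetting}: there the rescaled whole-plane corrector $\Ke$ from \eqref{def_Keps} solves \emph{exactly the same interior problem} as $k_\varepsilon$, so that $k_\varepsilon-\Ke$ is governed by the homogeneous frozen operator and is controlled only through boundary data that is small by the decay \eqref{asympt_K}. Concretely, I would perform the change of scale $x=\varepsilon y$ in $\int_{\holdAll}\DT_{\varepsilon}(x,U_0)\nabla\Ke\cdot\nabla\eta$ for arbitrary $\eta\in H^1_0(\holdAll)$. Since $\nabla\Ke(x)=(\nabla\tilde K)(x/\varepsilon)$ and, outside $\ome$, the frozen coefficient satisfies $\DT_{\varepsilon}(x,U_0)=\DT(U_0)=\mathrm{D}\Tt(x/\varepsilon,U_0)$ (this matching is exactly where the homogeneity of Section~\ref{sec_simplSetting} is used), the test function $\phi(y):=\eta(\varepsilon y)$ lies in $\HH(\RN)$ with support in $\holdAll/\varepsilon$, and the integral collapses through \eqref{variationAdjoint_scale1} to $-\int_{\ome}(\nu_0 I-\DT(U_0))\Adjz\cdot\nabla\eta$, i.e. precisely the right-hand side of \eqref{variationAdjoint_scaleEpsApprox}. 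Hence $z_\varepsilon:=k_\varepsilon-\Ke\in H^1(\holdAll)$ solves
\[
\int_{\holdAll}\DT_{\varepsilon}(x,U_0)\nabla z_\varepsilon\cdot\nabla\eta=0\qquad\forall\,\eta\in H^1_0(\holdAll),
\]
the only nontrivial datum being its trace $z_\varepsilon|_{\partial\holdAll}=-\Ke|_{\partial\holdAll}$.

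Next I would prove \eqref{estimate_keps_Keps} by an energy estimate. The eigenvalues of $\DT_{\varepsilon}(x,U_0)$ lie in $[\cAu,\cAo]$ uniformly in $x$ and $\varepsilon$ by \eqref{prop4} and Remark~\ref{rem:propXDep}. Fix a cut-off $\chi$ equal to $1$ near $\partial\holdAll$ and vanishing on a ball $B(0,R)\supset\ome$; then $z_\varepsilon+\chi\Ke\in H^1_0(\holdAll)$, and testing the homogeneous equation with it gives $\cAu\|\nabla z_\varepsilon\|_{L^2(\holdAll)}^2\le\cAo\|\nabla z_\varepsilon\|_{L^2(\holdAll)}\|\nabla(\chi\Ke)\|_{L^2(\holdAll)}$, hence $\|\nabla z_\varepsilon\|_{L^2(\holdAll)}\le(\cAo/\cAu)\|\nabla(\chi\Ke)\|_{L^2(\holdAll)}$. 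On $\mathrm{supp}\,\chi$ one has $|x|\ge R$, so $|x/\varepsilon|\ge R/\varepsilon$; inserting $\tilde K(y)=\mathcal O(|y|^{-1})$ from \eqref{asympt_K} and the gradient bound $\nabla\tilde K(y)=\mathcal O(|y|^{-2})$ into $\Ke(x)=\varepsilon\tilde K(x/\varepsilon)$ yields $\|\chi\Ke\|_{L^2}^2=\mathcal O(\varepsilon^4)$ and $\|\chi\nabla\Ke\|_{L^2}^2=\mathcal O(\varepsilon^4)$, whence $\|\nabla(\chi\Ke)\|_{L^2(\holdAll)}^2=\mathcal O(\varepsilon^4)$. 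Therefore $\|\nabla z_\varepsilon\|_{L^2(\holdAll)}^2=\mathcal O(\varepsilon^4)=o(\epsN)$, which is \eqref{estimate_keps_Keps}.

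Finally, \eqref{estimate_kepsOutsideball} follows from the splitting
\[
\int_{\holdAll\setminus B(0,\alpha\varepsilon^{r})}|\nabla k_\varepsilon|^2\le 2\|\nabla z_\varepsilon\|_{L^2(\holdAll)}^2+2\int_{\holdAll\setminus B(0,\alpha\varepsilon^{r})}|\nabla\Ke|^2 ,
\]
whose first term is $o(\epsN)$ by \eqref{estimate_keps_Keps}; for the second, the substitution $y=x/\varepsilon$ together with $\nabla\tilde K(y)=\mathcal O(|y|^{-2})$ gives $\int_{\holdAll\setminus B(0,\alpha\varepsilon^{r})}|\nabla\Ke|^2=\epsN\int_{|y|\ge\alpha\varepsilon^{r-1}}|\nabla\tilde K(y)|^2\,dy\le C\,\epsN\,(\alpha\varepsilon^{r-1})^{-2}=\mathcal O(\varepsilon^{4-2r})=o(\epsN)$, since $r\in(0,1)$.

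The main obstacle is the gradient decay $\nabla\tilde K(y)=\mathcal O(|y|^{-2})$, which both estimates need but which Proposition~\ref{prop454} does not record (it only furnishes the pointwise bound \eqref{asympt_K} on $\tilde K$ itself). I would obtain it from interior elliptic regularity for the homogeneous constant-coefficient equation $\mathrm{div}(\DT(U_0)\nabla\tilde K)=0$ satisfied by $\tilde K$ in the exterior $\RN\setminus\overline{B(0,1)}$: the Caccioppoli inequality on a dyadic annulus of radius $\rho$ bounds $\int|\nabla\tilde K|^2$ by $C\rho^{-2}\int|\tilde K|^2$ over a slightly larger annulus, and \eqref{asympt_K} makes the latter $\mathcal O(1)$, so that $\int_{|y|\ge\rho}|\nabla\tilde K|^2=\mathcal O(\rho^{-2})$, i.e. exactly the averaged form of $\nabla\tilde K=\mathcal O(|y|^{-2})$ used above. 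Alternatively, the same decay can be read off the explicit exterior multipole (dipole) expansion of the linear transmission solution.
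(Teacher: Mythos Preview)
Your argument is correct and is essentially the approach indicated by the paper (which defers the details to \cite{Bonnafe2013, AmstutzBonnafe2015, Gangl2017}): one checks that, thanks to the homogeneous simplified setting of Section~\ref{sec_simplSetting}, the rescaled corrector $K_\varepsilon$ satisfies the very same variational identity \eqref{variationAdjoint_scaleEpsApprox} as $k_\varepsilon$ for all $\eta\in H^1_0(\holdAll)$, so that the difference solves the homogeneous problem and is controlled by the smallness of $K_\varepsilon$ near $\partial\holdAll$ coming from \eqref{asympt_K}. Your use of a fixed cut-off supported away from $\ome$ together with the coercivity/boundedness bounds \eqref{prop4} and Remark~\ref{rem:propXDep} is exactly the standard lifting argument, and your Caccioppoli/dyadic-annulus reasoning correctly supplies the integrated gradient decay $\int_{|y|\ge\rho}|\nabla\tilde K|^2=\mathcal O(\rho^{-2})$, which is all that either estimate actually requires; alternatively, as you note and as the paper points out in Remark~\ref{rem_MatExplicit_CaseI}, this decay can be read off the explicit exterior formula for $K$.
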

This proof is an adaptation of \cite{Bonnafe2013, AmstutzBonnafe2015} which can be found in \cite{Gangl2017}.

	\begin{lemma}\info{corresponds to Lemma 4.5.6 in \cite{Bonnafe2013} } \label{lemma456}
		Let Assumptions \ref{assump_BH}, \ref{assump_c78}, \ref{assump_reg_u0} and \ref{assump_reg_v0} be satisfied. Then, it holds
		\begin{align}
			\| \nabla \adjet - \nabla k_{\varepsilon} \|_{L^2(\holdAll)}^2 = o(\epsN). \label{estimate_vteps_keps}
		\end{align}
	\end{lemma}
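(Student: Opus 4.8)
The plan is to derive the boundary value problem satisfied by the difference $z_\varepsilon := \adjet - k_\varepsilon$, test it with $z_\varepsilon$ itself, and exploit the uniform coercivity of the coefficient $\DT_{\varepsilon}(x,\nabla u_0)$ to reduce \eqref{estimate_vteps_keps} to the smallness of two remainder terms. Subtracting the approximate equation \eqref{variationAdjoint_scaleEpsApprox} from the exact one \eqref{variationAdjoint_scaleEps} and regrouping so that the coercive coefficient $\DT_{\varepsilon}(x,\nabla u_0)$ multiplies $\nabla z_\varepsilon$, one obtains, for every $\test\in H^1_0(\holdAll)$,
\begin{align*}
	\int_{\holdAll} \DT_{\varepsilon}(x,\nabla u_0)\nabla z_\varepsilon\cdot\nabla\test
	= &-\int_{\holdAll}\left[\DT_{\varepsilon}(x,\nabla u_0)-\DT_{\varepsilon}(x,U_0)\right]\nabla k_\varepsilon\cdot\nabla\test \\
	&-\int_{\ome}\left[(\nu_0 I-\DT(\nabla u_0))\nabla\adjz-(\nu_0 I-\DT(U_0))\Adjz\right]\cdot\nabla\test .
\end{align*}
Choosing $\test=z_\varepsilon$ and using the lower bound \eqref{prop4} (with constant $\cAu=\numin$) on the left bounds $\cAu\|\nabla z_\varepsilon\|_{L^2(\holdAll)}^2$ by the two terms on the right, which I label Term I and Term II; the goal is to estimate each by $\tfrac{\cAu}{4}\|\nabla z_\varepsilon\|_{L^2(\holdAll)}^2+o(\epsN)$ and absorb the coercive contributions.

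For Term I, note that on $\ome$ the coefficient $\DT_\varepsilon$ equals $\nu_0 I$ and the bracket vanishes, so only $\holdAll\setminus\ome$, where $\DT_\varepsilon=\DT$, contributes. I would split this region at a fixed ball $B(0,R)\subset\subset\Omega^d$. On $B(0,R)$ I use that $\DT$ is globally Lipschitz with constant $\cGOld$ (a consequence of \eqref{est_D2T} and \eqref{est_c7}) together with the Hölder regularity $u_0\in C^{1,\beta}(\Omega^d)$ from Assumption \ref{assump_reg_u0}, and then Cauchy--Schwarz and Young to reduce the interior contribution to $\tfrac{\cAu}{8}\|\nabla z_\varepsilon\|^2+C\int_{B(0,R)}|\nabla u_0-U_0|^2|\nabla k_\varepsilon|^2$. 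On $\holdAll\setminus B(0,R)$ I instead bound the coefficient difference crudely by $2\nu_0$ (the eigenvalues of $\DT$ lie in $[\numin,\nu_0]$ by \eqref{propNuBounded} and \eqref{propNuPrimeBounded}) and invoke the exterior decay \eqref{estimate_kepsOutsideball}, which for fixed $R$ gives $\int_{\holdAll\setminus B(0,R)}|\nabla k_\varepsilon|^2=o(\epsN)$; Cauchy--Schwarz and Young then produce $\tfrac{\cAu}{8}\|\nabla z_\varepsilon\|^2+o(\epsN)$.

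Term II is supported on $\ome=B(0,\varepsilon)$, which for small $\varepsilon$ lies well inside $\Omega^d$. Writing the bracket as $(\DT(U_0)-\DT(\nabla u_0))$ acting on $\Adjz$ plus $(\nu_0 I-\DT(\nabla u_0))$ acting on $\nabla\adjz-\Adjz$, the Hölder bounds $|\nabla u_0-U_0|\le C|x|^\beta\le C\varepsilon^\beta$ and $|\nabla\adjz-\Adjz|\le \tilde C|x|^{\tilde\beta}\le\tilde C\varepsilon^{\tilde\beta}$ (Assumptions \ref{assump_reg_u0} and \ref{assump_reg_v0}), the boundedness of the coefficients, and $|\ome|^{1/2}=O(\varepsilon)$ give $|\text{Term II}|\le C\varepsilon^{1+\min\{\beta,\tilde\beta\}}\|\nabla z_\varepsilon\|_{L^2(\ome)}$, hence $\tfrac{\cAu}{4}\|\nabla z_\varepsilon\|^2+o(\epsN)$ by Young. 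Collecting the three absorbable pieces yields $\tfrac{\cAu}{2}\|\nabla z_\varepsilon\|^2\le o(\epsN)$ and therefore \eqref{estimate_vteps_keps}.

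The main obstacle is the interior piece $\int_{B(0,R)}|\nabla u_0-U_0|^2|\nabla k_\varepsilon|^2=o(\epsN)$, since the naive uniform Hölder bound $|\nabla u_0-U_0|^2\le C^2R^{2\beta}$ only yields $O(\epsN)$. To gain the missing power of $\varepsilon$ I would first replace $\nabla k_\varepsilon$ by $\nabla K_\varepsilon$ at the cost of $C^2R^{2\beta}\|\nabla k_\varepsilon-\nabla K_\varepsilon\|^2=o(\epsN)$ by \eqref{estimate_keps_Keps}, and then, after rescaling $y=x/\varepsilon$, estimate $\int_{B(0,R)}|x|^{2\beta}|\nabla K_\varepsilon|^2=\varepsilon^{2+2\beta}\int_{B(0,R/\varepsilon)}|y|^{2\beta}|\nabla\tilde K(y)|^2\,dy$. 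The remaining weighted integral stays finite as $\varepsilon\to0$: combining the pointwise decay $\tilde K(y)=\mathcal O(|y|^{-1})$ from Proposition \ref{prop454} with a Caccioppoli estimate on dyadic annuli, on which $K$ solves the homogeneous constant-coefficient equation $\mbox{div}(\DT(U_0)\nabla K)=0$, shows $\int_{\RN\setminus\omega}|y|^{2\beta}|\nabla\tilde K|^2\,dy<\infty$ for $\beta<1$ (the borderline $\beta=1$ only introduces a harmless logarithmic factor). This delivers $\varepsilon^{2+2\beta}=o(\epsN)$ and closes the argument; this fine far-field analysis of the adjoint corrector, rather than the energy estimate itself, is the technically delicate step.
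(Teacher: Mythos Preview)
Your argument is correct and follows the same energy-method skeleton as the paper: subtract \eqref{variationAdjoint_scaleEpsApprox} from \eqref{variationAdjoint_scaleEps}, test with $z_\varepsilon=\adjet-k_\varepsilon$, and use the uniform coercivity \eqref{prop4}. Term~II is handled exactly as you do.

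The only genuine difference is how you treat the ``main obstacle'' $\int_{B(0,R)}|\nabla u_0-U_0|^2|\nabla k_\varepsilon|^2$. You split at a \emph{fixed} radius $R$ and then, to extract a positive power of $\varepsilon$, pass to $K_\varepsilon$, rescale, and invoke pointwise decay of $\tilde K$ plus a dyadic Caccioppoli argument to bound $\int |y|^{2\beta}|\nabla\tilde K|^2$. This works, but it re-derives from scratch information that is already packaged in Lemma~\ref{lemma455}. The paper (following \cite{Bonnafe2013,AmstutzBonnafe2015}) instead splits at the \emph{shrinking} radius $\alpha\varepsilon^r$ with $r\in(0,1)$: on $B(0,\alpha\varepsilon^r)\subset\Omega^d$ the H\"older bound gives $|\nabla u_0-U_0|\le C\varepsilon^{r\beta}$ uniformly, so the interior piece is $\le C\varepsilon^{2r\beta}\|\nabla k_\varepsilon\|_{L^2(\holdAll)}^2=O(\varepsilon^{2+2r\beta})$ directly from \eqref{norm_keps}; on $\holdAll\setminus B(0,\alpha\varepsilon^r)$ one bounds the matrix difference by $2\nu_0$ and applies \eqref{estimate_kepsOutsideball}. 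This avoids your far-field gradient analysis entirely. Your route has the virtue of being self-contained (it makes the role of the corrector decay \eqref{asympt_K} explicit), but the $\varepsilon^r$-split is shorter and is precisely why estimates like \eqref{estimate_kepsOutsideball} are formulated for all $r\in(0,1)$.
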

This proof is following the lines of \cite{Bonnafe2013, AmstutzBonnafe2015} and is adapted to our case in \cite{Gangl2017}.
		
	\subsection{Topological asymptotic expansion} \label{sec_topAsympExp_I}
	Recall Assumption \ref{assump_J}.
   	By estimate \eqref{estimate_4431}, it follows that 
	\begin{align} \label{Reps_eps2}
		R(\varepsilon) = o(\epsN).
	\end{align}
	We have a closer look at the term $\langle \tilde G, \uet \rangle$. Testing adjoint equation \eqref{adjointEquation_scaleEps} for $\varepsilon>0$ with $\test = \uet$ and exploiting the symmetry of $\DT_{\varepsilon}$, we get
	\begin{align*}
		 \langle \tilde G, \uet \rangle =& -\int_{\holdAll} \DT_{\varepsilon}(x,\nabla u_0) \nabla \tilde u_{\varepsilon} \cdot \nabla \adje  \\
		 =& -\int_{\holdAll} \DT_{\varepsilon}(x,\nabla u_0) \nabla \tilde u_{\varepsilon} \cdot \nabla \adje  \\
		 &+ \int_{\holdAll} \left( \Te(x, \nabla u_0 + \nabla \uet)  - \Te(x,\nabla u_0) \right) \cdot \nabla \adje + \int_{\ome}(\nu_0 - \hat{\nu}(|\nabla u_0|)) \nabla u_0 \cdot \nabla \adje,
	\end{align*}
	where we added the left and right hand side of \eqref{variationDirect_scaleEps} tested with $\test = \adje$. According to the definition of the operator $S^\varepsilon$ \eqref{def_Seps_I}, we get
	 \begin{align*}
		\langle \tilde G, \uet \rangle &= \int_{\ome}(\nu_0 - \hat{\nu}(|\nabla u_0|)) \nabla u_0 \cdot \nabla \adje + \int_{\holdAll} S^{\varepsilon}_{\nabla u_0}(x,\nabla \uet) \cdot \nabla \adje.
	 \end{align*}
	 Noting that $\adje = \adjz  +  \adjet$, and defining
	 \begin{align}
		j_1(\varepsilon) &:= \int_{\ome}(\nu_0 - \hat{\nu}(|\nabla u_0|)) \nabla u_0 \cdot ( \nabla \adjz + \nabla \adjet), \label{def_j1} \\
		j_2(\varepsilon) &:= \int_{\holdAll} S^{\varepsilon}_{\nabla u_0}(x,\nabla \uet) \cdot ( \nabla \adjz + \nabla \adjet),\label{def_j2}
	 \end{align}
	 together with \eqref{Reps_eps2}, we get from \eqref{expansion_J} that
	\begin{align}
		\mathcal J_{\varepsilon}(u_{\varepsilon}) - \mathcal J_{0}(u_{0}) = &j_1(\varepsilon) + j_2(\varepsilon) + \delta_J \epsN + o(\epsN). \label{topAsympExp_final}
	\end{align}
	Note that the operator $S_{\nabla u_0}^{\varepsilon}$ represents the nonlinearity of the problem. Therefore, the term $j_2$ vanishes in the linear case where the nonlinear function $\nuh$ is replaced by a constant $\nu_1$.

	In Sections \ref{sec:Expansion_j1} and \ref{sec:Expansion_j2} we will show that there exist numbers $J_1$, $J_2$ such that
	\begin{align*}
		j_1(\varepsilon) = \varepsilon^2 \, J_1 + o(\varepsilon^2) \quad \mbox{ and } \quad j_2(\varepsilon) = \varepsilon^2 \, J_2 + o(\varepsilon^2).
	\end{align*}
	Comparing expansion \eqref{topAsympExp_final} with \eqref{def:topDerivative} this will yield the final formula for the topological derivative,
	\begin{align*}
		G(x_0) = J_1 + J_2 + \delta_J
	\end{align*}
	in Theorem \ref{theo_MainResultCaseI}.
	
	\subsubsection{Expansion of linear term $j_1(\varepsilon)$} \label{sec:Expansion_j1}
	Following approximation steps 2 and 3 of Sections \ref{sec:approxStep1adj} and \ref{sec:approxStep2adj}, respectively, we define
	\begin{align}
		\tilde j_1(\varepsilon) &:=  \left( \nu_0 - \nuh(|U_0|) \right) \int_{\omega_{\varepsilon}} U_0 \cdot \left( \Adjz + \nabla k_{\varepsilon} \right), \label{def_j1tilde}  \\
		J_1 &:= \left( \nu_0 - \nuh(|U_0|) \right) \int_{\omega} U_0 \cdot \left( \Adjz + \nabla K \right). \label{def_J1}
	\end{align}
	
	\begin{lemma} \label{lemma_j1tilde_J1}
	\info{corresponds to Lemma 4.6.1 in \cite{Bonnafe2013}}
		Let Assumption \ref{assump_BH} hold. Then it holds
		\begin{align}
			\tilde j_1(\varepsilon) - \epsN J_1 = o(\epsN). \label{lemma461}
		\end{align}
	\end{lemma}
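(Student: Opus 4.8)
The plan is to reduce the claim to the single already-established $L^2$ estimate \eqref{estimate_keps_Keps} of Lemma~\ref{lemma455}, since every other step is an exact scaling identity. First I would factor out the common constant $(\nu_0 - \nuh(|U_0|))$ that appears in both $\tilde j_1(\varepsilon)$ and $J_1$, and split each integrand into the contribution of the constant vector $\Adjz$ and the contribution of the gradient ($\nabla k_\varepsilon$, respectively $\nabla K$). For the $\Adjz$-contribution there is nothing to estimate: since $U_0\cdot\Adjz$ is constant, $\int_{\omega_\varepsilon} U_0\cdot\Adjz = (U_0\cdot\Adjz)\,|\omega_\varepsilon| = \epsN\,(U_0\cdot\Adjz)\,|\omega|$, which is exactly $\epsN$ times the corresponding term of $J_1$ and thus produces no error.

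For the gradient contribution I would first pass from $\nabla k_\varepsilon$ to $\nabla \Ke$. Using the definition $\Ke(x)=\varepsilon\hat K(\varepsilon^{-1}x)$ from \eqref{def_Keps}, so that $\nabla \Ke(x)=(\nabla\hat K)(\varepsilon^{-1}x)$, the substitution $y=x/\varepsilon$ (recalling $\omega_\varepsilon=\varepsilon\omega$) gives the exact homogeneity identity
\[
	\int_{\omega_\varepsilon} U_0\cdot\nabla \Ke\,\mathrm dx \;=\; \epsN\int_{\omega} U_0\cdot\nabla K\,\mathrm dy .
\]
Consequently the entire discrepancy $\tilde j_1(\varepsilon)-\epsN J_1$ collapses to the single term $(\nu_0-\nuh(|U_0|))\int_{\omega_\varepsilon} U_0\cdot(\nabla k_\varepsilon-\nabla \Ke)$.

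It then remains to bound this last term, which is the only nontrivial step. By Cauchy--Schwarz on $\omega_\varepsilon$ together with $\omega_\varepsilon\subset\holdAll$,
\[
	\left|\int_{\omega_\varepsilon} U_0\cdot(\nabla k_\varepsilon-\nabla \Ke)\right|
	\leq |U_0|\,|\omega_\varepsilon|^{1/2}\,\|\nabla k_\varepsilon-\nabla \Ke\|_{L^2(\holdAll)} .
\]
Since $|\omega_\varepsilon|^{1/2}=\varepsilon\,|\omega|^{1/2}$ and, by \eqref{estimate_keps_Keps}, $\|\nabla k_\varepsilon-\nabla \Ke\|_{L^2(\holdAll)}=o(\varepsilon)$, the right-hand side is $o(\epsN)$, and multiplying by the fixed constant $(\nu_0-\nuh(|U_0|))$ preserves this order, giving the claim. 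I do not expect a genuine obstacle: the statement is essentially a corollary of Lemma~\ref{lemma455} combined with the exact homogeneity of the change of scale, and the only delicate ingredient — the $o(\epsN)$ proximity of $\nabla k_\varepsilon$ and $\nabla \Ke$ — is taken as already in hand.
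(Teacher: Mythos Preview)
Your proposal is correct and is essentially the standard argument the paper defers to \cite{Bonnafe2013, AmstutzBonnafe2015, Gangl2017}: the $\Adjz$-term scales exactly, the change of variables turns $\int_{\omega_\varepsilon}U_0\cdot\nabla K_\varepsilon$ into $\epsN\int_\omega U_0\cdot\nabla K$ exactly, and the remaining difference $\int_{\omega_\varepsilon}U_0\cdot(\nabla k_\varepsilon-\nabla K_\varepsilon)$ is $o(\epsN)$ by Cauchy--Schwarz together with \eqref{estimate_keps_Keps}. The hypotheses match (both the present lemma and Lemma~\ref{lemma455} require only Assumption~\ref{assump_BH}), so there is no gap.
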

This proof is following the lines of \cite{Bonnafe2013, AmstutzBonnafe2015} and is adapted to our case in \cite{Gangl2017}.

	\begin{lemma} \label{lemma_j1_j1tilde}
	\info{corresponds to Lemma 4.6.2 in \cite{Bonnafe2013}}
		Let Assumptions \ref{assump_BH}, \ref{assump_c78}, \ref{assump_reg_u0} and \ref{assump_reg_v0} hold. Then it holds
		\begin{align}
			j_1(\varepsilon) - \tilde j_1(\varepsilon) = o(\epsN). \label{lemma462}
		\end{align}
	\end{lemma}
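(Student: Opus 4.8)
The plan is to prove \eqref{lemma462} by localizing the difference $j_1(\varepsilon)-\tilde j_1(\varepsilon)$ to the inclusion $\ome$, splitting it into four pieces, and estimating each by combining the H\"older regularity of $u_0$ and $\adjz$ near $x_0$ with the $L^2$-estimates for the adjoint variation already established. First I would introduce the vector field $b(x):=(\nu_0-\nuh(|\nabla u_0(x)|))\nabla u_0(x)$ together with its value $b_0:=(\nu_0-\nuh(|U_0|))U_0=b(x_0)$, so that by \eqref{def_j1} and \eqref{def_j1tilde}
\begin{align*}
	j_1(\varepsilon)=\int_{\ome} b(x)\cdot(\nabla\adjz+\nabla\adjet),\qquad
	\tilde j_1(\varepsilon)=\int_{\ome} b_0\cdot(\Adjz+\nabla k_{\varepsilon}).
\end{align*}
Since $\nuh$ is Lipschitz continuous (Assumption \ref{assump_c78}) and $\nabla u_0\in C^{0,\beta}$ near $x_0$ (Assumption \ref{assump_reg_u0}), the field $b$ is H\"older continuous of exponent $\beta$ in a neighbourhood of $x_0$; as $\ome=x_0+\varepsilon\,\omega$ with $\omega$ bounded, this gives $|b(x)-b_0|\le C\varepsilon^{\beta}$ for a.e. $x\in\ome$, while $|\ome|=|\omega|\,\epsN$.

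Next I would use the telescoping decomposition
\begin{align*}
	j_1(\varepsilon)-\tilde j_1(\varepsilon)
	=&\int_{\ome}(b-b_0)\cdot\nabla\adjz+\int_{\ome} b_0\cdot(\nabla\adjz-\Adjz)\\
	&+\int_{\ome}(b-b_0)\cdot\nabla\adjet+\int_{\ome} b_0\cdot(\nabla\adjet-\nabla k_{\varepsilon})
\end{align*}
and treat the four terms in turn. The first two involve only the $\varepsilon$-independent fields: using $|b-b_0|\le C\varepsilon^{\beta}$, the boundedness of $\nabla\adjz$ near $x_0$, and the H\"older bound $|\nabla\adjz(x)-\Adjz|\le C\varepsilon^{\tilde\beta}$ coming from Assumption \ref{assump_reg_v0}, together with $|\ome|=C\epsN$, these are bounded by $C\varepsilon^{2+\beta}$ and $C\varepsilon^{2+\tilde\beta}$, hence both $o(\epsN)$. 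For the third term, Cauchy--Schwarz and the estimate $\|\nabla\adjet\|_{L^2(\holdAll)}^2=O(\epsN)$ from \eqref{norm_vepstilde} give
\begin{align*}
	\left|\int_{\ome}(b-b_0)\cdot\nabla\adjet\right|
	\le C\varepsilon^{\beta}\,|\ome|^{1/2}\,\|\nabla\adjet\|_{L^2(\ome)}
	\le C\varepsilon^{\beta}\cdot\varepsilon\cdot O(\varepsilon)=o(\epsN).
\end{align*}

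The fourth term is the one carrying the essential information. By Cauchy--Schwarz,
\begin{align*}
	\left|\int_{\ome} b_0\cdot(\nabla\adjet-\nabla k_{\varepsilon})\right|
	\le |b_0|\,|\ome|^{1/2}\,\|\nabla\adjet-\nabla k_{\varepsilon}\|_{L^2(\holdAll)}
	\le C\varepsilon\,\|\nabla\adjet-\nabla k_{\varepsilon}\|_{L^2(\holdAll)},
\end{align*}
and Lemma \ref{lemma456}, i.e. $\|\nabla\adjet-\nabla k_{\varepsilon}\|_{L^2(\holdAll)}^2=o(\epsN)$, yields $\|\nabla\adjet-\nabla k_{\varepsilon}\|_{L^2(\holdAll)}=o(\varepsilon)$, so this term is $o(\epsN)$ as well. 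Summing the four contributions proves \eqref{lemma462}. The only genuinely hard ingredient is the control of $\nabla\adjet-\nabla k_{\varepsilon}$, the gap between the true adjoint variation and its frozen-coefficient approximation, provided by Lemma \ref{lemma456}; everything else is a routine localization exploiting that $\ome$ has area $O(\varepsilon^2)$ and that the coefficients are H\"older continuous at $x_0$. I therefore expect the bookkeeping here to be short, with the analytic weight resting entirely on the previously established estimates \eqref{norm_vepstilde} and \eqref{estimate_vteps_keps}.
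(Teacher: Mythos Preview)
Your proposal is correct and follows essentially the same approach as the paper, which refers to \cite{Bonnafe2013, AmstutzBonnafe2015, Gangl2017} for the details: a telescoping splitting over $\ome$, H\"older continuity of $\nabla u_0$ and $\nabla\adjz$ at $x_0$ for the frozen-coefficient terms, and the previously established estimates \eqref{norm_vepstilde} and \eqref{estimate_vteps_keps} for the adjoint variation. One small remark: the Lipschitz bound $|b(x)-b_0|\le C|\nabla u_0(x)-U_0|$ is most cleanly obtained by writing $b(W)=\nu_0 W-T(W)$ and invoking \eqref{prop6}, rather than appealing to Lipschitz continuity of $\nuh$ alone.
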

This proof is following the lines of \cite{Bonnafe2013, AmstutzBonnafe2015} and is adapted to our case in \cite{Gangl2017}.

	Considering \eqref{def_J1}, it follows from the linearity of equation \eqref{variationAdjoint_scale1} that the mapping
	\begin{align*}
		\Adjz \mapsto \left( \nu_0 - \nuh(|U_0|) \right) \int_{\omega} \left(\Adjz +  \nabla K \right)
	\end{align*}
	is linear from $\RN$ to $\RN$. It only depends on the set $\omega$, and on the positive definite matrix~$\DT(U_0)$. Hence, there exists a matrix
	\begin{align}
		\myTDMat  = \myTDMat(\omega, \DT(U_0) ) \in \mathbb R^{2\times2}, \label{intro_M1}
	\end{align}
	such that 
	\begin{align*}
		\left( \nu_0 - \nuh(|U_0|) \right)  \int_{\omega} \left( \Adjz + \nabla  K \right) = \myTDMat \, \Adjz.
	\end{align*}
	This matrix $\myTDMat$ is related to the concept of polarization matrices, see, e.g., \cite{AmmariKang2007}.
	Eventually, it follows that
	\begin{align}
		J_1 = U_0^\top \, \myTDMat \, \Adjz. \label{J1_polMat}
	\end{align}
	In Section \ref{sec_AppendixCaseI}, an explicit formula for the matrix $\myTDMat = \myTDMat(\omega, \DT(U_0) )$ will be derived.
	
	Summing up estimates \eqref{lemma461} and \eqref{lemma462}, as well as \eqref{J1_polMat}, we get the following result:
	\begin{corollary}
	Let Assumptions \ref{assump_BH}, \ref{assump_c78}, \ref{assump_reg_u0} and \ref{assump_reg_v0} hold. Then, there exists a matrix $\myTDMat  = \myTDMat(\omega, \DT(U_0) ) \in \mathbb R^{2 \times 2}$ such that
		\begin{align}
			j_1(\varepsilon) = \epsN \; U_0^\top \, \myTDMat \, \Adjz + o(\epsN). \label{j1_final}
		\end{align}
	\end{corollary}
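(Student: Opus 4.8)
The plan is to obtain \eqref{j1_final} by simply chaining together the two immediately preceding lemmas and the polarization-matrix identity \eqref{J1_polMat}, since all the genuine analytic work has already been carried out. First I would dispense with the existence of the matrix $\myTDMat$. The transmission problem \eqref{variationAdjoint_scale1} defining $K$ is linear in its right-hand side, and that right-hand side depends linearly (in fact homogeneously of degree one) on $\Adjz$; hence the solution map $\Adjz \mapsto K$ is linear, and so is the map
\begin{align*}
	\Adjz \mapsto \left( \nu_0 - \nuh(|U_0|) \right) \int_{\omega} \left( \Adjz + \nabla K \right)
\end{align*}
from $\RN$ to $\RN$. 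Any linear map on $\RN$ is represented by a matrix, which depends only on $\omega$ and on the constant matrix $\DT(U_0)$ (these being the only data entering \eqref{variationAdjoint_scale1} besides $\Adjz$); calling it $\myTDMat = \myTDMat(\omega,\DT(U_0))$, relation \eqref{J1_polMat} gives $J_1 = U_0^\top \myTDMat \Adjz$.

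With the matrix in hand, I would write the trivial decomposition
\begin{align*}
	j_1(\varepsilon) - \epsN J_1 = \bigl( j_1(\varepsilon) - \tilde j_1(\varepsilon) \bigr) + \bigl( \tilde j_1(\varepsilon) - \epsN J_1 \bigr),
\end{align*}
and bound each bracket separately. The first bracket is $o(\epsN)$ by Lemma \ref{lemma_j1_j1tilde}, whose hypotheses (Assumptions \ref{assump_BH}, \ref{assump_c78}, \ref{assump_reg_u0}, \ref{assump_reg_v0}) are exactly those assumed in the corollary; the second bracket is $o(\epsN)$ by Lemma \ref{lemma_j1tilde_J1}, which needs only Assumption \ref{assump_BH}. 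Adding the two estimates yields $j_1(\varepsilon) - \epsN J_1 = o(\epsN)$, i.e.\ $j_1(\varepsilon) = \epsN J_1 + o(\epsN)$. Substituting $J_1 = U_0^\top \myTDMat \Adjz$ then gives precisely \eqref{j1_final}.

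There is no real obstacle in this step: it is a bookkeeping corollary that packages the two lemmas together, and the only point requiring a sentence of justification is the linearity argument for the existence and well-definedness of $\myTDMat$ (in particular that it is independent of $\Adjz$). The substantive difficulties—controlling the difference between $j_1$ and its frozen-coefficient approximation $\tilde j_1$, and the blow-up/limit passage relating $\tilde j_1$ to $\epsN J_1$—have already been absorbed into Lemmas \ref{lemma_j1_j1tilde} and \ref{lemma_j1tilde_J1}, which in turn rely on the decay estimates of Section \ref{sec:VarAdjoint}. The explicit evaluation of $\myTDMat$ is deferred to Section \ref{sec_AppendixCaseI} and plays no role here.
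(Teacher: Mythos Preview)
Your proposal is correct and follows essentially the same approach as the paper: the corollary is obtained by summing the estimates of Lemma~\ref{lemma_j1tilde_J1} and Lemma~\ref{lemma_j1_j1tilde} together with the identity \eqref{J1_polMat}, and the existence of $\myTDMat$ is justified exactly as you describe, via linearity of the map $\Adjz \mapsto K$.
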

	
	\subsubsection{Expansion of nonlinear term $j_2(\varepsilon)$} \label{sec:Expansion_j2}
	
	According to the approximation steps taken for the variations of the direct and adjoint state in Sections \ref{sec:approxStep1}, \ref{sec:changeOfScale}, as well as \ref{sec:approxStep1adj} and \ref{sec:approxStep2adj}, we define
	\begin{align}
		\tilde j_2(\varepsilon) &:= \int_{\holdAll} S^{\varepsilon}_{U_0}(x,\nabla h_{\varepsilon}) \cdot ( \Adjz + \nabla k_{\varepsilon}), \label{def_j2tilde} \\
		J_2 &:= \int_{\RN} \tilde S_{U_0}(x,\nabla H) \cdot ( \Adjz + \nabla K). \label{def_J2} 
	\end{align}
	Note that, under Assumption \ref{assump_c78}, both $\tilde j_2(\varepsilon)$ and $J_2$ are well-defined due to growth condition~\eqref{prop7_followUp}.

	\begin{lemma} \label{lemma464}
		\info{Corresponds to lemma 4.6.4 in \cite{Bonnafe2013} }
		Let $\omega = B(0,1)$ and let Assumptions \ref{assump_BH}, \ref{assump_c78}, \ref{assump_delta} and \ref{assump_reg_u0} hold. Then it holds
		\begin{align}
			\tilde j_2(\varepsilon) - \epsN J_2 = o(\epsN). \label{estimate_4618}
		\end{align}
	\end{lemma}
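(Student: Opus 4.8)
The plan is to interpolate between the two quantities in \eqref{def_j2tilde}--\eqref{def_J2} by means of the auxiliary term
\[
	\hat j_2(\varepsilon):=\int_{\holdAll} S^{\varepsilon}_{U_0}(x,\nabla\He)\cdot(\Adjz+\nabla\Ke),
\]
in which the finite-domain variations $\he,k_{\varepsilon}$ are replaced by their scale-1 counterparts $\He,\Ke$, and to show separately that $\tilde j_2(\varepsilon)-\hat j_2(\varepsilon)=o(\epsN)$ and $\hat j_2(\varepsilon)-\epsN J_2=o(\epsN)$. Throughout I would exploit the structural estimates \eqref{prop7} and \eqref{prop7_followUp} for $S$, the $L^2$-smallness estimates \eqref{hEps_minus_Heps_L2}, \eqref{estimate_4426}, \eqref{estimate_keps_Keps}, the $O(\varepsilon)$ bounds \eqref{asympt_hEps}, \eqref{asympt_HEps}, \eqref{norm_keps}, \eqref{norm_Keps}, and the uniform bounds $\|\nabla\He\|_{L^\infty(\holdAll)}=\|\nabla\tilde H\|_{L^\infty(\RN)}$, $\|\nabla\Ke\|_{L^\infty(\holdAll)}=\|\nabla\tilde K\|_{L^\infty(\RN)}$ coming from the scaling in \eqref{def_Heps}, \eqref{def_Keps}.

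For the first difference I would telescope,
\begin{align*}
	\tilde j_2(\varepsilon)-\hat j_2(\varepsilon) ={}& \int_{\holdAll}\big[S^{\varepsilon}_{U_0}(x,\nabla\he)-S^{\varepsilon}_{U_0}(x,\nabla\He)\big]\cdot(\Adjz+\nabla k_{\varepsilon})\\
	&+\int_{\holdAll} S^{\varepsilon}_{U_0}(x,\nabla\He)\cdot(\nabla k_{\varepsilon}-\nabla\Ke).
\end{align*}
The second integral is bounded via \eqref{prop7_followUp} by $\cGOld\|\nabla\tilde H\|_{L^\infty(\RN)}\|\nabla\He\|_{L^2(\holdAll)}\|\nabla k_{\varepsilon}-\nabla\Ke\|_{L^2(\holdAll)}=O(\varepsilon)\,o(\varepsilon)$ using \eqref{asympt_HEps}, \eqref{estimate_keps_Keps}. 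In the first integral I use the Lipschitz estimate \eqref{prop7}; the part multiplying the constant $\Adjz$ is $o(\epsN)$ by \eqref{estimate_4426}, and after writing $|\nabla k_{\varepsilon}|\le|\nabla k_{\varepsilon}-\nabla\Ke|+\|\nabla\tilde K\|_{L^\infty(\RN)}$ the $\|\nabla\tilde K\|_{L^\infty}$-part is again $o(\epsN)$ by \eqref{estimate_4426}. The remaining piece contains, after the splitting $|\nabla\he|+|\nabla\He|\le|\nabla\he-\nabla\He|+2\|\nabla\tilde H\|_{L^\infty}$, one term of size $\|\nabla\he-\nabla\He\|_{L^2}\|\nabla k_{\varepsilon}-\nabla\Ke\|_{L^2}=o(\varepsilon)\,o(\varepsilon)$ by \eqref{hEps_minus_Heps_L2}, \eqref{estimate_keps_Keps}, plus the genuinely cubic term $\int_{\holdAll}|\nabla\he-\nabla\He|^2|\nabla k_{\varepsilon}-\nabla\Ke|$, which is absorbed as $o(\epsN)$ once $\nabla\he$ is controlled in $L^\infty$ uniformly in $\varepsilon$.

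For the second difference I would undo the change of scale. With $\nabla\He(x)=\nabla\tilde H(x/\varepsilon)$, $\nabla\Ke(x)=\nabla\tilde K(x/\varepsilon)$ and the substitution $x=\varepsilon y$, the definition \eqref{def_Seps_I} yields
\[
	\hat j_2(\varepsilon)=\epsN\int_{(\holdAll/\varepsilon)\setminus\omega}S_{U_0}\big(\nabla\tilde H(y)\big)\cdot\big(\Adjz+\nabla\tilde K(y)\big)\,dy,
\]
while by \eqref{def_Stil_I} one has $J_2=\int_{\RN\setminus\omega}S_{U_0}(\nabla\tilde H)\cdot(\Adjz+\nabla\tilde K)$, so that
\[
	\hat j_2(\varepsilon)-\epsN J_2=-\epsN\int_{\RN\setminus(\holdAll/\varepsilon)}S_{U_0}(\nabla\tilde H)\cdot(\Adjz+\nabla\tilde K).
\]
Since $x_0=0$ lies in the interior of $\holdAll$, the set $\holdAll/\varepsilon$ contains a ball $B(0,c/\varepsilon)$, so the remaining domain is contained in $\{|y|\ge c/\varepsilon\}$. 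The integrand belongs to $L^1(\RN)$ because $|S_{U_0}(\nabla\tilde H)|\le\cGOld|\nabla\tilde H|^2$ by \eqref{prop7_followUp}, $\nabla\tilde H\in L^2(\RN)\cap L^\infty(\RN)$ and $\nabla\tilde K\in L^2(\RN)$; hence this tail tends to $0$ as $\varepsilon\to0$, giving $\hat j_2(\varepsilon)-\epsN J_2=\epsN\,o(1)=o(\epsN)$.

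The main obstacle is precisely the uniform $L^\infty$ control of the gradients: each place where a product of three gradient factors appears (the cubic term in the telescoping estimate and the $L^1$-integrability of the limit integrand) forces at least one factor to be taken in $L^\infty$ rather than $L^2$. For $\tilde H$ and $\tilde K$ this follows from interior elliptic (transmission) regularity for \eqref{variationApprox2} and \eqref{variationAdjoint_scale1} together with the decay at infinity established in Theorem~\ref{theo:asymp_H} and Proposition~\ref{prop454}; for $\he$ it follows from the $C^{1,\beta}$-regularity of $u_0$ near $x_0$ (Assumption~\ref{assump_reg_u0}) and elliptic regularity applied to \eqref{variationApprox1}, the bound being uniform in $\varepsilon$ by the closeness of $\nabla\he$ to $\nabla\He$. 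Granting these, the statement reduces to the bookkeeping of the $O(\varepsilon)$ and $o(\varepsilon)$ estimates already collected in Proposition~\ref{proposition4412} and Lemma~\ref{lemma455}.
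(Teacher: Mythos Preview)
Your overall architecture—interpolate via $\hat j_2(\varepsilon)$, telescope $\tilde j_2-\hat j_2$ using \eqref{prop7}/\eqref{prop7_followUp} together with Proposition~\ref{proposition4412} and Lemma~\ref{lemma455}, then write $\hat j_2(\varepsilon)=\epsN\int_{(D/\varepsilon)\setminus\omega}(\dots)$ and treat the complement as a vanishing tail—is exactly the route the paper (via \cite{Bonnafe2013,AmstutzBonnafe2015,Gangl2017}) follows. The second step and most of the first step are carried out correctly.

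The one genuine gap is in your last paragraph, where you justify the uniform bound $\|\nabla\he\|_{L^\infty(\holdAll)}\le C$. Your argument is circular: the only ``closeness of $\nabla\he$ to $\nabla\He$'' at your disposal is the $L^2$ estimate \eqref{hEps_minus_Heps_L2}, which cannot be upgraded to $L^\infty$, and Assumption~\ref{assump_reg_u0} is irrelevant because $\nabla u_0$ has already been frozen to the constant $U_0$ in \eqref{variationApprox1}. The correct way to obtain the uniform bound is to work at scale~$1$: set $\tilde h_\varepsilon(y):=\varepsilon^{-1}\he(\varepsilon y)$, which solves on $D/\varepsilon$ precisely the same quasilinear transmission problem as $H$, with $\varepsilon$-independent data and zero Dirichlet condition; local $C^{1,\alpha}$ regularity for this problem then yields $\|\nabla\tilde h_\varepsilon\|_{L^\infty}=\|\nabla\he\|_{L^\infty}\le C$ independently of~$\varepsilon$. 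Alternatively (and more cheaply), you can instead establish the uniform bound for $\nabla k_\varepsilon$ via the same rescaling applied to the \emph{linear} problem \eqref{variationAdjoint_scaleEpsApprox}, which already suffices to dispatch the cubic term $\int_{\holdAll}|\nabla\he-\nabla\He|^2|\nabla k_\varepsilon-\nabla\Ke|$. For $\nabla\tilde H,\nabla\tilde K\in L^\infty(\RN)$ your sketch is right, but note that Theorem~\ref{theo:asymp_H} and Proposition~\ref{prop454} give decay of $\tilde H,\tilde K$ only, so a further interior gradient estimate on translated unit balls is needed to pass to the gradients.
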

	This proof is following the lines of \cite{Bonnafe2013, AmstutzBonnafe2015} and is adapted to our case in \cite{Gangl2017}.

	\begin{lemma} \label{lemma465}
		\info{corresponds to lemma 4.6.5 \cite{Bonnafe2013} }
		Let $\omega = B(0,1)$ and let Assumptions \ref{assump_BH}, \ref{assump_delta}, \ref{assump_reg_u0} and \ref{assump_reg_v0} be satisfied. Then, it holds
		\begin{align}
			\int_{\holdAll} |\nabla \adjz - \Adjz| |\nabla h_{\varepsilon}|^2 = o(\epsN). \label{estimate_4619}
		\end{align}
	\end{lemma}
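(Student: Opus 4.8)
The plan is to mirror the proof of estimate \eqref{estimate_4423} in Proposition \ref{proposition4412}, which is the exact analogue of \eqref{estimate_4619} with the direct state $u_0$ and the constant $U_0$ replaced by the adjoint state $\adjz$ and the constant $\Adjz$. The idea is to split the domain $\holdAll$ into a small ball centered at the perturbation point $x_0=(0,0)^\top$ and its complement, exploiting the Hölder regularity of $\nabla\adjz$ near $x_0$ (Assumption \ref{assump_reg_v0}) on the ball, and the already established smallness of $\nabla h_\varepsilon$ away from the inclusion on the complement.

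Concretely, fix any $r\in(0,1)$ and $\alpha>0$ and set $B_\varepsilon:=B(0,\alpha\varepsilon^r)$, which is contained in $\Omega^d$ for $\varepsilon$ small enough since $x_0\in\Omega^d$ is an interior point. On $B_\varepsilon$ I would use that $\adjz|_{\Omega^d}\in C^{1,\tilde\beta}(\Omega^d)$ to bound $|\nabla\adjz(x)-\Adjz|=|\nabla\adjz(x)-\nabla\adjz(x_0)|\le C|x|^{\tilde\beta}\le C(\alpha\varepsilon^r)^{\tilde\beta}$. Together with the gradient bound $\|\nabla h_\varepsilon\|_{L^2(\holdAll)}^2=\mathcal O(\epsN)$ from \eqref{asympt_hEps} of Lemma \ref{lem445}, this yields
\begin{align*}
	\int_{B_\varepsilon}|\nabla\adjz-\Adjz|\,|\nabla h_\varepsilon|^2
	\le C(\alpha\varepsilon^r)^{\tilde\beta}\,\|\nabla h_\varepsilon\|_{L^2(\holdAll)}^2
	=\mathcal O\!\left(\varepsilon^{2+r\tilde\beta}\right)=o(\epsN),
\end{align*}
since $r\tilde\beta>0$.

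For the complement $\holdAll\setminus B_\varepsilon$ I would split once more into $\Omega^d\setminus B_\varepsilon$ and $\holdAll\setminus\Omega^d$. On $\Omega^d\setminus B_\varepsilon$ the gradient $\nabla\adjz$ is bounded (again by $C^{1,\tilde\beta}(\Omega^d)$), so $|\nabla\adjz-\Adjz|\le C$ there and the contribution is controlled by $C\int_{\holdAll\setminus B_\varepsilon}|\nabla h_\varepsilon|^2=o(\epsN)$ directly from estimate \eqref{estimate_4422}. On the far region $\holdAll\setminus\Omega^d$ the adjoint state $\nabla\adjz$ need not be bounded (the adjoint source $\tilde G$ is supported there), but this region stays at a fixed positive distance from the inclusion $\omega_\varepsilon\subset\subset\Omega^d$. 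I would exploit that $h_\varepsilon$ solves the source-free quasilinear equation away from $\omega_\varepsilon$, so interior elliptic regularity for the uniformly elliptic operator (uniform ellipticity from \eqref{prop4}, smoothness from $\nuh\in C^3$) bounds $\|\nabla h_\varepsilon\|_{L^\infty(\holdAll\setminus\Omega^d)}$ by $C\|\nabla h_\varepsilon\|_{L^2(\holdAll\setminus B_\varepsilon)}=o(\varepsilon)$ via \eqref{estimate_4422}; since $\nabla\adjz-\Adjz\in L^1(\holdAll)$ on the bounded domain, this gives $\int_{\holdAll\setminus\Omega^d}|\nabla\adjz-\Adjz|\,|\nabla h_\varepsilon|^2\le \|\nabla h_\varepsilon\|_{L^\infty(\holdAll\setminus\Omega^d)}^2\,\|\nabla\adjz-\Adjz\|_{L^1(\holdAll)}=o(\epsN)$. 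Summing the three contributions proves \eqref{estimate_4619}.

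The main obstacle is precisely the far region $\holdAll\setminus\Omega^d$: unlike in the near and intermediate regions, neither the Hölder decay nor a uniform bound on $\nabla\adjz$ is available there, so the smallness must come entirely from $\nabla h_\varepsilon$ being pointwise tiny far from the inclusion. This pointwise decay is guaranteed by the localization of the source in $\omega_\varepsilon$ together with the decay of $H$ at infinity established in Theorem \ref{theo:asymp_H} (equivalently, via interior regularity applied to the $o(\varepsilon)$-small $L^2$ tail \eqref{estimate_4422}); making this step rigorous for the quasilinear operator is the only part that goes beyond the routine domain-splitting argument.
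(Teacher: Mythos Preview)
Your approach is essentially what the paper intends: the paper does not give the proof but defers to \cite{Bonnafe2013, AmstutzBonnafe2015, Gangl2017}, and explicitly frames the lemma as the adjoint analogue of estimate \eqref{estimate_4423}, whose proof uses exactly the domain-splitting strategy you describe. Your near-region argument (H\"older regularity of $\nabla\adjz$ from Assumption~\ref{assump_reg_v0} combined with \eqref{asympt_hEps}) and your intermediate-region argument (boundedness of $\nabla\adjz$ on $\Omega^d$ combined with \eqref{estimate_4422}) are correct and match the standard proof.

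Your handling of the far region $\holdAll\setminus\Omega^d$ goes slightly beyond what the references typically spell out. In \cite{Bonnafe2013, AmstutzBonnafe2015} the regularity of the direct and adjoint states is assumed globally on $\overline\holdAll$, so that $|\nabla\adjz-\Adjz|$ is bounded everywhere and the outer contribution follows immediately from \eqref{estimate_4422}; the three-region split and the extra $L^\infty$ estimate on $\nabla h_\varepsilon$ are then unnecessary. Under the present paper's weaker Assumption~\ref{assump_reg_v0} (regularity only on $\Omega^d$), your concern is legitimate and your proposed remedy---interior $C^{1,\alpha}$ regularity for the quasilinear equation satisfied by $h_\varepsilon$ in $\holdAll\setminus\omega_\varepsilon$---is a valid way to close the gap. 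Note, however, that $\holdAll\setminus\Omega^d$ touches $\partial\holdAll$, so you need boundary (not just interior) regularity, which in turn requires some smoothness of $\partial\holdAll$; this is not stated explicitly in the paper but is satisfied in the motor application. Apart from this technical caveat, the argument is sound.
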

	This proof can be found in \cite{Bonnafe2013, AmstutzBonnafe2015, Gangl2017}.
		
	\begin{lemma} \label{lemma466}
		\info{Corresponds to lemma 4.6.6 \cite{Bonnafe2013}}
		Let $\omega = B(0,1)$ and let Assumptions \ref{assump_BH}, \ref{assump_c78}, \ref{assump_delta}, \ref{assump_reg_u0} and \ref{assump_reg_v0} be satisfied. Then, it holds
		\begin{align*}
			j_2(\varepsilon) - \tilde j_2(\varepsilon) = o(\epsN).
		\end{align*}
	\end{lemma}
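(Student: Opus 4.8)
The plan is to reduce $j_2(\varepsilon)-\tilde j_2(\varepsilon)$ to the estimates already established for the variations of the direct and adjoint states, by telescoping through the three replacements that separate the two expressions: the base point of the nonlinearity ($\nabla u_0\to U_0$), its argument ($\nabla\uet\to\nabla h_{\varepsilon}$) and the test field ($\nabla\adjz+\nabla\adjet\to\Adjz+\nabla k_{\varepsilon}$). Writing $B_1:=\nabla\adjz+\nabla\adjet$ and $B_2:=\Adjz+\nabla k_{\varepsilon}$, I split
\[
 j_2(\varepsilon)-\tilde j_2(\varepsilon)=\int_{\holdAll}\big(S^{\varepsilon}_{\nabla u_0}(x,\nabla\uet)-S^{\varepsilon}_{U_0}(x,\nabla h_{\varepsilon})\big)\cdot B_1+\int_{\holdAll}S^{\varepsilon}_{U_0}(x,\nabla h_{\varepsilon})\cdot(B_1-B_2),
\]
and decompose the $S$-difference in the first integral once more as $D_1+D_2$ with $D_1:=S^{\varepsilon}_{\nabla u_0}(x,\nabla\uet)-S^{\varepsilon}_{U_0}(x,\nabla\uet)$ and $D_2:=S^{\varepsilon}_{U_0}(x,\nabla\uet)-S^{\varepsilon}_{U_0}(x,\nabla h_{\varepsilon})$. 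By Remark \ref{rem:propXDep} the pointwise bounds of Lemma \ref{lemmaProperties2} carry over to $S^{\varepsilon}$, so that $|D_1|\le\cHOld\,|\nabla u_0-U_0|\,|\nabla\uet|^2$ by \eqref{prop8}, $|D_2|\le\cGOld\,|\nabla\uet-\nabla h_{\varepsilon}|(|\nabla\uet|+|\nabla h_{\varepsilon}|)$ by \eqref{prop7}, and $|S^{\varepsilon}_{U_0}(x,\nabla h_{\varepsilon})|\le\cGOld\,|\nabla h_{\varepsilon}|^2$ by \eqref{prop7_followUp}.

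Next I dispatch the pieces that close cleanly. In the second integral I use $B_1-B_2=(\nabla\adjz-\Adjz)+(\nabla\adjet-\nabla k_{\varepsilon})$: the summand carrying $\nabla\adjz-\Adjz$ is bounded by $\cGOld\int_{\holdAll}|\nabla h_{\varepsilon}|^2\,|\nabla\adjz-\Adjz|$, which is $o(\epsN)$ directly by Lemma \ref{lemma465}, estimate \eqref{estimate_4619}. For the terms pairing an $L^2$-controlled quantity with a bounded field I invoke the $C^{1,\beta}$-regularity of $u_0$ and $\adjz$ on $\Omega^d$ (Assumptions \ref{assump_reg_u0}, \ref{assump_reg_v0}) to bound $|\nabla\adjz|$ by a constant on the region of interest, and then apply Cauchy--Schwarz: the $D_2$ contribution against the bounded part of $B_1$ is $\le C\cGOld\int_{\holdAll}|\nabla\uet-\nabla h_{\varepsilon}|(|\nabla\uet|+|\nabla h_{\varepsilon}|)=o(\epsN)$ by \eqref{estimate_4429}, while the $D_1$ contribution against the bounded part of $B_1$, after $|\nabla\uet|^2\le 2|\nabla h_{\varepsilon}|^2+2|\nabla\uet-\nabla h_{\varepsilon}|^2$, reduces to \eqref{estimate_4423} together with \eqref{uEpsTilde_minus_hEps}. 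The $L^2$-smallness of the adjoint variations, \eqref{norm_vepstilde}, \eqref{norm_keps} and \eqref{estimate_vteps_keps}, is used repeatedly.

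The genuinely delicate terms are those in which a quadratic expression in the direct-state gradient meets a factor that is only small in $L^2$: most prominently $\int_{\holdAll}S^{\varepsilon}_{U_0}(x,\nabla h_{\varepsilon})\cdot(\nabla\adjet-\nabla k_{\varepsilon})$, bounded by $\cGOld\int_{\holdAll}|\nabla h_{\varepsilon}|^2\,|\nabla\adjet-\nabla k_{\varepsilon}|$, and the cross terms in the first integral involving $\nabla\adjet$. A plain $L^2$--$L^2$ Cauchy--Schwarz cannot close these cubic expressions; what is needed is $L^4$ control of the rescaled profiles, namely $\|\nabla H_{\varepsilon}\|_{L^4(\holdAll)}^2=O(\varepsilon)$, and this is exactly where the fast decay of the scale-1 variation enters. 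Using \eqref{def_Heps} with $\hat H=\tilde H$ we have $\nabla H_{\varepsilon}(x)=(\nabla\tilde H)(x/\varepsilon)$, so the change of variables $y=x/\varepsilon$ gives $\|\nabla H_{\varepsilon}\|_{L^4(\holdAll)}^4=\varepsilon^2\|\nabla\tilde H\|_{L^4(\holdAll/\varepsilon)}^4$; the bound $\tilde H(y)=\mathcal O(|y|^{-\tau})$ of Theorem \ref{theo:asymp_H} (together with interior elliptic estimates for its gradient) places $\nabla\tilde H$ in $L^4(\RN)$, whence $\|\nabla H_{\varepsilon}\|_{L^4}^2=O(\varepsilon)$, and analogously $\tilde K(y)=\mathcal O(|y|^{-1})$ from Proposition \ref{prop454} yields $\|\nabla K_{\varepsilon}\|_{L^4}^2=O(\varepsilon)$ via \eqref{def_Keps}. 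Combining $\|\nabla H_{\varepsilon}\|_{L^4}^2=O(\varepsilon)$ with $\|\nabla\adjet-\nabla k_{\varepsilon}\|_{L^2}=o(\varepsilon)$ (Lemma \ref{lemma456}) makes the displayed term $o(\epsN)$, and the remaining cubic pieces are handled in the same fashion, transferring from $H_{\varepsilon}$ to $h_{\varepsilon}$ and from $K_{\varepsilon}$ to $k_{\varepsilon}$ through the $L^2$-closeness estimates \eqref{estimate_4426} and \eqref{estimate_keps_Keps} (and \eqref{asympt_HEps} for the remaining $L^2$ norm). The main obstacle is precisely this $L^4$ bookkeeping: every term must be arranged so that at most one factor is merely $L^2$-small while the accompanying quadratic factor is placed in $L^4$, which is feasible only because of the $|y|^{-\tau}$ decay supplied by Theorem \ref{theo:asymp_H} and the $|y|^{-1}$ decay of Proposition \ref{prop454}. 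Summing the $o(\epsN)$ contributions gives $j_2(\varepsilon)-\tilde j_2(\varepsilon)=o(\epsN)$.
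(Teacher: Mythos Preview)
Your telescoping decomposition into $D_1$, $D_2$ and the $(B_1-B_2)$ piece is exactly the route taken in the reference proofs (Bonnaf\'e, Amstutz--Bonnaf\'e), and your identification of the ``easy'' terms together with estimates \eqref{estimate_4423}, \eqref{estimate_4429}, \eqref{estimate_4619}, \eqref{estimate_vteps_keps} is correct. You are also right that the cubic terms such as $\int_{\holdAll}|\nabla h_\varepsilon|^2\,|\nabla\adjet-\nabla k_\varepsilon|$ are the only genuine obstacle and that the decay results of Theorem~\ref{theo:asymp_H} and Proposition~\ref{prop454} are what ultimately closes them.

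There is, however, a gap in how you close the cubic terms. First, the passage ``$\tilde H(y)=\mathcal O(|y|^{-\tau})$ together with interior elliptic estimates places $\nabla\tilde H$ in $L^4(\RN)$'' is not quite the right mechanism: the equation for $\tilde H$ is not scale-invariant (because of the fixed vector $U_0$), so decay of $\tilde H$ does not directly translate into pointwise decay of $\nabla\tilde H$. What one actually obtains from interior $C^{1,\alpha}$ regularity applied to the rescaled function $z\mapsto R^{-1}(U_0\cdot(Rz)+\tilde H(Rz))$, which \emph{does} solve the scale-invariant equation $-\mathrm{div}(T(\nabla\cdot))=0$, is a uniform bound $\nabla\tilde H\in L^\infty(\RN)$; then $\|\nabla H_\varepsilon\|_{L^4}^4\le\|\nabla H_\varepsilon\|_{L^\infty}^2\|\nabla H_\varepsilon\|_{L^2}^2=O(\varepsilon^2)$ follows by interpolation with \eqref{asympt_HEps}. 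Second, and more seriously, your ``transfer from $H_\varepsilon$ to $h_\varepsilon$ through the $L^2$-closeness estimate \eqref{estimate_4426}'' does not work as stated: writing $|\nabla h_\varepsilon|^2\le 2|\nabla H_\varepsilon|^2+2|\nabla h_\varepsilon-\nabla H_\varepsilon|^2$ still leaves the term $\int|\nabla h_\varepsilon-\nabla H_\varepsilon|^2\,|\nabla\adjet-\nabla k_\varepsilon|$, for which none of the three factors is in $L^\infty$ and the $L^2$ bounds alone (each $o(\varepsilon)$) are insufficient. The fix is to exploit the $L^\infty$ bound on $\nabla H_\varepsilon$ \emph{pointwise}: from $|\nabla h_\varepsilon|\le\|\nabla H_\varepsilon\|_{L^\infty}+|\nabla h_\varepsilon-\nabla H_\varepsilon|$ one reduces the cubic integrand to bilinear ones of the form $|\nabla h_\varepsilon-\nabla H_\varepsilon|\,|\nabla\adjet-\nabla k_\varepsilon|$ and $|\nabla h_\varepsilon|\,|\nabla\adjet-\nabla k_\varepsilon|$, each handled by Cauchy--Schwarz with \eqref{hEps_minus_Heps_L2}, \eqref{asympt_hEps} and \eqref{estimate_vteps_keps}; the residual term $\int|\nabla h_\varepsilon-\nabla H_\varepsilon|^2\,|\nabla\adjet-\nabla k_\varepsilon|$ is then absorbed by iterating the same $L^\infty$ trick on the adjoint side using $\nabla\tilde K\in L^\infty(\RN)$ (explicit from Remark~\ref{rem_MatExplicit_CaseI}) together with \eqref{estimate_keps_Keps}. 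With these adjustments your argument goes through and matches the paper's referenced proof.
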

	This proof is following the lines of \cite{Bonnafe2013, AmstutzBonnafe2015} and is adapted to our case in \cite{Gangl2017}.

	Eventually, combining Lemma \ref{lemma464} and Lemma \ref{lemma466}, we get the following result:
	\begin{corollary}
		Let $\omega = B(0,1)$ and let Assumptions \ref{assump_BH}, \ref{assump_c78}, \ref{assump_delta}, \ref{assump_reg_u0} and \ref{assump_reg_v0} be satisfied. Then we have
		\begin{align}
			j_2(\varepsilon) = \epsN  \left( \int_{\RN} \tilde S_{U_0}(x,\nabla H) \cdot (\Adjz + \nabla K) \right) + o(\epsN). \label{j2_final}
		\end{align}
	\end{corollary}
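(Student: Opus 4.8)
The plan is to deduce \eqref{j2_final} directly from the two preceding lemmas by a simple add-and-subtract argument; all of the genuine analytic work has already been packaged into \lemref{lemma464} and \lemref{lemma466}. First I would observe that, by the definition \eqref{def_J2} of $J_2$, the bracketed integral appearing in \eqref{j2_final} is exactly $J_2 = \int_{\RN} \tilde S_{U_0}(x,\nabla H) \cdot (\Adjz + \nabla K)$, so that the assertion is equivalent to $j_2(\varepsilon) = \epsN J_2 + \smallO(\epsN)$.

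To establish this, I would split the error as
\begin{align*}
	j_2(\varepsilon) - \epsN J_2 = \bigl( j_2(\varepsilon) - \tilde j_2(\varepsilon) \bigr) + \bigl( \tilde j_2(\varepsilon) - \epsN J_2 \bigr).
\end{align*}
\lemref{lemma466} shows that the first parenthesis is $\smallO(\epsN)$, while \lemref{lemma464} shows that the second is $\smallO(\epsN)$. Adding the two estimates and using the triangle inequality yields $j_2(\varepsilon) - \epsN J_2 = \smallO(\epsN)$, which is precisely \eqref{j2_final}. Since the hypotheses of the corollary are exactly the union of the hypotheses of the two lemmas (Assumptions \ref{assump_BH}, \ref{assump_c78}, \ref{assump_delta}, \ref{assump_reg_u0} and \ref{assump_reg_v0}), no additional assumptions are required.

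The corollary itself is therefore immediate; the main obstacle lies entirely in the two lemmas it combines. \lemref{lemma466} is the harder of the two: it compares the genuine integrand $S^{\varepsilon}_{\nabla u_0}(x,\nabla \uet)\cdot(\nabla \adjz + \nabla \adjet)$ of \eqref{def_j2} against its approximation $S^{\varepsilon}_{U_0}(x,\nabla h_{\varepsilon})\cdot(\Adjz + \nabla k_{\varepsilon})$ of \eqref{def_j2tilde}, which requires simultaneously replacing the base point $\nabla u_0$ by its value $U_0$, the direct variation $\nabla \uet$ by $\nabla h_{\varepsilon}$, and the adjoint variation $\nabla \adjet$ by $\nabla k_{\varepsilon}$. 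Controlling these replacements hinges on the quadratic and Lipschitz-type bounds \eqref{prop7} and \eqref{prop8} for the nonlinear remainder $S$ from \lemref{lemmaProperties2}, together with the fine $\smallO(\epsN)$ estimates from Propositions \ref{proposition4412} and \ref{proposition4413}, Lemmas \ref{lemma455} and \ref{lemma456}, and the weighted bound \eqref{estimate_4619} of \lemref{lemma465}. \lemref{lemma464}, in turn, relies on the change of scale and on the decay of the representative $\tilde H$ established in \thmref{theo:asymp_H} to pass from the bounded-domain integral $\tilde j_2(\varepsilon)$ to the scale-$1$ integral over $\RN$ defining $J_2$.
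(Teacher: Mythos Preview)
Your proposal is correct and matches the paper's approach exactly: the paper simply states that the corollary follows by ``combining Lemma~\ref{lemma464} and Lemma~\ref{lemma466}'', which is precisely your add-and-subtract decomposition through the intermediate quantity $\tilde j_2(\varepsilon)$. Your additional commentary on where the real difficulty lies (inside those two lemmas) is accurate and helpful context, but not part of the proof of the corollary itself.
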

	
	\subsubsection{Main result}
	Finally, combining \eqref{topAsympExp_final} with \eqref{j1_final} and \eqref{j2_final}, we get the main result of this paper, i.e., the topological derivative for the introduction of linear material (air) inside a region of nonlinear (ferromagnetic) material according to \eqref{def:topDerivative}.
	We recall the notation needed for stating the result of Theorem \ref{theo_MainResultCaseI}:
	\begin{itemize}
		\item $x_0 \in \Omega^d$ denotes the point around which we perturb the material coefficient,
		\item $u_0 \in H_0^1(\holdAll)$ is the unperturbed direct state, i.e., the solution to \eqref{weakUnpert}, and $U_0 = \nabla u_0(x_0)$,
		\item $\adjz \in H_0^1(\holdAll)$ is the unperturbed adjoint state, i.e., the solution to \eqref{adjoint_eps0}, and $\Adjz = \nabla \adjz(x_0)$,
		\item $\myTDMat = \myTDMat(\omega, \DT(U_0))$ denotes the matrix defined in \eqref{TDMat_CaseI} where $\omega$ represents the shape of the inclusion and $\DT$ is the Jacobian of $T$ defined in \eqref{def_T},
		\item $H \in \HH(\RN)$ denotes the variation of the direct state at scale 1, i.e., the solution to~\eqref{variationApprox2},
		\item $K \in \HH(\RN)$ denotes the variation of the adjoint state at scale 1, i.e., the solution to~\eqref{variationAdjoint_scale1},
		\item $\tilde S$ is defined in \eqref{def_Stil_I},
		\item $\delta_J$ is according to \eqref{expansion_J}.
	\end{itemize}
	\begin{theorem} \label{theo_MainResultCaseI}
		Assume that 
		\begin{itemize}
		 \item[-] $\omega = B(0,1)$ the unit disk in $\RN$
		 \item[-] the ferromagnetic material is such that Assumptions \ref{assump_BH}, \ref{assump_c78} and \ref{assump_delta} are satisfied,
		 \item[-] the functional $\mathcal J_{\varepsilon}$ satisfies Assumption \ref{assump_J},
		 \item[-] the unperturbed direct state $u_0$ satisfies Assumption \ref{assump_reg_u0}, i.e., $u_0 \in C^{1,\beta}$ for some $\beta>0$,
		 \item[-] the unperturbed direct state $\adjz$ satisfies Assumption \ref{assump_reg_v0}, i.e., $\adjz \in C^{1,\tilde \beta}$ for some $\tilde \beta>0$.
		\end{itemize}
		Then the topological derivative for introducing air inside ferromagnetic material reads
		\begin{align}
			\begin{aligned}
				G^{f \rightarrow air}(x_0) 
				=& U_0^\top \, \myTDMat \, \Adjz \\
				&+ \int_{\RN} \tilde S_{U_0}(x,\nabla H) \cdot (\Adjz + \nabla K)  + \delta_J. \label{G1_final}
			\end{aligned}
		\end{align}
	\end{theorem}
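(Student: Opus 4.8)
The plan is to assemble the statement directly, since under the hypotheses listed all the hard analytic work has already been done in the preceding lemmas and propositions. Because the objective functional is supported in the air gap, away from the design region, we have $\mathcal J_{\varepsilon} = \mathcal J_0 = \mathcal J$, so the left-hand side of \eqref{def:topDerivative} equals $\mathcal J(\ue) - \mathcal J(u_0)$. Writing $\uet = \ue - u_0$, Assumption \ref{assump_J} then gives
\[
	\mathcal J(\ue) - \mathcal J(u_0) = \langle \tilde G, \uet\rangle + \delta_J \epsN + R(\varepsilon).
\]
First I would dispose of the remainder: by \eqref{functional_remainder_b}, $R(\varepsilon)$ is of order $\int_{\holdAll\setminus B(0,\hat\alpha\varepsilon^{\tilde r})}|\nabla\uet|^2$, and estimate \eqref{estimate_4431} applied with $\alpha = \hat\alpha$ and $r = \tilde r \in (0,1)$ shows this is $o(\epsN)$, which is exactly \eqref{Reps_eps2}.

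The central step is the algebraic identity that splits the bounded linear form $\langle \tilde G, \uet\rangle$ into a linear part $j_1$ and a nonlinear part $j_2$. Testing the perturbed adjoint equation \eqref{adjointEquation_scaleEps} with $\test = \uet$ and using the symmetry of $\DT_{\varepsilon}$, and then adding the variation equation \eqref{variationDirect_scaleEps} tested with $\test = \adje$, the two quadratic terms $\int_{\holdAll}\DT_{\varepsilon}(x,\nabla u_0)\nabla\uet\cdot\nabla\adje$ cancel; the residual difference $\Te(x,\nabla u_0 + \nabla\uet) - \Te(x,\nabla u_0) - \DT_{\varepsilon}(x,\nabla u_0)\nabla\uet$ is precisely $S^{\varepsilon}_{\nabla u_0}(x,\nabla\uet)$ by the definition \eqref{def_Seps_I}, leaving
\[
	\langle \tilde G, \uet\rangle = \int_{\ome}(\nu_0 - \nuh(|\nabla u_0|))\nabla u_0 \cdot \nabla \adje + \int_{\holdAll} S^{\varepsilon}_{\nabla u_0}(x,\nabla\uet)\cdot\nabla\adje.
\]
Substituting $\adje = \adjz + \adjet$ identifies these two terms with $j_1(\varepsilon)$ and $j_2(\varepsilon)$ from \eqref{def_j1}--\eqref{def_j2}, yielding \eqref{topAsympExp_final}.

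It then remains only to insert the two expansions already at our disposal. Corollary \eqref{j1_final} gives $j_1(\varepsilon) = \epsN\, U_0^\top\myTDMat\Adjz + o(\epsN)$, where $\myTDMat = \myTDMat(\omega,\DT(U_0))$ is the polarization matrix arising from the linearity of \eqref{variationAdjoint_scale1} as in \eqref{intro_M1}--\eqref{J1_polMat}; Corollary \eqref{j2_final} gives $j_2(\varepsilon) = \epsN \int_{\RN}\tilde S_{U_0}(x,\nabla H)\cdot(\Adjz + \nabla K) + o(\epsN)$. Putting both into \eqref{topAsympExp_final} and comparing with \eqref{def:topDerivative} (recall $f(\varepsilon) = \epsN$) reads off
\[
	G^{f \rightarrow air}(x_0) = U_0^\top\myTDMat\Adjz + \int_{\RN}\tilde S_{U_0}(x,\nabla H)\cdot(\Adjz+\nabla K) + \delta_J,
\]
which is \eqref{G1_final}.

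As for the difficulty, the theorem itself is only this bookkeeping, so the genuine obstacles lie entirely upstream. The decisive ingredient is the decay estimate of Theorem \ref{theo:asymp_H}, which rests on the super/subsolution construction of Propositions \ref{prop:supersol1} and \ref{prop:subsol} under Assumption \ref{assump_delta}: it is precisely this decay at scale $1$ that forces the approximation errors in Propositions \ref{proposition4412}--\ref{proposition4413} and in Lemmas \ref{lemma464}--\ref{lemma466} to be $o(\epsN)$ rather than merely $O(\epsN)$, and hence what makes the two corollaries above — and therefore the clean split of $\langle \tilde G, \uet\rangle$ into a linear and a nonlinear term with the correct rate — valid.
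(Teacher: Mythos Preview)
Your proposal is correct and follows essentially the same route as the paper: the theorem is indeed just the assembly of \eqref{topAsympExp_final} with the two corollaries \eqref{j1_final} and \eqref{j2_final}, after disposing of $R(\varepsilon)$ via \eqref{estimate_4431}, and the paper derives the splitting $\langle \tilde G,\uet\rangle = j_1(\varepsilon)+j_2(\varepsilon)$ through precisely the same adjoint/direct testing manipulation you describe. Your closing remark about the real difficulty lying upstream in Theorem~\ref{theo:asymp_H} and Propositions~\ref{prop:supersol1}--\ref{prop:subsol} is also exactly the paper's perspective.
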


	\begin{remark} \label{rem_shapeOfInclusion}
		The proof of Theorem \ref{theo_MainResultCaseI} is valid only under the assumption that $\omega = B(0,1)$. This is mainly due to the fact that the proof of Proposition \ref{prop:compPrinciple} uses Lemma \ref{lem:Hodd} which exploits the symmetry of $\omega$ with respect to the line $\lbrace U_0 \cdot x = 0 \rbrace$. Since we need to make sure that this condition is satisfied for any possible $U_0$, we have to assume that $\omega$ is a disk in the sequel. Thus, the condition $\omega = B(0,1)$ could be relaxed to arbitrarily-shaped inclusions with $C^2$ boundary if an asymptotic behavior of the form \eqref{asymp_H} can be guaranteed otherwise. Note that the proof of the asymptotic behavior of $K$ in \eqref{asympt_K} is independent of the shape of $\omega$.
		The second place where the shape of the inclusion influences the topological derivative is in the formula for the matrix $\myTDMat(\omega, \DT(U_0))$. Here, an extension to ellipse-shaped inclusions is possible.
	\end{remark}
	
	\subsection{Proofs} \label{sec_proofsTDNL}     
	

\textit{Proof of Lemma~\ref{lem:Hodd}:}. \label{lem:Hodd_proof}
\begin{proof}
	Let $H$ be the unique solution to $QH=0$ with $Q$ defined in \eqref{defQ}
	and $\tilde H$ a representative of the class $H$.
	Consider the transformation
	\begin{align*}
		\phi(x_1, x_2) &:= (-x_1, x_2) \quad  \\
		\mbox{with }J&=\nabla \phi(x_1, x_2) = \left[ \begin{array}{cc} -1 & 0 \\ 0 & 1  \end{array} \right] = J^{-\top} \quad \mbox{and}\quad |\mbox{det}J|=1,
	\end{align*}
	and define the function $\tilde H^s \in \HH(\RN)$ by
	\begin{align*}
		\tilde H^s(x_1, x_2) := - \tilde H(-x_1, x_2) = - (\tilde H \circ \phi)(x).
	\end{align*}
	We show that also $Q \, \tilde H^s = 0$. Thanks to the symmetry of $\omega$ with respect to the line $\lbrace x \in \RN: U_0 \cdot x = 0 \rbrace$, we have $\phi^{-1}(\RN \setminus \omega) = \RN \setminus \omega$ and $\phi^{-1}(\omega)=\omega$. Thus, we get
	\begin{align*}
		\langle &Q\, \tilde H^s, \test^s \rangle =\int_{\omega} \left[ \nu_0 \left(U_0 + J^{-\top}\nabla_y \hat{\tilde H}^s(y)\right) - \nuh(|U_0|)U_0 \right] \cdot J^{-\top}\nabla_y \hat{\test}^s(y) \mbox dy \\
		&+\int_{\RN \setminus \omega} \left[ \nuh\left( \left\lvert U_0 + J^{-\top}\nabla_y  \hat{\tilde H}^s(y)\right\rvert \right) \left(U_0 + J^{-\top}\nabla_y \hat{\tilde H}^s(y) \right) - \nuh\left( \left\lvert U_0 \right\rvert \right)U_0 \right] \cdot J^{-\top}\nabla_y \hat{\test}^s(y)\mbox dy,
	\end{align*}
	where $\hat{\tilde H}^s = \tilde H^s \circ \phi$ and $\hat{\test}^s = \test^s\circ \phi$.
	By definition of $\tilde H^s$ and due to the fact that $(\phi \circ \phi)(x) = x$, we have that $\hat{\tilde H}^s = -\tilde H$.
	Since the basis was chosen such that $e_1 = U_0 / |U_0|$ and $e_2 = e_1^\perp$, it holds that 
	\begin{align*}
		U_0 + J^{-\top}\nabla_y  \hat{\tilde H}^s(y) &= \left[ \begin{array}{c}
																|U_0| - \frac{\partial}{\partial y_1} (-\tilde H(y))\\
																0 + \frac{\partial}{\partial y_2} (-\tilde H(y))
															\end{array} \right], \\
		\left \lvert U_0 + J^{-\top}\nabla_y  \hat{\tilde H}^s(y) \right\rvert &= \left \lvert U_0 + \nabla_y  \tilde H(y) \right\rvert ,\\
		\left( U_0 + J^{-\top}\nabla_y  \hat{\tilde H}^s(y) \right) \cdot J^{-\top}\nabla_y \hat{\test}^s(y) &= \left(  U_0 + \nabla_y  \tilde H(y) \right) \cdot \nabla_y \left(-\hat{\test}^s(y) \right),\\
		U_0  \cdot J^{-\top}\nabla_y \hat{\test}^s(y) &= U_0   \cdot \nabla_y \left(-\hat{\test}^s(y) \right).
	\end{align*}
	Using these relations, we conclude that
	\begin{align*}
		\langle Q\, \tilde H^s, \test^s \rangle 
			=& \int_{\RN \setminus \omega} \left[ \nuh\left( \left \lvert U_0 + \nabla_y  \tilde H(y) \right\rvert \right) \left(  U_0 + \nabla_y  \tilde H(y) \right) - \nuh\left( \left\lvert U_0 \right\rvert \right)U_0 \right] \cdot \nabla_y \left(-\hat{\test}^s(y) \right) \mbox dy\\
			&+\int_{\omega} \left[ \nu_0 \left(  U_0 + \nabla_y  \tilde H(y) \right) - \nuh(|U_0|)U_0 \right] \cdot \nabla_y \left(-\hat{\test}^s(y) \right) \mbox dy\\
			=& \langle Q \, \tilde H, -\hat{\test}^s \rangle.
	\end{align*}
	Since $-\hat{\test}^s \in \HH(\RN)$ for $\test^s \in \mathcal H(\RN)$, \eqref{eq_QHzero} yields $\langle Q \tilde H,  -\hat{\test}^s \rangle = 0$ and therefore
	\begin{align*}
		\langle Q\, \tilde H^s, \test^s \rangle = 0 \quad  \mbox{ for all } \test^s \in \mathcal H(\RN).
	\end{align*}
	Thus, it follows from the uniqueness of a solution $H \in \mathcal H(\RN)$ to \eqref{variationApprox2} established in Proposition \ref{propExUnH} that also $\tilde H^s$ is a representative of $H$ and therefore $\tilde H^s(x_1, x_2) = \tilde H(x_1, x_2) + C$ for all $(x_1, x_2) \in \RN$ where $C$ is a constant.
	Restricted to the line $\lbrace x\in \RN: x_1 = 0 \rbrace$, this yields that $-2 \tilde H(0,x_2) = C$ for all $x_2 \in \mathbb R$. Thus, choosing the representative $\tilde H$ in such a way that $\tilde H(0,0) = 0$ yields that $C=0$ and thus $\tilde H^s = \tilde H$, which yields
	\begin{align*}
		\tilde H(-x_1,x_2) = -\tilde H(x_1, x_2)
	\end{align*}
	for all $(x_1, x_2) \in \RN$.
\end{proof} 

\textit{Proof of Proposition~\ref{prop:supersol1}:} \label{prop:supersol1_proof}
This proof follows the ideas of \cite{Bonnafe2013, AmstutzBonnafe2015}, but requires some different calculations.
	\begin{proof}
	Let $k\in(0,1]$ be defined depending on the lower bound $\delta_{\nuh} >-1/3$ from Assumption~\ref{assump_delta} as
	\begin{align}
		k = \begin{cases} \frac{1}{2} \left(-\frac{1}{\delta_{\nuh}(1+\varepsilon)} - 3 \right)  & -\frac{1}{3} < \delta_{\nuh} \leq -\frac{1}{5},  \\ 1 & \delta_{\nuh} > -\frac{1}{5}, \end{cases} \label{def_k_P}
	\end{align}
	for some $\varepsilon>0$.
	Furthermore, let $\overline \sigma$ be defined as
	\begin{align*}
		\overline \sigma := \mbox{min}\lbrace \sigma_0, \sigma_1, \sigma_2 \rbrace >1,	
	\end{align*}
	where $\sigma_0$ and $\sigma_1$ are given by
	\begin{align}
		\sigma_0 &:= 2, \nonumber\\
		\sigma_1 &:= 1 + \frac{ \numin }{\nu_0 + \tilde c' |U_0| \sqrt{5}} \in (1,2),\label{def_beta1}
	\end{align}
	with $\numin$ and $\tilde c'$ given by \eqref{physProperties} and \eqref{bound_nup}, respectively,
	and $\sigma_2$ is defined as the unique solution in $(1,2)$ to the equation $\tilde f(t) = \delta_{\nuh}$ with $\tilde f$ defined in \eqref{proof_P_ftilde} in the case where $\delta_{\nuh}<0$, and $\sigma_2:=2$ else.
	We show that property \eqref{QPgreaterEqualZero} holds for $k$ chosen according to \eqref{def_k_P} and any fixed $\sigma$ satisfying
	\begin{align}
		\sigma \in (1, \overline{\sigma}).\label{def_beta}
	\end{align}

	Let us first compute the first and second derivatives of the function $\myP$ given in \eqref{defP}. We use the notation $r = |x|$ and $e_r = x / |x|$. For $x \in \RN \setminus \overline \omega$, we have
	\begin{align}
		\myP(x) = \, & k\, (U_0 \cdot x) r^{-\sigma}, \nonumber\\
		\nabla \myP(x) \cdot \varphi = \,&  k\, r^{-\sigma} \left( U_0 - \sigma (U_0 \cdot e_r) e_r \right) \cdot \varphi,	 \label{nablaPext}	 \\
		\mathrm{D}^2 \myP(x)(\varphi, \psi) = \,  &  k\,  r^{-\sigma} \left(
		- \sigma \frac{1}{r^2} \left[ r (U_0 \cdot \psi) - (U_0 \cdot x) \frac{1}{r} (x\cdot \psi) \right] (e_r \cdot \varphi) \right. \nonumber\\
		& \left. \qquad \quad- \sigma (U_0 \cdot e_r) \frac{1}{r^2} \left[ r (\varphi \cdot \psi) - (\varphi \cdot x) \frac{1}{r} (x \cdot \psi) \right] \right) \nonumber \\
		& -  \sigma  \, k\, r^{-\sigma - 2} r (e_r \cdot \psi) (U_0\cdot \varphi - \sigma (U_0 \cdot e_r) (e_r \cdot \varphi) ),\nonumber
	\end{align}
	where we used that
	\begin{align*}
		\nabla(e_r \cdot \eta) = \nabla \left(\frac{x \cdot \eta}{|x|}\right) = \frac{1}{|x|^2}\left( |x| \eta - (x\cdot \eta) \frac{x}{|x|}\right)
	\end{align*}
	for $\eta \in \RN$.
	For $\psi = \varphi$ we get
	\begin{align}
		\mathrm{D}^2 \myP(x)(\varphi, \varphi) &=&  \sigma  \,k\, r^{-\sigma-2} \left( \vphantom{\frac{du}{mmy}} \right. & -r (U_0 \cdot \varphi)(e_r \cdot \varphi) &&+ (U_0 \cdot x)(e_r \cdot \varphi)^2 \nonumber \\
		&& & -r (U_0 \cdot e_r)(\varphi \cdot \varphi) &&+ (U_0 \cdot x) \frac{1}{r} (\varphi \cdot x)^2 \frac{1}{r} \nonumber\\
		&& & -r (e_r \cdot \varphi) (U_0 \cdot \varphi) && + r (e_r \cdot \varphi)^2 \sigma (U_0 \cdot e_r) \left. \vphantom{\frac{du}{mmy}} \right) \nonumber
	\end{align} \vspace{-5mm}
	\begin{align}
		\hspace{-8mm} = \sigma \,k\, r^{-\sigma-2} \lleft (\sigma+2) (U_0 \cdot x)(e_r \cdot \varphi)^2 - 2(x \cdot \varphi)(U_0\cdot\varphi) - (U_0 \cdot x)(\varphi \cdot \varphi) \rright. \label{proofP_D2P}
	\end{align}
	In particular, we obtain
	\begin{align}
		\Delta \myP(x) &= \sum_{i = 1}^{\myN} \mathrm{D}^2 \myP(x)(e_i, e_i) \nonumber \\
			&=  \sigma  \,k\,  r^{-\sigma-2} \lleft (\sigma+2) (U_0 \cdot x) - 2 (U_0 \cdot x) - \myN (U_0\cdot x) \rright \nonumber\\
			&= -  \sigma \,k \,  r^{-\sigma - 2} (U_0 \cdot x ) (\myN - \sigma) \label{proofP_LaplaceP}
	\end{align}
	where $e_i$ denotes the unit vector in Cartesian coordinates in direction $x_i$.

	Integration by parts yields
	\begin{align*} 
		\langle  Q\,  \myP, \test \rangle = \langle  Q_{int}\,  \myP, \test \rangle + \langle  Q_{trans}\,  \myP, \test \rangle + \langle  Q_{ext}\,  \myP, \test \rangle
	\end{align*}
	with
	\begin{align*}
		\langle  Q_{int}\,  \myP, \test \rangle &:= \int_{\omega} - \nu_0 \Delta \myP \, \test, \\
		\langle  Q_{trans}\,  \myP, \test \rangle &:= \int_{\partial \omega} \left[ - \nuh(|U_0 + (\nabla \myP)_{ext}|)(U_0 + (\nabla \myP)_{ext}) + \nu_0(U_0 + (\nabla \myP)_{int}) \right] \cdot n \, \test,\\
		\langle  Q_{ext}\,  \myP, \test \rangle &:= \int_{\RN \setminus \omega} - \ddiv \left( \nuh(|U_0 + \nabla \myP|) (U_0 + \nabla \myP) \right) \test,
	\end{align*}
	where $n$ denotes the unit normal vector pointing out of $\omega$.

	Thus, we have that $\langle Q \, \myP, \test \rangle \geq 0$ for all $\test \in \HH(\RN)$ with $\mbox{supp}(\test) \subset \RN_+$ such that $\test \geq 0$ almost everywhere, 
	if (and only if) the following three conditions are satisfied:
	\begin{align}
		- \nu_0 \Delta \myP \geq 0 \quad\; &\forall x \in \omega: \, U_0 \cdot x > 0, \label{Qint}\\
		\left[ - \nuh(|U_0 + (\nabla \myP)_{ext}|)(U_0 + (\nabla \myP)_{ext}) \right. \quad\; \qquad\quad& \nonumber\\
			\left.+ \nu_0(U_0 + (\nabla \myP)_{int}) \right] \cdot n \geq 0 \quad\; &\forall x \in  \partial \omega: \, U_0 \cdot x > 0, \label{Qtrans}\\
		-\ddiv \left( \nuh(|U_0 + \nabla \myP|) (U_0 + \nabla \myP) \right) \geq 0 \quad\;&\forall x \in \RN \setminus {\overline \omega}: \,  U_0 \cdot x > 0. \label{Qext}
	\end{align}	
	\begin{enumerate}
	 \item 
		The first condition \eqref{Qint} is satisfied by definition as $\myP$ is linear inside $\omega$ and thus $\Delta \myP = 0$ in $\omega$.
	
	\item
		Next, we investigate the transmission condition \eqref{Qtrans}. 
		For $x \in \partial \omega$, we have
		\begin{align*}
			(\nabla \myP)_{int} =  k\,  U_0
		\end{align*}
		and, by means of formula \eqref{nablaPext},
		\begin{align*}
			(\nabla \myP)_{ext} = k\, (U_0 - \sigma (U_0 \cdot x)x)
		\end{align*}
		because $r = |x| =1$.
		Using that
		\begin{align*}
			(U_0 + (\nabla \myP)_{int})\cdot n &= (1+k)(U_0 \cdot x) >0, \\
			\left(  -  (\nabla \myP)_{ext} + (\nabla \myP)_{int} \right) \cdot n &= \sigma \, k \, (U_0\cdot x) >0,  
		\end{align*}
		because $n(x) = x$ for $x\in \partial \omega$, and exploiting the physical property \eqref{propNuBounded}, we get
		\begin{align*}
		& \left(- \nuh(|U_0 + (\nabla \myP)_{ext}|)(U_0 + (\nabla \myP)_{ext}) + \nu_0(U_0 + (\nabla \myP)_{int})  \right) \cdot n\\
		& \geq\left( - \nuh(|U_0 + (\nabla \myP)_{ext}|)(U_0 + (\nabla \myP)_{ext}) + \nuh(|U_0 + (\nabla \myP)_{ext}|)(U_0 + (\nabla \myP)_{int})  \right) \cdot n\\
		& =\nuh(|U_0 + (\nabla \myP)_{ext}|)\left(-  (\nabla \myP)_{ext} + (\nabla \myP)_{int}  \right) \cdot n\\
		&\geq \numin\, \sigma \, k \,(U_0 \cdot x)  > 0.
		\end{align*} 
		for all $x \in \partial \omega$ with $(U_0 \cdot x)>0$.
	
	\item
		Now, we consider the exterior condition \eqref{Qext}. Let $(r,\theta)$ denote the polar coordinates of $x$ in the coordinate system aligned with $U_0$ such that 
        \begin{align*}
        	\mbox{cos}\, \theta = \frac{x}{|x|} \cdot \frac{U_0}{|U_0|}.
        \end{align*}
		We further introduce $\tilde \varphi = \tilde \varphi (r, \theta) := U_0 + (\nabla \myP)_{ext}$, and the auxiliary variables
		\begin{align} 
			q&= q(r) :=&& \hspace{-12mm} \sigma \,k\, r^{-\sigma}, \nonumber \\
			d_0 &= d_0(r) :=&&\hspace{-12mm} 1+ k\,r^{-\sigma}, & \nonumber\\
			d_1 &=d_1(r) :=&&\hspace{-12mm} 1+ k\,r^{-\sigma}(1-\sigma)
				& &\hspace{-12mm}= d_0 -   q, \label{def_d1}\\
			d_2 &= d_2(r, \theta) :=&& \hspace{-12mm} 1+ k\,r^{-\sigma}(1-\sigma \ccos^2 \theta)
				& &\hspace{-12mm}= d_0 - q \, \ccos^2 \theta, \label{def_d2} \\
			s &=s(r,\theta) :=&&\hspace{-12mm} \ssin^2 \theta \, d_0^2, \nonumber\\
			c &=c(r,\theta) :=&& \hspace{-12mm}\ccos^2 \theta \, d_1^2, \nonumber\\
			d &=d(r,\theta) :=&& \hspace{-12mm}d_1 \, d_2\nonumber.
		\end{align}
		Note that all of these symbols are actually functions of $r$ and possibly~$\theta$. For better presentation, we will drop these dependencies for the rest of the proof.
		It can be seen that
		\begin{align}
			e_r \cdot \tilde \varphi & = |U_0|\, \ccos \theta \, d_1, \nonumber\\
			x \cdot \tilde \varphi & = (U_0 \cdot x)\, d_1, \nonumber\\
			U_0 \cdot\tilde \varphi & =  |U_0|^2 \, d_2, \nonumber\\
			|\tilde \varphi|^2 &= |U_0|^2 \, \left( s+ c \right). \label{normPhiTilde}
		\end{align}
		
		Note that $d_0$ and $d_1$ are positive because of $\sigma < \sigma_0 = 2$, $k\in(0,1]$ and $r>1$. This implies that $|\tilde \varphi| > 0$ since not both $\ssin \theta$ and $\ccos \theta$ can vanish at the same time. 
		These relations, together with \eqref{proofP_D2P} and \eqref{proofP_LaplaceP}, yield that
		\begin{align*}
			-\ddiv &\left( \nuh(|U_0 + \nabla \myP|) (U_0 + \nabla \myP) \right) =- \ddiv \left( \nuh(|\tilde \varphi|) \tilde \varphi \right)\\
			&=- \left( \nuh (|\tilde \varphi|)  \Delta \myP + \frac{\nuh'(|\tilde \varphi|)}{|\tilde \varphi|} \mathrm{D}^2 \myP(\tilde \varphi, \tilde \varphi) \right)\\
			&=-  \sigma \, k \, r^{-\sigma-2} (U_0 \cdot x) \left( \nuh (|\tilde \varphi|) (\sigma - \myN) + \nuh'(|\tilde \varphi|) \frac{1}{|\tilde \varphi|} |U_0|^2 f(r, \theta ) \right)
		\end{align*}
		with
		\begin{align}
			f(r, \theta) =  (\sigma+ 1) \, c- 2 \,d - s. \label{proofP_f}
		\end{align}	
		Thus, condition \eqref{Qext} is satisfied if 
		\begin{align} \label{integrandQext}
			\nuh (|\tilde \varphi|) (\sigma - \myN) + \nuh'(|\tilde \varphi|)  \frac{1}{|\tilde \varphi|} |U_0|^2 f(r, \theta ) \leq 0
		\end{align}
		for all $x \in \RN \setminus \overline \omega$ with $U_0 \cdot x >0$, i.e., for all $(r, \theta)$ with $r>1, \, \ccos \theta \in (0,1)$.
		We distinguish three different cases:

		\textbf{Case 0}: The spatial coordinates $(r, \theta)$ are such that $\nuh'(|\tilde \varphi|) = 0$:\\
		Condition \eqref{integrandQext} is satisfied since $\sigma$ is smaller than $\sigma_0=\myN $ due to \eqref{def_beta}, and since $\nuh(|\tilde \varphi|)>0$ by physical property \eqref{propNuBounded}.

		\textbf{Case 1}: The spatial coordinates $(r, \theta)$ are such that $\nuh'(|\tilde \varphi|) > 0$:\\
		We insert \eqref{normPhiTilde} and \eqref{proofP_f} into the left hand side of \eqref{integrandQext} and get \label{prop:supersol1:pageref3Case1}
		\begin{align*}
			\nuh (|\tilde \varphi|) &(\sigma - \myN) + \nuh'(|\tilde \varphi|) \frac{1}{|\tilde \varphi|} |U_0|^2 f(r, \theta )  \\
			&=   \nuh (|\tilde \varphi|) (\sigma - \myN) + \nuh'(|\tilde \varphi|)|U_0| \frac{1}{ \sqrt{s+c}} ((\sigma+1)c-2d-s)  \\ 
			&\leq \nuh (|\tilde \varphi|) (\sigma - \myN) + \nuh'(|\tilde \varphi|)|U_0| \frac{((\sigma-1)c-s)}{ \sqrt{s+c}}   \\
			&=   \nuh (|\tilde \varphi|) (\sigma - \myN) + \nuh'(|\tilde \varphi|)|U_0| \left(\frac{\sigma c}{ \sqrt{s+c}} - \frac{c+s}{\sqrt{c+s}} \right)  \\
			&\leq   \nuh (|\tilde \varphi|) (\sigma - \myN) + \nuh'(|\tilde \varphi|)|U_0| \left(\frac{\sigma (c + s)}{ \sqrt{s+c}} - \sqrt{c+s} \right)  \\
			&=     \nuh (|\tilde \varphi|) (\sigma - \myN) + \nuh'(|\tilde \varphi|)|U_0| (\sigma-1) \sqrt{c+s},
		\end{align*}	
		where we used that
		\begin{align*}
			-2\,d = - 2 d_1 \, d_2 \leq -2 \, d_1^2 \leq -2 \, d_1^2 \, \ccos^2 \theta =  -2\,c.
		\end{align*}

		So, \eqref{integrandQext} holds if $\sigma$ satisfies
		\begin{align*}
			&\nuh (|\tilde \varphi|) (\sigma - \myN) + \nuh'(|\tilde \varphi|)|U_0| (\sigma-1) \sqrt{c+s}  \leq 0 \\
			&\Leftrightarrow \sigma( \nuh (|\tilde \varphi|)  + \nuh'(|\tilde \varphi|)|U_0| \sqrt{c+s})  \leq \myN \nuh (|\tilde \varphi|) + \nuh'(|\tilde \varphi|)|U_0| \sqrt{c+s} \\
			&\Leftrightarrow \sigma \leq \frac{\myN \nuh (|\tilde \varphi|) + \nuh'(|\tilde \varphi|)|U_0| \sqrt{c+s}}{ \nuh (|\tilde \varphi|)  + \nuh'(|\tilde \varphi|)|U_0| \sqrt{c+s}} = 1 + \frac{ \nuh (|\tilde \varphi|)}{ \nuh (|\tilde \varphi|)  + \nuh'(|\tilde \varphi|)|U_0| \sqrt{c+s}}.
		\end{align*}
		Since $\sigma \in (1, \sigma_1)$ with $\sigma_1$ defined in \eqref{def_beta1},	
		the above inequality is satisfied because		
		\begin{align*}
			\frac{ \nuh (|\tilde \varphi|)}{ \nuh (|\tilde \varphi|)  + \nuh'(|\tilde \varphi|)|U_0| \sqrt{c+s}} \geq 
			\frac{ \numin}{ \nuh (|\tilde \varphi|)   + \nuh'(|\tilde \varphi|)|U_0| \sqrt{c+s}} \geq 
			\frac{ \numin}{ \nu_0  + \tilde c' |U_0| \sqrt{5}},
		\end{align*}
		where we used property \eqref{propNuBounded} as well as the facts that $0<\nuh'(|\tilde \varphi|)$ by assumption, $\nuh'(|\tilde \varphi|) \leq \tilde c'$ by \eqref{bound_nup}, and $0\leq c \leq 1$, $0 \leq s \leq (1+k)^2 \leq 4$ for $\sigma\in (1,2)$ and $k\in (0,1)$ noting that $r>1$.
		Thus, choosing $\sigma$ according to \eqref{def_beta}, condition \eqref{Qext} is satisfied at points where $\nuh'(|\tilde \varphi|) > 0$.
			 
		\textbf{Case 2}: The spatial coordinates $(r, \theta)$ are such that $\nuh'(|\tilde \varphi|) < 0$:
		
		\underline{Case 2a:} $f(r, \theta ) \geq 0$: Condition \eqref{integrandQext} is satisfied since $\sigma <  \sigma_0 = \myN$ due to \eqref{def_beta} because both summands are non-positive. 
		
		\underline{Case 2b:} $f(r, \theta ) < 0$:\\
		In this case, we must show that the positive contribution of the second summand on the left hand side of \eqref{integrandQext} is compensated by the negative first term. This is possible if Assumption \ref{assump_delta} holds.
		
		We introduce $g(r, \theta ) := (|U_0|^2/|\tilde \varphi|^2) \, f(r,\theta)$, such that condition \eqref{integrandQext} can be rewritten as
		\begin{align*}
			\nuh (|\tilde \varphi|) (\sigma - \myN) + \nuh'(|\tilde \varphi|)|\tilde \varphi| \, g( r, \theta ) \leq 0,
		\end{align*}
		and find a lower bound $\underline g$ for $g(r, \theta )$,
		\begin{align*}
			\underline g \leq g(r, \theta) <0, \quad \forall r>1, \, \theta \in \left( -\frac{\pi}{2}, \frac{\pi}{2} \right).
		\end{align*}
		Then, since $\nuh'(|\tilde \varphi|) < 0$, it holds that 
		\begin{align*}
			\nuh (|\tilde \varphi|) (\sigma - \myN) + \nuh'(|\tilde \varphi|)|\tilde \varphi| \, g( r, \theta ) \leq \nuh (|\tilde \varphi|) (\sigma - \myN) + \nuh'(|\tilde \varphi|)|\tilde \varphi| \, \underline g
		\end{align*}
		and \eqref{integrandQext} follows if the right hand side of the given estimate is non-positive. The condition that the right hand side of the expression above is non-positive is equivalent to the condition
		\begin{align}
			\frac{\nuh'(|\tilde \varphi|)|\tilde \varphi|}{ \nuh (|\tilde \varphi|)}  &\geq \frac{\myN - \sigma}{\underline g}. \label{proofP_condftilde}
		\end{align}

		Let us now investigate the expression $g( r, \theta )$ and find a bound $\underline g$ from below. Using \eqref{normPhiTilde} and \eqref{proofP_f}, we have
		\begin{align*}
			g(r, \theta ) = \frac{(\sigma+ 1) \, \ccos^2 \theta \, d_1^2- 2 \, d_1 \, d_2 - \ssin^2 \theta \, d_0^2}{ d_0^2\,  \ssin^2 \theta + d_1^2 \, \ccos^2 \theta} =:  \frac{g_1}{g_2}.
		\end{align*}
		Rewriting the nominator $g_1$ in terms of $d_0$ using \eqref{def_d1} and \eqref{def_d2}, we get
		\begin{align*}
			g_1 =& (\sigma+1)\ccos^2 \theta (d_0^2 - 2 q d_0 + q^2) - 2(d_0^2 - q d_0 - q \ccos^2 \theta d_0 + q^2 \ccos^2 \theta) - \ssin^2 \theta d_0^2 \\
			=& d_0^2( (\sigma+2)\ccos^2 \theta - 3) + 2\,d_0 \, q(1- \sigma \ccos^2 \theta) + q^2 \ccos^2 \theta  (\sigma-1).
		\end{align*}
		Similarly, we get for the denominator $g_2$,
		\begin{align*}
			g_2 &= d_0^2\,  \ssin^2 \theta + d_1^2 \, \ccos^2 \theta\\
				&= d_0^2 - 2\, d_0 \, q \ccos^2 \theta + q^2 \ccos^2 \theta.
		\end{align*}
		Note that $g_2$ is positive and, therefore, $g_1$ must be negative by the assumption of Case 2b.
		Together, we get
		\begin{align*}
			\frac{g_1}{g_2} &= \frac{d_0^2( (\sigma+2)\ccos^2 \theta - 3) + 2\,d_0 \, q(1- \sigma \ccos^2 \theta) + q^2  \ccos^2 \theta  (\sigma-1)}{d_0^2 - 2\,d_0\, q \ccos^2 \theta  + q^2 \ccos^2 \theta} \\
			&= -\frac{g_2}{g_2} + \frac{1}{g_2}\left( d_0^2((\sigma+2)\ccos^2 \theta - 2) + 2\, d_0\,  q  (1 - \ccos^2 \theta (1+\sigma) ) + q^2 \ccos^2 \theta \sigma			\right) \nonumber \\
			&= -1 + \frac{1}{g_2}\left( \ccos^2\theta \left[d_0^2(\sigma+2) -2\,(1+\sigma) \, d_0\,  q + q^2 \sigma \right] -2 d_0 (d_0 - q)\right). \nonumber 
		\end{align*}
		Note that, for $r>1$ and $\sigma >1$, we have
		\begin{align*}
			0 \leq r^{-\sigma} \leq 1  \quad \mbox{ and } \quad
			0 \leq \ccos^2 \theta \leq 1,
		\end{align*}
		and thus, for $\sigma \in (1,\sigma_0)$ and $k\in (0,1]$,
		\begin{align*}
			1 \leq d_0 \leq 1+k,  \quad \mbox{ and } \quad 0 \leq q \leq \sigma \,k,  \quad \mbox{ and } \quad 0< 1-k(\sigma-1)  \leq d_0 - q = d_1 \leq 1.
		\end{align*}
		Hence, we can see that
		\begin{align*}
			d_0^2(\sigma+2) -(1+\sigma) d_0\, 2 q + q^2 \sigma = (d_0 - q)^2 + (\sqrt{\sigma}\,d_0 - \sqrt{\sigma}\, q)^2 + (d_0+q)(d_0-q) >0,
		\end{align*}
		and we can estimate
		\begin{align}
			\frac{g_1}{g_2} 
			&\geq -1 - \frac{2 d_0 (d_0 - q)}{g_2} \geq -1 - \frac{2(1+k)}{g_2}. \nonumber 
		\end{align}
		For the denominator $g_2$, it can be seen that
		\begin{align*}
			g_2 &= d_0^2\,  \ssin^2 \theta + d_1^2 \, \ccos^2 \theta\\
				&\geq   \ssin^2 \theta + (1-k(\sigma-1))^2 \, \ccos^2 \theta\\
				&= 1 + \ccos^2 \theta \left( (1-k(\sigma-1))^2 -1 \right) \\
				&\geq 1 + \left( (1-k(\sigma-1))^2 -1 \right) = (1-k(\sigma-1))^2,
		\end{align*}
		because $(1-k(\sigma-1))^2 - 1 < 0$ due to $\sigma < \sigma_0 = \myN$ and $k\in (0,1]$, and thus	
		\begin{align}
			g(r,\theta) = \frac{g_1}{g_2}&\geq  -1 - \frac{2(1+k)}{(1-k(\sigma-1))^2 } =  - \frac{(1-k(\sigma-1))^2+2(1+k)}{(1-k(\sigma-1))^2 } =: \underline g. \nonumber
		\end{align}
		Note that $\underline g$ depends on $\sigma$ and $k$. For $t\in (1,2)$, we define
		\begin{align}
			\tilde f(t) := -\frac{(2-t)\,(1-k(t-1))^2}{(1-k(t-1))^2+2(1+k)} , \label{proof_P_ftilde}
		\end{align}
		and note that $\tilde f(\sigma) = (2-\sigma)/\underline g$. If now $\sigma$ satisfies that 
		\begin{align*}
			\tilde f(\sigma) \leq  \underset{s>0}{\mbox{inf }} \frac{\nuh'(s) s}{\nuh(s)} = \delta_{\nuh},
		\end{align*}
		then \eqref{proofP_condftilde} is satisfied, which yields \eqref{integrandQext} and, therefore, \eqref{Qext} in Case 2b. 
		
		If $\delta_{\nuh}$ is non-negative, this is satisfied because $\tilde f(t)<0$ for any $t \in (1,2)$ since $k\in (0,1]$, and \eqref{proofP_condftilde} and thus also \eqref{Qext} holds because $\sigma < \sigma_2=2$ in this case.
		
		In the case where $\delta_{\nuh}$ is negative, recall that $\delta_{\nuh}>-1/3 $ by Assumption \ref{assump_delta}, so we have $-1/3 < \delta_{\nuh}<0$. 
		In \eqref{def_k_P}, we defined $k$ in such a way that $\tilde f(1) = -1/(3+2k) = \delta_{\nuh}(1+\varepsilon)< \delta_{\nuh}$.
		Since $\tilde f(1) < \delta_{\nuh}$, $\tilde f(2) = 0 > \delta_{\nuh}$ and since it can be seen that $\tilde f$ is continuous and increasing in the interval $(1,2)$, there exists a unique $\sigma_2\in (1,2)$ such that $\tilde f(\sigma_2) = \delta_{\nuh}$ and it holds that $\tilde f(t) <\delta_{\nuh}$ for all $t \in (1, \sigma_2)$. Thus, if $\sigma \in (1, \sigma_2)$, inequality \eqref{proofP_condftilde} is satisfied, which yields \eqref{integrandQext} and thus \eqref{Qext}.
	\end{enumerate}
	Hence, choosing $\sigma$ and $k$ according to \eqref{def_beta} and \eqref{def_k_P}, respectively, yields the statement of Proposition \ref{prop:supersol1}.
	\end{proof}

\textit{Proof of Proposition~\ref{prop:subsol}:} \label{prop:subsol_proof}

	\begin{proof}
		The proof is similar to the proof of Proposition \ref{prop:supersol1}. 
		Again, we define $\overline{\hat{\sigma}}$ as
		\begin{align*}
			\overline{\hat{\sigma}} := \mbox{min}\lbrace \hat \sigma_0, \hat \sigma_1, \hat \sigma_2 \rbrace >1,	
		\end{align*}
		where $\hat \sigma_0$ and $\hat \sigma_1$ are given by
		\begin{align}
			\hat\sigma_0 &:= 2, \nonumber\\
			\hat\sigma_1 &:= 1 + \frac{ \numin }{\nu_0 + \tilde c' |U_0| \sqrt{5}} \in (1,2),\nonumber
		\end{align}
        with the constant $\tilde c'$ defined in \eqref{bound_nup}. If the bound $\delta_{\nuh}$ from Assumption \ref{assump_delta} is non-negative, we define $\hat \sigma_2 :=2$. Otherwise, if $\delta_{\nuh}<0$, let
        \begin{align}
			\hat k(t) &:= \frac{\numin - \nu_0}{\nu_0 + \numin (t-1)}, \nonumber\\
			\hat{\tilde f}^{(\mbox{i})}(t) &:= -\frac{(2-t)(1+ \hat k(t))^2}{(1+ \hat k(t))^2 + 2(1+ \hat k(t)(1-t))}, \label{proofR_ftilde_i}\\ 
			\hat{\tilde f}^{(\mbox{ii})}(t) &:= 	-\frac{(2-t)(1+ \hat k(t))^2}{(1+ \hat k(t))^2 + (1+ \hat k(t)(1-t))^2}, 		\label{proofR_ftilde_ii}
		\end{align}
        be mappings from $[1,2]$ to $\mathbb R$. Note that, for $j \in \lbrace \mbox{i}, \mbox{ii} \rbrace$ we have that $\hat{\tilde f}^{(j)}(2) = 0$ and 
        \begin{align*}
        	-\frac{(1+\hat k(1))^2}{(1+\hat k(1))^2+1}=\hat{\tilde f}^{(\mbox{ii})}(1)<\hat{\tilde f}^{(\mbox{i})}(1)=-\frac{(1+\hat k(1))^2}{(1+\hat k(1))^2+2} = \delta_{\nuh}^{R_2} < \delta_{\nuh} < 0,
        \end{align*}       
        with $\delta_{\nuh}^{R_2}$ defined in Assumption \ref{assump_delta}. It can be seen that $\hat{\tilde f}^{(\mbox{i})}$ and $\hat{\tilde f}^{(\mbox{ii})}$ are continuous. Moreover, $\hat{\tilde f}^{(\mbox{i})}$ and $\hat{\tilde f}^{(\mbox{ii})}$ are monotonically increasing in the interval $(1,2)$ which can be seen as follows: Note that
\begin{align*}
		(\hat{\tilde f}^{(\mbox{i})})'(1) &= \frac{2 q+1}{\left(2 q^2+1\right)^2} > 0, \mbox{ and }
		(\hat{\tilde f}^{(\mbox{i})})''(t) = \frac{8 q \left(q^3-1\right)}{(2 q (q+t-1)+t)^3} > 0 \quad \forall \, t\in (1,2),
	\end{align*}
    with $q=\nu_0 / \underline{\nu}>1$. Thus, it holds that $(\hat{\tilde f}^{(\mbox{i})})'(t) > 0$ for all $t \in (1,2)$.	
	For $\hat{\tilde f}^{(\mbox{ii})}$, we get ${(\hat{\tilde f}^{(\mbox{ii})})'(t) = \frac{1}{q^2+1} > 0}$ for all $t\in (1,2)$.
        
Thus, for $j \in \lbrace \mbox{i}, \mbox{ii} \rbrace$, there exists a unique solution in $(1,2)$ to the equation $\hat{\tilde f}^{(j)}(t) = \delta_{\nuh}$ which we denote by $\hat \sigma_2^{(j)}$ for $j \in \lbrace \mbox{i}, \mbox{ii} \rbrace$. In this case we define $\hat \sigma_2 := \mbox{min} \lbrace \hat \sigma_2^{(\mbox{i})}, \hat \sigma_2^{(\mbox{ii})} \rbrace$.
       
		We show that property \eqref{QRgreaterEqualZero} holds for any fixed $\sigma$ satisfying
		\begin{align}
			\sigma \in (1, \overline{\hat\sigma}).\label{def_betaR}
		\end{align}
		
		Similarly to the proof of Proposition \ref{prop:supersol1}, we have to show the three conditions
		\begin{align}
			-\nu_0 \Delta \myRsub \leq 0 &\quad \forall x \in \omega: \,U_0 \cdot x > 0 \label{QRint}\\
			\left[ - \nuh(|U_0 + (\nabla \myRsub)_{ext}|)(U_0 + (\nabla \myRsub)_{ext}) \right.\quad \; & \nonumber\\
				+ \nu_0(U_0 + (\nabla \myRsub)_{int}) \left. \right] \cdot n \leq 0 &\quad \forall x \in  \partial \omega: \, U_0 \cdot x > 0 \label{QRtrans}\\
			-\ddiv \left( \nuh(|U_0 + \nabla \myRsub|) (U_0 + \nabla \myRsub) \right) \leq 0 &\quad \forall x \in \RN \setminus { \overline \omega}: \, U_0 \cdot x > 0 \label{QRext}
		\end{align}
		where $n$ denotes the unit normal vector pointing out of $\omega$.
		\begin{enumerate}
		 \item 
		As in the proof of Proposition \ref{prop:supersol1}, it is easily seen that the first condition \eqref{QRint} is trivially satisfied as $ \myRsub$ is linear inside $\omega$.
				
		\item
		Next we consider the transmission condition \eqref{QRtrans}. Exploiting that, for $x \in \partial \omega$ with $\omega = B(0,1)$, the outward unit vector $n$ is equal to $x$ and $|x|=r=1$, and noting the formulas for the gradient of $\myRsub$ inside and outside the inclusion $\omega$,
		\begin{align*}
			(\nabla \myRsub)_{int} &= k \, U_0,\\
			(\nabla \myRsub)_{ext} &= k\, r^{-\sigma} \left( U_0 - \sigma (U_0 \cdot e_r) e_r \right),
		\end{align*}
		we obtain 
		\begin{align*}
			\left( -\nuh(|U_0 \right.&\left.+ (\nabla \myRsub)_{ext}|)(U_0 + (\nabla \myRsub)_{ext}) + \nu_0(U_0 + (\nabla \myRsub)_{int}) \right) \cdot n \\
			&= ( - \nuh(|U_0 + \nabla \myRsub_{ext}|)(1+k(1-\sigma)) +\nu_0 (1+k) )\,(U_0 \cdot x) \\
			&\leq ( - \numin(1+k(1-\sigma)) + \nu_0 (1+k) ) \, (U_0 \cdot x) \\
			&=  \left(  - \numin\frac{\sigma\,\nu_0}{\nu_0 + \numin(\sigma-1)}) + \nu_0 \frac{\sigma \, \numin }{ \nu_0 + \numin (\sigma-1)}\right) \, (U_0 \cdot x) = 0.
		\end{align*}
		In the estimation, we used that $1+k(1-\sigma)>0$ since $k\in(-1,0)$ and $\sigma > 1$.
		
		\item
		For the exterior condition \eqref{QRext}, we need to verify that
		\begin{align*}
			-\ddiv \left( \nuh(|U_0 + \nabla \myRsub|) (U_0 + \nabla \myRsub) \right) \leq 0  \quad \forall x \in \RN \setminus {\overline \omega}: U_0 \cdot x > 0.
		\end{align*}
		Again, let $(r,\theta)$ denote the polar coordinates of $x$ in the coordinate system aligned with $U_0$ such that 
        \begin{align*}
        	\mbox{cos}\, \theta = \frac{x}{|x|} \cdot \frac{U_0}{|U_0|}.
        \end{align*}
        
        Again, for better readability, we introduce the symbols
		\begin{align*}
			\hat q &= \hat q (r):=&& \hspace{-19mm} k\,\sigma r^{-\sigma}\\
			\hat d_0 &= \hat d_0 (r):=&&\hspace{-19mm} 1 +k\, r^{-\sigma}, \\
			\hat d_1 &= \hat d_1(r) :=&& \hspace{-19mm}1+k\, r^{-\sigma}(1-\sigma) &&\hspace{-19mm}= \hat d_0 -\hat q, \\
			\hat d_2 &= \hat d_2(r, \theta) :=&&\hspace{-19mm} 1+  k\, r^{-\sigma}(1-\sigma \ccos^2 \theta)& &\hspace{-19mm}= \hat d_0 -\hat q \ccos^2 \theta,  \\
			\hat s &= \hat s(r, \theta) :=&&\hspace{-19mm} \ssin^2 \theta \, \hat d_0^2, \\
			\hat c &= \hat c(r, \theta):= &&\hspace{-19mm} \ccos^2 \theta \, \hat d_1^2, \\
			\hat d &= \hat d(r,\theta) :=&& \hspace{-19mm}\hat d_1 \, \hat d_2,
		\end{align*}
		and drop the dependencies on $r$ and $\theta$. Note that, due to $k <0$, the symbols introduced above are not the same as the corresponding symbols used in the proof of Proposition \ref{prop:supersol1}.
		Analogously to the proof of Proposition \ref{prop:supersol1}, we introduce the notation $\hat{\tilde \varphi}=\hat{\tilde \varphi}(r,\theta):= U_0 + (\nabla \myRsub)_{ext}$ and get the relations
		\begin{align*}
			e_r \cdot \hat{\tilde \varphi} & = |U_0| \, \ccos \theta \, \hat d_1, \\
			x \cdot \hat{\tilde \varphi} & = (U_0 \cdot x)\, \hat d_1, \\
			U_0 \cdot \hat{\tilde \varphi} & =  |U_0|^2 \, \hat d_2, \\
			|\hat{\tilde \varphi}|^2 &= |U_0|^2 \left( \hat s+ \hat c \right).
		\end{align*}
		Again, we can deduce
		\begin{align*}
			-\ddiv &\left( \nuh(|U_0 + \nabla \myRsub|) (U_0 + \nabla \myRsub) \right)\\
				&=- \left( \nuh (|\hat{\tilde \varphi}|)  \Delta \myRsub + \frac{\nuh'(|\hat{\tilde \varphi}|)}{|\hat{\tilde \varphi}|} \mathrm{D}^2 \myRsub(\hat{\tilde \varphi}, \hat{\tilde \varphi}) \right)\\
				&= - \sigma\,k\, r^{-\sigma-2} (U_0 \cdot x) \left( \nuh (|\hat{\tilde \varphi}|) (\sigma - \myN) + \nuh'(|\hat{\tilde \varphi}|) |\hat{\tilde \varphi}| \frac{1}{|\hat{\tilde \varphi}|^2} |U_0|^2 \hat f(r, \theta ) \right),
		\end{align*}
		with the function $\hat f$ defined as
		\begin{align*}
			\begin{aligned}
				\hat f(r, \theta ) = & (\sigma+ 1) \hat c - 2 \, \hat d_1\, \hat d_2 - \hat s.
			\end{aligned}
		\end{align*}
		Thus, since $k<0$, it again suffices to show that
		\begin{align}
			 \nuh (|\hat{\tilde \varphi}|) (\sigma - \myN) + \nuh'(|\hat{\tilde \varphi}|) \frac{1}{|\hat{\tilde \varphi}|} |U_0|^2 \hat f(r, \theta )  \leq 0 \label{integrandQRext}
		\end{align}
		for all $x \in \RN \setminus \overline \omega$ with $U_0 \cdot x >0$, i.e., for all $(r, \theta)$ with $r>1, \, \ccos \theta \in (0,1)$.
		Again, we distinguish three different cases:
		
		\textbf{Case 0}: $\nuh'(|\hat{\tilde \varphi}|) = 0$: Estimate \eqref{integrandQRext} obviously holds for $\sigma <  \hat \sigma_0 = 2$ since  $\nuh(|\hat{\tilde \varphi}|)>0$ by physical property \eqref{propNuBounded}.
		
		\textbf{Case 1}: $\nuh'(|\hat{\tilde \varphi}|) > 0$:\\		
		Also for $k \in (-1,0)$ and $\sigma > 1$, it holds that $-2\hat d \leq -2 \hat c$ since $d_1>0$ and
		\begin{align*}
			\hat d_2 = 1 +k\, r^{-\sigma} -\sigma\, k\, r^{-\sigma}  \ccos^2\theta &\geq (1+k\, r^{-\sigma})\ccos^2\theta -\sigma \,k\, r^{-\sigma} \ccos^2\theta\\
			&= \ccos^2\theta \hat d_1,
		\end{align*}
		because $r>1$. Thus, we can perform the analogous estimations as in the proof of Proposition \ref{prop:supersol1}.
		Using that, for $\sigma \in (\Nhalf, \myN)$ and $k\in(-1,0)$, we have
		\begin{align*}
			0 \leq \hat c \leq (1-k)^2 \leq 4, \qquad 0 \leq \hat s \leq 1, \quad \mbox{ and } \quad 0 \leq \sqrt{\hat s+\hat c} \leq \sqrt{5},
		\end{align*}
		we again conclude that \eqref{integrandQRext} and thus \eqref{QRext} hold since $\sigma < \hat{\sigma}_1$ due to \eqref{def_betaR}. 
		
		\textbf{Case 2}: $\nuh'(|\hat{\tilde \varphi}|) < 0$:
		
		\underline{Case 2a:} $\hat f(r, \theta ) \geq 0$: Estimate \eqref{integrandQRext} holds for any $\sigma \in (\Nhalf, \myN)$ because both summands are non-positive. 
		
		\underline{Case 2b:} $\hat f(r, \theta ) < 0$:\\
		Analogously to the proof of Proposition \ref{prop:supersol1}, we can introduce
		\begin{align*}
			g^{(2)}( r, \theta ) := \frac{|U_0|^2}{|\hat{\tilde \varphi}|^2} \, \hat f(r,\theta) = \frac{1}{\hat s + \hat c}  \, \hat f(r,\theta),
		\end{align*}
		and rewrite condition \eqref{integrandQRext} as
		\begin{align*}
			\nuh (|\hat{\tilde \varphi}|) (\sigma - \myN) + \nuh'(|\hat{\tilde \varphi}|) |\hat{\tilde \varphi}|  g^{(2)}(r, \theta ) \leq 0.
		\end{align*}
		Again, we have to find a lower bound on the expression
		\begin{align*}
			g^{(2)}( r, \theta ) = \frac{(\sigma+ 1) \, \ccos^2 \theta \, \hat d_1^2- 2 \, \hat d_1 \, \hat d_2 - \ssin^2 \theta \, \hat d_0^2}{ \hat d_0^2\,  \ssin^2 \theta + \hat d_1^2 \, \ccos^2 \theta} =:  \frac{g_1^{(2)}}{g_2^{(2)}}
		\end{align*}
		which satisfies condition \eqref{proofP_condftilde}.
		The manipulations of the terms $g_1^{(2)}$ and $g_2^{(2)}$ are analogous to the proof of Proposition \ref{prop:supersol1} and we arrive at the corresponding expression
		\begin{align*}
			\frac{g_1^{(2)}}{g_2^{(2)}} 
			= -1 + \frac{1}{g_2^{(2)}} &\left[ \ccos^2 \theta \left(\hat d_0^2(\sigma+2) - 2\, (1+\sigma) \,\hat d_0 \,\hat q   + \hat q^2 \, \sigma \right) -2 \hat d_0 (\hat d_0 - \hat q) \right] \\
			= -1 + \frac{1}{g_2^{(2)}} &\left[ \ccos^2 \theta \left( (\hat d_0 - \hat q)^2 + (\sqrt{\sigma} \, \hat d_0 - \sqrt{\sigma} \, \hat q)^2 + (\hat d_0 +  \hat q)(\hat d_0 - \hat q) \right) \right. \\
			& \left. \;\, -2 \, \hat d_0 \,(\hat d_0 - \hat q) \right] .
		 \end{align*}
		 Again, we will estimate this expression from below such that we can extract a condition on $\sigma>1$ that is sufficient for \eqref{integrandQRext}.
		 For the estimation, we will use that, for $r>1$, $\sigma \in (1, \hat \sigma_0)$ and $k\in(-1,0)$, we have
		 \begin{align*}
			\sigma \, k &\leq \hat q \leq 0,\\
			1+k &\leq \hat d_0 \leq 1, \\
			1 &\leq \hat d_0 - \hat q = \hat d_1 \leq 1+k\, (1-\sigma),\\
			1+k\, (1+\sigma) &\leq   \hat d_0 + \hat q \leq 1.
		 \end{align*}

		 Note that, for the denominator $g_2^{(2)}$, we have
		 \begin{align}
			g_2^{(2)} &= \hat d_0^2\,  \ssin^2 \theta + \hat d_1^2 \, \ccos^2 \theta \geq (1+k)^2 \ssin^2 \theta +  \ccos^2 \theta \nonumber \\
					& = \ccos^2 \theta\left(1-(1+k)^2\right) + (1+k)^2 \geq (1+k)^2 . \label{g22_lower}
		 \end{align}

		For the estimation of $g_1^{(2)}/g_2^{(2)}$, we need to distinguish two more cases:
		
		\underline{Case 2b (i)}: The spatial coordinates $(r,\theta)$ are such that $\hat d_0 + \hat q \geq 0$:\\
		In this case, recalling that $g_2^{(2)}>0$ since $-1<k<0$, we can estimate the above expression from below by dropping the positive cosine term and, taking into account \eqref{g22_lower}, we get
		\begin{align*}
			\frac{g_1^{(2)}}{g_2^{(2)}} &\geq -1 - \frac{2 \, \hat d_0 \, (\hat d_0 - \hat q) }{g_2^{(2)}} \geq -1 - \frac{2 (1+k\, (1-\sigma) )}{g_2^{(2)}} \\
				&\geq -1 - \frac{2 (1+k\, (1-\sigma) )}{(1+k)^2} =  \frac{-(1+k)^2-2 (1+k\, (1-\sigma) )}{(1+k)^2} =: \underline g^{(\mbox{i})}.
		\end{align*}

		\underline{Case 2b (ii)}: The spatial coordinates $(r,\theta)$ are such that $\hat d_0 + \hat q < 0$:\\
		In this case, we get the estimate
		\begin{align*}
			\frac{g_1^{(2)}}{g_2^{(2)}} &\geq -1 + \frac{\ccos^2 \theta (\hat d_0 + \hat q)(\hat d_0 - \hat q)-2 \, \hat d_0 \, (\hat d_0 - \hat q) }{g_2^{(2)}} \\
			 &\geq -1 + \frac{(\hat d_0 + \hat q)(\hat d_0 - \hat q)-2 \, \hat d_0 \, (\hat d_0 - \hat q) }{g_2^{(2)}} 
			 = -1 + \frac{- (\hat d_0 - \hat q)^2 }{g_2^{(2)}}  \\
			 &\geq = -1 -\frac{ (1+k\, (1-\sigma))^2 }{g_2^{(2)}}  
			\geq  \frac{-(1+k)^2-(1+k\, (1-\sigma))^2}{(1+k)^2} =: \underline g^{(\mbox{ii})}.
		\end{align*}

		Recall that $\sigma$ and $k$ are fixed numbers only depending on the given data $\numin$, $\nu_0$ and $\delta_{\nuh}$, and on $|U_0|$. Note that $\hat k(\sigma) = k$ and, for $j \in \lbrace \mbox{i}, \mbox{ii}\rbrace$, it holds $\hat{\tilde f}^{(j)}(\sigma) = (2- \sigma)/ \underline g^{(j)}$ for $j \in \lbrace \mbox{i}, \mbox{ii}\rbrace$ with $\hat{\tilde f}^{(\mbox{i})}$ and $\hat{\tilde f}^{(\mbox{ii})}$ defined in \eqref{proofR_ftilde_i} and \eqref{proofR_ftilde_ii}, respectively. Due to the monotonicity of $\hat{\tilde f}^{(\mbox{i})}$ and $\hat{\tilde f}^{(\mbox{ii})}$, we see that, in the case where $\delta_{\nuh} < 0$, the condition $ \sigma< \hat{\sigma}_2 = \mbox{min}\lbrace \hat{\sigma_2}^{(\mbox{i})},\hat{\sigma_2}^{(\mbox{ii})} \rbrace$ implies $\hat{\tilde f}^{(j)}( \sigma) < \delta_{\nuh}$ for $j \in \lbrace \mbox{i}, \mbox{ii} \rbrace$. Thus, for $ \sigma \in (1, \hat \sigma_2)$ and $j \in \lbrace \mbox{i},\mbox{ii} \rbrace$, we get
        \begin{align*}
        	\frac{2-\sigma}{g^{(2)}} \leq \frac{2-\sigma}{\underline{g}^{(j)}} = \hat{\tilde f}^{(j)}( \sigma) < \delta_{\nuh} \leq \frac{\nuh'(|\hat{\tilde \varphi}|)\, |\hat{\tilde \varphi}|}{\nuh(|\hat{\tilde \varphi}|)}
       \end{align*}
       which is equivalent to \eqref{integrandQRext} and thus \eqref{QRext}. 
       
       If $\delta_{\nuh}$ is non-negative, the inequalities above hold for all $ \sigma \in (1,2)$ since $\hat{\tilde f}^{(j)}$ is increasing in $(1,2)$ and $\hat{\tilde f}^{(j)}(2) = 0$.
		\end{enumerate}
		Again, the overall statement of Proposition \ref{prop:subsol} follows because $\sigma \in (1, \mbox{min} \lbrace \hat\sigma_0, \hat\sigma_1, \hat\sigma_2 \rbrace)$.
	\end{proof}


\section{Topological asymptotic expansion: case II} \label{sec:CaseII}

In a similar way to Section \ref{sec:TopAsympExp_CaseI}, it is possible to derive the topological derivative also in the case where we perturb a domain of linear material by an inclusion of nonlinear material, see Fig. \ref{fig:OmegaPertUnpert_caseII}. Since, here, the material outside the inclusion behaves linearly, the result corresponding to Theorem \ref{theo:asymp_H} about the asymptotic behavior of the variation of the direct state at scale 1 simplifies significantly. The behavior at infinity can be treated in the same way as it was done for the variation of the adjoint state at scale 1 in Proposition \ref{prop454}. The rest of the derivation is analogous to Section \ref{sec:TopAsympExp_CaseI}, see \cite{Gangl2017} for more details.

\begin{figure} 
	\begin{minipage}{.5 \textwidth}
		\begin{tabular}{c}    
			\hspace{15mm}\includegraphics[scale=0.55]{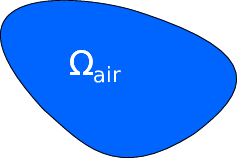}
		\end{tabular}
	\end{minipage}
	\hspace{1.2cm}
	\begin{minipage}{.5 \textwidth}
		\begin{tabular}{c}    
			\hspace{-7mm}\includegraphics[scale=0.55]{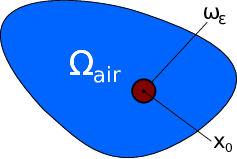}
		\end{tabular}
	\end{minipage}
	\caption[Illustration of topological derivative in Case II]{Left: Unperturbed configuration for Case II. Right: Perturbed configuration for Case II. }
	\label{fig:OmegaPertUnpert_caseII}	
\end{figure}

\subsection{Main result in case II}
Here, we only state the main result in Case II, i.e., the topological derivative for the introduction of nonlinear (ferromagnetic) material inside a region of linear material (air) according to the definition \eqref{def:topDerivative}. We introduce the notation to be used in the statement of Theorem \ref{theo_MainResultCaseII}:
	\begin{itemize}
		\item $x_0 \in \Omega^d$ denotes the point around which we perturb the material coefficient;
		\item $\uii \in H_0^1(\holdAll)$ is the unperturbed direct state, i.e., the solution to 
        \begin{align*}
	\mbox{Find } u_0^{(2)} \in H^1_0(\holdAll): \int_{\holdAll} \nu_0 \nabla u_0^{(2)} \cdot \nabla \test  = \langle F, \test \rangle \quad \forall \test \in H^1_0(\holdAll),
\end{align*}
and $U_0^{(2)} = \nabla \uii(x_0)$;
		\item $\adjii \in H_0^1(\holdAll)$ is the unperturbed adjoint state, i.e., the solution to 
        \begin{align*}
	&\mbox{Find } \vii \in H^1_0(\holdAll): \int_{\holdAll} \nu_0 \nabla \vii \cdot \nabla \test = -\langle \tilde G , \test \rangle \quad  \forall \test \in H^1_0(\holdAll),
\end{align*}
with $\tilde G$ according to \eqref{expansion_J}, and $\Adjzii = \nabla \adjii(x_0)$;
		\item $\myTDMat^{(2)} = \myTDMat^{(2)}(\omega, \DT(U_0^{(2)}))$ denotes the matrix defined in \eqref{TDMat_CaseII} where $\omega$ represents the shape of the inclusion and $\DT$ is the Jacobian of $T$ defined in \eqref{def_T};
		\item $H^{(2)} \in \HH(\RN)$ denotes the variation of the direct state at scale 1, i.e., the solution to
        \begin{align*} 
		\mbox{Find } \Hii \in \mathcal H(\RN):
        &\int_{\RN\setminus \omega} \nu_0 \Hii  \cdot \nabla \test + \int_{\omega} ( T(\Uoii + \nabla \Hii) - T( \Uoii) ) \cdot \nabla \test \\
		&= \int_{\omega} (\nu_0 - \nuh(|\Uoii|)) \Uoii \cdot \nabla \test \quad \forall \test \in \mathcal H(\RN);
\end{align*}
		\item $K^{(2)} \in \HH(\RN)$ denotes the variation of the adjoint state at scale 1, i.e., the solution to
        \begin{align} 
        \begin{aligned}
        \label{variationAdjoint_scale1_ii}
	\mbox{Find } \Kii \in \mathcal H(\RN):
	&\int_{\RN \setminus \omega} \nu_0 \nabla \Kii \cdot \nabla \test + \int_{\omega} \DT(\Uoii)\nabla \Kii \cdot \nabla \test \\
    &=  \int_{\omega} (\nu_0 I -\DT(\Uoii) )  \Voii \cdot \nabla \test \quad \forall \test \in \mathcal H(\RN);
    \end{aligned}
\end{align}
		\item $\delta_J$ is according to \eqref{expansion_J}.
	\end{itemize}
	\begin{theorem} \label{theo_MainResultCaseII}
		Assume that 
		\begin{itemize}
		 \item[-] $\omega = B(0,1)$ the unit disk in $\RN$
		 \item[-] the ferromagnetic material is such that Assumptions \ref{assump_BH} and \ref{assump_c78} are satisfied,
		 \item[-] the functional $\mathcal J_{\varepsilon}$ satisfies Assumption \ref{assump_J},
		 \item[-] the unperturbed direct state $\uii$ satisfies $\uii \in C^{1,\beta}$ for some $\beta>0$,
		 \item[-] the unperturbed direct state $\adjii$ satisfies $\adjii \in C^{1,\tilde \beta}$ for some $\tilde \beta>0$.
		\end{itemize}
		Then the topological derivative for introducing air inside ferromagnetic material reads
		\begin{align}
			\begin{aligned}
			G^{air \rightarrow f}(x_0) 
			=& (\Uoii)^\top \, \myTDMat^{(2)} \, \Voii \\
				&+  \int_{\omega} S_{\Uoii}(x,\nabla H^{(2)}) \cdot (\Voii + \nabla \Kii) + \delta_J. \label{G2_final}	
			\end{aligned}
		\end{align}
	\end{theorem}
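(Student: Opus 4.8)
The plan is to mirror the entire development of Section~\ref{sec:TopAsympExp_CaseI} with the roles of the linear and nonlinear regions exchanged, exploiting the fact that this reversal actually removes the principal technical obstacle of Case~I. In the simplified setting the unperturbed state solves the linear problem $\int_\holdAll \nu_0 \nabla \uii \cdot \nabla \test = \langle F, \test\rangle$, while the perturbed state $\ueii$ solves the equation governed by $\Tepsii$. Subtracting and writing $\uetii := \ueii - \uii$ gives the variation equation
\begin{align*}
	\int_\holdAll \left( \Tepsii(x, \nabla \uii + \nabla \uetii) - \Tepsii(x, \nabla \uii) \right) \cdot \nabla \test = \int_{\ome} (\nu_0 - \nuh(|\nabla \uii|)) \nabla \uii \cdot \nabla \test,
\end{align*}
whose source is now supported inside the shrinking inclusion. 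I would then test the perturbed adjoint equation with $\test = \uetii$, insert the variation equation tested with $\test = \adjeii$, and use the splitting $\Tepsii(x, \nabla \uii + \nabla \uetii) - \Tepsii(x, \nabla \uii) = \DT_\varepsilon^{(2)}(x,\nabla \uii)\nabla \uetii + \chi_{\ome} S_{\nabla \uii}(\nabla \uetii)$. The remainder operator $S$ is now multiplied by $\chi_{\ome}$, since outside the inclusion the material is linear and $S$ vanishes there. This yields the decomposition $\langle \tilde G, \uetii \rangle = j_1^{(2)}(\varepsilon) + j_2^{(2)}(\varepsilon)$ with $j_1^{(2)}$ collecting $\int_{\ome}(\nuh(|\nabla \uii|)-\nu_0)\nabla \uii\cdot(\nabla\adjii+\nabla\adjetii)$ and $j_2^{(2)}$ the nonlinear term $\int_{\ome} S_{\nabla \uii}(\nabla \uetii)\cdot(\nabla\adjii+\nabla\adjetii)$, together with $R(\varepsilon) = \smallO(\epsN)$ from Assumption~\ref{assump_J}.

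Next I would carry out the scale-$1$ analysis of $\Hii$ and $\Kii$. The crucial simplification relative to Case~I is that the transmission problem defining $\Hii$ has linear exterior: outside $\omega$ it reduces to $-\nu_0\Delta \Hii = 0$. Consequently the delicate super-/sub-solution construction (Propositions~\ref{prop:supersol1}, \ref{prop:subsol}, \ref{prop:compPrinciple}) and Assumption~\ref{assump_delta} are not needed here; instead the decay $\Hii(y) = \mathcal O(|y|^{-1})$ follows by exactly the standard exterior-expansion argument used for the adjoint variation in Proposition~\ref{prop454}. This is precisely why Theorem~\ref{theo_MainResultCaseII} dispenses with Assumption~\ref{assump_delta}. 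With this decay in hand, the chain of estimates corresponding to Lemma~\ref{lem445}, Proposition~\ref{proposition4412} and Proposition~\ref{proposition4413} --- controlling $\nabla\Heii$, $\nabla\heii - \nabla\Heii$, and $\nabla\uetii - \nabla\heii$ in $L^2(\holdAll)$ at order $\smallO(\epsN)$ --- transfers, as does the corresponding adjoint chain (Lemmas~\ref{lemma453}, \ref{lemma455}, \ref{lemma456}), the adjoint being linear in both cases.

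It then remains to expand $j_1^{(2)}$ and $j_2^{(2)}$. For the linear term I would argue as in Section~\ref{sec:Expansion_j1}: the approximation lemmas analogous to Lemmas~\ref{lemma_j1tilde_J1} and \ref{lemma_j1_j1tilde} give $j_1^{(2)}(\varepsilon) = \epsN J_1^{(2)} + \smallO(\epsN)$, and since problem~\eqref{variationAdjoint_scale1_ii} is linear, the dependence of $J_1^{(2)}$ on $\Voii$ is linear, producing a polarization matrix $\myTDMat^{(2)} = \myTDMat^{(2)}(\omega,\DT(\Uoii))$ with $J_1^{(2)} = (\Uoii)^\top \myTDMat^{(2)}\Voii$. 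For the nonlinear term I would invoke the growth bound~\eqref{prop7_followUp} to ensure that both $j_2^{(2)}$ and its limit are well defined, and then adapt Lemmas~\ref{lemma464}, \ref{lemma465} and \ref{lemma466} to obtain $j_2^{(2)}(\varepsilon) = \epsN J_2^{(2)} + \smallO(\epsN)$ with $J_2^{(2)} = \int_\omega S_{\Uoii}(x,\nabla \Hii)\cdot(\Voii + \nabla \Kii)$; here the integral extends over the bounded set $\omega$ only, reflecting that the nonlinearity is confined to the inclusion. Summing the two expansions together with $\delta_J\epsN$ and comparing with \eqref{def:topDerivative} yields $G^{air \rightarrow f}(x_0) = J_1^{(2)} + J_2^{(2)} + \delta_J$, which is exactly \eqref{G2_final}.

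The main obstacle is not where it was in Case~I. Because the nonlinear region is now the bounded inclusion, the hard asymptotic-decay analysis collapses into the routine Proposition~\ref{prop454} argument, so the genuine difficulty shifts to bookkeeping: one must re-derive every $\smallO(\epsN)$ remainder estimate with the characteristic functions $\chi_{\ome}$ and $\chi_{\RN\setminus\omega}$ swapped, and in particular re-examine the step in the analog of \eqref{hEps_minus_Heps_L2} where the homogeneity of the simplified setting was exploited, checking that it still closes when the source sits inside the inclusion. I expect this reversal to be essentially benign --- the Hilbert-space framework and the uniform bounds of Lemmas~\ref{lemmaProperties} and \ref{lemmaProperties2} hold identically inside $\omega$ --- so that the principal content of the Case~II proof is the correct identification of $\myTDMat^{(2)}$ and of the localized nonlinear integral $J_2^{(2)}$, rather than any new estimate.
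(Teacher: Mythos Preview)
Your proposal is correct and follows essentially the same approach as the paper: mirror the Case~I development with the linear/nonlinear roles exchanged, observe that the exterior linearity of the transmission problem for $\Hii$ lets the decay at infinity be handled by the elementary argument of Proposition~\ref{prop454} (which is exactly why Assumption~\ref{assump_delta} is dropped), and then transfer the chain of $\smallO(\epsN)$ estimates and the expansions of $j_1^{(2)}$, $j_2^{(2)}$ verbatim. The paper itself only sketches this and defers the details to \cite{Gangl2017}, but your identification of the key simplification and of the localized nonlinear term $J_2^{(2)} = \int_\omega S_{\Uoii}(\nabla\Hii)\cdot(\Voii+\nabla\Kii)$ matches precisely.
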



\section{Polarization matrices} \label{sec_Appendix}

In this section we present a way to explicitly compute the matrices $\myTDMat$ and $\myTDMat^{(2)}$ arising in the first terms of the topological derivatives \eqref{G1_final} and \eqref{G2_final}, respectively, based on the notion of anisotropic polarization tensors \cite[Sect. 4.12]{AmmariKang2007}.
 
Let $\omega$ be the unit ball, $\omega = B(0,1)$, and let the conductivities in $\omega$ and in $\RN \setminus \overline \omega$ be denoted by the symmetric and positive definite $2\times 2$ matrices $\tilde A$ and $A$, respectively. We further assume that the matrix $\tilde A - A$ is either positive definite or negative definite.

From \cite[Def. 4.29]{AmmariKang2007} we obtain the first order anisotropic polarization tensor in two dimensions
\begin{align}
	\polMat(A, \tilde A; \omega):=(\polMat_{ij})_{i,j=1,2}. \label{def_PolMat}
\end{align}
where, for $i,j\in \lbrace 1,2 \rbrace$,
\begin{align}
	\polMat_{ij} = \int_{\omega} (\tilde A - A) e_j \cdot \nabla \theta_i(A, \tilde A; \omega), \label{appendix_Mij}
\end{align}
and $\theta_i(A, \tilde A; \omega)$ is the solution to the transmission problem
\begin{align}
	\left \lbrace
	\begin{aligned} \label{transmissionThetai}
	&\nabla \cdot (A \nabla \theta_i) = 0 \; \mbox{ in } \RN \setminus \overline \omega,\\
	&\nabla \cdot (\tilde A \nabla \theta_i) = 0 \; \mbox{ in } \omega, \\
	&\theta_i|_-  - \theta_i|_+ = x_i  \; \mbox{ on } \partial \omega, \\
	&n \cdot \tilde A \nabla \theta_i |_- - n \cdot A \nabla \theta_i|_+ = n \cdot A e_i  \; \mbox{ on } \partial \omega, \\
	&\theta_i - \frac{1}{2 \pi \sqrt{\mbox{det}(A)} } \mbox{ln} \| A^{-1/2}x\| \int_{\partial \omega} \theta_i(y) \mbox d\sigma(y) \rightarrow 0 \mbox{ as } |x| \rightarrow \infty.
	\end{aligned} \right.
\end{align}

For the case where $A=I$ and $\omega$ is an ellipse which is aligned with the coordinate system, an explicit formula for the polarization matrix is available:
\begin{proposition}[\hspace{1sp}\cite{AmmariKang2007}, Proposition 4.31] \label{prop_PolMatExplicit_AK}
	If $\omega$ is an ellipse whose semi-axes are aligned with the $x_1$- and $x_2$-axes and of length $a$ and $b$, respectively, then the first-order APT, $\polMat(I, \tilde A; \omega)$, takes the form
	\begin{align}
		\polMat(I, \tilde A; \omega) = |\omega| \left( I + (\tilde A - I)(\frac{1}{2}I-C) \right)^{-1} (\tilde A - I),  \label{formulaM_ellipse}
	\end{align}
	with the matrix
	\begin{align} \label{prop_PolMatExplicit_AK_C}
		C = \frac{a-b}{2(a+b)} \left[ \begin{array}{cc}
		                               1 & 0 \\
		                               0 & -1
		                              \end{array}  \right].
	\end{align}
\end{proposition}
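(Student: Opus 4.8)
The plan is to reduce the computation of $\polMat(I,\tilde A;\omega)$ to a single piece of classical potential theory: for an ellipse, a uniform applied field produces a \emph{uniform} field inside the inclusion (the two-dimensional Eshelby property). First I would recast the transmission problem \eqref{transmissionThetai} in its physical form, seeking, for a prescribed constant applied gradient $\xi\in\RN$, a potential $u=\xi\cdot x+w$ with $w$ decaying at infinity, $\Delta u=0$ in $\RN\setminus\overline\omega$, $\nabla\cdot(\tilde A\nabla u)=0$ in $\omega$, and $u$ together with $n\cdot\sigma\nabla u$ continuous across $\partial\omega$ (where $\sigma=\tilde A$ in $\omega$ and $\sigma=I$ outside). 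The entries $\polMat_{ij}$ of \eqref{appendix_Mij} assemble, up to the symmetry of $\tilde A-I$, into the tensor relating the induced dipole moment $\int_\omega(\tilde A-I)\nabla u$ to $\xi$, so it suffices to determine the interior field.

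The main device is the Newtonian potential $N_\omega(x):=\int_\omega\Gamma(x-y)\,dy$ with unit density, where $\Gamma(x)=\tfrac1{2\pi}\log|x|$ satisfies $\Delta\Gamma=\delta_0$. Since $\Delta N_\omega=\chi_\omega$, each component $\partial_k N_\omega$ is harmonic both inside and outside $\omega$ and decays like a dipole, so $\mathbf v\cdot\nabla N_\omega$ is an admissible perturbation for any constant $\mathbf v$. Writing the (constant) interior polarization as $\mathbf q:=(\tilde A-I)F$, where $F$ is the interior field, the single-layer identity $S_\omega[\mathbf q\cdot n](x)=-\mathbf q\cdot\nabla N_\omega(x)$ lets me represent $u=\xi\cdot x-\mathbf q\cdot\nabla N_\omega$ and check $\nabla\cdot(\sigma\nabla u)=0$ distributionally, because the volume divergence of $(\tilde A-I)F\,\chi_\omega$ cancels the single-layer normal-derivative jump. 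The crucial input is that the Hessian $\nabla^2N_\omega$ is \emph{constant} inside the ellipse and, for semi-axes $a,b$ aligned with the coordinate axes, equals $\operatorname{diag}\!\big(\tfrac{b}{a+b},\tfrac{a}{a+b}\big)=\tfrac12 I-C$ with $C$ as in \eqref{prop_PolMatExplicit_AK_C} (the off-diagonal entries vanish by the reflection symmetry of $\omega$, and the trace equals $\Delta N_\omega=1$ inside).

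Granting this, the interior field obeys $F=\xi-(\tfrac12 I-C)(\tilde A-I)F$, i.e. $\big[I+(\tfrac12 I-C)(\tilde A-I)\big]F=\xi$, whence the dipole moment is $|\omega|(\tilde A-I)F=|\omega|(\tilde A-I)\big[I+(\tfrac12 I-C)(\tilde A-I)\big]^{-1}\xi$, giving $\polMat(I,\tilde A;\omega)=|\omega|(\tilde A-I)\big[I+(\tfrac12 I-C)(\tilde A-I)\big]^{-1}$. I would then invoke the elementary identity $B\,[I+DB]^{-1}=[I+BD]^{-1}B$ (proved from $[I+BD]B=B[I+DB]$, valid whenever the inverses exist) with $B=\tilde A-I$ and $D=\tfrac12 I-C$ to recover the stated form \eqref{formulaM_ellipse}, namely $|\omega|\big(I+(\tilde A-I)(\tfrac12 I-C)\big)^{-1}(\tilde A-I)$. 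Invertibility follows from the definiteness hypothesis on $\tilde A-A=\tilde A-I$ together with the positivity of $\tfrac12 I-C=\operatorname{diag}(\tfrac{b}{a+b},\tfrac{a}{a+b})$.

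The hard part is not the linear algebra but the two classical facts that make everything collapse to it: that the field inside an elliptical inclusion is uniform, and that $\nabla^2 N_\omega$ is constant inside $\omega$ with the explicit diagonal value above. I would establish the latter directly, either from the closed-form Newtonian potential of a homogeneous ellipse or by diagonalizing the Neumann--Poincar\'e operator on $\partial\omega$, and note that it is precisely here that both hypotheses are used: $A=I$ makes the exterior operator the plain Laplacian so that $N_\omega$ applies unchanged, and the axis-aligned ellipse makes the constant Hessian diagonal with the stated depolarization factors. For general $A$ one would first apply the change of variables $x\mapsto A^{-1/2}x$, which maps the ellipse to a generally non-aligned one, explaining why the explicit formula is restricted to $A=I$.
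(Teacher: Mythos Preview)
The paper does not prove this proposition at all; it is quoted verbatim from \cite{AmmariKang2007} and used as a black box in Section~\ref{sec_Appendix}. So there is no ``paper's own proof'' to compare against.

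Your argument is correct and is one of the two standard routes to the result. The proof in \cite{AmmariKang2007} works on the boundary: it uses the single-layer representation of $\theta_i$ and the explicit diagonalization of the Neumann--Poincar\'e operator $\mathcal K^*_{\partial\omega}$ on the ellipse (whose eigenvalues on the first harmonics are $\pm\tfrac{a-b}{2(a+b)}$, which is exactly where $C$ comes from). You instead work in the bulk via the volume Newtonian potential $N_\omega$ and the two-dimensional Eshelby property $\nabla^2 N_\omega|_\omega=\operatorname{diag}(\tfrac{b}{a+b},\tfrac{a}{a+b})=\tfrac12 I-C$, then close with the distributional cancellation and the identity $B[I+DB]^{-1}=[I+BD]^{-1}B$. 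The two approaches are equivalent---the jump relations tie $\nabla^2 N_\omega|_\omega$ to the first eigenvalues of $\mathcal K^*_{\partial\omega}$---but yours avoids the boundary-integral machinery at the price of taking the constancy and value of $\nabla^2 N_\omega$ inside the ellipse as the single analytic input. One bookkeeping point worth tightening: the Ammari--Kang $\theta_i$ in \eqref{transmissionThetai} is \emph{not} the continuous physical potential $u$ (it has the jump $\theta_i|_--\theta_i|_+=x_i$); the clean link is $\theta_i=\chi_\omega\,x_i+K_i$ with $K_i$ solving \eqref{transmissionK} for $\Adjz=e_i$, so that $\nabla\theta_i|_\omega=e_i+\nabla K_i|_\omega$ is precisely your interior field $F$ for $\xi=e_i$. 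Once that identification is made explicit, your computation of $\polMat_{ij}=\int_\omega(\tilde A-I)e_j\cdot\nabla\theta_i$ goes through without the ``up to symmetry'' caveat.
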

In particular, if $\omega$ is a disk, then
\begin{align}
	\polMat(I,\tilde A; \omega) = 2 |\omega| (\tilde A +I)^{-1} (\tilde A - I).	\label{formulaM_disk}
\end{align}
Furthermore, we will use the following relation:
\begin{lemma}[\hspace{1sp}\cite{AmmariKang2007}, Lemma 4.30] \label{lem:RotPolMat}
	For any unitary transformation $R$, the following holds:
		\begin{align*}
			\polMat(A, \tilde A; \omega) = R \, \polMat( R^\top A R, R^\top \tilde A R; R^{-1} \omega) \, R^\top.
		\end{align*}
\end{lemma}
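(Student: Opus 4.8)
The plan is to prove the identity by the linear change of variables $x = Ry$ that maps the transmission problem \eqref{transmissionThetai} defining $\polMat(A,\tilde A;\omega)$ onto the corresponding problem for $\polMat(R^\top A R, R^\top\tilde A R; R^{-1}\omega)$. Write $B := R^\top A R$, $\tilde B := R^\top \tilde A R$ and $\omega' := R^{-1}\omega = R^\top\omega$, and for $y\in\RN$ set $\phi_i(y) := \theta_i(A,\tilde A;\omega)(Ry)$. Since $R$ is orthogonal, the chain rule gives $\nabla_x\theta_i = R\nabla_y\phi_i$, hence for any symmetric matrix $M$ one has the operator identity $\nabla_y\cdot\big((R^\top M R)\nabla_y\phi_i\big)(y) = \big(\nabla_x\cdot(M\nabla_x\theta_i)\big)(Ry)$. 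Applying this with $M=A$ outside and $M=\tilde A$ inside shows that $\phi_i$ satisfies the two interior equations of \eqref{transmissionThetai} with $A,\tilde A$ replaced by $B,\tilde B$ and $\omega$ by $\omega'$.

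First I would transform the interface data on $\partial\omega' = R^\top\partial\omega$. The outward unit normals satisfy $n_x = R n_y$, so $n_x\cdot M\nabla_x\theta_i = n_y\cdot(R^\top M R)\nabla_y\phi_i$. The jump condition becomes $\phi_i|_- - \phi_i|_+ = (Ry)_i = (R^\top e_i)\cdot y$, while the flux condition becomes $n_y\cdot\tilde B\nabla_y\phi_i|_- - n_y\cdot B\nabla_y\phi_i|_+ = n_x\cdot A e_i = n_y\cdot(R^\top A e_i) = n_y\cdot B(R^\top e_i)$, using $B R^\top e_i = R^\top A R R^\top e_i = R^\top A e_i$. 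The crucial observation is that the jump datum and the flux datum are both governed by the single vector $c = R^\top e_i$; consequently, by linearity of \eqref{transmissionThetai} in its data, $\phi_i = \sum_j (R^\top e_i)_j\,\theta_j(B,\tilde B;\omega') = \sum_j R_{ij}\,\theta_j(B,\tilde B;\omega')$. Before invoking uniqueness I would also check that the normalization at infinity is preserved: since $\det B = \det A$ and $\|B^{-1/2}y\| = \|A^{-1/2}Ry\| = \|A^{-1/2}x\|$, and $R$ is an isometry of $\partial\omega$ onto $\partial\omega'$ (so surface measure is preserved), the decay condition for $\theta_i$ transforms exactly into the one for the $B$-problem.

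With the relation between the correctors in hand, I would finish by a direct computation in \eqref{appendix_Mij}. Changing variables $x = Ry$ (with $|\det R| = 1$), using $\nabla_x\theta_i = R\nabla_y\phi_i$ together with $\tilde A - A = R(\tilde B - B)R^\top$ and the symmetry of $\tilde B - B$, gives
\begin{align*}
	\polMat_{ij}(A,\tilde A;\omega) &= \int_{\omega'} (\tilde A - A)e_j\cdot R\nabla_y\phi_i\,dy = \sum_l R_{il}\int_{\omega'} (R^\top e_j)\cdot(\tilde B - B)\nabla_y\theta_l\,dy \\
	&= \sum_{l,k} R_{il}\,R_{jk}\int_{\omega'}(\tilde B - B)e_k\cdot\nabla_y\theta_l\,dy = \sum_{l,k} R_{il}\,\polMat_{lk}(B,\tilde B;\omega')\,R_{jk},
\end{align*}
where I expanded $R^\top e_j = \sum_k R_{jk}e_k$ and inserted $\nabla_y\phi_i = \sum_l R_{il}\nabla_y\theta_l$. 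The last double sum is precisely the $(i,j)$ entry of $R\,\polMat(B,\tilde B;\omega')\,R^\top$, which is the claimed identity.

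The argument is essentially bookkeeping for a linear transmission problem under an orthogonal map, so no single step is genuinely hard; the one place demanding care is verifying that the two pieces of interface data \emph{and} the logarithmic normalization at infinity all transform through the same matrix $R^\top$ in a mutually consistent way, so that uniqueness of \eqref{transmissionThetai} legitimately identifies $\phi_i$ with the combination $\sum_j R_{ij}\,\theta_j(B,\tilde B;\omega')$.
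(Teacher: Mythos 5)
Your proof is correct: the change of variables $x=Ry$, the transformation of the interior equations, both interface data, and the logarithmic normalization, the identification $\phi_i=\sum_j R_{ij}\theta_j(R^\top AR,R^\top\tilde AR;R^{-1}\omega)$ via linearity and uniqueness, and the final index computation giving $R\,\polMat\,R^\top$ are all accurate. The paper itself offers no proof of this statement — it is quoted from \cite{AmmariKang2007} — and your argument is exactly the standard change-of-variables proof of that cited lemma, so there is nothing to reconcile.
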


In order to apply Proposition \ref{prop_PolMatExplicit_AK} to the case of an anisotropic background conductivity $A$, we need to perform a change of variables such that the background conductivity $A$ becomes the identity.
We can show following relation:
\begin{lemma} \label{lem_coordinateTrans}
	Let $\omega$ be bounded with smooth boundary, $A, \tilde A \in \mathbb R^{2\times 2}$ positive definite, symmetric and such that $\tilde A - A$ is either positive definite or negative definite. Let the polarization matrix $\polMat(A, \tilde A; \omega)$ be defined by \eqref{def_PolMat}, \eqref{appendix_Mij} and \eqref{transmissionThetai}. Then it holds
	\begin{align}
		\polMat(A, \tilde A; \omega) = \mbox{det}(A^{1/2})\, (A^{1/2})^\top \, \polMat(I, A^{-1/2}\tilde A A^{-1/2}; A^{-1/2}\omega) \, A^{1/2} . \label{polMatMcal}
	\end{align}
\end{lemma}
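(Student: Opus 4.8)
The plan is to reduce the anisotropic transmission problem \eqref{transmissionThetai} to the one with isotropic background conductivity by the linear change of variables $x = A^{1/2}y$ (equivalently $y = A^{-1/2}x$), and then to push this substitution through the defining integral \eqref{appendix_Mij}. Since $A$ is symmetric positive definite, $A^{1/2}$ is symmetric positive definite, so $(A^{1/2})^\top = A^{1/2}$ and $\det(A^{1/2}) = \sqrt{\det A} > 0$; I will use these facts freely. Writing $\hat\theta_i(y) := \theta_i(A^{1/2}y)$, the chain rule gives $\nabla_x\theta_i = A^{-1/2}\nabla_y\hat\theta_i$, and a short computation using $A^{-1/2}A A^{-1/2} = I$ and $B := A^{-1/2}\tilde A A^{-1/2}$ shows that $\nabla_x\cdot(A\nabla_x\theta_i) = \Delta_y\hat\theta_i$ in the exterior region and $\nabla_x\cdot(\tilde A\nabla_x\theta_i) = \nabla_y\cdot(B\nabla_y\hat\theta_i)$ in the inclusion. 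Hence $\hat\theta_i$ satisfies the two bulk equations of \eqref{transmissionThetai} with $A$ replaced by $I$, $\tilde A$ replaced by $B$, and $\omega$ replaced by $A^{-1/2}\omega$.

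Next I would verify that the transmission conditions transform consistently. On the interface the Dirichlet jump becomes $\hat\theta_i|_- - \hat\theta_i|_+ = (A^{1/2}y)_i = \sum_k (A^{1/2})_{ik}\,y_k$, and the conormal flux condition — after using $A\nabla_x\theta_i = A^{1/2}\nabla_y\hat\theta_i$ and $\tilde A\nabla_x\theta_i = A^{1/2}B\nabla_y\hat\theta_i$ together with the transformation of the outward normal and of the surface measure on $\partial\omega$ — reduces to the flux jump condition of the isotropic-background problem. Finally, since $\|A^{-1/2}x\| = \|y\|$ and $\sqrt{\det A} = \det(A^{1/2})$, the logarithmic normalization at infinity in \eqref{transmissionThetai} passes over to the standard one for the transformed problem. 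Denoting by $\Theta_k := \theta_k(I, B; A^{-1/2}\omega)$ the canonical solutions of the isotropic-background problem (with Dirichlet jump $y_k$), the linearity of \eqref{transmissionThetai} together with the uniqueness of its solution yields the superposition identity $\hat\theta_i = \sum_k (A^{1/2})_{ik}\,\Theta_k$.

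With this identity in hand I would substitute into \eqref{appendix_Mij}. The change of variables $x = A^{1/2}y$ gives $dx = \det(A^{1/2})\,dy$ and $\nabla_x\theta_i = A^{-1/2}\nabla_y\hat\theta_i$, and using the symmetry of $A^{-1/2}$ together with $A^{-1/2}(\tilde A - A)A^{-1/2} = B - I$ one rewrites the integrand as
\begin{align*}
(\tilde A - A)e_j\cdot\nabla_x\theta_i = \big(A^{-1/2}(\tilde A - A)e_j\big)\cdot\nabla_y\hat\theta_i = \big((B - I)A^{1/2}e_j\big)\cdot\nabla_y\hat\theta_i .
\end{align*}
Expanding $A^{1/2}e_j = \sum_l (A^{1/2})_{lj}e_l$ and $\nabla_y\hat\theta_i = \sum_k (A^{1/2})_{ik}\nabla_y\Theta_k$, the integral factorises as
\begin{align*}
\polMat_{ij}(A,\tilde A;\omega) = \det(A^{1/2})\sum_{k,l}(A^{1/2})_{ik}\,(A^{1/2})_{lj}\int_{A^{-1/2}\omega}(B - I)e_l\cdot\nabla_y\Theta_k\,dy ,
\end{align*}
and the remaining integral is exactly $\polMat_{kl}(I, B; A^{-1/2}\omega)$ by the definition \eqref{appendix_Mij}. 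This double sum is the $(i,j)$ entry of $\det(A^{1/2})\,A^{1/2}\,\polMat(I, B; A^{-1/2}\omega)\,A^{1/2}$, which, upon writing $A^{1/2} = (A^{1/2})^\top$, is precisely \eqref{polMatMcal}.

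I expect the main obstacle to be the careful bookkeeping in the middle step: verifying that the conormal-derivative jump and the logarithmic decay normalization in \eqref{transmissionThetai} map exactly onto those of the isotropic-background problem under $x = A^{1/2}y$, including the transformation of the normal and the surface measure, so that the superposition principle and the uniqueness asserted there legitimately apply. The final index contraction reassembling the matrix product $\det(A^{1/2})\,(A^{1/2})^\top\,\polMat(I,B;A^{-1/2}\omega)\,A^{1/2}$ is then routine.
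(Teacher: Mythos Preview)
Your proposal is correct and follows essentially the same route as the paper: the change of variables $x = A^{1/2}y$ turns the anisotropic-background problem into the isotropic one on $A^{-1/2}\omega$, the inhomogeneous data pick up a factor $A^{1/2}$, and pushing this through the defining integral \eqref{appendix_Mij} produces the two outer $A^{1/2}$ factors and the Jacobian $\det(A^{1/2})$.

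The one methodological difference worth noting is that the paper carries out the change of variables in the \emph{weak} formulation. It rewrites the solutions $\theta_i$ via the auxiliary function $K\in\HH(\RN)$ solving the variational transmission problem \eqref{transmissionK}, and performs the substitution $x=A^{1/2}y$ there. This sidesteps exactly the obstacle you anticipate: in the weak form the conormal-jump condition and the decay normalization are absorbed into the bilinear form and the function space, so there is no need to track the transformation of the surface normal, the surface measure, or the logarithmic term at infinity. Your strong-form route works too, but the weak-form shortcut makes the ``main obstacle'' you flag disappear. The final contraction to $\det(A^{1/2})\,(A^{1/2})^\top\,\polMat(I,A^{-1/2}\tilde A A^{-1/2};A^{-1/2}\omega)\,A^{1/2}$ is then identical in both approaches.
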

\begin{proof}
We only give a sketch of the proof here and refer the reader to \cite{Gangl2017} for more details. The idea of the proof is to consider \eqref{transmissionThetai} in a weak form by noting that, for $\Adjz= (\adj_1, \adj_2)^\top$,
\begin{align}
	\theta(A, \tilde A; \omega;\Adjz) &:= \sum_{i=1}^2 \adj_i \, \theta_i(A, \tilde A; \omega) = K(A, \tilde A; \omega; \Adjz) + \chi_{\omega} (\Adjz \cdot x), \label{appendix_theta_K}
\end{align}
where $K(A, \tilde A; \omega; \Adjz)$ is the solution to the transmission problem
\begin{align}
	&\mbox{Find } K \in \mathcal H(\RN) \mbox{ such that} \nonumber\\
	&\int_{\RN \setminus \omega} A \, \nabla K \cdot \nabla \test \,\mbox dy + \int_{\omega} \tilde A \, \nabla K \cdot \nabla \test \,\mbox dy = - \int_{\omega} (\tilde A - A) \, \Adjz \cdot \nabla \test \, \mbox dy \; \forall \test \in \mathcal H(\RN). \label{transmissionK}
\end{align}
The coordinate transformation $x = A^{1/2} y$ yields that
\begin{align}
	\theta (A, \tilde A; \omega; \Adjz)(A^{1/2} y) &= \theta (I, A^{-1/2}\tilde A A^{-1/2}; A^{-1/2} \omega; A^{1/2} \Adjz)(y), \label{theta_coordTransform}
\end{align}
and, therefore, 
\begin{align*}
	\Adjz^\top \, \polMat(A, \tilde A; \omega) \, U_0 &= \int_{\omega} \left( (\tilde A - A) \, U_0 \right) \cdot \nabla_x \theta(A, \tilde A; \omega;\Adjz)(x) \, \mbox d x \\
	&= \mbox{det}(A^{1/2}) \Adjz ^\top \, (A^{1/2})^\top \, \polMat(I, A^{-1/2}\tilde A A^{-1/2}; A^{-1/2} \omega) \, A^{1/2} U_0
\end{align*}
for $\Adjz$, $U_0 \in \mathbb R^2$.
\end{proof}

\begin{remark} \label{rem_Psymm}
	The polarization tensor \eqref{def_PolMat} is well-known to be symmetric, see e.g., \cite{AmmariKang2007}. The symmetry of $\polMat(A, \tilde A; \omega)$ for symmetric matrices $A, \tilde A$ can also be seen from \eqref{polMatMcal} noting that the matrix \eqref{formulaM_ellipse} is symmetric.
\end{remark}

\subsection{Case I} \label{sec_AppendixCaseI}
Now we are in the position to derive the polarization matrix $\polMat(A, \tilde A; \omega)$ with $A=\DT(U_0)$ as defined in \eqref{DTW_eigenvalues} and $\tilde A = \nu_0 I$, which we will later use to rewrite the first term of the topological derivative $J_1$ \eqref{def_J1}. Recall the representation of $\DT(U_0)$ \eqref{DTW_eigenvalues}, i.e.,
\begin{align} \label{DTU0_eigenvalues}
	\DT(U_0) = R \left[ \begin{array}{cc} \eigvTwo & 0 \\ 0 & \eigvOne \end{array} \right] R^\top
\end{align}
with $\eigvOne:= \lambda_1(|U_0|) = \nuh(|U_0|)$ and $\eigvTwo:= \lambda_2(|U_0|) = \nuh(|U_0|) + \nuh'(|U_0|)|U_0|$. Using Lemma \ref{lem_coordinateTrans}, Proposition \ref{prop_PolMatExplicit_AK} and Lemma \ref{lem:RotPolMat}, we get the following result:

\begin{proposition}
	Let $\omega = B(0,1)$ the unit disk in $\mathbb R^2$, and let Assumption \ref{assump_BH} hold. Then, we get
	\begin{align} \label{prop_polMat_CaseI}
		\polMat(\DT(U_0), \nu_0 I;  \omega) &= |\omega| \, R \,  
	 \left[  \begin{array}{cc} \frac{ \left( \eigvTwo +\sqrt{\eigvOne\eigvTwo}\right)(\nu_0 - \eigvTwo) }{\nu_0  +\sqrt{\eigvOne\eigvTwo}  } & 0 \\ 0&  \frac{\left(\eigvOne + \sqrt{\eigvOne\eigvTwo}\right)(\nu_0-\eigvOne) }{\nu_0 +\sqrt{\eigvOne\eigvTwo}}  \end{array} \right] \, R^{\top}.
	\end{align}
\end{proposition}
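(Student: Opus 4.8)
The plan is to combine the three auxiliary facts on polarization tensors collected above—Lemma~\ref{lem:RotPolMat} (behaviour under rotations), Lemma~\ref{lem_coordinateTrans} (reduction to an identity background), and Proposition~\ref{prop_PolMatExplicit_AK} (the explicit formula for an axis-aligned ellipse)—in order to reduce the computation of $\polMat(\DT(U_0), \nu_0 I; \omega)$ to an elementary diagonal calculation. First I would use the diagonalization $\DT(U_0) = R\,\mathrm{diag}(\eigvTwo,\eigvOne)\,R^{\top}$ from \eqref{DTU0_eigenvalues} together with Lemma~\ref{lem:RotPolMat}. Since $R$ is orthogonal, $R^{\top}\DT(U_0)R = \mathrm{diag}(\eigvTwo,\eigvOne)=:D$ and $R^{\top}(\nu_0 I)R = \nu_0 I$, while $\omega = B(0,1)$ is rotation-invariant so that $R^{-1}\omega = \omega$. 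Hence
\[
 \polMat(\DT(U_0), \nu_0 I; \omega) = R\,\polMat(D, \nu_0 I; \omega)\,R^{\top},
\]
and it remains only to compute the polarization matrix for the diagonal background $D$.

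For this diagonal case I would apply Lemma~\ref{lem_coordinateTrans} with $A = D$ and $\tilde A = \nu_0 I$; its definiteness hypothesis holds because $\eigvOne,\eigvTwo \le \nu_0$ by Assumption~\ref{assump_BH}, so $\nu_0 I - D$ is positive semidefinite, the degenerate equality case being recovered by continuity. Here $A^{1/2} = \mathrm{diag}(\sqrt{\eigvTwo},\sqrt{\eigvOne})$, $\det(A^{1/2}) = \sqrt{\eigvOne\eigvTwo}$, and $A^{-1/2}(\nu_0 I)A^{-1/2} = \mathrm{diag}(\nu_0/\eigvTwo,\nu_0/\eigvOne)$. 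The transformed inclusion $A^{-1/2}\omega$ is the axis-aligned ellipse with semi-axes $a = 1/\sqrt{\eigvTwo}$ and $b = 1/\sqrt{\eigvOne}$ and area $|A^{-1/2}\omega| = |\omega|/\sqrt{\eigvOne\eigvTwo}$, so Proposition~\ref{prop_PolMatExplicit_AK} applies directly to $\polMat(I, A^{-1/2}(\nu_0 I)A^{-1/2}; A^{-1/2}\omega)$, with $C$ as in \eqref{prop_PolMatExplicit_AK_C} governed by $c := \tfrac{a-b}{2(a+b)}$.

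Everything in sight is now diagonal, so I would finish the remaining algebra entrywise. The only structural simplifications needed are $\tfrac12 - c = \sqrt{\eigvTwo}/(\sqrt{\eigvOne}+\sqrt{\eigvTwo})$ and $\tfrac12 + c = \sqrt{\eigvOne}/(\sqrt{\eigvOne}+\sqrt{\eigvTwo})$; substituting these into the factor $(I + (\tilde A - I)(\tfrac12 I - C))^{-1}(\tilde A - I)$ of \eqref{formulaM_ellipse} and then conjugating by $\det(A^{1/2})\,(A^{1/2})^{\top}(\cdot)\,A^{1/2}$, the prefactor $\sqrt{\eigvOne\eigvTwo}\,|A^{-1/2}\omega|$ collapses to $|\omega|$ and the numerators telescope. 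For the $(1,1)$ entry this gives $|\omega|\,(\eigvTwo+\sqrt{\eigvOne\eigvTwo})(\nu_0-\eigvTwo)/(\nu_0+\sqrt{\eigvOne\eigvTwo})$, and the $(2,2)$ entry follows by exchanging the roles of $\eigvOne$ and $\eigvTwo$; conjugating back by $R$ then yields \eqref{prop_polMat_CaseI}. I expect the main obstacle to be purely bookkeeping: tracking the several square-root factors introduced by $A^{\pm 1/2}$ and by the ellipse area through the $2\times 2$ inversion and the two conjugations, so that all $\sqrt{\eigvOne\eigvTwo}$ contributions combine into the stated closed form.
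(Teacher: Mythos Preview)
Your proposal is correct and follows essentially the same route as the paper: the paper's proof likewise proceeds by combining Lemma~\ref{lem:RotPolMat}, Lemma~\ref{lem_coordinateTrans}, and Proposition~\ref{prop_PolMatExplicit_AK}, noting that Assumption~\ref{assump_BH} guarantees the required positive-definiteness of $A$ and $\tilde A - A$. Your write-up in fact supplies more of the diagonal bookkeeping than the paper does (it defers the algebra to \cite{Gangl2017}), and your handling of the borderline case $\lambda_i = \nu_0$ by continuity is a reasonable way to cover the non-strict inequalities in \eqref{physProperties}.
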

\begin{proof}
	The result follows by application of Lemma \ref{lem_coordinateTrans}, Proposition \ref{prop_PolMatExplicit_AK} and Lemma \ref{lem:RotPolMat}. Note that, due to \eqref{physProperties} which follows from Assumption \ref{assump_BH}, it holds $\eigvOne>0$, $\eigvTwo>0$, $\nu_0 - \eigvOne>0$ and $\nu_0 - \eigvTwo > 0$, and, therefore, $A$ and $\tilde A - A$ are positive definite. For details of the proof, we refer the reader to \cite{Gangl2017}.
    \end{proof}

Now, we finally obtain an explicit form of the matrix $\myTDMat$ in \eqref{j1_final}.
\begin{theorem}
	Let $\omega = B(0,1)$ and let Assumptions \ref{assump_BH}, \ref{assump_J}, \ref{assump_c78}, \ref{assump_delta}, \ref{assump_reg_u0} and \ref{assump_reg_v0} hold. Then the matrix $\myTDMat = \myTDMat(\omega, \DT(U_0))$ in the topological derivative \eqref{G1_final} reads
    \begin{align}
  	  \myTDMat&=  (\nu_0 - \eigvOne) |\omega| \, R
	 \left[  \begin{array}{cc} \frac{  \eigvTwo +\sqrt{\eigvOne\eigvTwo} }{\nu_0  +\sqrt{\eigvOne\eigvTwo}  } & 0 \\ 0&  \frac{\eigvOne + \sqrt{\eigvOne\eigvTwo} }{\nu_0 +\sqrt{\eigvOne\eigvTwo}}  \end{array} \right] R^{\top}, \label{TDMat_CaseI}
     \end{align}
     where $\eigvOne$, $\eigvTwo$ and $R$ are according to \eqref{DTU0_eigenvalues}.    
\end{theorem}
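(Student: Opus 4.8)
The plan is to turn the implicit definition of $\myTDMat$, namely the relation $(\nu_0 - \nuh(|U_0|))\int_{\omega}(\Adjz + \nabla K) = \myTDMat\,\Adjz$ that precedes \eqref{J1_polMat}, into an explicit matrix identity involving the anisotropic polarization matrix $\polMat(\DT(U_0),\nu_0 I;\omega)$ already computed in \eqref{prop_polMat_CaseI}, and then to read off the diagonal form. The first step is to observe that the adjoint--variation problem \eqref{variationAdjoint_scale1} defining $K$ is \emph{exactly} the transmission problem \eqref{transmissionK}: writing $\mbox{D}\Tt(x,U_0) = \chi_{\RN\setminus\omega}(x)\DT(U_0) + \chi_{\omega}(x)\nu_0 I$ and moving the inclusion term to the right-hand side identifies the background conductivity as $A = \DT(U_0)$ (outside) and $\tilde A = \nu_0 I$ (inside), so that the $K$ of \eqref{variationAdjoint_scale1} coincides with $K(\DT(U_0),\nu_0 I;\omega;\Adjz)$ of \eqref{appendix_theta_K}.

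Next I would promote the scalar quantity $J_1 = U_0^\top\myTDMat\,\Adjz$ to a genuine matrix relation. Introduce the linear map $L:\Adjz\mapsto\int_{\omega}(\Adjz+\nabla K)$, so that $\myTDMat = (\nu_0-\nuh(|U_0|))\,L$. Taking $\Adjz = e_i$ makes $\theta_i$ from \eqref{appendix_theta_K} the relevant potential, whence inside $\omega$ one has $\nabla\theta_i = \nabla K + e_i$ and therefore $L e_i = \int_{\omega}\nabla\theta_i$. Substituting this into the definition \eqref{appendix_Mij} of the polarization entries and using that $\tilde A - A = \nu_0 I - \DT(U_0)$ is symmetric gives
\[
	\polMat_{ij} = \int_{\omega}(\tilde A - A)e_j\cdot\nabla\theta_i = e_j^\top(\tilde A - A)\,L\,e_i = \big[(\tilde A - A)L\big]_{ji},
\]
that is, $\polMat^\top = (\nu_0 I - \DT(U_0))\,L$. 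Since the polarization matrix is symmetric by Remark~\ref{rem_Psymm}, this yields the clean identity $L = (\nu_0 I - \DT(U_0))^{-1}\polMat$ and consequently
\[
	\myTDMat = (\nu_0-\nuh(|U_0|))\,(\nu_0 I - \DT(U_0))^{-1}\,\polMat(\DT(U_0),\nu_0 I;\omega).
\]

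Finally I would insert the spectral decomposition \eqref{DTU0_eigenvalues}, $\DT(U_0) = R\,\mbox{diag}(\eigvTwo,\eigvOne)R^\top$, giving $\nu_0 I - \DT(U_0) = R\,\mbox{diag}(\nu_0-\eigvTwo,\nu_0-\eigvOne)R^\top$, which is invertible because $\eigvOne,\eigvTwo<\nu_0$ by \eqref{physProperties}, together with the explicit diagonal form of $\polMat$ from \eqref{prop_polMat_CaseI} and the identity $\nuh(|U_0|)=\eigvOne$. Both factors are diagonalized by the same $R$, so the product is computed slotwise: the factor $(\nu_0-\eigvTwo)$ cancels in the first diagonal entry and $(\nu_0-\eigvOne)$ cancels in the second, and after multiplying by the scalar $(\nu_0-\eigvOne)$ one recovers precisely \eqref{TDMat_CaseI}. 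The one genuinely delicate point — and the step most easily botched — is this passage from the implicit bilinear datum $U_0^\top\myTDMat\,\Adjz$ to the full matrix: one must avoid the tempting but wrong conclusion that $\myTDMat$ is a scalar multiple of $\polMat$ (which would produce an incorrect second eigenvalue), and instead carry the matrix prefactor $(\nu_0 I - \DT(U_0))^{-1}$ through correctly, keeping careful track of the transpose and of the symmetry of $\polMat$. Once $\polMat = (\nu_0 I - \DT(U_0))L$ is secured, the remainder is a routine diagonal multiplication.
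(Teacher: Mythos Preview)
Your proposal is correct and follows essentially the same approach as the paper: you identify $K$ from \eqref{variationAdjoint_scale1} with $K(\DT(U_0),\nu_0 I;\omega;\Adjz)$ of \eqref{transmissionK}, relate $\int_\omega(\Adjz+\nabla K)$ to the polarization matrix via $\theta_i$ and \eqref{appendix_Mij}, obtain $\myTDMat=(\nu_0-\eigvOne)(\nu_0 I-\DT(U_0))^{-1}\polMat(\DT(U_0),\nu_0 I;\omega)$ using the symmetry of $\polMat$, and then read off the diagonal form from \eqref{prop_polMat_CaseI}. The only cosmetic difference is that the paper inserts the identity $(\nu_0 I-\DT(U_0))^{-1}(\nu_0 I-\DT(U_0))$ directly inside the integral, whereas you package the same step through the auxiliary map $L$ and the index computation $\polMat_{ij}=[(\tilde A-A)L]_{ji}$.
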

\begin{proof}
	We start out from the definition of $J_1 = J_1(U_0, \Adjz)$ in \eqref{def_J1}. Note that \eqref{variationAdjoint_scale1} is the same as \eqref{transmissionK} with $A=\DT(U_0)$, $\tilde A = \nu_0 I$ and $e_i$ replaced by $\Adjz$. Therefore, $K$ appearing in \eqref{def_J1} equals $K(\DT(U_0), \nu_0 I;\omega; \Adjz)$ according to \eqref{transmissionK}. By \eqref{appendix_theta_K} and \eqref{appendix_Mij}, we get
\begin{align*}
	J_1(U_0, \Adjz) &= (\nu_0 - \eigvOne) U_0 \cdot \int_{\omega} \Adjz + \nabla K \\
	&= (\nu_0 - \eigvOne) U_0^\top (\nu_0 I - \DT(U_0))^{-1}  \int_{\omega}(\nu_0 I - \DT(U_0)) ( \nabla \theta(\DT(U_0), \nu_0 I; \omega; \Adjz)) \\
	&= (\nu_0 - \eigvOne)  U_0 ^\top \,(\nu_0 I - \DT(U_0))^{-1} \, \polMat(\DT(U_0), \nu_0 I; \omega)^\top \, \Adjz ,
\end{align*}
where $\polMat(\DT(U_0), \nu_0 I; \omega)$ is given in \eqref{prop_polMat_CaseI}.
Thus, exploiting the symmetry of $\polMat(\DT(U_0), \nu_0 I; \omega)$ according to Remark \ref{rem_Psymm}, we finally get \eqref{J1_polMat} with the matrix
\begin{align}
	\myTDMat &= (\nu_0 - \eigvOne )(\nu_0 I - \DT(U_0))^{-1} \polMat(\DT(U_0), \nu_0 I; \omega) \nonumber \\
	  &= (\nu_0 - \eigvOne) |\omega| \, R
	 \left[  \begin{array}{cc} \frac{  \eigvTwo +\sqrt{\eigvOne\eigvTwo} }{\nu_0  +\sqrt{\eigvOne\eigvTwo}  } & 0 \\ 0&  \frac{\eigvOne + \sqrt{\eigvOne\eigvTwo} }{\nu_0 +\sqrt{\eigvOne\eigvTwo}}  \end{array} \right] R^{\top}. 
\end{align} 
\end{proof}

Note that, in the linear case where $\eigvTwo = \eigvOne>0$, it holds $(\nu_0 - \eigvOne)(\nu_0 I - \DT(U_0))^{-1}  = I$, and we obtain
\begin{align*}
	\myTDMat = \polMat(\DT(U_0), \nu_0 I; \omega)
	 &=2 \, |\omega| \,  \eigvOne \, \frac{  \nu_0 - \eigvOne  }{\nu_0 + \eigvOne  } I,
\end{align*}
which coincides with the well-known formula derived in, e.g., \cite{Amstutz2006}.
Thus, here, unlike in the nonlinear case, the matrix appearing in the topological derivative is actually the polarization matrix according to \cite{AmmariKang2007}.

\begin{remark} \label{rem_MatExplicit_CaseI}
	Finally, we remark that the explicit form of the matrix $\myTDMat$ satisfying relation \eqref{J1_polMat} can also be obtained directly without exploiting Proposition \ref{prop_PolMatExplicit_AK} in the following way: Starting out from the transmission problem defining $K$ \eqref{transmissionK}, after a coordinate transformation $x=\DT(U_0)^{1/2} y$ we can compute the solution $K$ explicitly by a special ansatz similarly to \cite[Proposition 4.6]{AmmariKang2007}. Noting that, by the coordinate transformation the circular inclusion $\omega$ becomes an ellipse $\tilde \omega$, we make the ansatz in elliptic coordinates. For that purpose, let $\tilde R \in \mathbb R$ and $(r,\varphi) \in \mathbb R_0^+ \times [0, 2\pi]$ be such that $x_1(r,\varphi) = \tilde R \, \ccos \varphi \ccosh r$ and $x_2(r,\varphi) = \tilde R \, \ssin \varphi \ssinh r$. For $i=1,2$ we make the ansatz
	\begin{align*}
		K_{e_1}(r, \varphi) = \begin{cases}
								a_1 \tilde R \, \ccos \varphi \, \ccosh r & \mbox{in } \tilde \omega, \\
								b_1 \mathrm{e}^{-r} \ccos\varphi  &  \mbox{in } \RN \setminus \overline{\tilde \omega },
							  \end{cases}
		\quad
		K_{e_2}(r, \varphi) = \begin{cases}
										a_2 \tilde R \, \ssin \varphi \, \ssinh r & \mbox{in } \tilde \omega, \\
										b_2 \mathrm{e}^{-r} \ssin \varphi &  \mbox{in } \RN \setminus \overline{\tilde \omega },
									\end{cases}
	\end{align*}
	for the transformed version of problem \eqref{transmissionK} involving the unit vector $e_i$, and choose the constants $a_i$, $b_i$ such that $K_{e_i}$ is continuous and satisfies the correct interface jump condition on $\partial \tilde \omega$ which are incorporated in the variational formulation for $i=1,2$. For a given $\Adjz \in \RN$, the solution $K$ to \eqref{variationAdjoint_scale1} is then obtained as a linear combination of $K_{e_1}$ and $K_{e_2}$. Plugging in this explicit solution K into \eqref{J1_polMat}, the matrix $\myTDMat$ can be identified. In particular, the behavior of $K$ as $|x| \rightarrow \infty$, cf. \eqref{asympt_K}, can be seen from this explicit formula.
\end{remark}

\subsection{Case II} \label{sec_AppendixCaseII}
In this case, we compute the polarization matrix $\polMat(A, \tilde A; \omega)$ with $A= \nu_0 I$ and $\tilde A = \DT(\Uoii)$. Again, using Lemma \ref{lem_coordinateTrans}, Proposition \ref{prop_PolMatExplicit_AK} and Lemma \ref{lem:RotPolMat}, we obtain the following result:

\begin{proposition} \label{prop_polMatii}
	Let $\omega = B(0,1)$ the unit disk in $\mathbb R^2$, and let Assumption \ref{assump_BH} hold. Then, we have
	\begin{align*}
		\polMat( \nu_0 I, \DT(\Uoii);  \omega) &=2 |\omega| \nu_0 \, R \, 
	\left[ \begin{array}{cc} \frac{\eigvTwo - \nu_0}{\eigvTwo + \nu_0} & 0\\ 0 & \frac{\eigvOne- \nu_0}{\eigvOne + \nu_0} \end{array} \right] \, R^{\top},
	\end{align*}
    where $\eigvOne = \hat\nu(|\Uoii|)$, $\eigvTwo =\hat\nu(|\Uoii|) + \hat\nu'(|\Uoii |)|\Uoii |$, and $R$ denotes the rotation matrix around the angle between $\Uoii $ and the x-axis such that
\begin{align*}
	\Uoii = R \binom{|\Uoii|}{0}.
\end{align*}
\end{proposition}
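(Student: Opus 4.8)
The plan is to obtain the formula by composing the three tools assembled above: the change-of-variables identity of Lemma~\ref{lem_coordinateTrans}, the rotation invariance of Lemma~\ref{lem:RotPolMat}, and the explicit disk formula \eqref{formulaM_disk}, applied to the pair $A=\nu_0 I$, $\tilde A = \DT(\Uoii)$. The decisive simplification is that the background conductivity $A=\nu_0 I$ is a scalar matrix, so $A^{1/2}=\sqrt{\nu_0}\,I$, $A^{-1/2}=\nu_0^{-1/2}I$, and $\det(A^{1/2})=\nu_0$ in two dimensions. Before applying Lemma~\ref{lem_coordinateTrans} I would verify its hypotheses: by Assumption~\ref{assump_BH} and the resulting bounds \eqref{physProperties} one has $\eigvOne>0$, $\eigvTwo>0$, $\nu_0-\eigvOne>0$ and $\nu_0-\eigvTwo>0$, where $\eigvOne=\hat\nu(|\Uoii|)$ and $\eigvTwo=\hat\nu(|\Uoii|)+\hat\nu'(|\Uoii|)|\Uoii|$. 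Hence $\DT(\Uoii)$ is symmetric positive definite and $\tilde A-A=\DT(\Uoii)-\nu_0 I$ is negative definite, as required. Invoking Lemma~\ref{lem_coordinateTrans}, and using $A^{-1/2}\tilde A A^{-1/2}=\nu_0^{-1}\DT(\Uoii)$, $A^{-1/2}\omega=\nu_0^{-1/2}\omega$, together with $\det(A^{1/2})\,(\sqrt{\nu_0})^2=\nu_0\cdot\nu_0$, this collapses to
\begin{align*}
	\polMat(\nu_0 I, \DT(\Uoii); \omega) = \nu_0^2\,\polMat\!\left(I,\tfrac{1}{\nu_0}\DT(\Uoii);\tfrac{1}{\sqrt{\nu_0}}\omega\right).
\end{align*}

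Next I would diagonalise the inclusion matrix. Writing $\DT(\Uoii)=R\,\mathrm{diag}(\eigvTwo,\eigvOne)\,R^\top$ according to the eigendecomposition \eqref{DTW_eigenvalues}, the argument of the inner polarization matrix becomes $\tfrac{1}{\nu_0}\DT(\Uoii)=R\,\mathrm{diag}(\eigvTwo/\nu_0,\eigvOne/\nu_0)\,R^\top$. Applying Lemma~\ref{lem:RotPolMat} with the unitary matrix $R$, and using that the scaled inclusion $\tfrac{1}{\sqrt{\nu_0}}\omega$ is a disk and therefore invariant under the rotation $R^{-1}$, I would get
\begin{align*}
	\polMat\!\left(I,\tfrac{1}{\nu_0}\DT(\Uoii);\tfrac{1}{\sqrt{\nu_0}}\omega\right) = R\,\polMat\!\left(I,\mathrm{diag}\big(\tfrac{\eigvTwo}{\nu_0},\tfrac{\eigvOne}{\nu_0}\big);\tfrac{1}{\sqrt{\nu_0}}\omega\right)R^\top.
\end{align*}
The argument is now a disk with a diagonal inclusion conductivity, so the explicit disk formula \eqref{formulaM_disk} applies directly. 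With $\tilde A=\mathrm{diag}(\eigvTwo/\nu_0,\eigvOne/\nu_0)$ the matrix $(\tilde A+I)^{-1}(\tilde A-I)$ is diagonal with entries $(\eigvTwo-\nu_0)/(\eigvTwo+\nu_0)$ and $(\eigvOne-\nu_0)/(\eigvOne+\nu_0)$, while the volume factor is $|\tfrac{1}{\sqrt{\nu_0}}\omega|=|\omega|/\nu_0$, giving the prefactor $2|\omega|/\nu_0$.

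Finally I would multiply the scalars together. Combining the last two displays with the leading $\nu_0^2$ yields
\begin{align*}
	\polMat(\nu_0 I, \DT(\Uoii); \omega) = \nu_0^2\cdot\frac{2|\omega|}{\nu_0}\,R\left[\begin{array}{cc}\frac{\eigvTwo-\nu_0}{\eigvTwo+\nu_0} & 0\\[2pt] 0 & \frac{\eigvOne-\nu_0}{\eigvOne+\nu_0}\end{array}\right]R^\top = 2\nu_0|\omega|\,R\left[\begin{array}{cc}\frac{\eigvTwo-\nu_0}{\eigvTwo+\nu_0} & 0\\[2pt] 0 & \frac{\eigvOne-\nu_0}{\eigvOne+\nu_0}\end{array}\right]R^\top,
\end{align*}
which is exactly the claimed expression. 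The whole argument is a direct composition of results already established, so I do not expect a genuine analytic obstacle; the substance was spent in proving Lemma~\ref{lem_coordinateTrans} and the disk formula. The only point demanding care is the bookkeeping of the powers of $\nu_0$: the factor $\nu_0^2$ coming from the change of variables and the factor $\nu_0^{-1}$ coming from the shrunken disk volume must combine to the single $\nu_0$ in the final formula, so the main risk is merely a scalar miscount.
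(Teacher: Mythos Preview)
Your proposal is correct and follows essentially the same route as the paper: apply Lemma~\ref{lem_coordinateTrans} with $A=\nu_0 I$ (so that the inclusion remains a disk), then use the explicit disk formula from Proposition~\ref{prop_PolMatExplicit_AK}, after checking via Assumption~\ref{assump_BH} that $\DT(\Uoii)$ is positive definite and $\DT(\Uoii)-\nu_0 I$ is negative definite. The only cosmetic difference is that you insert Lemma~\ref{lem:RotPolMat} explicitly to diagonalise $\nu_0^{-1}\DT(\Uoii)$ before invoking \eqref{formulaM_disk}, whereas the paper applies \eqref{formulaM_disk} directly (it holds for arbitrary symmetric $\tilde A$, not just diagonal ones) and only then reads off the eigenvalues; the computations are identical and your bookkeeping of the $\nu_0$ powers is correct.
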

\begin{proof}
	This case is simpler since, here, the material coefficient outside the inclusion is proportional to the identity matrix. Therefore, the inclusion remains a disk even after the corresponding coordinate transformation. The result follows by application of Lemma \ref{lem_coordinateTrans} and Proposition \ref{prop_PolMatExplicit_AK}, noting that $\DT(\Uoii)$ is positive definite and $\DT(\Uoii) - \nu_0 I$ is negative definite due to Assumption \ref{assump_BH}.
\end{proof}

In the same way as in Case I, we obtain an explicit formula for the matrix $\myTDMat^{(2)}$ in the topological derivative \eqref{G2_final}.
\begin{theorem}
	Let $\omega = B(0,1)$ and let Assumptions \ref{assump_BH}, \ref{assump_J}, \ref{assump_c78}, \ref{assump_delta}, \ref{assump_reg_u0} and \ref{assump_reg_v0} hold. Then the matrix $\myTDMat^{(2)} = \myTDMat^{(2)}(\omega, \DT(\Uoii))$ in the topological derivative \eqref{G2_final} reads
    \begin{align}
  	  \myTDMat^{(2)}&=   2 |\omega| \nu_0 \, R
	 \left[ \begin{array}{cc} \frac{\eigvOne - \nu_0}{\eigvTwo + \nu_0} & 0\\ 0 & \frac{\eigvOne - \nu_0}{\eigvOne + \nu_0} \end{array} \right] R^{\top} . \label{TDMat_CaseII}
     \end{align}
     where $\eigvOne$, $\eigvTwo$ and $R$ are as in Proposition \ref{prop_polMatii}.
\end{theorem}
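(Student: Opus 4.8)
The plan is to mirror the proof of the corresponding Case~I theorem, interchanging the roles of the linear and the nonlinear material. The first term $(\Uoii)^\top\myTDMat^{(2)}\Voii$ of \eqref{G2_final} is the linear contribution $J_1^{(2)}$, which by the derivation analogous to \eqref{def_J1} takes the form
\begin{align*}
	J_1^{(2)} = \left(\nuh(|\Uoii|) - \nu_0\right)\,\Uoii\cdot\int_{\omega}\left(\Voii + \nabla\Kii\right),
\end{align*}
with $\Kii$ the solution of \eqref{variationAdjoint_scale1_ii}. Note that the coefficient $\nuh(|\Uoii|)-\nu_0$ carries the opposite sign to Case~I, since here the inclusion introduces ferromagnetic material into air; getting this sign and coefficient right is the only place where Case~II genuinely differs from Case~I.

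First I would identify $\Kii$ with the abstract transmission solution of Section~\ref{sec_Appendix}. Comparing \eqref{variationAdjoint_scale1_ii} with \eqref{transmissionK} shows they coincide upon choosing $A=\nu_0 I$, $\tilde A=\DT(\Uoii)$ and source direction $\Voii$; since $\DT(\Uoii)$ is positive definite and $\DT(\Uoii)-\nu_0 I$ is negative definite by Assumption~\ref{assump_BH}, the hypotheses of the appendix are met and $\Kii = K(\nu_0 I,\DT(\Uoii);\omega;\Voii)$. Using \eqref{appendix_theta_K} the integral $\int_\omega(\Voii+\nabla\Kii)$ equals $\int_\omega\nabla\theta(\nu_0 I,\DT(\Uoii);\omega;\Voii)$, and \eqref{appendix_Mij} together with the symmetry of $\polMat$ (Remark~\ref{rem_Psymm}) gives
\begin{align*}
	\int_\omega\left(\DT(\Uoii)-\nu_0 I\right)\nabla\theta = \polMat(\nu_0 I,\DT(\Uoii);\omega)\,\Voii.
\end{align*}

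Next I would insert the identity $I=(\DT(\Uoii)-\nu_0 I)^{-1}(\DT(\Uoii)-\nu_0 I)$ into $J_1^{(2)}$ — legitimate because the constant matrix $\DT(\Uoii)-\nu_0 I$ is invertible — to obtain
\begin{align*}
	J_1^{(2)} = \left(\eigvOne-\nu_0\right)(\Uoii)^\top(\DT(\Uoii)-\nu_0 I)^{-1}\,\polMat(\nu_0 I,\DT(\Uoii);\omega)\,\Voii,
\end{align*}
with $\eigvOne=\nuh(|\Uoii|)$, so that $\myTDMat^{(2)} = (\eigvOne-\nu_0)(\DT(\Uoii)-\nu_0 I)^{-1}\,\polMat(\nu_0 I,\DT(\Uoii);\omega)$. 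Finally I would diagonalize using $\DT(\Uoii)=R\,\mathrm{diag}(\eigvTwo,\eigvOne)\,R^\top$ from Proposition~\ref{prop_polMatii}, whence $(\eigvOne-\nu_0)(\DT(\Uoii)-\nu_0 I)^{-1}=R\,\mathrm{diag}\!\left(\tfrac{\eigvOne-\nu_0}{\eigvTwo-\nu_0},\,1\right)R^\top$, and substitute the explicit $\polMat(\nu_0 I,\DT(\Uoii);\omega)$ of Proposition~\ref{prop_polMatii}. Multiplying the two diagonal factors, the $(\eigvTwo-\nu_0)$ in the first entry cancels and one reads off exactly \eqref{TDMat_CaseII}.

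The computation is essentially routine linear algebra; there is no deep obstacle. The main care required is bookkeeping: tracking the coefficient $\eigvOne-\nu_0$ of the linear term $J_1^{(2)}$ (which replaces the $\nu_0-\nuh(|U_0|)$ of Case~I), and matching the abstract problem \eqref{transmissionK} to \eqref{variationAdjoint_scale1_ii} with the correct assignment $A=\nu_0 I$, $\tilde A=\DT(\Uoii)$ rather than the reverse, so that the two sign cancellations (one in the coefficient, one in the eigenvalue $\eigvTwo-\nu_0$) produce the stated formula.
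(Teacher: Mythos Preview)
Your proposal is correct and follows precisely the approach the paper takes (implicitly, by analogy with the detailed Case~I proof): identify $\Kii$ with $K(\nu_0 I,\DT(\Uoii);\omega;\Voii)$ via \eqref{transmissionK}, express $J_1^{(2)}$ through the polarization tensor by inserting $(\DT(\Uoii)-\nu_0 I)^{-1}(\DT(\Uoii)-\nu_0 I)$, and then diagonalize using Proposition~\ref{prop_polMatii}. Your bookkeeping of the sign change in the coefficient $(\eigvOne-\nu_0)$ and the assignment $A=\nu_0 I$, $\tilde A=\DT(\Uoii)$ is exactly the care the paper's analogy requires.
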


\begin{remark} \label{rem_MatExplicit_CaseII}
	Similarly to Remark \ref{rem_MatExplicit_CaseI}, also here we can compute the solution to transmission problem \eqref{variationAdjoint_scale1_ii} explicitly by making a special ansatz. Unlike in Case I, here the conductivity matrix outside the inclusion is a scaled identity matrix, $A = \nu_0 I$, and the circular inclusion $\omega$ does not become an ellipse. Therefore, the solution can be obtained by the following ansatz:
	\begin{align*}
			\Kii_{e_1}(x_1, x_2)= 
			\begin{cases}
									a_1^{(2)} \, x_1& \mbox{ in } \omega, \\
									b_1^{(2)} \, \frac{x_1}{x_1^2+x_2^2} & \mbox{ in } \RN \setminus \overline{\omega},
			                      \end{cases}
			                     \quad 
			\Kii_{e_2}(r,\varphi)= 
			\begin{cases}
									a_2^{(2)} \, x_2 & \mbox{ in } \omega, \\
									b_2^{(2)} \, \frac{x_2}{x_1^2+x_2^2}  & \mbox{ in } \RN \setminus \overline{\omega} .
			                      \end{cases}
 	\end{align*}    
	Again, the constants $a_i^{(2)}$, $b_i^{(2)}$ must be chosen such that the interface conditions are satisfied, and the matrix $\myTDMat^{(2)}$ can be identified. 
\end{remark}


\section{Computational aspects}\label{Sec:Numerics}
	In order to make use of formulas \eqref{G1_final} and \eqref{G2_final} in applications of shape and topology optimization, an efficient method to evaluate these formulas for every point $x_0$ in the design region of the computational domain is of utter importance. For the rest of this section, we restrict our presentation to Case I, noting that analogous results hold true for Case II.
	
	In particular, the evaluation of the second term $J_2$ in \eqref{G1_final} seems to be computationally very costly, as it involves the solutions $H$ and $K$ to the transmission problems \eqref{variationApprox2} and \eqref{variationAdjoint_scale1}, respectively. Both of these problems are defined on the unbounded domain $\RN$ and depend on $U_0 = \nabla u_0 (x_0)$, i.e., the gradient of the unperturbed direct state $u_0$ evaluated at the point of interest $x_0$. In addition, problem \eqref{variationAdjoint_scale1} also depends on $\Adjz = \nabla \adjz(x_0)$, i.e., the gradient of the unperturbed adjoint state $\adjz$ at point $x_0$.
	Recall the second term $J_2$ defined in \eqref{def_J2},
	\begin{align}
		J_2 = J_2(x_0) =J_2(U_0, \Adjz) = \int_{\RN} \tilde S_{U_0}(x,\nabla H(U_0) ) \cdot (\Adjz + \nabla K(U_0, \Adjz)), \label{J2_U0_V0}
	\end{align}
	where $\tilde S$ is defined in \eqref{def_Stil_I}, $H = H(U_0)$ is the solution to problem \eqref{variationApprox2},
	and $K = K(U_0, \Adjz)$ the solution to \eqref{variationAdjoint_scale1}.
	At the first glance, this means that, for each point $x_0 \in \Omega^d$ where one wants to evaluate the term $J_2$, one has to solve problems \eqref{variationApprox2} and \eqref{variationAdjoint_scale1} in order to get the value for $J_2$. Topology optimization algorithms which are based on topological sensitivities usually require the values of these sensitivities at all points of the design domain $\Omega^d$ simultaneously, which would, of course, result in extremely inefficient optimization algorithms. This enormous computational effort can be reduced with the help of the following observations.
	\begin{lemma} \label{propertiesJ2}
		Let $U_0$, $\Adjz \in \RN$, $R \in \mathbb R^{\myN \times \myN}$ an orthogonal matrix. Let $H(U_0) \in \HH(\RN)$ be the solution to \eqref{variationApprox2} and $K(U_0, \Adjz)$ the solution to \eqref{variationAdjoint_scale1} for given $U_0$, $\Adjz$. Let further $J_2 = J_2(U_0, \Adjz)$ be defined by \eqref{J2_U0_V0}. 
		Then the following properties hold:
		\begin{enumerate}
			\item $J_2$ is linear in the second argument, i.e., for all $a,b \in \mathbb R$ and $U_0, \Adj_1, \Adj_2 \in \RN$, \label{statementJ2linearV}
				\begin{align}
					J_2(U_0, a\,\Adj_1 + b \Adj_2) = a \, J_2(U_0,\Adj_1) + b\,J_2(U_0, \Adj_2). \label{eqnJ2linearV}
				\end{align}
			\item Let $y=R\,x$. For the solution $H = H(U_0)$ to \eqref{variationApprox2}, we have \label{statementHInvar}
				\begin{align}
					R^\top\, \nabla_y H(U_0)( y) &= \nabla H(R^\top \, U_0)( x). \label{nablaHInvar}
				\end{align}
			\item Let $y=R\,x$. For the solution $K = K(U_0, \Adjz)$ to \eqref{variationAdjoint_scale1}, we have \label{statementKInvar}
				\begin{align}
					R^\top\, \nabla_y K(U_0, \Adjz)( y) &= \nabla K(R^\top U_0, R^\top \, \Adjz)( x).  \label{nablaKInvar}
				\end{align}
			\item It holds \label{statementJ2rotInvar}
				\begin{align}	
					J_2(R^\top U_0, R^\top \Adjz ) = J_2(U_0, \Adjz). \label{eqnJ2rotInvar}
				\end{align}
		\end{enumerate}
	\end{lemma}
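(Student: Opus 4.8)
The plan is to reduce all four assertions to a single structural fact: the operator $T$ of \eqref{def_T} is isotropic. Since $T(W)=\nuh(|W|)W$ and $|RW|=|W|$ for every orthogonal $R\in\mathbb R^{\myN\times\myN}$, one reads off $T(RW)=R\,T(W)$, whence from \eqref{def_DT} and $(RW)\otimes(RW)=R\,(W\otimes W)\,R^\top$ also $\DT(RW)=R\,\DT(W)\,R^\top$, and from \eqref{def_S} likewise $S_{RW}(RV)=R\,S_W(V)$. Combined with the rotation invariance of $\omega=B(0,1)$, so that $\chi_\omega(Rx)=\chi_\omega(x)$, these identities propagate to the truncated operators $\Tt$, $\mbox{D}\Tt$, $\tilde S$ introduced in \eqref{def_Stil_I}, giving $\Tt(x,R^\top W)=R^\top\Tt(Rx,W)$, $\mbox{D}\Tt(x,R^\top W)=R^\top\,\mbox{D}\Tt(Rx,W)\,R$, and $\tilde S_{R^\top U_0}(x,R^\top V)=R^\top\tilde S_{U_0}(Rx,V)$. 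Everything then follows from these covariances, the change of variables $y=Rx$ (which preserves both the test space $\HH(\RN)$, as the weight \eqref{def_weight} is radial, and the measure, since $|\det R|=1$), and the uniqueness statements of Proposition \ref{propExUnH} and Lemma \ref{lem_exUniqK}.

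Statement \ref{statementJ2linearV} is the easy one. The form \eqref{variationAdjoint_scale1} defining $K$ is linear with a right-hand side that is linear in $\Adjz$, so by uniqueness $\Adjz\mapsto K(U_0,\Adjz)$ is linear; since $H(U_0)$ is independent of $\Adjz$, the integrand in \eqref{J2_U0_V0} is linear in $\Adjz+\nabla K(U_0,\Adjz)$, hence in $\Adjz$, which is \eqref{eqnJ2linearV}.

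For statement \ref{statementHInvar} I would set $G(x):=H(U_0)(Rx)$, so that $\nabla G(x)=R^\top(\nabla H(U_0))(Rx)$ by the chain rule, and verify that $G$ solves \eqref{variationApprox2} with $U_0$ replaced by $R^\top U_0$. Testing that problem with $\psi$ and substituting $y=Rx$ together with $\test(y):=\psi(R^\top y)$ (so that $R\,\nabla\psi(x)=\nabla_y\test(y)$), the identity $\Tt(x,R^\top U_0+\nabla G)=R^\top\Tt\!\big(Rx,U_0+(\nabla H(U_0))(Rx)\big)$ turns the left-hand side into the left-hand side of \eqref{variationApprox2} for $H(U_0)$ tested with $\test$; using $\nuh(|R^\top U_0|)=\nuh(|U_0|)$ and the invariance of $\omega$, the right-hand sides match under the same substitution. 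Uniqueness forces $G=H(R^\top U_0)$, which upon taking gradients is exactly \eqref{nablaHInvar}. Statement \ref{statementKInvar} is proved by the identical device applied to the linear problem \eqref{variationAdjoint_scale1}: with $G(x):=K(U_0,\Adjz)(Rx)$ one uses $\mbox{D}\Tt(x,R^\top U_0)=R^\top\,\mbox{D}\Tt(Rx,U_0)\,R$ and $(\nu_0 I-\DT(R^\top U_0))R^\top\Adjz=R^\top(\nu_0 I-\DT(U_0))\Adjz$ to identify $G$ with $K(R^\top U_0,R^\top\Adjz)$, giving \eqref{nablaKInvar}.

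Finally, statement \ref{statementJ2rotInvar} combines the previous three. Writing out $J_2(R^\top U_0,R^\top\Adjz)$ from \eqref{J2_U0_V0}, I substitute \eqref{nablaHInvar} and \eqref{nablaKInvar} and apply $\tilde S_{R^\top U_0}(x,R^\top V)=R^\top\tilde S_{U_0}(Rx,V)$ with $V=(\nabla H(U_0))(Rx)$; the two factors $R^\top$ in the integrand cancel in the inner product because $R^\top a\cdot R^\top b=a\cdot b$ for orthogonal $R$, and the change of variables $y=Rx$ returns precisely $J_2(U_0,\Adjz)$, i.e.\ \eqref{eqnJ2rotInvar}. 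The main obstacle is the nonlinear covariance in statement \ref{statementHInvar}: one must check that the change of variables transforms the \emph{full} nonlinear equation \eqref{variationApprox2}, not merely its principal part, consistently, which is exactly where the isotropy $T(RW)=RT(W)$ and the symmetry of $\omega$ are indispensable. Once that step is secured, statements \ref{statementKInvar} and \ref{statementJ2rotInvar} are purely algebraic consequences.
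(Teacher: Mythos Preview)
Your proof is correct and follows essentially the same route as the paper: both argue statement~\ref{statementJ2linearV} from the linearity of \eqref{variationAdjoint_scale1} in $\Adjz$, establish statements~\ref{statementHInvar} and~\ref{statementKInvar} by the change of variables $y=Rx$ in the weak formulations combined with the uniqueness results of Proposition~\ref{propExUnH} and Lemma~\ref{lem_exUniqK}, and deduce statement~\ref{statementJ2rotInvar} from the covariance identities $T(R^\top W)=R^\top T(W)$, $\DT(R^\top W)=R^\top\DT(W)R$, $S_{R^\top V}(R^\top W)=R^\top S_V(W)$ together with \eqref{nablaHInvar}--\eqref{nablaKInvar}. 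Your presentation is slightly cleaner in that you front-load the isotropy identities and note explicitly that the radial weight \eqref{def_weight} makes $\HH(\RN)$ rotation-invariant, a point the paper uses implicitly.
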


	\begin{proof}
		\begin{enumerate}
			\item It can easily be seen from \eqref{variationAdjoint_scale1} that $K$ depends linearly on $\Adjz$.
			
			\item For $x \in \RN$, let $H(U_0)( R\,x)$ the solution to problem \eqref{variationApprox2} after a coordinate transformation. Define $\tilde H(U_0)( x) := H(U_0)( y(x))$, $x \in \RN$, with $y(x) = R\,x$. Then we have
			\begin{align*}
				\nabla_y H(U_0)( y) = R^{-\top} \nabla_x \tilde H(U_0)( x) = R \nabla_x \tilde H(U_0)( x),
			\end{align*}
			since $R$ is orthogonal. Similarly, for a test function $\test$ and $x \in \RN$, we define $\tilde \test(x) := \test(y(x))$ and get
			\begin{align*}
				\nabla_y \test(y) = R \nabla_x \tilde \test(x).
			\end{align*}
			For the left hand side of transmission problem \eqref{variationApprox2} we obtain
			\begin{align}
				\int_{\RN} &\left( \Tt(y,U_0 + \nabla_y H(y)) - \Tt(y,U_0) \right)\cdot \nabla_y \test(y) \, \mbox d y \nonumber \\
                \label{lhs_coordTrans} 
                =& \int_{\RN} \left( \Tt(x,R^\top U_0 + \nabla_x \tilde H(x)) - \Tt(x,R^\top U_0) \right)\cdot \nabla_x \test(x) \, \mbox d x  
			\end{align}
			where we used that $R \omega = \omega$, $R^\top R = I$, $|R^\top U_0| = |U_0|$ and that $|$det$R|=1$.
            
            Similarly, we get for the right hand side of \eqref{variationApprox2},
			\begin{align}
				-\int_{\omega} \left(\nu_0 - \nuh (|U_0|) \right) U_0 \cdot& \nabla_y \test(y) \mbox d y 
				=  -\int_{\omega} \left(\nu_0 - \nuh (|R^\top U_0|) \right) R^\top U_0 \cdot  \nabla_x \tilde \test(x)  \mbox d x. \label{rhs_coordTrans}
			\end{align}
			On the other hand, consider the transmission problem obtained by replacing $U_0$ in \eqref{variationApprox2} by $R^\top U_0$ and denote its solution by $H(R^\top U_0)$. We note that the left and right hand side are equal to \eqref{lhs_coordTrans} and \eqref{rhs_coordTrans}, respectively. 
			Thus, it follows from the uniqueness of a solution in $\HH(\RN)$ to \eqref{variationApprox2} (where $U_0$ is replaced by $R^\top U_0$) stated in Proposition \ref{propExUnH}, that the solution of the original problem after a coordinate transformation $y = Rx$ equals the solution to the problem in the original coordinates with the vector $U_0$ rotated by application of $R^\top$ in $\HH(\RN)$, i.e.,
			\begin{align}
				H(U_0)( R\,x) = H(R^\top U_0)(x)\label{invariance_H}
			\end{align}
			for all $x \in \RN$. From \eqref{invariance_H}, it follows that
			\begin{align*}
				\nabla_x \left( H(R^\top U_0)(x) \right) = \nabla_x \left(H(U_0)( R\,x) \right) = R^\top \nabla_y H(U_0)(y),
			\end{align*}
			which finishes the proof of statement \ref{statementHInvar}.
			
			\item Statement \ref{statementKInvar} is shown in an analogous way by comparing the left and right hand side of transmission problem \eqref{variationAdjoint_scale1} first after the coordinate transformation $y = R\, x$ and second after replacing $U_0$ and $\Adjz$ by $R^\top U_0$ and $R^\top \Adjz$, respectively. Note that 
			\begin{align*}
				\mathrm{D}\tilde T(x, R^\top U_0) = R^\top \mathrm{D}\tilde T(x, U_0) R \quad \mbox{ and } \quad 
				\mathrm{D} T(R^\top U_0) = R^\top \DT(U_0) R.
			\end{align*}
			
			\item 
			Note that, for $V,W \in \RN$, $T$ defined in \eqref{def_T} and $S$ defined in \eqref{def_S}, we have
			\begin{align*}
				T(R^\top W ) = R^\top T(W), \;
				\DT(R^\top  W) = R^\top \DT(W) R, \;
				S_{R^\top V}(R^\top W) = R^\top S_V(W).
			\end{align*}
			Then, it follows by \eqref{nablaHInvar} and \eqref{nablaKInvar} and the fact that $|\mbox{det}R|= 1$ that
			\begin{align*}
				J_2&(R^\top U_0, R^\top \Adjz ) \\
				=& \int_{\RN \setminus \omega} S_{R^\top U_0}(\nabla_x H(R^\top U_0)(x)) \cdot \left( R^\top \Adjz + \nabla_x K(R^\top U_0, R^\top \Adjz)(x) \right) \mbox dx \\
				=& \int_{\RN\setminus \omega} S_{R^\top U_0}(R^\top \nabla_y H( U_0)(y)) \cdot \left( R^\top \Adjz + R^\top \nabla_y  K( U_0, \Adjz)( y) \right) \mbox dy \\
				=& \int_{\RN\setminus \omega} R^\top S_{U_0}(\nabla_y H(U_0)( y) ) \cdot R^\top (\Adjz + \nabla_y K(U_0, \Adjz)( y)) \mbox d y \\
				=&	J_2(U_0, \Adjz).
			\end{align*}
		\end{enumerate}
	\end{proof} 
	
	By means of properties \ref{statementJ2linearV} and \ref{statementJ2rotInvar} of Lemma \ref{propertiesJ2}, it is possible to efficiently evaluate $J_2$ by
	first precomputing values in an offline stage and then looking them up and interpolating between them during the optimization procedure.
	Let $t := |U_0|$, $s:=|\Adjz|$, $e_i$ the unit vector in $x_i$-direction for $i=1,2$, and $\theta$ and $\varphi$ the angles between $U_0$ and $e_1$ and between $\Adjz$ and $e_1$, respectively, i.e.,
	\begin{align*}
		U_0 = t\, R_{\theta} e_1 \quad \mbox{ and } \quad \Adjz = s \,R_{\varphi} e_1,
	\end{align*}
	where $R_{\alpha}$ denotes the counter-clockwise rotation matrix around an angle $\alpha$, i.e., $$R_{\alpha} = \left[ \begin{array}{cc} \ccos \alpha & -\ssin \alpha \\ \ssin \alpha & \ccos \alpha \end{array} \right].$$ Then, by \eqref{eqnJ2rotInvar} and \eqref{eqnJ2linearV}, we have
	\begin{align}
		J_2(U_0, \Adjz) &= J_2(t \, R_{\theta} e_1, s\, R_{\varphi} e_1)  \nonumber \\
			&= J_2(t \, e_1, s\, R_{\varphi- \theta} e_1) \nonumber \\
			&= J_2(t \, e_1, s \, \mbox{cos}(\varphi-\theta) e_1 + s\, \mbox{sin}(\varphi- \theta) e_2) \nonumber \\
			&= s \,\mbox{cos}(\varphi-\theta) J_2(t \, e_1,  e_1) + s \,\mbox{sin}(\varphi- \theta) J_2(t\, e_1, e_2). \label{J2_precompute}
	\end{align}
	Thus, by precomputing the values of $J_2(t \, e_1,  e_i)$ for $i=1,2$ for a typical range of values of $t = |U_0| = |\nabla u(x_0)| = |\mathbf B(x_0)|$ where $\mathbf B$ denotes the magnetic flux density, the values of the term $J_2$ can be efficiently approximated for any $U_0$ and $\Adjz$ by interpolation, without the need to solve a nonlinear problem for every evaluation.

\subsection{Numerical experiments}

For given $U_0,\Adjz \in \RN $, we compute approximate solutions $H_h$ and $K_h$ to the elements $ \tilde H(U_0) \in H(U_0)$ and $ \tilde K(U_0, \Adjz)\in K(U_0, \Adjz)$ which satisfy the asymptotic behaviors \eqref{asymp_H} and \eqref{asympt_K}, respectively, by the finite element method. We approximate problems \eqref{variationApprox2} and \eqref{variationAdjoint_scale1}, which are defined on the plane $\RN$, by restricting the computational domains to a circular domain of radius $1000$ which is centered at the origin where the inclusion $\omega$ is the unit disk, $\omega = B(0,1)$. 
We use homogeneous Dirichlet boundary conditions for both problems. This approximation is justified by the asymptotic behavior of the solutions $H$ and $K$ derived in \eqref{asymp_H} and \eqref{asympt_K}, respectively; see also the explicit expression of $K$ given in Remarks \ref{rem_MatExplicit_CaseI} and \ref{rem_MatExplicit_CaseII}.
We use piecewise linear finite elements on a triangular mesh.
Figure \ref{fig:HK_caseI} shows the obtained solutions for $H_h \approx H(U_0)$, 
$K_{h,10} \approx K(U_0, (1,0)^\top)$ 
and $K_{h,01} \approx K(U_0, (0,1)^\top)$ with $U_0 = (0.1, 0)^\top$. Note that, for $\Adjz = (\adj_1, \adj_2)^\top$, an approximation to $K(U_0, \Adjz)$ is given by the linear combination $\adj_1 K_{h,10}  + \adj_2 K_{h,01}$. 
The difference between the numerical approximation $K_{h,01}$ and the analytical formula for $K(U_0,(0,1)^\top)$ can be seen from the last picture in Figure \ref{fig:HK_caseI}. We observed that the difference between the exact values and the approximated values with Dirichlet conditions is rather small and decreases when further refining the mesh.
    \begin{figure}
    	\begin{tabular}{cc}
        	\includegraphics[width=.66\textwidth, trim=30mm 0 30mm 0 0, clip=true]{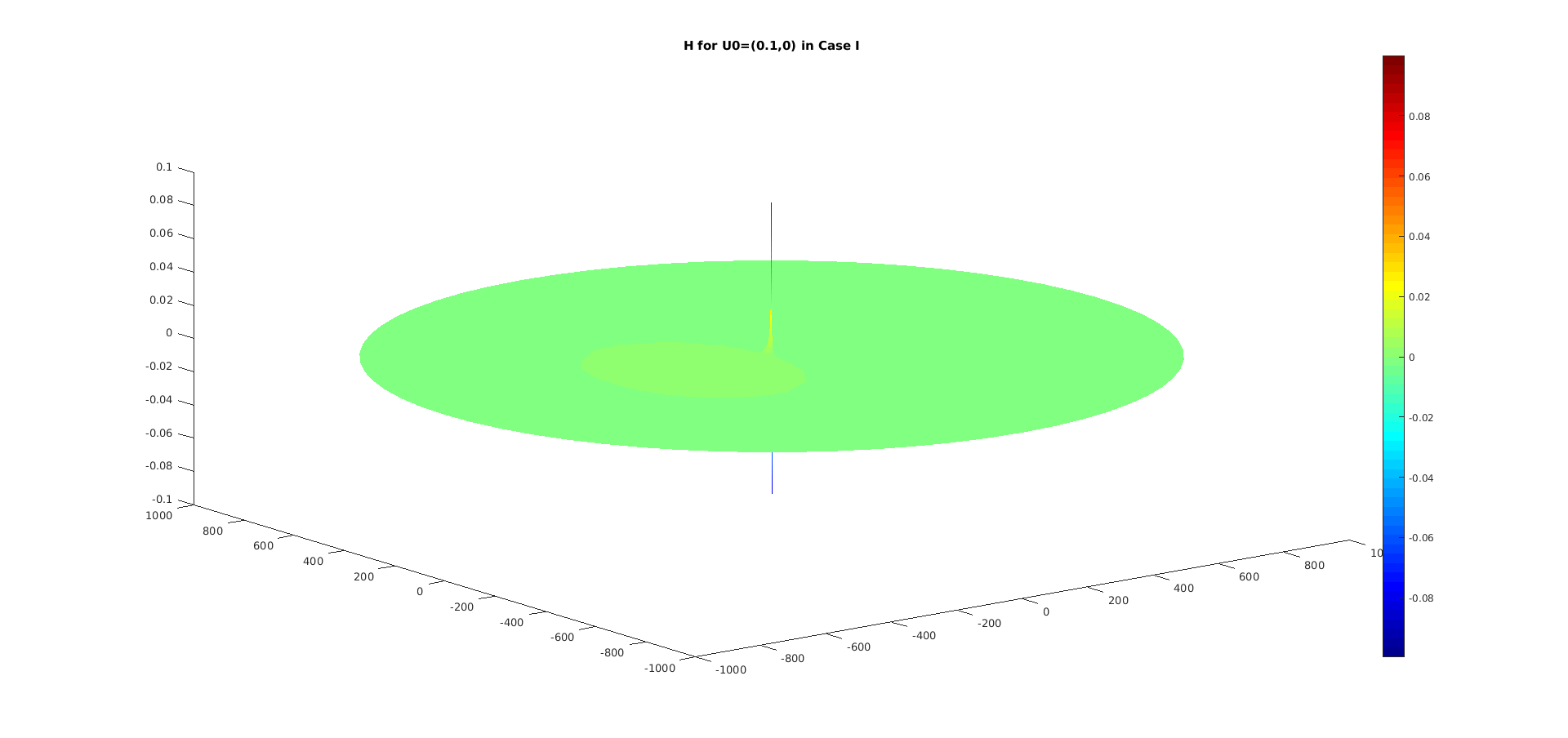}
            &\includegraphics[width=.33\textwidth, trim=20 10 20 10, clip=true]{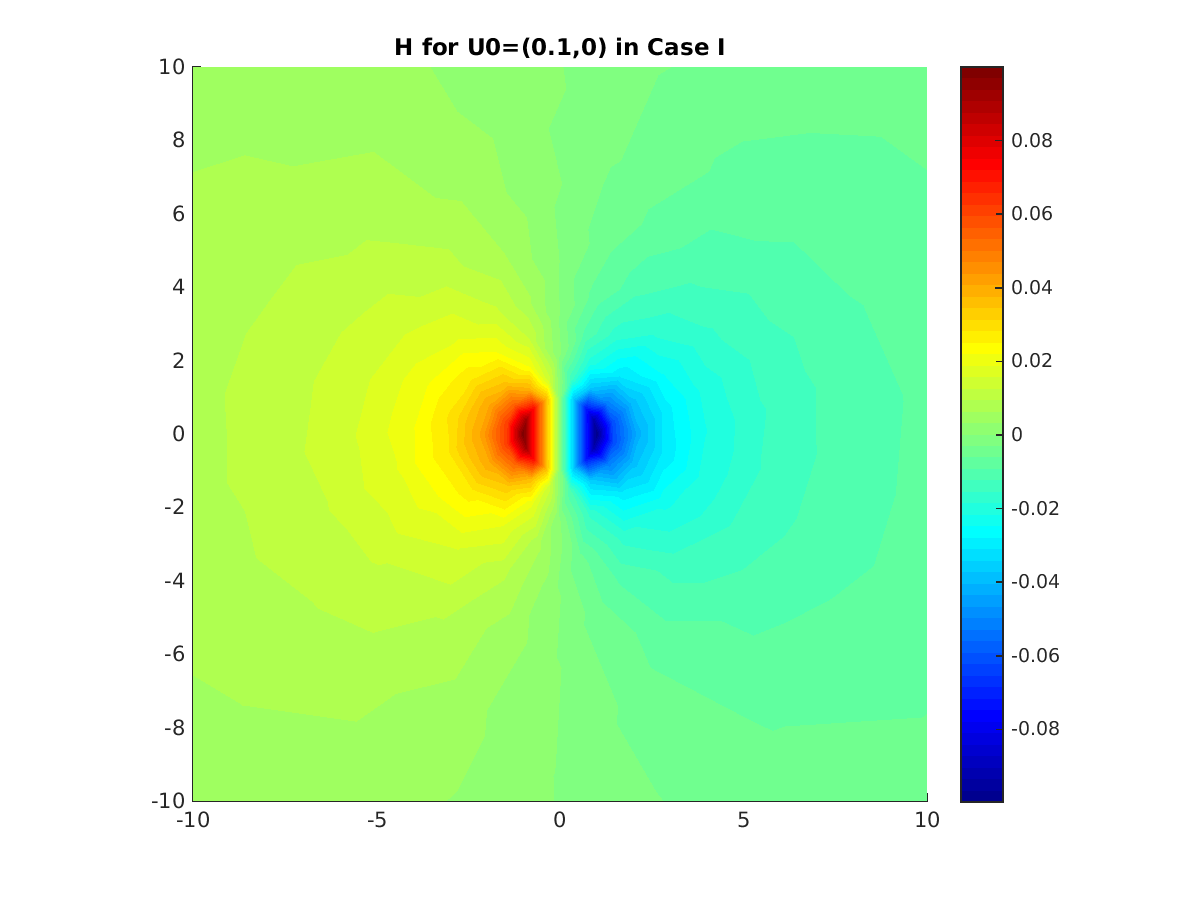} \\
            \begin{minipage}{0.66\textwidth}
            	\begin{tabular}{cc}
                 \includegraphics[width=.475\textwidth, trim=20 10 20 10, clip=true]{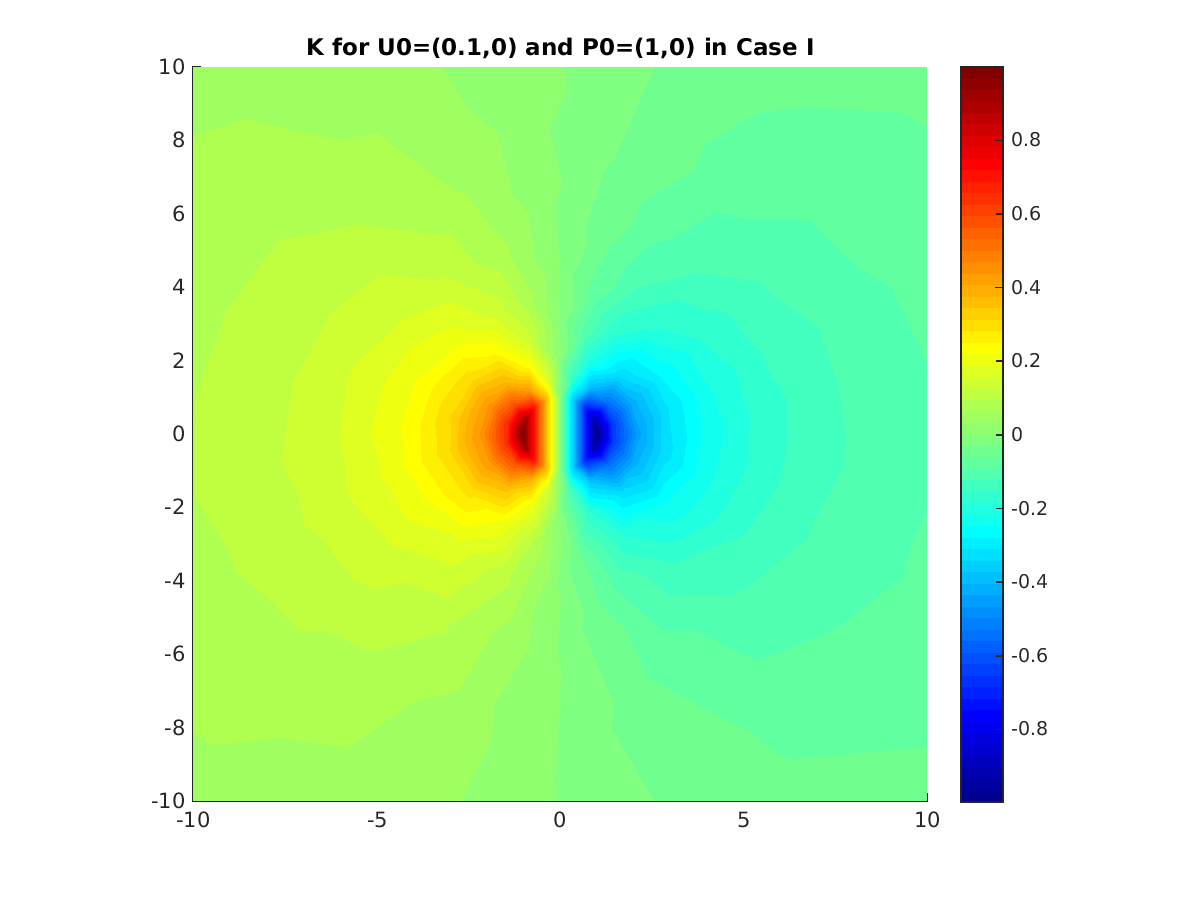} & \includegraphics[width=.475\textwidth, trim=20 10 20 10, clip=true]{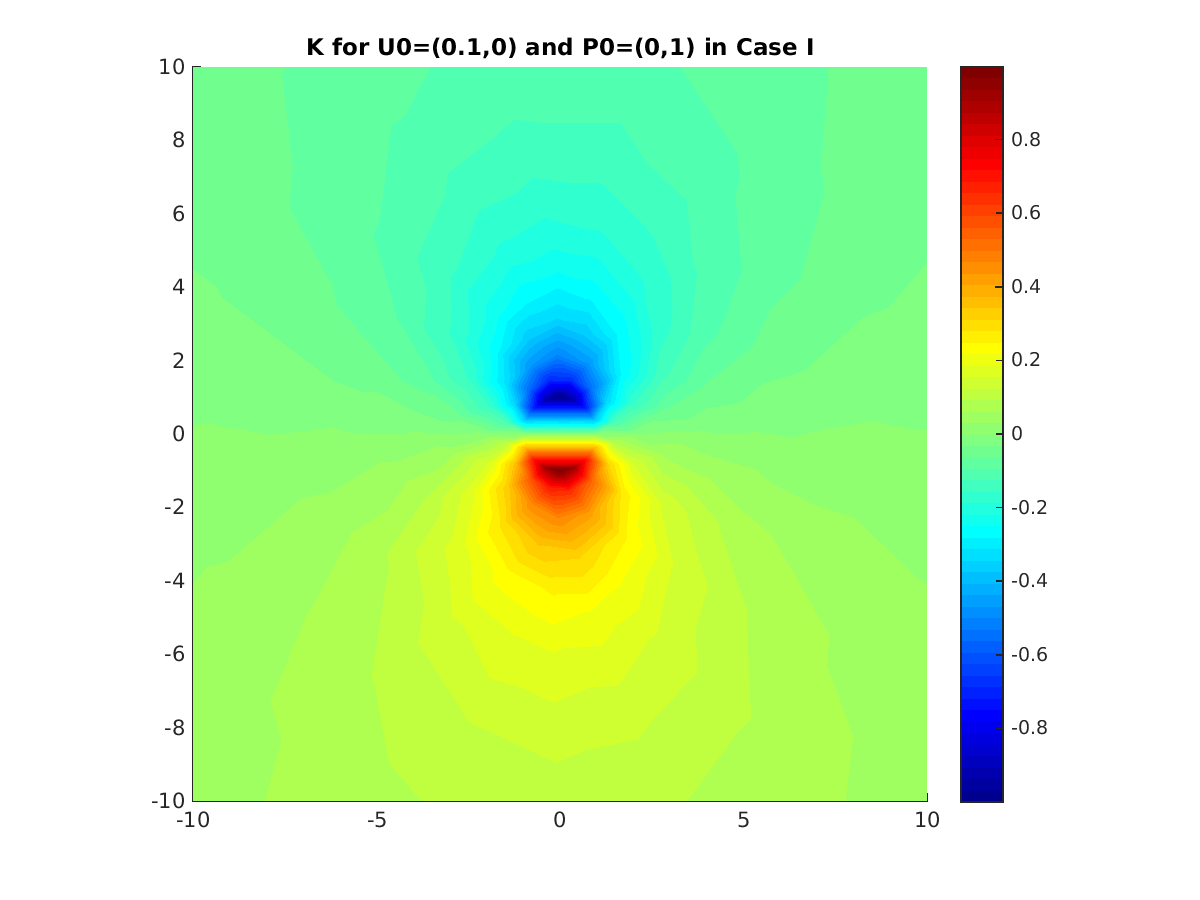}
                \end{tabular}
            \end{minipage}
            &
            \begin{minipage}{0.33\textwidth}
            \includegraphics[width=.95\textwidth, trim=20 10 20 10, clip=true]{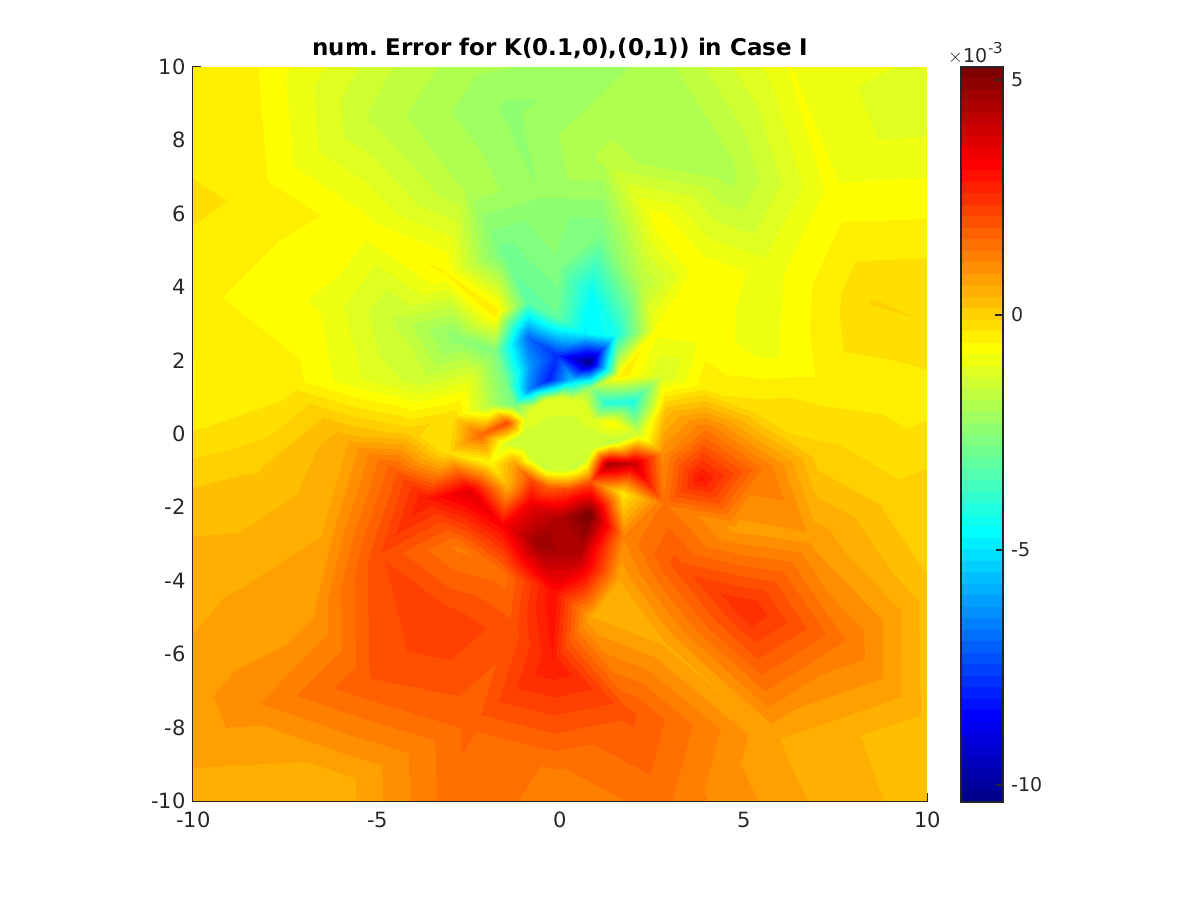}  
            \end{minipage}
        \end{tabular}
        \caption{$H_h$, $K_{h,10}$ and $K_{h,01}$ for $U_0 = (0.1, 0)^\top$ and difference between numerical approximation $K_{h,01}$ and exact $K(U_0, (0,1)^\top)$ in Case I.}
		\label{fig:HK_caseI}	
	\end{figure}
   
Next, we compute and compare the terms $J_1$ and $J_2$ appearing in the topological derivative \eqref{G1_final}.
	The quantities $J_1$ and $J_2$ depend on $U_0=t\, R_{\theta} e_1$ and $\Adjz =s \,R_{\varphi} e_1$ and thus have, in two space dimensions, four degrees of freedom. Both $J_1$ and $J_2$ are linear in the second argument $\Adjz$, thus we can neglect $s = |\Adjz|$, as a scaling of $\Adjz$ will result in the same scaling of $J_1$ and $J_2$.
	Furthermore, in terms of the angles $\theta, \varphi$, both $J_1$ and $J_2$ only depend on the difference $\varphi - \theta$. For $J_2$ this can be seen from \eqref{J2_precompute} and for $J_1$, this can be seen from \eqref{J1_polMat} and \eqref{TDMat_CaseI}. Thus, we can visualize $J_1$ and $J_2$ in dependence of two degrees of freedom, $|U_0|$ and $\varphi - \theta$. Figures \ref{fig:J1J2_caseI} and \ref{fig:J1J2_caseII} show $J_1$ and $J_2$ in Case I and Case II in dependence on these two degrees of freedom. Note that they are of a similar magnitude for certain values of $|U_0| = |\mathbf B(x_0)|$.
   
\begin{figure} 
		\begin{minipage}{7cm}
			\begin{tabular}{c}    
				\includegraphics[width=\textwidth, trim=0 0mm 0 0mm, clip=true]{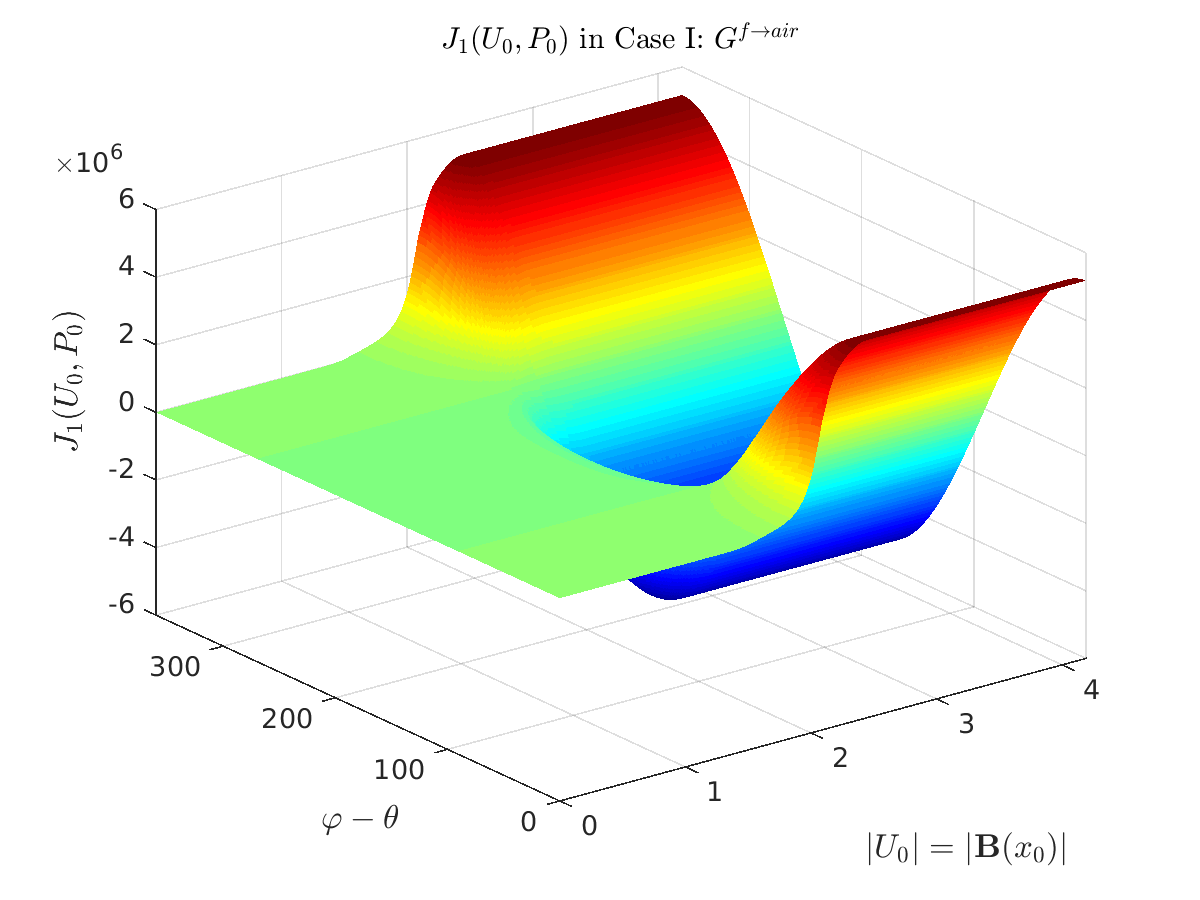}
			\end{tabular}
		\end{minipage}
		\begin{minipage}{7cm}
			\begin{tabular}{c}    
				\includegraphics[width=\textwidth, trim=0 0mm 0 0mm, clip=true]{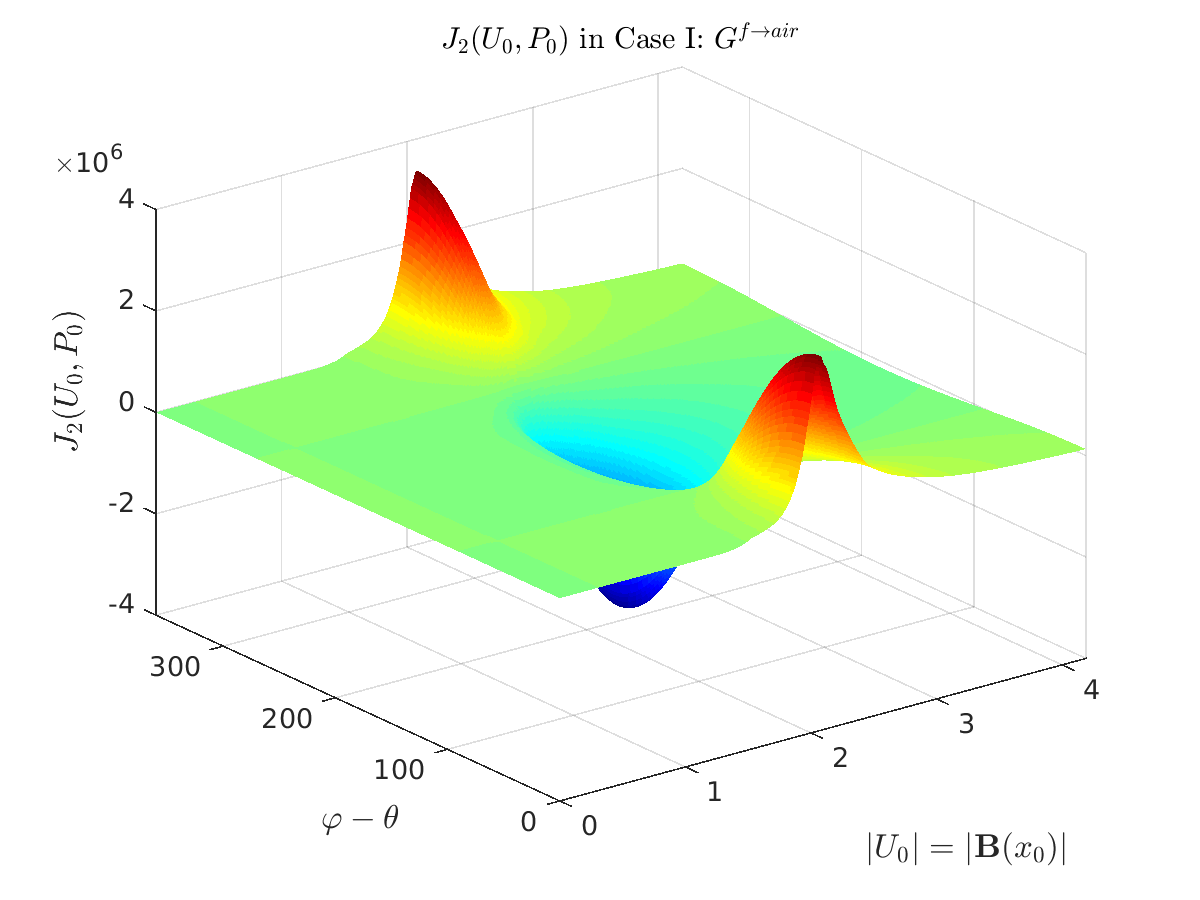}
			\end{tabular}
		\end{minipage}
 		\caption{$J_1(U_0,\Adjz)$ and $J_2(U_0,\Adjz)$ in Case I.}
		\label{fig:J1J2_caseI}	
	\end{figure}
	
	\begin{figure} 
		\begin{minipage}{7cm}
			\begin{tabular}{c}    
				\includegraphics[width=\textwidth, trim=0 0mm 0 0mm, clip=true]{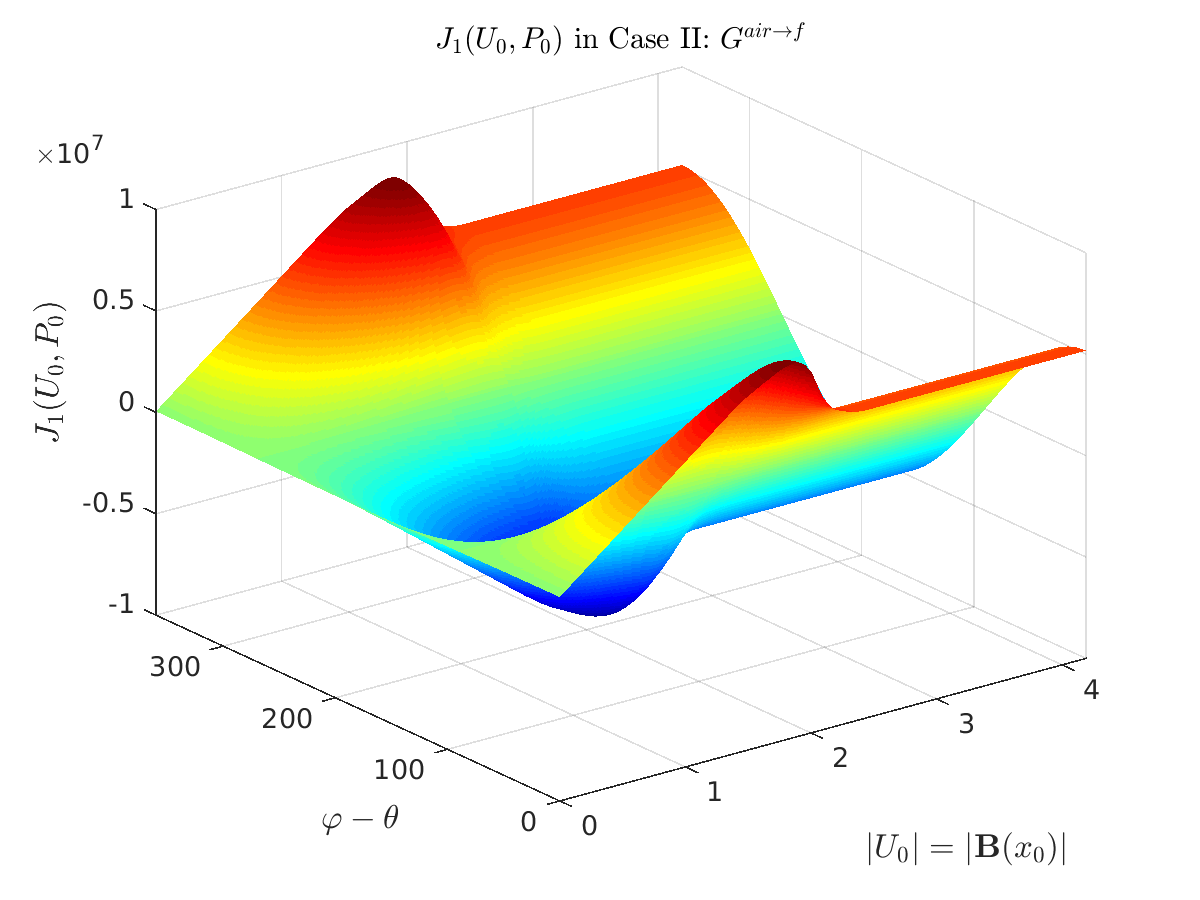}
			\end{tabular}
		\end{minipage}
		\begin{minipage}{7cm}
			\begin{tabular}{c}    
				\includegraphics[width=\textwidth, trim=0 0mm 0 0mm, clip=true]{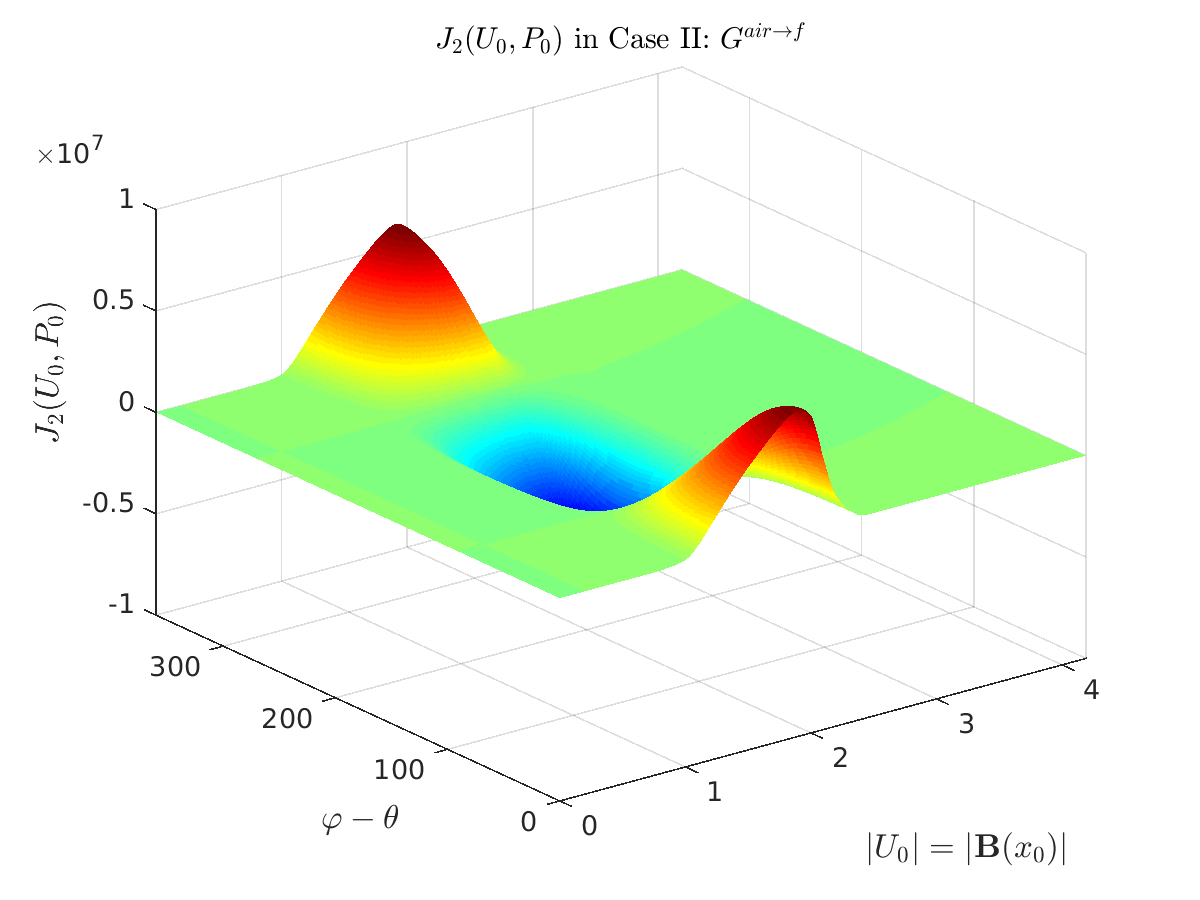}
			\end{tabular}
		\end{minipage}
        \caption{$J_1(U_0,\Adjz)$ and $J_2(U_0,\Adjz)$ in Case II.}
		\label{fig:J1J2_caseII}	
	\end{figure}

	
\section{Application to topology optimization of electric motor} \label{sec:applicTDNL}
Finally, we employ the topological derivative derived in \eqref{G1_final} and \eqref{G2_final} to the model design optimization problem introduced in \eqref{minJ}--\eqref{PDEconstraintUnperturbed} in Section \ref{sec:ProblemDescription}.

\subsection{Objective function}
The electric motor depicted in Figure \ref{fig:elMotor} consists of a fixed outer part (the stator) and a rotating inner part (the rotor) which are separated by a thin air gap. We introduce an objective function whose minimization corresponds to finding a design where the rotor rotates smoothly with little mechanical vibration and noise.
For that purpose, we consider the radial component of the magnetic flux density $\mathbf B$ on a circular curve $\Gamma_0 \subset \Omega_g$ in the air gap which is generated only by the permanent magnets, i.e., $J_z=0$. The goal is to find the optimal distribution of ferromagnetic material in the design domains such that this radial component is as close as possible to a given smooth curve in an $L^2$ sense, see Fig. \ref{fig_optiResults} (right) for the initial and desired curve as well as the curve for the final design. Thus, noting that $\mathbf B = \mathbf B(u) = \mbox{curl}((0,0,u)^\top)$, the objective function reads
\begin{align*}
	\mathcal J(u) = \int_{\Gamma_0} |\mathbf B(u)\cdot n - B_d|^2 \mbox ds = \int_{\Gamma_0} |\nabla u \cdot \tau - B_d|^2 \mbox ds,
\end{align*}
where $n$ and $\tau$ denote the outer unit normal vector and the tangential vector, respectively. Note that $\mathcal J$ is well-defined for $u$ the solution to \eqref{PDEconstraintUnperturbed}, which is smooth in the air gap $\Omega_g \supset \Gamma_0$.

\subsection{Algorithm}
We apply the level set algorithm introduced in \cite{AmstutzAndrae:2006a}, which is based on the topological derivative. This is in contrast to the level set method for shape optimization where the evolution of the interface is usually guided by shape sensitivity information and generally lacks a nucleation mechanism.
	We give a short overview of the algorithm and refer to the references \cite{AmstutzAndrae:2006a, Amstutz:2011a} for a more detailed description.
		
	Recall the notation of Section \ref{sec:ProblemDescription}. In particular, recall that the variable set $\Omega$ was defined as that subset of $\Omega^d$ which is currently occupied with ferromagnetic material. The current design is represented by means of a level set function $\psi : \Omega^d \rightarrow \mathbb R$ which is positive in the ferromagnetic subdomain and negative in the air subdomain. The zero level set of $\psi$ represents the interface between the two subdomains. Thus, we have
	\begin{align}
		\psi(x) > 0 \Leftrightarrow x \in \Omega, \qquad \psi(x) < 0 \Leftrightarrow x \in \Omega^d \setminus \overline{\Omega}, \qquad \psi(x) =0 \Leftrightarrow x \in \partial \Omega. \label{def_psi}
	\end{align}
	The evolution of this level set function is guided by the generalized topological derivative, which, for a given design represented by $\psi$, is defined in the following way:
	\begin{align} \label{def_Gtilde}
		\tilde G_{\psi}(x) := \begin{cases}
								G^{f\rightarrow air}(x), & x \in \Omega, \\
								-G^{air\rightarrow f}(x), & x \in \Omega^d \setminus \overline{\Omega}.
		                     \end{cases}
	\end{align}
	Note that the topological derivative is only defined in the interior of $\Omega$ and in the interior of $\Omega^d \setminus \overline{\Omega}$, but not on the interface.
	The algorithm is based on the following observation: If for all $x \in \Omega \cup (\Omega^d \setminus \overline{\Omega})$, it holds
		\begin{align}
			\psi(x) = c \, \tilde G_{\psi}(x) \label{def_optiCond}
		\end{align}
		for a constant $c>0$, then a small topological perturbation (introduction of an inclusion of air inside ferromagnetic material or vice versa) will always increase the objective function.

	This observation motivates the following algorithm:
	\begin{algorithm} \label{algoLevelSetTD}
		Initialization: Choose $\psi_0$ with $\| \psi_0 \| = 1$, compute $\mathcal J(\psi_0)$ and $\tilde G_{\psi_0}$ and set $k=0$.
		\begin{itemize}
			\item[(i)] Set $\theta_k = \mbox{arccos}\,\left(\psi_k, \frac{\tilde G_{\psi_k}}{\|\tilde G_{\psi_k}\|} \right)$ and
				\begin{align*}
						\textcolor{black}{\psi_{k+1}} = \frac{1}{\mbox{sin} \theta_k} \left( \mbox{sin}((1-\kappa_k)\theta_k) \, \textcolor{black}{\psi_k} + \mbox{sin}(\kappa_k\theta_k) \textcolor{black}{\frac{\tilde G_{\psi_k}}{\|\tilde G_{\psi_k}\|}} \right),
					\end{align*}
					where $\kappa_k = \mbox{max} \lbrace 1, 1/2, 1/4 \dots \rbrace$ such that $\mathcal J(\psi_{k+1})<\mathcal J(\psi_k)$
			\item[(ii)] Compute $\tilde G_{\psi_{k+1}}$ according to \eqref{def_Gtilde}
			\item[(iii)] If $\tilde G_{\psi_{k+1}} = \psi_{k+1}$ then stop, else set $k \leftarrow k+1$ and go to (ii)
		\end{itemize} 
	\end{algorithm}
	Here, we identified the domain $\Omega$ with the level set function $\psi$ representing $\Omega$ and wrote $\mathcal J(\psi)$ instead of $\mathcal J(\Omega)$.
	Note that each evaluation of the objective function $\mathcal J$ requires the solution of the state equation \eqref{PDEconstraintUnperturbed} and each evaluation of the generalized topological derivative $\tilde G_{\psi}$ additionally requires the adjoint state $\adj$, i.e., the solution to \eqref{adjoint_eps0}. 
    Here, the norms and the inner product are to be understood in the space $L^2(\Omega^d)$. More details on the algorithm and its implementation can be found in \cite{AmstutzAndrae:2006a, Amstutz:2011a}.

\subsection{Numerical results}
Figure \ref{fig_optiResults} shows the initial geometry of one out of eight design subdomains (left) and the the radial component of the magnetic flux density for the initial and final design (right). Figure \ref{fig_optiResults2} shows the final design obtained after 375 iterations of Algorithm \ref{algoLevelSetTD} together with the magnetic flux density caused by the permanent magnets. The objective value was reduced from $7.6011*10^{-4}$ to $2.0412*10^{-4}$. 
In order to preserve symmetry of the designs, we chose a slightly more conservative step size and started with $\kappa_k=1/10$ rather than $\kappa_k=1$ in Algorithm \ref{algoLevelSetTD}.

\begin{figure}[ht]
	\begin{tabular}{cc}
    	\includegraphics[trim = 10 200 10 60mm, clip, width = .5\textwidth]{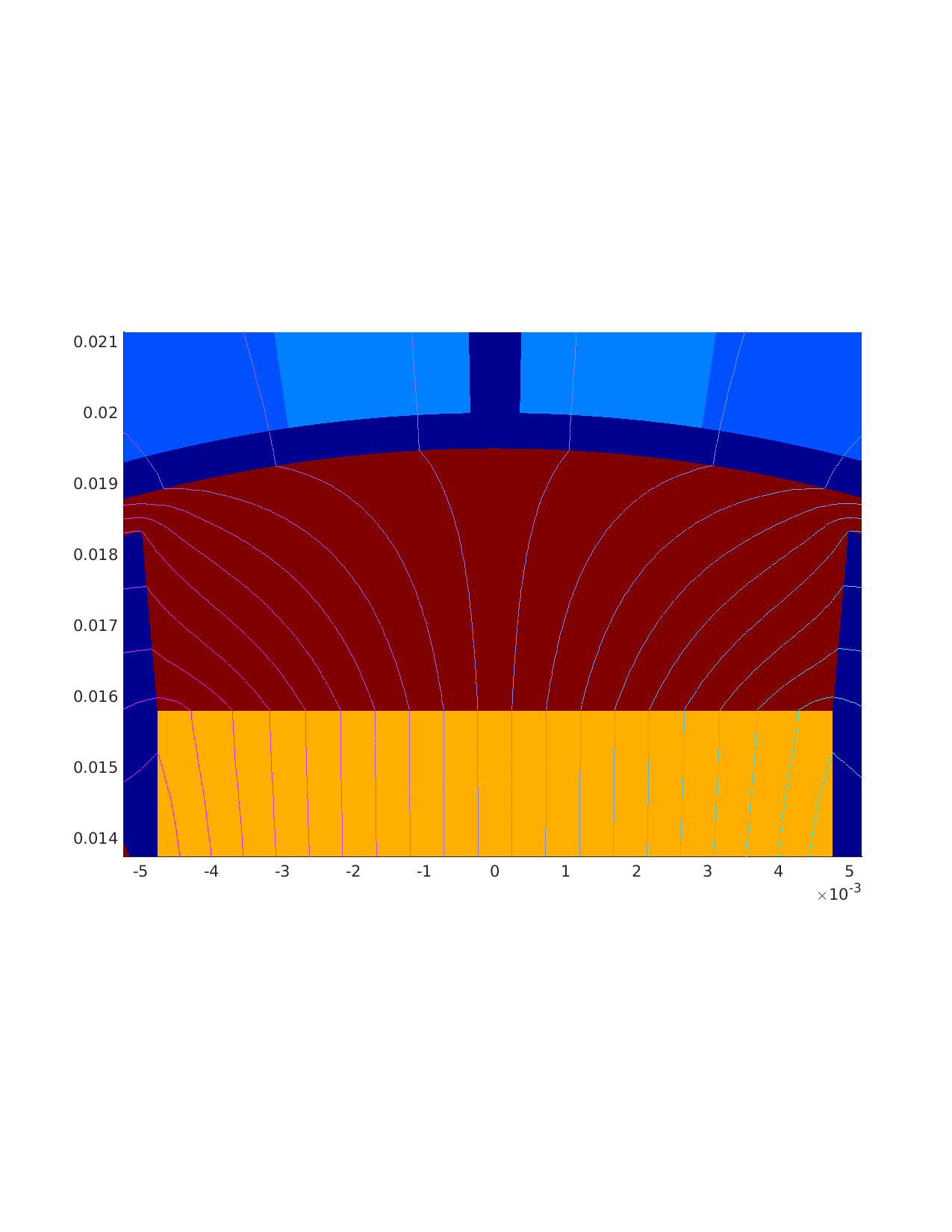}&
          \includegraphics[trim = 10 10 10 10mm, clip, width = .5\textwidth]{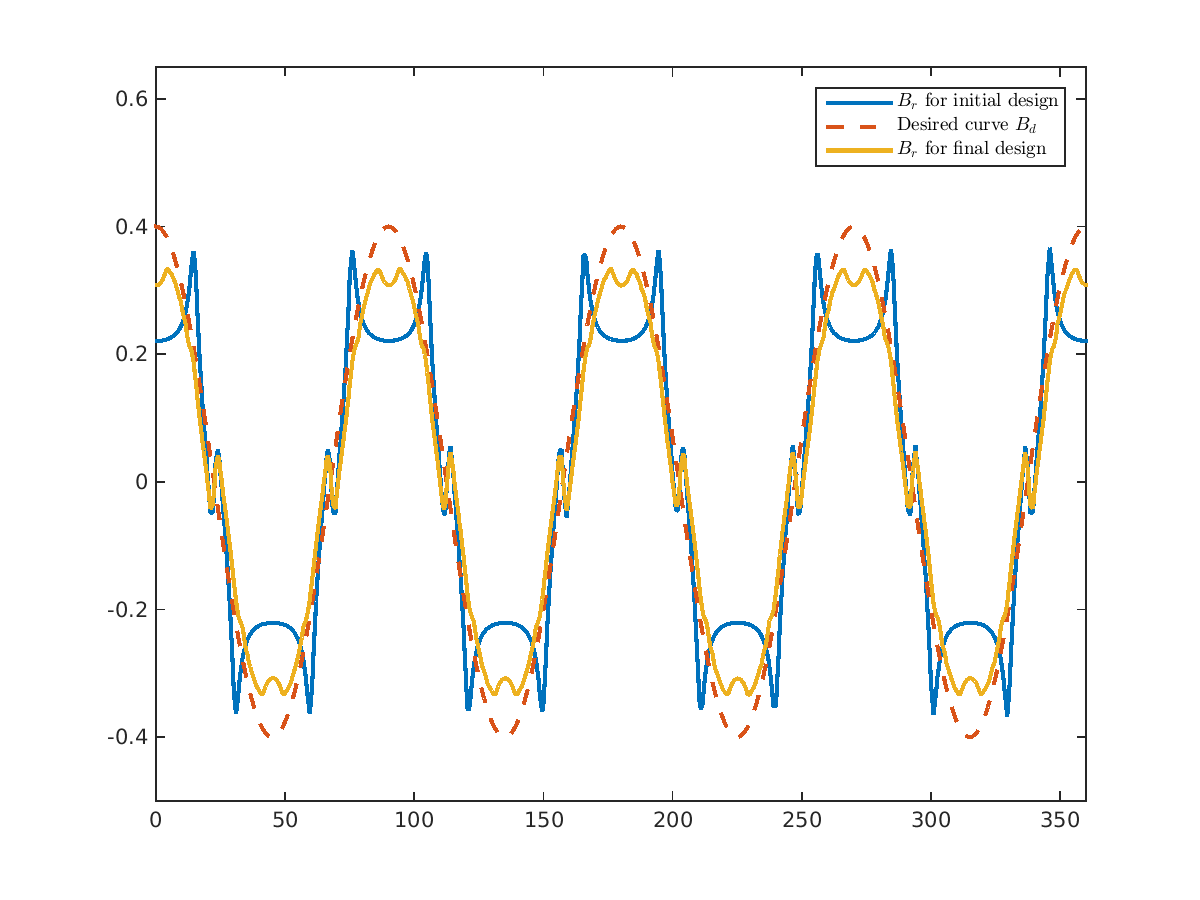}
    \end{tabular}
    \caption{Left: Initial design. Right: Radial component of magnetic flux density in air gap for initial and final design and desired curve.}
    \label{fig_optiResults}
\end{figure}

\begin{figure}[ht]
	\begin{tabular}{cc}
    	\includegraphics[trim = 10 200 10 60mm, clip, width = .5\textwidth]{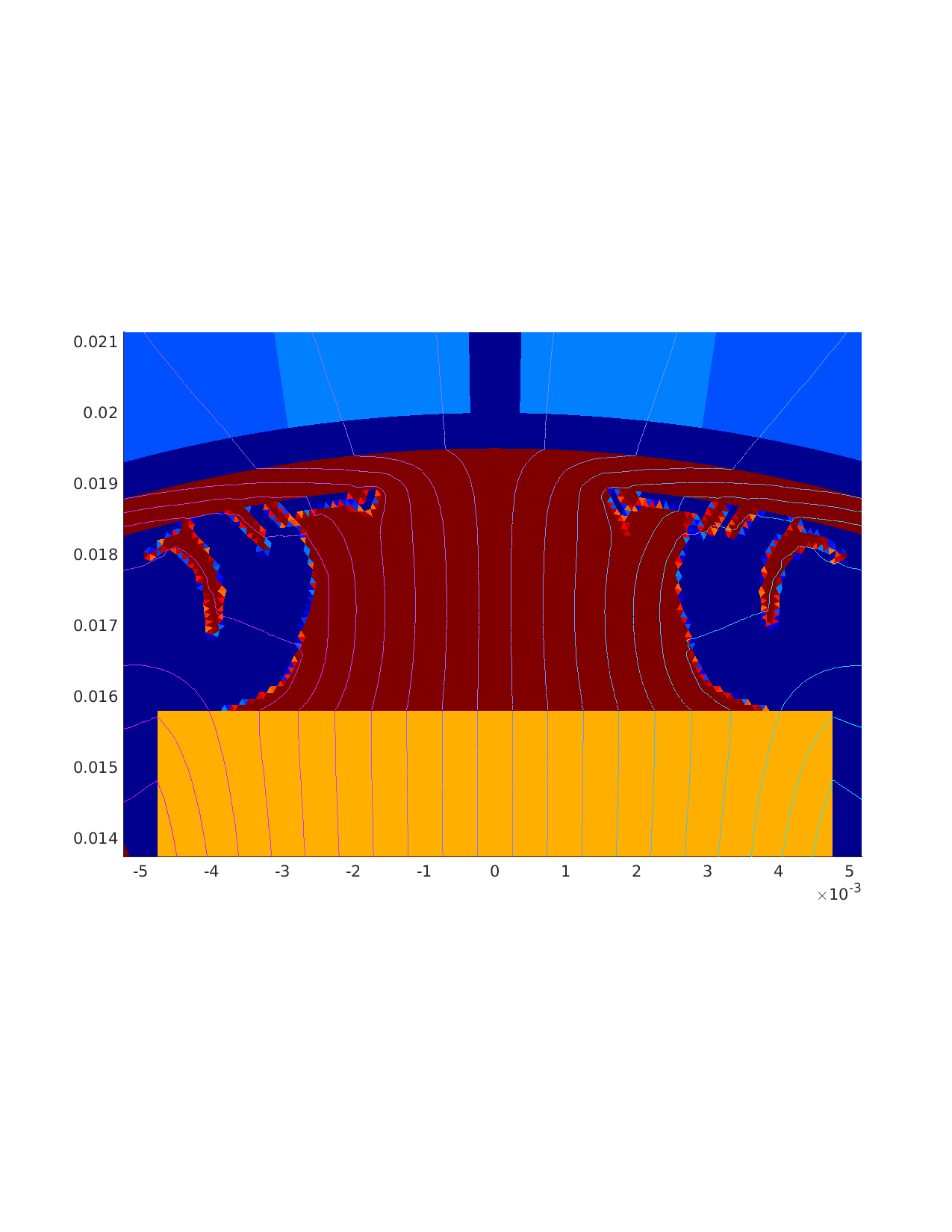}&
        \includegraphics[trim = 10 200 10 60mm, clip, width = .5\textwidth]{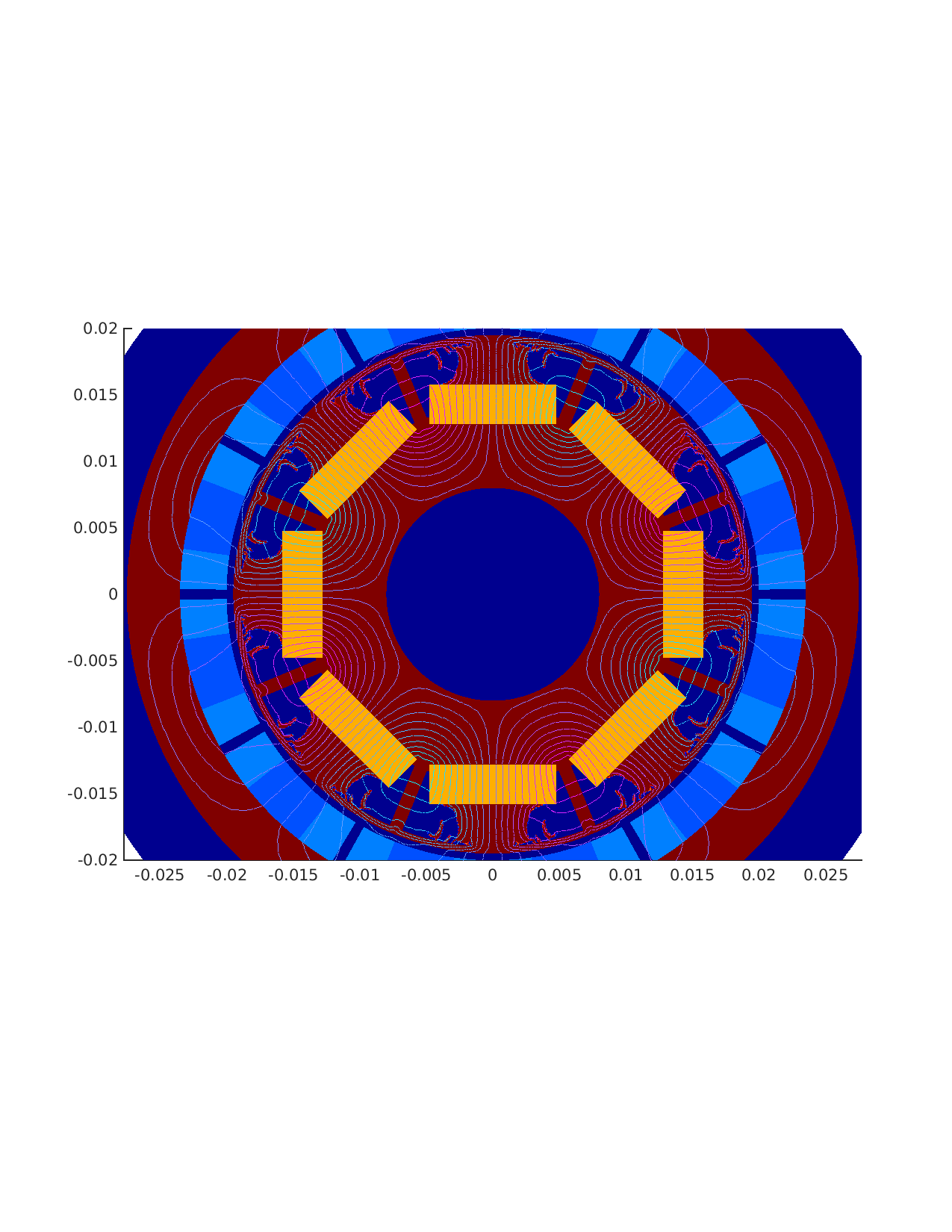}
      
    \end{tabular}
    \caption{Final design after 375 iterations when applying algorithm \cite{AmstutzAndrae:2006a}, which uses the topological derivatives \eqref{G1_final} and \eqref{G2_final}, to problem \eqref{minJ}--\eqref{PDEconstraintUnperturbed}.}
    \label{fig_optiResults2}
\end{figure}

We mention that, when the second term $J_2$ in the topological derivative is dropped in the optimization algorithm, the objective function cannot be reduced as much and the algorithm terminates prematurely.

\section*{Conclusion}
We derived the topological derivative for an optimization problem from electromagnetics which is constrained by the quasilinear partial differential equation of two-dimensional magnetostatics. We proved the formula for the topological derivative in the case where linear material (air) is introduced inside nonlinear (ferromagnetic) material and stated the corresponding formula for the reverse scenario. The key ingredient in the first case was to show a sufficiently fast decay of the variation of the direct state at scale 1 as $|x|\rightarrow \infty$. The topological derivative consists of two terms. The first term resembles the formula for the case of a linear state equation and includes a polarization matrix, which we computed explicitly. The second term is hard to evaluate in practice. We presented a way to efficiently compute the term approximately which can be used in a topology optimization algorithm. Finally we applied a level set algorithm which is based on the topological derivative to the optimization of an electric motor.

	\bibliographystyle{abbrv}

\end{document}